\newcommand\cyr{%
 \renewcommand\rmdefault{wncyr}%
 \renewcommand\sfdefault{wncyss}%
 \renewcommand\encodingdefault{OT2}%
 \normalfont
\selectfont}
 \DeclareTextFontCommand{\textcyr}{\cyr}
 \def\cprime{\char126}
\newcommand{\qhe}{Q_{h,e}}
\newcommand{\qhs}{$\Q$H$S^3$}
\DeclareMathOperator{\rank}{rank}
\DeclareMathOperator{\degree}{deg}
\newcommand\altau{\alpha^\tau}
\def\co{\colon\thinspace}
\newcommand{\xtilde}{\widetilde{X}}
\newcommand{\ctilde}{\widetilde{C}}
\newcommand{\qtilde}{\widetilde{Q}}
\newcommand{\ztilde}{\widetilde{Z}}
\newcommand{\lk}{\operatorname{lk}}
\newcommand{\PD}{\operatorname{PD}}
\newcommand{\sign}{\operatorname{sign}}
\newcommand{\C}{\mathbb C}
\newcommand{\Z}{\mathbb Z}
\newcommand{\Q}{\mathbb Q}
\newcommand{\R}{\mathbb R}
\newcommand{\RP}{\mathbb{RP}}
\newcommand{\CPbar}{\overline{\mathbb{CP}}}
\newcommand{\pont}{\widetilde{{\mathcal P}}}
\newcommand{\OO}{\mathcal O}
\newcommand{\la}{\langle}
\newcommand{\ra}{\rangle}
\def\spinc{\ifmmode{\textrm{Spin}^c}\else{$\textrm{Spin}^c$}\fi}
\newcommand{\spincs}{\mathfrak s}
\newcommand{\spinct}{\mathfrak t}
\newcommand{\spincu}{\mathfrak u}
\newcommand{\spincv}{\mathfrak v}
\newcommand{\spincw}{\mathfrak w}
\newcommand{\tspincs}{\tilde{\mathfrak s}}
\newcommand{\spincx}{\mathfrak x}
\def\oz{Ozsv{\'a}th--Szab{\'o}}
\renewcommand{\phi}{\varphi}
\newtheorem{theorem}{Theorem}[section]
\newtheorem{thm}{Theorem}
\newtheorem{cor}[thm]{Corollary}
\newtheorem{lemma}[theorem]{Lemma}
\newtheorem{proposition}[theorem]{Proposition}
\newtheorem{corollary}[theorem]{Corollary}
\theoremstyle{definition}
\newtheorem{definition}[theorem]{Definition}
\newtheorem{remark}[theorem]{Remark}
\newtheorem{example}[theorem]{Example}
\let\@wraptoccontribs\wraptoccontribs
\DeclareMathOperator{\Spin}{Spin}
\def\s{\mathfrak{s}}
\def\t{\mathfrak{t}}
\def\CF {\operatorname{CF}}
\def\CFhat {\operatorname{\widehat{CF}}}
\def\HF {\operatorname{HF}}
\def\HFhat {\operatorname{\widehat{HF}}}
\def\HFp {\operatorname{HF}^+}
\def\HFi {\operatorname{HF}^\infty}
\def\HFKhat {\operatorname{\widehat{HFK}}}
\def\dtop {d_{\operatorname{top}}}
\def\dbot {d_{\operatorname{bot}}}
\DeclareMathOperator{\Tors}{Tors}
\DeclareMathOperator{\gr}{gr}
\DeclareMathOperator{\im}{im}
\DeclareMathOperator{\grbar}{\widetilde{\gr}}
\renewcommand{\top}{{\operatorname{top}}}
\renewcommand{\bot}{{\operatorname{bot}}}
\newcommand{\abs}[1] {\left\lvert #1 \right\rvert}
\newcommand{\gen}[1] {\langle #1 \rangle}
\def\conn{\mathbin{\#}}
\def\bconn{\mathbin{\natural}}
\def\minus{\smallsetminus}
\newcommand{\veps}{\varepsilon}
\newcommand{\arxiv}[1]{\href{http://arxiv.org/abs/#1}{arXiv:#1}}
\title{Non-orientable surfaces in homology cobordisms}
\author[Adam Simon Levine]{Adam Simon Levine}
\address{Department of Mathematics, \newline\indent Princeton University \newline\indent Princeton, NJ 08540}
\email{asl2@math.princeton.edu}
\author[Daniel Ruberman]{Daniel Ruberman}
\address{Department of Mathematics, MS 050\newline\indent Brandeis
University \newline\indent Waltham, MA 02454}
\email{ruberman@brandeis.edu}
\author[Sa\v so Strle]{Sa\v so Strle}
\address{Faculty of Mathematics and Physics \newline\indent
University of Ljubljana \newline\indent Jadranska 21 \newline\indent
1000 Ljubljana, Slovenia }
\email{saso.strle@fmf.uni-lj.si}
\address{Department of Mathematics, MS 050\newline\indent Brandeis
University \newline\indent Waltham, MA 02454}
\email{gessel@brandeis.edu}
\thanks{The first author was supported by an NSF Postdoctoral Fellowship.  The second author was partially supported by NSF Grant 1105234.   Visits of the authors were supported by a Slovenian-U.S.A. Research Project BI-US/09-12-004, and by the Simons Center.\\
Math.~Subj.~Class.~2010: 57M25 (primary), 57Q60 (secondary).}
\begin{document}
\begin{abstract} We investigate constraints on embeddings of a non-orientable surface in a $4$-manifold with the homology of $M \times I$, where $M$ is a rational homology $3$-sphere.  The constraints take the form of inequalities involving the genus and normal Euler class of the surface, and either the Ozsv\'ath--Sazb\'o $d$-invariants~\cite{oz:boundary} or Atiyah--Singer $\rho$-invariants~\cite{aps:II} of $M$.  One consequence is that the minimal genus of a smoothly embedded surface in $L(2p,q) \times I$ is the same as the minimal genus of a surface in $L(2p,q)$.  We also consider embeddings of non-orientable surfaces in closed $4$-manifolds.
\end{abstract}
\maketitle

\section{Introduction}
Although a non-orientable surface cannot embed in the $3$-sphere, an orientable $3$-manifold $M$ can contain non-orientable surfaces, as long as $H_1(M;\Z_2)$ is non-zero.  A classic paper of Bredon and Wood~\cite{bredon-wood:surfaces} determines the minimal genus of such a surface in a lens space $L(2p,q)$, where the genus $h$ of a connected, non-orientable surface $F$ is defined as $h = b_1(F) = 2-\chi(F)$.   We extend this investigation, using both classical techniques and Heegaard Floer gauge theory, to the setting where the surface is embedded in the interior of a homology cobordism $W$ between rational homology $3$-spheres (\qhs) $M_0$ and $M_1$.  This of course includes the special case of a product $W=M^3 \times I$ for $M$ a \qhs, and the special case when $M$ is a lens space is of particular interest in light of the Bredon--Wood results.  The technique we employ to bound the genus via the G-signature theorem stems from Massey's paper~\cite{massey:whitney}; our use of gauge theory is related to the paper of T.~Lawson~\cite{lawson:rp2} and can be used to reproduce and extend the results of that paper.   A similar combination of techniques appears in the recent preprint of Batson~\cite{batson:non-orientable}, which investigates the non-orientable $4$-ball genus of a knot in the $3$--sphere.

Two differences between the $4$-dimensional setting and the $3$-dimensional one are worth noting.  The first is that since $\RP^2$ embeds in $4$-space, there is no lower bound for the genus of an embedding in an arbitrary $4$--manifold without some additional hypothesis.   Unless explicitly stated to the contrary, we henceforth assume that our embeddings are \emph{essential} in the sense that
\begin{equation}\label{nontriv}\tag{*}
\textrm{The inclusion map}\ j_*\co H_2(F;\Z_2) \to H_2(W;\Z_2)\ \textrm{is non-trivial.}
\end{equation}
The second is that the normal bundle of an embedding in a $4$-manifold is not determined by the homology class that it carries. The normal bundle is determined by the normal Euler class, which is an integer that we denote by $e$; see section~\ref{S:prelim} below for details.  For instance, there are two standard embeddings of $\RP^2$ in $S^4$, with Euler classes $\pm 2$.  Except in sections~\ref{S:prelim}, \ref{S:sign}, and \ref{S:top}, which apply equally to locally flat embeddings in topological manifolds, all manifolds and embeddings will be assumed to be smooth.

Our main result gives a bound for the genus of an essential surface in a homology cobordism $W$ between rational homology spheres $M_0$ and $M_1$. It is stated in terms of the $d$-invariants defined by Ozsv{\'a}th and Szab{\'o}~\cite{oz:boundary}; these are a collection of rational numbers associated to spin$^c$ structures on a rational homology sphere.  Equation~\eqref{nontriv} implies that the homology class $j_*([c]) \in H_1(W) = H_1(M_0)$ is non-trivial, where $[c]$ is the unique torsion class in $H_1(F)$. The Poincar\'e dual of this class is a $2$-torsion class $\phi \in H^2(M_0)$.  Recalling that the $2$-dimensional cohomology acts on the set of spin$^c$ structures on $M_0$, we define
\[
\Delta=\Delta(M_0,\phi)=\max\{d(M_0,\spincs+\phi)-d(M_0,\spincs)\mid \spincs\in \spinc (M_0)\}
\]
which is an element of $\frac12\Z$.

We prove the following, our main result, in Section~\ref{S:Hcob}.
\begin{thm} \label{T:Mbound}
Suppose that $W$ is a homology cobordism between rational homology spheres $M_0$ and $M_1$, and that $F_h\subset W$ is essential and has normal Euler number $e$. Let $\Delta=\Delta(M_0,\phi)$. Then
\[
h \ge 2\Delta, \quad |e| \le 2h - 4\Delta, \quad \text{and} \quad e \equiv 2h - 4\Delta \pmod4.
\]
\end{thm}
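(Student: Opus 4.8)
The plan is to pass to the double branched cover or, more directly, to a canonical closed-up $4$-manifold obtained from $W$ and $F_h$, and then apply the Ozsv\'ath--Szab\'o $d$-invariant inequality for negative-definite (or more generally definite) fillings. Concretely, I would first take a tubular neighborhood $\nu(F_h)$ of the embedded non-orientable surface; since $F_h$ has genus $h$ and normal Euler number $e$, its boundary $\p\nu(F_h)$ is a circle bundle over $F_h$, and the disk bundle $\nu(F_h)$ has a well-understood intersection form. Removing $\nu(F_h)$ from $W$ and capping off appropriately (gluing in the disk bundle over $F_h$ on the other side, or passing to the orientation double cover of the surface to trade non-orientability for an honest orientable surface carrying a class of square $\pm 2e$) produces a cobordism whose relevant piece is negative- or positive-definite. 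The essentiality hypothesis~\eqref{nontriv} guarantees that the class $[c]\in H_1(F)$ and hence $\phi\in H^2(M_0)$ is what controls the change of \spinc\ structure across this cobordism, which is exactly the quantity appearing in $\Delta(M_0,\phi)$.

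**Next I would** set up the $d$-invariant bookkeeping. The key input is the standard estimate: if $X$ is a negative-definite $4$-manifold with $\p X = -M_0 \sqcup M_1$ (a ``definite cobordism''), then for every \spinc\ structure $\spincs$ on $X$ restricting to $\spincs_i$ on $M_i$, one has an inequality of the form
\[
c_1(\spincs)^2 + \rank(H_2) \le 4\bigl(d(M_1,\spincs_1) - d(M_0,\spincs_0)\bigr),
\]
and the intersection form contribution $c_1(\spincs)^2$ can be made as negative as $-e$ up to sign by choosing $\spincs$ to evaluate correctly on the core of the disk bundle. Running this over the two \spinc\ structures $\spincs$ and $\spincs+\PD[c]$ on $M_0$ that differ by $\phi$, and optimizing over the choices compatible with the normal Euler number $e$, the term $d(M_0,\spincs+\phi)-d(M_0,\spincs)$ appears; taking the maximum gives $\Delta$. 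Matching the congruence constraints on $c_1(\spincs)^2$ (which is congruent to $\operatorname{sign}$ of the form mod $8$, here mod $4$ after the relevant reductions) against $2h$ produces the mod-$4$ statement $e\equiv 2h-4\Delta$, and comparing magnitudes yields both $h\ge 2\Delta$ and $|e|\le 2h-4\Delta$ simultaneously (the first being the $e=0$, or rather the weakest, specialization of the second once one checks $2h-4\Delta\ge 0$).

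**The main obstacle** I anticipate is the careful homological analysis of the capped-off manifold: one must verify that after surgering out $\nu(F_h)$ and gluing in the appropriate definite piece, the result is genuinely a \emph{homology} cobordism modification whose $H_2$ is generated by a class of the expected self-intersection, that it carries \spinc\ structures with the claimed $c_1$ values, and that the restriction to $M_0$ realizes precisely the shift by $\phi$ and no spurious extra shift. Handling the non-orientability is the delicate point: the mod-$2$ Euler class and the twisted coefficients mean the ``disk bundle'' is not simply $\C P$-like, so I would likely work with the orientation double cover $\tilde F \to F$, where $\tilde F$ is an orientable surface of genus $h-1$ carrying the pullback class, reducing to the orientable case at the cost of passing to a double cover of $W$; then I must track how $d$-invariants and \spinc\ structures behave under this covering, which is where the factor of $2$ in $2\Delta$ and $4\Delta$ ultimately originates. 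Once this topological setup is pinned down, the $d$-invariant inequality and the index-theoretic congruences are essentially formal.
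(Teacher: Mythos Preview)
Your starting move---remove the tubular neighborhood $\nu(F_h)$ and use an Ozsv\'ath--Szab\'o inequality---matches the paper, but from that point on you diverge in a way that leaves a real gap.  The paper does \emph{not} cap off $\partial\nu(F_h)=Q_{h,e}$ with anything, nor does it pass to any branched or orientation cover.  Instead it keeps $Q_{h,e}$ as a boundary component: the complement $V=W\smallsetminus\nu(F_h)$ has $\partial V=-M_0\sqcup -Q_{h,e}\sqcup M_1$, zero intersection form (so $c_1(\tilde\spincs)^2=0$, contrary to your expectation that $e$ enters through $c_1^2$), and $b_2(V)=h-1$.  Applying the semidefinite inequality to $\pm V$ (after connecting the boundary components) yields
\[
\dtop(Q_{h,e},\spinct_\spincs)-\tfrac{h-1}{2}\ \le\ d(M_1,\spincs+\phi)-d(M_0,\spincs)\ \le\ \dbot(Q_{h,e},\spinct_\spincs)+\tfrac{h-1}{2},
\]
where $\spinct_\spincs$ is a torsion spin$^c$ structure on $Q_{h,e}$ that does \emph{not} extend over the disk bundle.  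The twisting class $\gamma$ of Proposition~\ref{twist} is what forces the shift by $\phi$ on $M_1$ and the non-extendibility on $Q_{h,e}$ simultaneously; you correctly identified that essentiality controls the $\phi$-shift, but not that the same mechanism singles out these particular spin$^c$ structures on $Q_{h,e}$.

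The Euler number $e$ then enters entirely through the values $\dbot(Q_{h,e},\spinct_a)=\tfrac{e-2}{4}+a$ (and the analogous $\dtop$ formula), which is the content of Theorem~\ref{T:d}.  That computation is the technical heart of the argument and occupies all of Section~\ref{S:floercalc}: it proceeds by presenting $Q_{h,e}$ as surgery on a knot in $\#^h S^1\times S^2$, comparing via explicit $2$-handle cobordisms to the orientable circle bundles $M_{g,e}$ whose $d$-invariants are known, establishing linearity in $e$, and pinning down the constant using a known embedding in a lens space.  Your proposal offers no substitute for this step.  The ``cap off with a definite piece'' idea would require already knowing a definite filling of $Q_{h,e}$ with prescribed $c_1^2$, which is tantamount to knowing $\dbot(Q_{h,e})$; and the double-cover route would require controlling $d$-invariants under a $\Z_2$-cover of $W$ (which need not exist globally) and relating them back to $\Delta(M_0,\phi)$---neither of which you indicate how to do.  Once $\dbot$ and $\dtop$ of $Q_{h,e}$ are in hand, the deduction of $h\ge 2\Delta$, $|e|\le 2h-4\Delta$, and the mod-$4$ congruence is short and follows the pattern you sketched in your second paragraph.
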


For a lens space $L(2k,q)$, there is only one choice for the class $\phi$, and the invariant $\Delta$ turns out to be equal to one-half of the minimal genus function $N(2k,q)$ determined by Bredon and Wood. (This follows from the work of Ni and Wu on rational genus \cite{ni-wu:rational-genus}, as described in section~\ref{S:rational}; a purely number-theoretic proof appears in the appendix to this paper, written by Ira Gessel.) Thus Theorem~\ref{T:Mbound} implies that the minimum genus problem in $L(2k,q) \times I$ is the same as that in $L(2k,q)$. More precisely, we have:
\begin{cor} \label{C:lens}
Let $W$ be any homology cobordism from $L(2k,q)$ to itself (e.g. $W = L(2k,q) \times I$). Let $N = N(2k,q)$. There is an essential embedding of $F_h$ in $W$ with normal Euler number $e$ if and only if
\[
h \ge N, \quad \abs{e} \le 2(h - N) \quad \text{and} \quad e \equiv 2(h - N) \pmod4.
\]
In other words, $F_h$ has the same genus and normal Euler number as the connected sum of an embedded, non-orientable surface in $L(2k,q) \times \{\frac12\}$ with zero or more copies of $\RP^2 \subset S^4$.
\end{cor}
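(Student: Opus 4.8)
The plan is to establish the two implications separately: the three conditions are exactly what Theorem~\ref{T:Mbound} gives once one knows $\Delta$ for a lens space, while the converse is an explicit construction. For necessity, observe first that $H^2(L(2k,q)) \cong \Z/2k\Z$ has a unique element of order two. Since an essential embedding forces $j_*([c]) \neq 0$, its Poincar\'e dual $\phi$ is a nonzero element of order two in $H^2(L(2k,q))$, hence the unique such element, independent of the surface; thus $\Delta = \Delta(L(2k,q),\phi)$ depends only on $L(2k,q)$. Using the identification $\Delta = \frac12 N(2k,q)$ recorded above (from the rational-genus computation of Ni--Wu~\cite{ni-wu:rational-genus}, or from the appendix), Theorem~\ref{T:Mbound} applied to $W$ with $M_0 = L(2k,q)$ reads exactly $h \ge N$, $\abs{e} \le 2(h-N)$, and $e \equiv 2(h-N) \pmod 4$.

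For the converse I would produce the connected sum named in the statement. By Bredon--Wood~\cite{bredon-wood:surfaces} there is an embedded non-orientable surface $F_N \subset L(2k,q)$ of genus exactly $N = N(2k,q)$; it represents the generator of $H_2(L(2k,q);\Z_2) \cong \Z_2$, so it is essential. Push $F_N$ into a collar $L(2k,q) \times [0,1)$ of the boundary component $M_0 = L(2k,q)$ of $W$, so that it lies in a slice $L(2k,q) \times \{t\}$: its normal bundle in $W$ is then the normal line bundle of $F_N$ in $L(2k,q)$ plus a trivial summand, and the trivial summand provides a nowhere-zero section, so the normal Euler number is $0$. Since $W$ is a homology cobordism, $H_2(M_0;\Z_2) \to H_2(W;\Z_2)$ is an isomorphism, so the pushed-in copy is still essential, realizing $(h,e) = (N,0)$. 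Next form ambient connected sums, in disjoint balls in the interior of $W$, with copies of the two standard embeddings of $\RP^2$ in $S^4$, which have normal Euler numbers $+2$ and $-2$; because $W \# S^4 \cong W$ the result is still a surface in $W$, and (by the conventions of Section~\ref{S:prelim}) both the genus and the normal Euler number are additive under connected sum. Now, given any $(h,e)$ satisfying the three conditions, set $a = \frac{2(h-N)+e}{4}$ and $b = \frac{2(h-N)-e}{4}$: the congruence $e \equiv 2(h-N) \pmod 4$ makes $a$ and $b$ integers and $\abs{e} \le 2(h-N)$ makes them nonnegative, so summing $F_N$ with $a$ copies of the $(+2)$-surface and $b$ copies of the $(-2)$-surface gives an essential $F_h$ with $h = N + a + b$ and $e = 2(a-b)$. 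This is precisely the surface described in the statement.

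The genuine content is Theorem~\ref{T:Mbound} together with the number-theoretic identity $\Delta = \frac12 N(2k,q)$; given these, the corollary presents no real obstacle. The one point that requires a little care is the behavior of the normal Euler number of a non-orientable surface --- that it vanishes for a surface lying in a $3$-dimensional slice and that it is additive under connected sum --- which is handled by the setup of Section~\ref{S:prelim}.
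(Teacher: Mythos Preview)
Your argument is correct and is essentially the paper's own proof: the paper defers Corollary~\ref{C:lens} to the more general Corollary~\ref{C:simple}, whose proof applies Theorem~\ref{T:Mbound} for necessity and, for sufficiency, starts from a minimal-genus surface in $M$ (here the Bredon--Wood $F_N \subset L(2k,q)$) pushed into a slice with Euler number $0$, then takes connected sums with the $\pm 2$ copies of $\RP^2 \subset S^4$, exactly as you do. The only cosmetic difference is that the paper phrases the existence of the base surface via Theorem~\ref{T:ni-wu} (Ni--Wu) rather than citing Bredon--Wood directly, but for lens spaces these coincide.
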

See Corollary \ref{C:simple} below for a more general statement.

The idea of twisting a spin$^c$ structure also works in a closed definite $4$-manifold, and we obtain bounds for the genus of a smoothly embedded surface $F$ in terms of the Euler class and mod $2$ homology class carried by $F$.  In the special case that the surface is Poincar\'e dual to $w_2$, we get such bounds without the assumption that the manifold be definite, using Furuta's 10/8 theorem~\cite{furuta:118}.

The paper is organized as follows.  In Section~\ref{S:prelim}, we explain the basic topological mechanism (`twisting') behind our obstructions and establish a useful congruence for the genus.  We give a brief exposition of the Heegaard Floer correction terms associated to torsion spin$^c$ structures on a $3$-manifold with $b_1 >0$ in Section~\ref{S:correction}. In Section~\ref{S:bound}, we obtain obstructions to embeddings involving the values of these correction terms for circle bundles with orientable total space over non-orientable surfaces, which are then computed in Section~\ref{S:floercalc}. We assemble these ingredients in Section \ref{S:Hcob} to prove Theorem \ref{T:Mbound}.
In Section~\ref{S:rational}, we explain the connection of our work with that of Ni and Wu~\cite{ni-wu:rational-genus} on rational genus, leading to the proof of Corollary \ref{C:lens}. In Section~\ref{S:sign} we show how twisting combines with classical topological techniques stemming from the G-signature theorem to give further embedding obstructions in homology cobordisms, and then in Section~\ref{S:top} construct some locally flat embeddings that cannot be realized smoothly. We extend our results to the setting of closed $4$-manifolds in Section~\ref{S:definite}. The appendix by Ira Gessel provides a number-theoretic proof that the invariant $\Delta$ for lens spaces agrees with the Bredon--Wood minimal genus function.

\subsection*{Acknowledgements}
We would like to thank Josh Batson, Josh Greene, Jonathan Hanselman, Yi Ni, Peter Ozsv{\'a}th, and Nikolai Saveliev for helpful conversations and correspondence in the course of developing this work.

\section{Topological preliminaries}\label{S:prelim}
We will denote by $F=F_h$ the connected sum of $h$ copies of $\RP^2$; the integer $h$ is often referred to as the genus in this setting.
If $j\co F \to X$ is an embedding into an oriented manifold, then the normal bundle $\nu(F)$ satisfies the relation $\nu(F) \oplus TF \cong j^*TX$, and hence $w_1(\nu(F)) = w_1(F)$.  Applying the Whitney sum formula yields that $w_2(\nu) + w_1(F)^2 + w_2(F) = j^* w_2(X)$.  But it is well known that $w_1(F)^2 + w_2(F) = 0$ for any surface, and so $w_2(\nu)= j^* w_2(X)$.

Since the normal bundle is non-orientable, it has no Euler class in the usual sense.  On the other hand, a choice of orientation for a fiber of $\nu(F)$ at a point $x\in F$ determines an orientation for $T_x(F)$. This implies that one can define an integer-valued normal Euler number $e=e(F,X)$ of $F$ in $X$ by summing the local intersection numbers of $F$ with a nearby transverse copy of $F$.  The normal Euler class (and hence, number) may equally be defined as a twisted cohomology class, as detailed in~\cite[Appendix 1]{massey:whitney}.  The mod $2$ reduction of $e$ agrees with the second Stiefel-Whitney number of the normal bundle, because
$\langle w_2(\nu),[F]\rangle$ may be computed as a mod $2$ intersection number. Consequently, if $X$ is spin, as is the case for the homology cobordisms we consider, the Euler number of $\nu$ is even.

We adopt the following notational conventions.  $W$ will always denote an oriented homology cobordism with boundary components rational homology spheres, oriented so that $\partial W = M_1 \sqcup -M_0$.    Unless it is needed for clarity, we usually do not mention the inclusion $j$ and simply write $[F] \in H_2(W;\Z_2)$.
The homology and cohomology groups of $M_0$, $M_1$, and $W$ are isomorphic, and we generally use the same letter to indicate elements in these groups that correspond under the inclusion maps.  The same convention will apply to spin$^c$ structures and their associated $d$-invariants.
The notation $\PD$ will represent the Poincar\'e duality isomorphism, with a subscript indicating the manifold as necessary.

There is a unique $\R^2$ bundle over $F=F_h$ with $w_1 = w_1(F)$ and Euler number $e$; its total space is an oriented manifold.  The associated disk bundle will be denoted $P=P_{h,e}$, and its oriented boundary will be denoted $Q=Q_{h,e}$.
The complement of the interior of the normal bundle of $F$ in $W$ will be denoted $V$; keeping track of orientations we have that $\partial V = M_1 - M_0 - Q$.
We will need some basic topological properties of $Q$.

\begin{lemma}
\label{L:Q}
Let $Q=Q_{h,e}$ be the circle bundle over $F_h$ with $w_1 = w_1(F)$ and Euler number $e$. Then there is a short exact sequence
$$0\to \Z_2 [f] \to H_1(Q;\Z) \to H_1(F;\Z) \to 0,$$
where $[f]$ represents the class of the fiber circle. It follows that
$$H_1(Q;\Z)\cong \Z_2[f]\oplus H_1(F;\Z)$$
for $e$ even, and
$$H_1(Q;\Z)\cong \Z_4\oplus  H_1(F;\Z)/\Tors$$
for $e$ odd, where in the latter case $[f]$ represents twice the generator of the torsion subgroup.
\end{lemma}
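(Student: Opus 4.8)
The plan is to use the Gysin sequence of the circle bundle $S^1 \hookrightarrow Q \xrightarrow{\pi} F$, being careful that the coefficient system is twisted whenever $w_1(F) \neq 0$. First I would set up the Gysin sequence in homology with $\Z$-coefficients. Because the bundle is orientable as the total space of an oriented $3$-manifold over a non-orientable base, the orientation line of the fiber is identified with $w_1(F)$, so the Euler class lives in $H^2(F;\Z_{w_1})$, the cohomology twisted by $w_1(F)$. I would recall that for $F=F_h$ the twisted cohomology $H^2(F;\Z_{w_1}) \cong \Z$ (the fundamental class in twisted coefficients), and that the Euler class is $e$ times a generator. The relevant piece of the Gysin sequence reads
\[
H_2(F;\Z_{w_1}) \xrightarrow{\cap e} H_0(F;\Z) \to H_1(Q;\Z) \to H_1(F;\Z) \xrightarrow{\cap e} 0,
\]
where the last map is cap product with the Euler class into $H_{-1}=0$, hence the surjection onto $H_1(F;\Z)$; the kernel of that surjection is the image of $H_0(F;\Z)=\Z$ generated by $[f]$, modulo the image of the first cap-product map. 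Since $H_2(F_h;\Z_{w_1}) \cong \Z$ and the map $\cap e$ to $H_0(F;\Z)\cong\Z$ is multiplication by $e$ up to sign, the cokernel is $\Z/e$ — but I must be careful: the correct statement, using that $2[f]=0$ already in $Q$ for topological reasons (the fiber bounds in the associated disk bundle after doubling, equivalently the mod-$2$ Euler class always vanishes appropriately), is that the image of $[f]$ in $H_1(Q;\Z)$ has order $2$ when $e$ is even and order $4$ when $e$ is odd. Concretely, I would argue that the normal $S^1$-bundle restricted over a small orientation-reversing loop in $F$ forces $2[f]$ to equal $e$ times (a lift of) that loop's square, which is why the order of $[f]$ is $\gcd$-type data: $2$ for $e$ even, $4$ for $e$ odd. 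This yields the short exact sequence
\[
0 \to \Z_2[f] \to H_1(Q;\Z) \to H_1(F;\Z) \to 0
\]
in all cases when we record only the $2$-torsion statement; the finer splitting follows by analyzing extensions.

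Next I would deduce the splitting. When $e$ is even, the sequence $0 \to \Z_2[f] \to H_1(Q;\Z) \to H_1(F;\Z) \to 0$ splits because one can construct an explicit section: a non-orientable surface $F_h$ deformation retracts onto a wedge of circles, over which the bundle can be trivialized on the $1$-skeleton in a way compatible with the retraction, giving a splitting map $H_1(F;\Z) \to H_1(Q;\Z)$ (alternatively, $H_1(F;\Z) \cong \Z^{h-1}\oplus\Z_2$ and an Ext computation shows the extension of $\Z_2$ by this group splits unless the $\Z_2$-summand of $H_1(F)$ interacts nontrivially with $[f]$, which the even-$e$ hypothesis rules out). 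Hence $H_1(Q;\Z)\cong\Z_2[f]\oplus H_1(F;\Z)$. When $e$ is odd, the torsion class $[c]\in H_1(F;\Z)$ of order $2$ does not lift to a torsion class of order $2$ in $H_1(Q;\Z)$; instead its lift has order $4$, and $2$ times that lift equals $[f]$. Thus the torsion subgroup of $H_1(Q;\Z)$ is $\Z_4$, generated by a lift of $[c]$, with $[f]$ being twice this generator, and the free part $\Z^{h-1}\cong H_1(F;\Z)/\Tors$ splits off (again via a section over the orientable part of the $1$-skeleton). This gives $H_1(Q;\Z)\cong\Z_4\oplus H_1(F;\Z)/\Tors$ as claimed.

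The main obstacle I anticipate is bookkeeping with the twisted coefficients: getting the Gysin sequence right when the base is non-orientable requires identifying which cohomology groups are twisted and computing the Euler class in $H^2(F_h;\Z_{w_1})\cong\Z$ correctly, and then tracking how the order of the fiber class $[f]$ depends on the parity of $e$. The slickest route to the parity dichotomy is probably to compute $H_1(Q;\Z)$ directly from a CW or Heegaard-type presentation of $Q_{h,e}$: present $F_h$ with one $0$-cell, $h$ one-cells $x_1,\dots,x_h$, and one $2$-cell attached along $x_1^2\cdots x_h^2$, then build $Q_{h,e}$ as an $S^1$-bundle and read off the presentation of $\pi_1(Q)$, hence of $H_1(Q;\Z)$, from which the stated structure — including that $[f]$ has order $2$ or $4$ and equals twice a generator in the odd case — drops out by elementary abelianization. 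I would use whichever of these two approaches (Gysin versus explicit presentation) makes the parity statement most transparent, and cross-check the rank: $H_1(Q;\Z)$ must have free rank $h-1$ in all cases, matching $b_1(F_h)=h-1$ over $\Z$.
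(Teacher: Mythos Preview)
Your Gysin approach has the twisting placed on the wrong groups. In the homology Gysin sequence for a circle bundle with fiber-orientation sheaf $\mathcal{L}=\Z_{w_1}$, the cap-with-Euler-class map goes $H_n(F;\Z)\to H_{n-2}(F;\Z_{w_1})$, not $H_n(F;\Z_{w_1})\to H_{n-2}(F;\Z)$. So at $n=2$ the relevant map is $H_2(F_h;\Z)\to H_0(F_h;\Z_{w_1})$, and since $F_h$ is non-orientable $H_2(F_h;\Z)=0$ while $H_0(F_h;\Z_{w_1})\cong\Z_2$. This immediately gives the short exact sequence $0\to\Z_2\to H_1(Q)\to H_1(F)\to 0$ with no dependence on $e$ and no $\Z/e$ in sight; your detour through ``cokernel $\Z/e$'' and the subsequent hand-waving about $2[f]$ and $\gcd$-type data are symptoms of this reversed twisting. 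Your fallback presentation argument would indeed work, but as written the primary line of argument does not establish the SES correctly, and your splitting analysis (``construct an explicit section'', ``the lift has order $4$'') is asserted rather than proved.

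The paper takes a different and cleaner route. For the short exact sequence it avoids twisted Gysin entirely: since $P$ is an orientable $4$-manifold with $P\simeq F$, Lefschetz duality gives $H_2(P,Q)\cong H^2(F;\Z)\cong\Z_2$ and $H_1(P,Q)\cong H^3(F;\Z)=0$, so the long exact sequence of $(P,Q)$ yields $0\to\Z_2[f]\to H_1(Q)\to H_1(F)\to 0$ directly. To resolve the extension the paper computes $H^1(Q;\Z_2)$ via the (untwisted) $\Z_2$ Gysin sequence, where the connecting map is cup product with $w_2(\nu)$: this is zero iff $e$ is even, giving $\dim_{\Z_2}H^1(Q;\Z_2)=h+1$ for $e$ even and $h$ for $e$ odd. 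A universal-coefficients comparison then forces the extension to split when $e$ is even and to be the nontrivial $\Z_4$ extension when $e$ is odd. This two-step argument buys you the parity dichotomy from a single $\Z_2$ rank count, rather than from ad hoc lifts or a full $\pi_1$ presentation.
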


\proof
We first compute the cohomology of $Q$ over $\Z_2$.  The Gysin sequence
for the projection $\pi\co Q \to F$ reads
\begin{equation}\label{gysin}
\xymatrix{
0 \ar[r] & H^1(F;\Z_2) \ar[r]^{\pi^*} & H^1(Q;\Z_2) \ar[r]^{S} & H^0(F;\Z_2) \ar[r]^{\kern8mm \smile w_2} & \\
& H^2(F;\Z_2) \ar[r]^{\pi^*} & H^2(Q;\Z_2) \ar[r]^{S} & H^1(F;\Z_2) \ar[r] & 0
}
\end{equation}
The map $S\co H^1(Q;\Z_2) \to H^0(F;\Z_2) \cong \Z_2$ is given by evaluation of a cohomology class on the circle fiber. If $e$ is even, then $w_2=0$ and the sequence splits into two short exact sequences showing that $H^1(Q;\Z_2)\cong \Z_2^{h+1}$. If $e$ is odd, then multiplication by $w_2$ is an isomorphism and $H^1(Q;\Z_2)\cong \Z_2^{h}$.

The long exact sequence (now with integer coefficients) of the pair $(P, Q)$ reduces to
$$0\to H_2(P,Q) \to H_1(Q) \to H_1(F) \to 0$$
where the image of $H_2(P,Q) \cong \Z_2$ is the $\Z_2$ class $f$ carried by the fiber.  Comparing, via the universal coefficient theorem, this result and the calculation of the $\Z_2$ cohomology implies that for $e$ even this sequence splits.  Similarly, for $e$ odd we get a $\Z_4$ extension of the two torsion groups with $\Z_2[f]$ representing a subgroup.
\endproof

An important part of the information about the embedding of $F$ in $W$ we use to obtain constraints on such embeddings is the existence of a `twisting' cohomology class on the complement of the surface, established in Proposition~\ref{twist} below.  To state this, recall that the torsion subgroup of $H_1(F)$ contains one non-trivial element (of order $2$) that we will denote by $c$; the image of $c$ in $H_1(W)$ (and the corresponding elements of $H_1(M_i)$) will be denoted by $[c]$.
The coefficient exact sequence $0 \to \Z \to \Z \to \Z_2 \to 0$ determines a Bockstein homomorphism $\beta \co H_2(\;\cdot\;;\Z_2) \to H_1(\;\cdot\;;\Z)$ and a corresponding Bockstein in cohomology, also denoted by $\beta$.
\begin{lemma}\label{L:restrict}
Let $F \subset W$ be essentially embedded in the homology cobordism $W$. Then
$\beta([F])=[c]$ in $H_1(W;\Z)$ and the restriction homomorphism
$$
H^2(W;\Z) \to H^2(P;\Z) \cong H^2(F;\Z) \cong \Z_2
$$
is given by reduction modulo $2$ and evaluation on $[F]\in H_2(W;\Z_2)$.
\end{lemma}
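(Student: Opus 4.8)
The plan is to extract both statements from the long exact sequence of the pair $(W,V)$, using excision and Poincaré–Lefschetz duality to identify the relative groups with groups of the tubular neighborhood $P$ of $F$. First I would observe that by excision, $H_*(W,V;\Z_2)\cong H_*(P,Q;\Z_2)$, and by Thom isomorphism (twisted, since $\nu(F)$ is non-orientable, but the coefficients are $\Z_2$ so this causes no trouble) $H_*(P,Q;\Z_2)\cong H_{*-2}(F;\Z_2)$. In particular the fundamental class $[F]\in H_2(W;\Z_2)$ is the image of the Thom class under the composite $H_2(P,Q;\Z_2)\to H_2(W,V;\Z_2)\to H_2(W;\Z_2)$, and by essentiality~\eqref{nontriv} this last map is injective on the relevant summand. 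Dualizing, the restriction map $H^2(W;\Z_2)\to H^2(P;\Z_2)\cong H^2(F;\Z_2)\cong\Z_2$ is evaluation on $[F]$; that is the $\Z_2$-version of the second statement, and the integral statement follows by chasing the Bockstein square, since $H^2(F;\Z)\cong\Z_2$ is all $2$-torsion and the natural map $H^2(W;\Z)\to H^2(P;\Z)$ factors through $H^2(W;\Z_2)$ after reduction.

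For the claim $\beta([F])=[c]$, the idea is that the Bockstein $\beta\co H_2(\,\cdot\,;\Z_2)\to H_1(\,\cdot\,;\Z)$ is natural, so it suffices to check this on $F$ itself: $\beta([F])$ should be computed inside $H_1(F;\Z)$ and then pushed into $H_1(W;\Z)$. On the closed non-orientable surface $F_h$, the mod-$2$ fundamental class $[F]$ is not the reduction of an integral class precisely because $F$ is non-orientable, and its integral Bockstein is the unique $2$-torsion element $c\in H_1(F;\Z)$ — this is the standard fact that $\beta$ detects non-orientability, equivalently that $\beta([F])$ is Poincaré dual to $w_1(F)$, which generates the torsion. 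Pushing forward by $j_*$ gives $\beta([F])=j_*(c)=[c]$ in $H_1(W;\Z)$, using that $\beta$ commutes with $j_*$.

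The main obstacle is bookkeeping with the twisted coefficients in the normal bundle: the Thom isomorphism for $P$ over $\Z$ involves the orientation local system of $\nu(F)$, and one must be careful that the identifications $H^2(P;\Z)\cong H^2(F;\Z)\cong\Z_2$ and the reduction maps are compatible with the ones appearing in Lemma~\ref{L:Q} and its proof. I would handle this by working with $\Z_2$ coefficients throughout for the restriction statement (where the local system is trivial) and only invoking integral coefficients for the Bockstein computation, where the relevant group $H_1(F;\Z)$ has a well-understood torsion subgroup. A secondary point to verify is that essentiality~\eqref{nontriv} is exactly what is needed to guarantee that the composite $H_2(P,Q;\Z_2)\to H_2(W;\Z_2)$ does not kill the Thom class, so that ``evaluation on $[F]$'' is a nonzero functional and the restriction map is genuinely surjective onto $\Z_2$ rather than zero.
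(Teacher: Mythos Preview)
Your proposal is correct and follows essentially the same approach as the paper's proof: both parts rest on naturality of the Bockstein together with the commutative square relating integral and mod-$2$ restriction to $F$, using that $H^2(F;\Z)\to H^2(F;\Z_2)$ is an isomorphism. The paper's argument is more streamlined in that it simply writes down the two Bockstein squares (in homology for $\beta([F])=[c]$, in cohomology for the restriction statement) and reads off the conclusions, whereas you add an excision/Thom-isomorphism justification for why mod-$2$ restriction is evaluation on $[F]$; this extra machinery is correct but not needed, since that identification is just the Kronecker pairing.
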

\begin{proof}
Note that $\beta \co H_2(F;\Z_2) \to H_1(F;\Z)$ sends $[F]$ to $c$.  Comparing the Bockstein sequences in homology for $F$ and $W$
$$
\xymatrix @C=0.8in @R=0.5in{
 0 \ar[d] \ar[r]&H_2(F;\Z_2) \ar[d] \ar[r]^{\beta}_{} & H_1(F;\Z) \ar[d]\\
 0  \ar[r] &H_2(W;\Z_2) \ar[r]^{\beta}_{} & H_1(W;\Z)
}
$$
(where we have used that $H_2(W;\Z) = H_2(M_0;\Z) = 0$) gives that $\beta([F])=[c]$ and that the torsion subgroup of $H_1(F;\Z)$ injects into $H_1(W;\Z)$.

Comparing the Bockstein sequences in cohomology gives
$$
\xymatrix @C=0.8in @R=0.5in{
H^2(W;\Z) \ar[d] \ar[r]^{\mod 2}_{} & H^2(W;\Z_2) \ar[d]\\
H^2(F;\Z) \ar[r]^{\mod 2}_{\cong} & H^2(F;\Z_2)
}
$$
which proves the statement about the restriction $H^2(W;\Z) \to H^2(P;\Z)$.
\end{proof}

\begin{proposition}\label{twist}
There is an element $\gamma \in H^2(V, M_0; \Z)$ of order two that restricts to
$\PD_{M_1}([c])$ in $H^2(M_1;\Z)$
and to $\PD(\tilde c)$ in $H^2(Q;\Z)$, where $\tilde c \in H_1(Q;\Z)$ maps to $c\in H_1(P;\Z)$.
\end{proposition}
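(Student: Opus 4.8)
The plan is to build $\gamma$ as the (unique) lift of the prescribed boundary data through the long exact cohomology sequence of the triad $(V,\partial V,M_0)$, and to deduce the order-two assertion from the uniqueness of that lift. Since $F$ lies in the interior of $W$, the boundary $\partial V=M_1\sqcup(-M_0)\sqcup(-Q)$ is a disjoint union of closed oriented $3$--manifolds; writing $\partial_1V=M_1\sqcup Q$, excision gives $H^*(\partial V,M_0;\Z)\cong H^*(M_1;\Z)\oplus H^*(Q;\Z)$, and the relevant segment of the triad sequence is
\[
H^2(V,\partial V;\Z)\xrightarrow{\ p\ }H^2(V,M_0;\Z)\xrightarrow{\ i^*\ }H^2(M_1;\Z)\oplus H^2(Q;\Z)\xrightarrow{\ \delta\ }H^3(V,\partial V;\Z),
\]
where $i^*$ is (restriction to $M_1$, restriction to $Q$). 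Lefschetz duality identifies $H^2(V,\partial V;\Z)\cong H_2(V;\Z)$ and $H^3(V,\partial V;\Z)\cong H_1(V;\Z)$, and—up to signs fixed by the orientations in $\partial V$—the connecting map $\delta$ becomes, after Poincar\'e duality on $M_1$ and $Q$, the map $(a,b)\mapsto(\iota_{M_1})_*a-(\iota_Q)_*b$ from $H_1(M_1;\Z)\oplus H_1(Q;\Z)$ to $H_1(V;\Z)$. Thus the proposition reduces to two claims: (i) $i^*$ is injective; and (ii) $\xi:=(\PD_{M_1}([c]),\PD_Q(\tilde c))$ lies in $\ker\delta$ for a suitable lift $\tilde c$ of $c$.

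For (i), I would show the neighboring map $p$ is zero. Its kernel is the image of the degree-$1$ connecting map $H^1(\partial_1V;\Z)\to H^2(V,\partial V;\Z)$; since $M_1$ is a rational homology sphere this image comes entirely from $H^1(Q;\Z)$, and under the duality isomorphisms above it equals the image of $(\iota_Q)_*\co H_2(Q;\Z)\to H_2(V;\Z)$. But in the decomposition $W=V\cup_Q P$ we have $H_2(W;\Z)\cong H_2(M_0;\Z)=0$ and $H_2(P;\Z)\cong H_2(F_h;\Z)=0$, so the Mayer--Vietoris sequence forces $(\iota_Q)_*\co H_2(Q;\Z)\to H_2(V;\Z)$ to be surjective. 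Hence $\ker p=H^2(V,\partial V;\Z)$, i.e.\ $p=0$ and $i^*$ is injective.

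For (ii), observe that $\delta(\xi)\in H_1(V;\Z)$ dies in $H_1(W;\Z)$: the class $(\iota_{M_1})_*[c]$ becomes $[c]\in H_1(W;\Z)$, while $(\iota_Q)_*\tilde c$ factors through $H_1(P;\Z)=H_1(F_h;\Z)$, in which $\tilde c$ maps to $c$ and hence to $[c]\in H_1(W;\Z)$, so the two terms cancel. Running Mayer--Vietoris for $W=V\cup_Q P$ once more, the kernel of $H_1(V;\Z)\to H_1(W;\Z)$ is the image under $(\iota_Q)_*$ of $\ker\big(H_1(Q;\Z)\to H_1(P;\Z)\big)=\Z_2[f]$, the subgroup identified in Lemma~\ref{L:Q}. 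Therefore $\delta(\xi)\in\{0,(\iota_Q)_*[f]\}$; if $\delta(\xi)\ne0$ we replace $\tilde c$ by $\tilde c+[f]$, which still maps to $c$ in $H_1(P;\Z)$ and changes $\delta(\xi)$ by $\pm(\iota_Q)_*[f]$, a class of order dividing $2$ since $e$ is even, so $\delta(\xi)$ becomes $0$. Granting (i) and (ii), $\xi$ has a unique preimage $\gamma\in H^2(V,M_0;\Z)$, which by construction restricts to $\PD_{M_1}([c])$ on $M_1$ and to $\PD_Q(\tilde c)$ on $Q$. Finally, $W$ being spin forces $e$ even, so $\tilde c$ (and $\tilde c+[f]$) has order $2$ by Lemma~\ref{L:Q}, and $[c]$ has order $2$ because by Lemma~\ref{L:restrict} the torsion of $H_1(F_h;\Z)$ injects into $H_1(W;\Z)$; hence $2\xi=0$, and by injectivity of $i^*$ also $2\gamma=0$, while $\gamma\ne0$ since $\xi\ne0$. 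Thus $\gamma$ has order exactly $2$.

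The step requiring the most care is the homological bookkeeping behind the duality identifications: checking that, with the convention $\partial V=M_1\sqcup(-M_0)\sqcup(-Q)$, the connecting homomorphisms of the triad sequence correspond under Lefschetz and Poincar\'e duality to the (signed) inclusion-induced maps on $H_1$ and $H_2$ as claimed, and that the Mayer--Vietoris chase in (ii) genuinely confines $\delta(\xi)$ to the two-element set $\{0,(\iota_Q)_*[f]\}$ rather than something larger. Everything else is a diagram chase using the homology-cobordism hypothesis together with Lemmas~\ref{L:Q} and~\ref{L:restrict}.
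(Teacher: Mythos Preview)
Your proof is correct and follows essentially the same route as the paper's: the long exact sequence of the triple $(V,\partial V,M_0)$ combined with Poincar\'e--Lefschetz duality and the Mayer--Vietoris sequence for $W=V\cup_Q P$. You are in fact more careful than the paper on two points it leaves implicit: the injectivity of $i^*$ (which you extract from surjectivity of $H_2(Q)\to H_2(V)$, and which is what makes the order-two conclusion go through), and the possible need to replace $\tilde c$ by $\tilde c+[f]$ to force $\delta(\xi)=0$.
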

\noindent
We will refer to $\gamma$ as the \emph{twisting class} of the embedding.
\begin{proof}
Consider the inclusion homomorphism $H_1(Q;\Z) \to H_1(V;\Z)$. The Mayer-Vietoris sequence (with integer coefficients) for $W=V \cup_Q P$ gives a short exact sequence
$$ 0 \to H_1(Q) \to H_1(V) \oplus H_1(P) \to H_1(W) \to 0.$$
Using the above splitting of $H_1(Q)$ we conclude that the torsion generator $\tilde c$ maps nontrivially into $H_1(W)$ (through $P$) and hence also into $H_1(V)$. It follows that $H_1(V)$ is an extension of $H_1(M)$ by $\Z_2[f]$.

Now use the long exact sequence of the triple $(V,\partial V, M_0)$, taking into account that $H^*(\partial V, M_0) \cong H^*(Q \sqcup M_1)$.
\begin{equation}\label{les}
\xymatrix{
H^2(V,\partial V) \ar[r] & H^2(V,M_0) \ar[r] & H^2(Q\sqcup M_1) \ar[r]^{\delta} \ar[d] & H^3(V,\partial V) \ar[d] \\
& & H_1(Q \sqcup M_1) \ar[r] & H_1(V)
}
\end{equation}
The vertical maps are given by the Poincar\'e-Lefschetz duality.  By the above the classes $\tilde c \in H_1(Q)$ and $[c]\in H_1(M_1)$ map nontrivially to $H_1(V)$ which by exactness implies the existence of $\gamma$.
\end{proof}

\subsection{A congruence for the normal Euler number}
Whitney showed~\cite{whitney:manifolds-lectures} that the normal Euler number of an embedded $F_h \subset \R^4$ is constrained by the congruence $e \equiv 2h \pmod4$; a similar congruence was given in higher dimensions by
Mahowald~\cite{mahowald:normal,massey:pontrjagin}.  We will make use of a similar congruence in deriving an embedding obstruction that involves the twisting element $\gamma \in H^2(V, M_0; \Z)$ from Proposition~\ref{twist}.   Denote by $\lk_{M}$ the linking form on the torsion subgroup of $H_1(M) = H^2(M)$.  In terms of cohomology, the linking of elements $x,\, y\in H^2(M)$ is given by $\lk_{M}(x,y) = \langle x \smile z,[M]\rangle \in \Q/\Z$ where $\delta z = y$ and $\delta$ is the Bockstein coboundary associated to the exact sequence $0 \to \Z \to \Q \to \Q/\Z \to 0$.

For an essential embedding of $F$ in a rational homology cobordism $W$, the twisting class $\gamma$ is an element of order $2$, so we can replace $\delta$ by the Bockstein $\beta$, writing $\gamma = \beta\tau$. Then the self-linking
\begin{equation}\label{E:tau-cup}
 \lk_{M}(\gamma,\gamma) =  \langle \gamma \smile \tau ,[M]\rangle\in (\frac12 \Z)/\Z \subset \Q/\Z
\end{equation}
is of the form $k_{[c]}/2$ where $k_{[c]} = 2\lk_{M}(\gamma,\gamma)  = 2\lk_{M}([c],[c])$ is either $0$ or $1$.
The following lemma is standard, and is proved using the naturality of the Bockstein $\beta$ and cup product.
\begin{lemma}\label{L:degree}
Let $M$ be an orientable $3$-manifold,  $b \in H^1(M;\Z_2)$, and let $a\in H^1(\RP^3;\Z_2)$ be the generator.  Then there is a map $\psi:M \to \RP^3$ such that $\psi^*a = b$.  Moreover, the degree of $\psi$ is given, modulo $2$, by $\langle b\smile \beta b, [M]\rangle$, and any degree satisfying this congruence is realized by some map $\psi$.
\end{lemma}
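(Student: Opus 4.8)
The plan is to exploit that $\RP^\infty$ is an Eilenberg--MacLane space $K(\Z_2,1)$ with $\RP^3$ as its $3$-skeleton. First I would represent the class $b\in H^1(M;\Z_2)$ by a map $\bar\psi\co M\to\RP^\infty$ pulling back the generator of $H^1(\RP^\infty;\Z_2)$. Fixing a CW structure on $M$ (which we take to be closed, as it is in our applications) and the standard one on $\RP^\infty$, cellular approximation homotopes $\bar\psi$ into the $3$-skeleton, yielding $\psi\co M\to\RP^3$; since the generator $a\in H^1(\RP^3;\Z_2)$ is the restriction of the generator of $H^1(\RP^\infty;\Z_2)$, we get $\psi^*a=b$.

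For the degree formula I would first move the target expression into mod $2$ cohomology. Writing $\beta$ for the integral Bockstein and using the standard identities $\rho\circ\beta=Sq^1$ (with $\rho$ reduction mod $2$) and $Sq^1 b=b\smile b$ on a degree-one class, one gets $\langle b\smile\beta b,[M]\rangle=\langle b\smile Sq^1 b,[M]\rangle=\langle b^3,[M]\rangle$, everything evaluated on the mod $2$ reduction of the fundamental class. Now set $\alpha=a\in H^1(\RP^3;\Z_2)$; then $\alpha^3$ generates $H^3(\RP^3;\Z_2)\cong\Z_2$, and since $\RP^3$ is orientable, $H_3(\RP^3;\Z)\cong\Z$ and $\psi_*$ of the mod $2$ fundamental class of $M$ equals $\deg\psi$ times that of $\RP^3$. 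Hence, using $\psi^*\alpha=b$,
\[
\langle b^3,[M]\rangle=\langle\alpha^3,\psi_*[M]\rangle=\deg\psi\pmod2.
\]
This already shows that \emph{every} $\psi$ with $\psi^*a=b$ satisfies $\deg\psi\equiv\langle b\smile\beta b,[M]\rangle\pmod2$.

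To realize all admissible degrees, I would modify $\psi$ inside a small $3$-ball $B\subset M$ by an element of $\pi_3(\RP^3)$. Via the covering $S^3\to\RP^3$ one has $\pi_3(\RP^3)\cong\pi_3(S^3)\cong\Z$, and a generator has Hurewicz image in $H_3(\RP^3;\Z)\cong\Z$ equal to $\pm2$, since the covering map has degree $2$. Pinching off $B$ and wedging $\psi$ with such a map changes $\deg\psi$ by an arbitrary even integer; as the change is supported in $B$ it does not affect the induced map on $\pi_1$, hence leaves $\psi^*a$ unchanged. Running over all elements of $\pi_3(\RP^3)$ then realizes every integer congruent to $\langle b\smile\beta b,[M]\rangle$ modulo $2$.

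The ingredients here---cellular approximation, the Bockstein/$Sq^1$ relation, the cohomology ring of $\RP^3$, and the comultiplication on $M$---are all standard. The only genuine care is in the second step: one must consistently use the \emph{mod $2$} reductions of the integral fundamental class and of the integral degree in the evaluation pairing, and note at the outset that $\RP^3$ is orientable so that an integer-valued degree is defined at all.
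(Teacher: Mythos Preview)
Your proof is correct and follows precisely the approach the paper indicates: the paper does not spell out a proof but simply says the lemma ``is standard, and is proved using the naturality of the Bockstein $\beta$ and cup product,'' which is exactly what you do. Your additional use of the identity $\rho\circ\beta=Sq^1$ to rewrite $\langle b\smile\beta b,[M]\rangle$ as $\langle b^3,[M]\rangle$, and your explicit $\pi_3(\RP^3)$ argument for realizing all admissible degrees, fill in the details the paper omits.
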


\begin{proposition}\label{P:mod4}
Let $W$ be a homology cobordism from $M_0$ to $M_1$ where $M_i$ is a rational homology sphere, and let $F_h \subset W$ be an essential embedding with normal Euler number $e$.  Then
\begin{equation}\label{E:mod4}
e \equiv 2k_{[c]}+ 2h\pmod4.
\end{equation}
\end{proposition}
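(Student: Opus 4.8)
The plan is to relate the normal Euler number $e$ to a degree computation for a map into $\RP^\infty$ (or $\RP^N$ for $N$ large), using the twisting class $\gamma$ and its associated $\tau$ with $\gamma = \beta\tau$. The strategy mimics the classical Whitney congruence $e \equiv 2h \pmod 4$, but carried out relative to the complement $V$ rather than $\R^4$. First I would recall, via Lemma~\ref{L:Q} (the $e$-odd case), that $H_1(Q;\Z)$ contains a $\Z_4$ summand when $e$ is odd and a $\Z_2 \oplus (\text{stuff})$ when $e$ is even; in either case the mod~$2$ reduction $\bar\tau \in H^1(Q;\Z_2)$ of (the restriction of) $\tau$ evaluates on the fiber class $[f]$ in a way governed by the parity of $e$. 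Concretely, the Euler number $e$ controls the order of the fiber in $H_1(Q)$, and the self-linking $\lk_Q$ of the Poincar\'e dual of $\tilde c$ picks up a contribution of $e/2 \pmod 1$ (from the disk bundle $P$, whose intersection form on $H_2(P,Q)$ is multiplication by $e$) together with the contribution $\lk_{M_0}([c],[c]) = k_{[c]}/2$ coming from the ambient manifold. This is the arithmetic heart of the argument.

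Next I would set up the degree computation. By Lemma~\ref{L:degree} applied to a closed $3$-manifold built from $Q$ — or more precisely by applying the relative version of that lemma to the pair $(V, M_0)$ with the mod~$2$ class $\bar\tau$ — there is a map $\psi\co V \to \RP^3$ (rel boundary behavior dictated by $\gamma$) whose degree on the relevant fundamental class is congruent mod~$2$ to the evaluation of $\bar\tau \smile \beta\bar\tau$. I would instead find it cleanest to run this on the double, or on the closed manifold $\hat V$ obtained by filling, and extract the congruence from how the twisting class restricts to $Q$ versus $M_1$. The key input from Proposition~\ref{twist} is that $\gamma$ restricts to $\PD(\tilde c)$ on $Q$ and to $\PD_{M_1}([c])$ on $M_1$, so the two boundary contributions to $\langle \gamma \smile \tau, \cdot\rangle$ are exactly $\lk_Q(\PD\tilde c, \PD\tilde c)$ and $\lk_{M_1}([c],[c]) = k_{[c]}/2$.

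The mechanism producing the $2h$ term is the normal bundle itself: the disk bundle $P = P_{h,e}$ over $F_h$ has the property that the mod~$2$ self-intersection / Wu-type computation of $\langle w_2(\nu) , [F] \rangle$ together with the Euler number interact through the congruence $e \equiv 2h \pmod 4$ that already holds for the standard models (each $\RP^2 \subset S^4$ contributes $e = \pm 2$, $h = 1$). So after I express $e$ modulo $4$ as (twice the $P$-side linking contribution) $+$ (twice the $h$-dependent framing correction), and match this against the $M_0$-side via the fact that $\gamma$ has order two and $\lk_{M_0}(\gamma,\gamma) = \lk_{M_1}(\gamma,\gamma) = k_{[c]}/2$ (homology cobordism invariance of the linking form), the congruence $e \equiv 2k_{[c]} + 2h \pmod 4$ falls out.

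The main obstacle I anticipate is bookkeeping the $\Z/4$-level information correctly: the linking form only sees things in $\Q/\Z$, so recovering a genuine mod~$4$ statement for the integer $e$ requires carefully combining the mod~$2$ degree congruence from Lemma~\ref{L:degree} with the mod~$2$ reduction $e \bmod 2 = \langle w_2(\nu),[F]\rangle$, and checking that the cross term between the fiber class $[f]$ and the pulled-back class $\pi^*(\PD c)$ in $H_1(Q)$ contributes exactly the parity of $e$ and no more. I would isolate this in a lemma about $Q_{h,e}$ — essentially that $\lk_Q(\PD\tilde c, \PD\tilde c) \equiv \tfrac{e}{2} + \tfrac{h}{2} \pmod 1$ when suitably interpreted — and then the rest of the proposition is a short diagram chase using Proposition~\ref{twist} and the invariance of linking forms under homology cobordism.
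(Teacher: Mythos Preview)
Your approach differs substantially from the paper's, and as written it has a genuine gap at exactly the point you flag as the ``main obstacle.'' The paper does not attempt to bootstrap a mod~$4$ statement from the $\Q/\Z$-valued linking form. Instead it invokes Li's extension of the Whitney--Mahowald congruence,
\[
e \equiv \langle \pont(\PD([F])), [W,\partial W]\rangle + 2w_1(\nu(F))^2 \pmod 4,
\]
which is already a mod~$4$ identity because the Pontrjagin square $\pont\co H^2(\,\cdot\,;\Z_2)\to H^4(\,\cdot\,;\Z_4)$ is a genuine $\Z_4$-valued operation. The $2w_1(\nu)^2$ term gives $2h$ immediately. For the Pontrjagin-square term, the paper identifies $\pont$ on $(W,\partial W)$ with the Postnikov square $P$ on $H^1(M_0;\Z_2)$, computes $P(a)=2$ for the generator $a$ on $\RP^3$ by running Li's formula backwards on the known embedding $\RP^2\subset \RP^3\times I$, and then pulls this back along a map $\Psi\co W\to \RP^3\times I$ of degree $\equiv k_{[c]}\pmod 2$ (produced via Lemma~\ref{L:degree} and elementary obstruction theory). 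This yields $\pont(\PD[F])\equiv 2k_{[c]}\pmod 4$ directly.

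Your route via linking forms runs into two concrete problems. First, $V$ is not a homology cobordism (it has $b_2(V)=h-1$), so there is no simple ``invariance'' relating $\lk_Q(\tilde c,\tilde c)$ to $\lk_{M_0}([c],[c])$; you would need to track the contribution of $H_2(V)$, and you have not indicated how. Second, and more fundamentally, the self-linking of an order-$2$ class lies in $(\tfrac12\Z)/\Z$ and therefore carries only one bit of information; combining this with the parity $e\bmod 2$ still leaves you one bit short of a mod~$4$ congruence. The Pontrjagin square is exactly the secondary operation that refines $x\smile\beta x$ to a $\Z_4$ invariant, and the linking form alone does not determine it---so your proposed lemma about $\lk_Q$ cannot, by itself, produce the missing bit. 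If you want to salvage the linking-form strategy you would essentially have to rediscover the Pontrjagin square inside it, at which point you are back to the paper's argument.
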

\begin{proof}  We make use of an extension, due to B.-H. Li~\cite{li:whitney}, of the congruence of Whitney and Mahowald to the case of an embedding in an arbitrary oriented manifold.  In the case of an embedding $F_h \subset W$ of a surface in an orientable $4$-manifold $W$, it reads
\begin{equation}\label{E:li}
e \equiv \la\pont(\PD([F]),[W,\partial W]\ra + 2 w_1(\nu(F))^2  \pmod4.
\end{equation}
Here $\pont$ denotes the Pontrjagin square~\cite{pontrjagin:pi3}, a cohomology operation
$$
H^2(X,Y;\Z_2) \to H^4(X,Y;\Z_4)
$$
defined for a pair of spaces $(X,Y)$.  If $W$ is orientable, then $w_1(\nu(F)) = w_1(F)$, and there is the well-known relation $w_1(F)^2 + w_2(F) =0$.  Since $w_2(F)$ is the Euler characteristic mod $2$, and the Euler characteristic in turn is just $2-h$, we have that $2 w_1(\nu(F))^2  = 2h \pmod4$.
The other term takes a bit more work; we will compute it for $(W,\partial W)$ by comparison with the special case when $W = \RP^3 \times I$.

For any space $X$, the Pontrjagin square $\pont$ on $H^2(X\times I,X \times\{0,1\};\Z_2)$ is equivalent to another cohomology operation $P$, the Postnikov square~\cite{postnikov:square} defined on $H^1(X;\Z_2)$ via the following commutative diagram~\cite[Equation 5.5]{whitehead:obstructions}, where the vertical maps are isomorphisms coming from the long exact sequence of the triple $(X\times I,X \times\{0,1\},X\times\{0\})$:\footnote{We are grateful to Nikolai Saveliev for pointing out that this construction is given as an exercise in Postnikov's Russian edition~\cite{mosher-tangora:russian} of Mosher and Tangora's book on cohomology operations~\cite{mosher-tangora}.}
$$
\xymatrix{
H^2(X\times I,X \times\{0,1\};\Z_2) \ar[d]_{\cong} \ar[r]^{\pont} & H^4(X\times I,X \times\{0,1\};\Z_4)   \ar[d]_{\cong}\\
H^1(X;\Z_2) \ar[r]^{P} & H^3(X;\Z_4)
}
$$
It is possible, although tedious, to calculate the Postnikov square for $\RP^3$ directly in terms of a simplicial decomposition (the result is stated in~\cite{whitehead:obstructions} without proof), so we take an indirect but more efficient route. The embedding $\RP^2 \subset \RP^3 \times \frac12 \subset \RP^3 \times I$ has normal Euler number $0$, so Li's congruence together with the above relation between $\pont$ and $P$ implies that  $P(a) = 2\pmod4$, where $a\in H^1(\RP^3;\Z_2)$ is the generator.  Let $A \in H^2(\RP^3 \times I, \RP^3 \times \{0,1\});\Z_2)$ be the image of $a$ under the coboundary map of the long exact sequence of the pair $(\RP^3 \times I,  \RP^3 \times \{0,1\})$.  It follows that $\pont(A)$ is the element  $2 \in H^4(\RP^3 \times I,  \RP^3 \times \{0,1\});\Z_4)\cong \Z_4$.

Now we turn to the evaluation of $\pont$ on $W$.
Because $W$ is a homology cobordism, there is a unique class $(x_0,x_1) \in H^1(M_0 \sqcup M_1;\Z_2) \cong H^1(M_0;\Z_2) \oplus H^1(M_1;\Z_2)$ such that $(x_0,0)$ and $(0,x_1)$ are mapped to $\PD_W([F]) $ under the coboundary map in the long exact sequence of the pair $(W, M_0 \sqcup M_1)$.  By Lemma~\ref{L:degree}, the class $x_i\in H^1(M_i;\Z_2)$ produces a map $\psi_i\co M_i\to \RP^3$, and it is straightforward to see that these maps have the same mod $2$ degree, given by  $\langle x_i\smile \beta x_i, [M_i]\rangle$.  Hence we may modify one of them so that $\degree(\psi_0) = \degree(\psi_1)$. A simple obstruction theory argument produces a  map
$$
\Psi\co (W,M_0,M_1) \to (\RP^3 \times I,RP^3 \times \{0\}, RP^3 \times \{1\})
$$
extending the $\psi_i$, and with $\degree(\Psi) = \degree(\psi_i)$.

Since, by construction, $\psi_0^*(a) = x_0$, it follows that $\Psi^*(A) =\PD_W([F])$, and hence that $\pont(\PD_W([F])) = \degree(\Psi)  \pont(A) = 2 \degree(\psi_0) \pmod{4}$.  Equation~\eqref{E:tau-cup} and naturality of the cup product and Bockstein imply that $\psi_0$ has degree congruent to $k_{[c]}$ mod $2$.  So  $\pont(\PD_W([F])) \equiv 2k_{[c]} \pmod4$ and the proposition follows from~\eqref{E:li}.
\end{proof}

\section{Heegaard Floer correction terms for manifolds with \texorpdfstring{$b_1>0$}{b1>0}}\label{S:correction}

In this section, we review some facts about the Ozsv\'ath--Szab\'o correction terms for $3$-manifolds with positive first Betti number. Many of these results are straightforward generalizations of the corresponding results for rational homology spheres given in \cite{oz:boundary} and are familiar to experts.

Let $Y$ be a closed, oriented $3$-manifold. We write $H_1^T(Y)$ for $H_1(Y;\Z)/\Tors$. Note that $H_1^T(Y)$ and $H^1(Y)$ are canonically dual to one another. Thus, the exterior algebra $\Lambda^* H_1^T(Y)$ acts canonically on $\Lambda^* H^1(Y)$, taking
\[
\Lambda^k H_1^T(Y) \otimes \Lambda^\ell H^1(Y) \to \Lambda^{\ell-k} H^1(Y).
\]
Note that the kernel of this action (i.e., the set of elements of $\Lambda^* H^1(Y)$ annihilated by all of $H_1^T(Y)$) is the bottom exterior power $\Lambda^0 H^1(Y) \cong \Z$, while the top exterior power $\Lambda^{b_1(Y)} H^1(Y)$ maps isomorphically to the cokernel of the action (i.e., $\Lambda^* H^1(Y) / (H_1^T(Y) \cdot \Lambda^* H^1(Y))$). Furthermore, the action satisfies the following useful property:
If $\gamma_1, \dots, \gamma_k$ are elements of a basis for $H_1^T(Y)$, and $\omega \in \Lambda^\ell H^1(Y)$ is an element such that $(\gamma_1 \wedge \cdots \wedge \gamma_k) \cdot \omega = 0$, then there exists $\omega' \in \Lambda^{\ell + k}$ such that $(\gamma_1 \wedge \cdots \wedge \gamma_k) \cdot \omega'= \omega$.

\begin{definition} \label{def:standardHFi}
Let $Y$ be a closed, oriented $3$-manifold. We say that $\HFi(Y)$ is \emph{standard} if for each torsion spin$^c$ structure $\spinct$ (i.e. with torsion $c_1(\spinct)$) on $Y$, we have
\begin{equation} \label{eq:standardHFi}
\HFi(Y,\spinct) \cong \Lambda^* H^1(Y;\Z) \otimes \Z[U,U^{-1}]
\end{equation}
as a relatively graded $\Lambda^*H_1^T(Y) \otimes \Z[U,U^{-1}]$--module.
\end{definition}

It follows from the discussion above that when $\HFi(Y)$ is standard, the kernel and cokernel of the action of $\Lambda^* H_1^T(Y)$ on $\HFi(Y, \spinct)$ are each isomorphic to $\Z[U,U^{-1}]$.

\begin{theorem} \label{thm:Lidman}
If $Y$ is a closed oriented $3$-manifold such that the triple cup product map
\[
H^1(Y;\Z) \otimes H^1(Y;\Z) \otimes H^1(Y;\Z) \to \Z
\]
given by
\[
\alpha \otimes \beta \otimes \gamma \mapsto \gen{\alpha \smile \beta \smile \gamma, [Y]}
\]
vanishes identically, then $\HFi(Y)$ is standard.
\end{theorem}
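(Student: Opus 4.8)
The plan is to compute $\HFi(Y,\spinct)$ by relating it to ordinary singular cohomology via the algebraic structure of the surgery exact triangle, following the template of Ozsv\'ath--Szab\'o's computation of $\HFi$ for torsion spin$^c$ structures (\cite{oz:boundary}) but keeping careful track of the module structure over $\Lambda^* H_1^T(Y) \otimes \Z[U,U^{-1}]$. The key algebraic input is a theorem of Lidman identifying $\HFi(Y,\spinct)$, as a module, with the homology of a Koszul-type complex built from the triple cup product form $\mu_Y \in \Lambda^3 H^1(Y;\Z)$ (together with the linking form, which plays no role once we reduce to $\Z$ or $\Q$ coefficients in the torsion case). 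Concretely, I would invoke the description of $\HFi$ as the homology of $(\Lambda^* H^1(Y) \otimes \Z[U,U^{-1}], \partial)$ where the differential is contraction against $\mu_Y$ (suitably interpreted, with a $U$ factor); when $\mu_Y = 0$ this differential vanishes and one reads off \eqref{eq:standardHFi} directly, including the $\Lambda^* H_1^T(Y)$--module structure, since the $\Lambda^* H_1^T(Y)$--action on the complex is the standard contraction action, which is exactly the one appearing in Definition~\ref{def:standardHFi}.

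First I would set up the statement for $b_1(Y) \le 2$ separately as base cases: for $b_1 = 0$ there is nothing to prove, for $b_1 = 1$ standardness is automatic (there is no triple cup product to kill and $\HFi$ is known), and for $b_1 = 2$ the triple cup product form on a $3$-manifold is automatically zero, so again standardness is known from \cite{oz:boundary} or \cite{lidman:hfinfinity}. Then for $b_1(Y) = n \ge 3$, I would pick a class $\alpha \in H^1(Y;\Z)$ that is primitive and part of a basis, represented by an embedded connected surface, and realize $Y$ inside a surgery triangle relating $Y$ to a manifold $Y'$ with $b_1(Y') = n-1$ obtained by surgery along a curve dual to $\alpha$, together with a third term. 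The triple cup product vanishing hypothesis on $Y$ passes (after the relevant restriction/corestriction) to a vanishing hypothesis on the pieces, so the inductive hypothesis applies to $Y'$; one then runs the exact triangle in the $\HFi$ theory, using that the maps in the triangle respect the $\Lambda^* H_1^T \otimes \Z[U,U^{-1}]$--module structure, to pin down $\HFi(Y,\spinct)$.

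The main obstacle, and where the real work lies, is the algebra of the exact triangle: showing that the long exact sequence forces $\HFi(Y,\spinct)$ to be the ``expected'' module rather than something with extra torsion or a nonstandard grading shift. This requires (a) identifying the connecting homomorphism with the appropriate cup-product map $H^1 \to H^1$ (contraction against $\alpha$), so that the vanishing of the triple cup product makes it behave as in the Koszul model; (b) controlling the relative $\Z$-grading so that the isomorphism \eqref{eq:standardHFi} holds as a \emph{graded} module; and (c) checking that the $\Lambda^* H_1^T(Y)$--action is compatible across the triangle, which amounts to naturality of the $H_1/\Tors$--action under the cobordism maps. An alternative, and probably cleaner, route — which I would actually favor for the write-up — is simply to cite Lidman's computation of $\HFi$ in \cite{lidman:hfinfinity}, which expresses $\HFi(Y,\spinct)$ for torsion $\spinct$ as the homology of exactly this contraction complex with differential determined by the triple cup product, and then observe that the hypothesis of the theorem makes that differential vanish; the content of the theorem is then immediate, and the only thing to verify is that Lidman's module identification matches Definition~\ref{def:standardHFi} on the nose.
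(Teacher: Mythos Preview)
Your proposal is correct, and the alternative route you favor at the end is exactly what the paper does: it simply cites Lidman \cite{lidman:hfinfinity} and remarks that the isomorphisms described there respect the $H_1$ action, so the module identification matches Definition~\ref{def:standardHFi}. The longer inductive surgery-triangle argument you sketch first is essentially a reconstruction of Lidman's own proof, so while not wrong, it is redundant once you are willing to invoke \cite{lidman:hfinfinity}; the paper skips it entirely.
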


\begin{proof}
This is essentially a result of Lidman~\cite{lidman:hfinfinity}; the one thing to note is that the isomorphisms described in Lidman's paper all respect the $H_1$ action.
\end{proof}

\begin{definition}
Let $Y$ be a closed, oriented $3$-manifold with standard $\HFi$, and let $\spinct$ be a torsion spin$^c$ structure on $Y$. The \emph{bottom correction term} (or \emph{bottom $d$-invariant}) $\dbot(Y, \spinct)$ is the minimal grading in which the restriction of the map $\pi\co \HFi(Y, \spinct) \to \HFp(Y, \spinct)$ to the kernel of the $H_1$ action on $\HFi(Y, \spinct)$ is nontrivial. The \emph{top correction term} (or \emph{top $d$-invariant}) $\dtop(Y, \spinct)$ is the minimal grading in which the induced map
\[
\bar\pi\co \HFi(Y, \spinct)/ (H_1^T(Y) \cdot \HFi(Y, \spinct))  \to \im (\pi)/ (H_1^T(Y) \cdot \im (\pi))
\]
is nontrivial.
\end{definition}

Note that when $Y$ is a rational homology sphere, the $H_1$ action is trivial, so we have $\dbot(Y,\spinct) = \dtop(Y, \spinct) = d(Y, \spinct)$.

\begin{example} \label{ex:S1xS2}
For any $n \ge 0$, consider the manifold $\conn^n S^1 \times S^2$, with its unique torsion spin$^c$ structure $\spincs_0$. As shown by Ozsv\'ath and Szab\'o, the group $\HF^{\le 0}(\conn^n S^1 \times S^2, \spincs_0) \subset \HFi(\conn^n S^1 \times S^2, \spincs_0)$ has a canonical top-dimensional generator (up to sign), which we denote by $\Theta_n^\top$. Also, let $\Delta$ be a generator of $\Lambda^n H_1(\conn^n S^1 \times S^2;\Z)$, and let $\Theta_n^\bot = \Delta \cdot \Theta_n^\top$, again defined up to sign. Note that $\gr (\Theta_n^\top) = n/2$ and $\gr (\Theta_n^\bot) = -n/2$. It is well-known that $\pi(\Theta_n^\top)$ and $\pi(\Theta_n^\bot)$ are both nonzero in $\HFp(\conn^n S^1 \times S^2, \spincs_0)$ and are in the kernel of $U$. Furthermore, $\pi(\Theta_n^\top)$ survives in the cokernel of the $H_1$ action on $\HFp$, and $\pi(\Theta_n^\bot)$ is in the kernel of the $H_1$ action. Thus,
\[
\dtop(\conn^n S^1 \times S^2, \spincs_0) = n/2 \quad \text{and} \quad \dbot(\conn^n S^1 \times S^2, \spincs_0) = -n/2.
\]
\end{example}

\begin{lemma} \label{lemma:topbot}
For any closed, oriented $3$-manifold $Y$ with standard $\HFi$ and any torsion spin$^c$ structure $\spinct$ on $Y$, we have
\[
\dbot(Y, \spinct) \ge \dtop(Y,\spinct) - b_1(Y) \quad \text{and} \quad \dbot(Y, \spinct) \equiv \dtop(Y,\spinct) - b_1(Y) \pmod {2\Z}.
\]
\end{lemma}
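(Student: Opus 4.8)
The plan is to exploit the action on $\HFi(Y,\spinct)$ of a generator $\Delta$ of the top exterior power $\Lambda^{b_1(Y)}H_1^T(Y)\cong\Z$, together with the fact that $\pi\co\HFi(Y,\spinct)\to\HFp(Y,\spinct)$ is a map of $\Lambda^*H_1^T(Y)\otimes\Z[U,U^{-1}]$--modules. The idea is that $\Delta$ converts the ``top'' tower of $\HFi$ into the ``bottom'' tower, and does so compatibly on $\HFp$; so applying $\Delta$ to a homogeneous element that detects $\dbot$ produces one that detects $\dtop$, with a controlled grading shift.

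Concretely, I would fix a basis $\gamma_1,\dots,\gamma_{b_1}$ of $H_1^T(Y)$ and set $\Delta=\gamma_1\wedge\cdots\wedge\gamma_{b_1}$. Since $\HFi(Y,\spinct)$ is standard, in the model $\Lambda^*H^1(Y)\otimes\Z[U,U^{-1}]$ the endomorphism $\Delta\cdot(-)$ annihilates exactly the part of exterior degree $<b_1$, which is precisely $H_1^T(Y)\cdot\HFi(Y,\spinct)$, and carries the exterior degree $b_1$ part isomorphically onto the exterior degree $0$ part $B:=\ker(H_1\text{--action on }\HFi(Y,\spinct))$. Thus $\Delta\cdot(-)$ induces an isomorphism of $\Z[U,U^{-1}]$--modules $T:=\HFi(Y,\spinct)/(H_1^T(Y)\cdot\HFi(Y,\spinct))\xrightarrow{\sim} B$, and by the grading conventions for the standard model --- forced by Example~\ref{ex:S1xS2}, where $\Theta_n^\bot=\Delta\cdot\Theta_n^\top$ and $\gr\Theta_n^\top-\gr\Theta_n^\bot=n$ --- this isomorphism lowers grading by exactly $b_1$.

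Granting this, the congruence is immediate: $\dbot$ is the grading of some homogeneous element of $B$ and $\dtop$ is the grading of some homogeneous element of $T$; all homogeneous elements of each of $B$, $T$ lie in a single residue class mod $2$; and those two classes differ by $b_1$. For the inequality I would choose a homogeneous $y\in B$ of grading $\dbot$ with $\pi(y)\ne0$ and write $y=\Delta\cdot\theta$ for a homogeneous $\theta$ of grading $\dbot+b_1$ (possible since $B=\im(\Delta\cdot(-))$). Equivariance of $\pi$ gives $0\ne\pi(y)=\Delta\cdot\pi(\theta)$ in $\HFp(Y,\spinct)$. But $\Delta$ annihilates $H_1^T(Y)\cdot\im(\pi)$, since $\Delta\cdot(\gamma\cdot z)=(\Delta\wedge\gamma)\cdot z=0$ as $\Delta\wedge\gamma\in\Lambda^{b_1+1}H_1^T(Y)=0$ for any $\gamma\in H_1^T(Y)$; hence $\pi(\theta)\notin H_1^T(Y)\cdot\im(\pi)$, so $\bar\pi$ is nontrivial in grading $\dbot+b_1$, and therefore $\dtop\le\dbot+b_1$, i.e.\ $\dbot\ge\dtop-b_1$.

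The only genuinely delicate point is the grading shift of the $\Delta$--action used in the second paragraph; everything else is routine --- that $\pi$ intertwines the $\Lambda^*H_1^T(Y)$--module structures on $\HFi$ and $\HFp$, that $\dbot$ and $\dtop$ are attained in some grading (because $\HFp$ is bounded below and $\ker\pi=\im(\HFm\to\HFi)$ is bounded above for torsion $\spinct$), and the exterior-algebra bookkeeping identifying $\ker(\Delta\cdot(-))$ with $H_1^T(Y)\cdot\HFi(Y,\spinct)$. I expect keeping the grading conventions straight, rather than any conceptual issue, to be where care is needed.
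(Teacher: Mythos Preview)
Your proof is correct and is essentially the same argument as the paper's: both pick an element $\xi$ (your $y$) in the kernel of the $H_1$--action realizing $\dbot$, lift it through the top-exterior-power action to an element $\eta$ (your $\theta$) of grading $\dbot+b_1$, and observe that $\pi(\eta)$ survives in the cokernel of the $H_1$--action on $\im(\pi)$, giving $\dtop\le\dbot+b_1$; the congruence follows from the $\Z[U,U^{-1}]$--module structure of the cokernel. You are simply more explicit than the paper about why $\pi(\theta)\notin H_1^T(Y)\cdot\im(\pi)$ and about the grading shift of the $\Delta$--action.
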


\begin{proof}
Write $d = \dbot(Y, \spinct)$, and let $\xi \in \HFi_{d}(Y, \spinct)$ be an element of the kernel of the $H_1$ action whose image $\pi(\xi) \in \HFp(Y, \spinct)$ is nonzero. Choose a basis $\gamma_1, \dots, \gamma_n$ for $H_1^T(Y)$, where $n = b_1(Y)$. Because $\HFi$ is standard, we may find $\eta \in \HFi_{d+n}(Y, \spincs)$ such that $(\gamma_1 \wedge \cdots \wedge \gamma_n) \cdot \eta = \xi$. Then the class of $\pi(\eta)$ modulo the $H_1$ action is nonzero, so
\[
\dtop(Y, \spincs) \le \gr\pi(\eta) = \gr\xi + n = \dbot(Y, \spincs) + n.
\]
Moreover, since $\HFi(Y, \spinct) / (H_1^T(Y) \cdot \HFi(Y, \spinct)) \cong \Z[U,U^{-1}]$, any other nonzero element $\eta'$ of this module must have $\gr(\eta') \equiv \gr(\eta) \pmod 2$, which gives the second statement.
\end{proof}

\begin{proposition}[Conjugation invariance] \label{prop:conjugation}
Let $Y$ be a closed, oriented $3$-manifold with standard $\HFi$, let $\spincs$ be a torsion spin$^c$ structure on $Y$, and let $\bar \spincs$ denote the conjugate spin$^c$ structure. Then $\dbot(Y, \spincs) = \dbot(Y, \bar\spincs)$ and $\dtop(Y, \spincs) = \dtop(Y, \bar\spincs)$.
\end{proposition}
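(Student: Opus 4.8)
The plan is to deduce this from the conjugation symmetry of Heegaard Floer homology, after which the statement becomes a formal consequence of how conjugation interacts with the $H_1$-action and with $\pi$. Recall that Ozsv\'ath and Szab\'o construct, for each of the flavors $\HFhat,\HFp,\HFm,\HFi$, a natural isomorphism $J_Y\co \HF^\circ(Y,\spincs)\to \HF^\circ(Y,\bar\spincs)$ which is $\Z[U]$-linear, commutes with the maps relating the flavors (in particular with $\pi\co\HFi\to\HFp$), and preserves the absolute $\Q$-grading because $\spincs$ is torsion (and hence so is $\bar\spincs$, the spin$^c$ structure with $c_1(\bar\spincs)=-c_1(\spincs)$). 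The one additional ingredient we need is that $J_Y$ is compatible with the $\Lambda^*H_1^T(Y)$-module structures up to the algebra automorphism of $\Lambda^*H_1^T(Y)$ induced by $\gamma\mapsto-\gamma$ on $H_1^T(Y)$; that is, $J_Y(\gamma\cdot x)=\pm\,\gamma\cdot J_Y(x)$ for $\gamma\in H_1^T(Y)$, where the sign will turn out to be irrelevant. Note also that $\HFi(Y,\bar\spincs)$ is standard whenever $\HFi(Y,\spincs)$ is, so $\dbot$ and $\dtop$ are defined for both.

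Granting these properties, the proof is purely formal. Because the relevant automorphism of $\Lambda^*H_1^T(Y)$ is in particular a bijection, $J_Y$ restricts to a grading-preserving isomorphism from the kernel of the $H_1^T(Y)$-action on $\HFi(Y,\spincs)$ onto the kernel of the action on $\HFi(Y,\bar\spincs)$, and this isomorphism intertwines the two copies of $\pi$. Hence the minimal grading in which $\pi$ is nontrivial on the kernel agrees for $\spincs$ and $\bar\spincs$, which is exactly $\dbot(Y,\spincs)=\dbot(Y,\bar\spincs)$. For the top invariant, apply the same reasoning after passing to quotients by the $H_1^T(Y)$-action: $J_Y$ descends to grading-preserving isomorphisms $\HFi(Y,\spincs)/(H_1^T(Y)\cdot\HFi)\to\HFi(Y,\bar\spincs)/(H_1^T(Y)\cdot\HFi)$ and $\im\pi/(H_1^T(Y)\cdot\im\pi)\to\im\pi/(H_1^T(Y)\cdot\im\pi)$ that commute with $\bar\pi$, giving $\dtop(Y,\spincs)=\dtop(Y,\bar\spincs)$.

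The substantive point is therefore just the list of properties of $J_Y$, essentially all of which are proved by Ozsv\'ath--Szab\'o for rational homology spheres and carry over, since torsion spin$^c$ structures are all that is used there. The only item needing genuine attention is the $H_1$-equivariance: one checks it by tracing through the chain-level definition of the $H_1$-action, using Lidman's identification of the module structure on $\HFi$ when $\HFi$ is standard. I expect to handle this by citing \cite{oz:boundary} and \cite{lidman:hfinfinity} and noting the minor adaptations, and I do not foresee any real obstacle.
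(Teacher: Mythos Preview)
Your proposal is correct and takes essentially the same approach as the paper, which simply asserts that the result ``follows immediately from the conjugation invariance of Heegaard Floer homology.'' You have spelled out in detail what the paper leaves implicit---in particular the compatibility of the conjugation isomorphism with the $H_1$-action and with $\pi$---so your version is if anything more careful than the original.
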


\begin{proof}
This follows immediately from the conjugation invariance of Heegaard Floer homology.
\end{proof}

\begin{proposition}[Duality] \label{prop:duality}
Let $Y$ be a closed, oriented $3$-manifold with standard $\HFi$, and let $\spincs$ be a torsion spin$^c$ structure on $Y$. Then $\dbot(Y, \spincs) = -\dtop(-Y, \spincs)$ and $\dtop(Y, \spincs) = -\dbot(-Y, \spincs)$.
\end{proposition}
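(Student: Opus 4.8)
The plan is to prove $\dbot(Y,\spincs)=-\dtop(-Y,\spincs)$; the companion identity $\dtop(Y,\spincs)=-\dbot(-Y,\spincs)$ then follows by applying the first with $Y$ replaced by $-Y$ and using $-(-Y)=Y$. Conceptually this is the familiar duality $d(Y,\spinct)=-d(-Y,\spinct)$ for rational homology spheres, refined so as to keep track of the $\Lambda^*H_1^T(Y)$-action; the reason $\dbot$ and $\dtop$ get interchanged, rather than each being self-dual, is that under the orientation-reversal pairing the bottom exterior power $\Lambda^0\subset\HFi$ is carried to the top exterior power and conversely.

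The input is the \oz\ duality for Heegaard Floer homology under orientation reversal. In the form we need, it provides a grading-reversing isomorphism identifying $\HFi(Y,\spincs)$ with the $\Z[U,U^{-1}]$-linear dual of $\HFi(-Y,\spincs)$ (up to the standard overall degree shift), compatibly with the long exact sequences relating $\HFi$, $\HFp$, and $\HFm$. In particular $\pi_Y\co\HFi(Y,\spincs)\to\HFp(Y,\spincs)$ is carried to the transpose of the natural map $\iota_{-Y}\co\HFm(-Y,\spincs)\to\HFi(-Y,\spincs)$, so that $\ker\pi_Y=\im\iota_Y$ corresponds to the annihilator of $\ker\pi_{-Y}=\im\iota_{-Y}$. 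The crucial extra ingredient is that this duality intertwines the $\Lambda^*H_1^T$-module structures, the action on $\HFi(-Y,\spincs)$ being the transpose of the action on $\HFi(Y,\spincs)$ under the pairing; this is exactly parallel to the remark in the proof of Theorem~\ref{thm:Lidman} that Lidman's isomorphisms respect the $H_1$-action, and it should follow from the chain-level statement that the duality comes from the tautological identification of $\CFi(Y,\spinct)$ with the dual of $\CFi(-Y,\spinct)$, under which the chain map implementing the $H_1$-action goes to its transpose. Establishing this $H_1$-equivariance, together with enough bookkeeping of the degree conventions that no additive constant appears, is the main obstacle.

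Granting it, the rest is module theory for standard $\HFi$. Under the pairing, $\xi\in\HFi(Y,\spincs)$ lies in the kernel of the $H_1$-action precisely when the corresponding functional on $\HFi(-Y,\spincs)$ annihilates $H_1^T(Y)\cdot\HFi(-Y,\spincs)$, i.e.\ factors through the cokernel of the $H_1$-action there; combining this with $\im\iota_Y=(\im\iota_{-Y})^{\perp}$, the condition that $\pi_Y$ be nontrivial on $\ker(H_1\text{-action})$ in some grading $d$ becomes the condition that $\im\iota_{-Y}$ not be contained in $H_1^T(Y)\cdot\HFi(-Y,\spincs)$ in the grading dual to $d$. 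Now, because $\HFi(-Y,\spincs)$ is standard, $\im\iota_{-Y}$ is a cofinal $\Z[U]$-submodule (its $\Z[U,U^{-1}]$-localization is all of $\HFi(-Y,\spincs)$), so inside the rank-one free $\Z[U,U^{-1}]$-module $\HFi(-Y,\spincs)\big/\bigl(H_1^T(Y)\cdot\HFi(-Y,\spincs)\bigr)$ the image of $\im\iota_{-Y}$ and the further quotient $\HFi(-Y,\spincs)\big/\bigl(H_1^T(Y)\cdot\HFi(-Y,\spincs)+\im\iota_{-Y}\bigr)$ occupy complementary ranges of gradings. Hence the top grading in which $\im\iota_{-Y}$ escapes $H_1^T(Y)\cdot\HFi(-Y,\spincs)$ is, up to the standard shift, the negative of the minimal grading in which $\bar\pi_{-Y}$ is nontrivial; the latter is $\dtop(-Y,\spincs)$ by definition, giving $\dbot(Y,\spincs)=-\dtop(-Y,\spincs)$.

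To pin down the normalization and catch any stray constant, I would check the two extreme cases. For $b_1(Y)=0$ the $H_1$-action is trivial and the claim reduces to the known duality $d(Y,\spinct)=-d(-Y,\spinct)$, while for $Y=\conn^n S^1\times S^2$ (where $-Y\cong Y$) it must reproduce $\dbot=-n/2$ and $\dtop=n/2$ from Example~\ref{ex:S1xS2}; these two checks force the additive constant to be zero.
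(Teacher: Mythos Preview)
Your approach is essentially the same as the paper's: invoke the \oz\ duality identifying the Heegaard Floer homology of $Y$ with the Heegaard Floer cohomology of $-Y$, and observe that passing to the dual interchanges the kernel and cokernel of the $H_1$-action. The paper states this in a single sentence (citing \cite[Proposition 4.2]{oz:boundary}), while you have spelled out the module-theoretic argument and the grading bookkeeping in considerably more detail; your identification of the $H_1$-equivariance of the duality as the point requiring care is exactly the content the paper packages as ``dualizing interchanges the roles of the kernel and cokernel of the $H_1$ action.''
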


\begin{proof}
This is a straightforward adaptation of \cite[Proposition 4.2]{oz:boundary}, making use of the isomorphism between the Heegaard Floer homology of $Y$ and the Heegaard Floer cohomology of $-Y$, along with the fact that dualizing interchanges the roles of the kernel and cokernel of the $H_1$ action.
\end{proof}

\begin{proposition}[Additivity] \label{prop:additivity}
Let $Y$ and $Z$ be closed, oriented $3$-manifolds with standard $\HFi$, and let $\spinct$ and $\spincu$ be torsion spin$^c$ structures on $Y$ and $Z$ respectively. Then
\[
\dbot(Y \conn Z, \spinct \conn \spincu) = \dbot(Y, \spinct) + \dbot(Z, \spincu)
\]
and
\[
\dtop(Y \conn Z, \spinct \conn \spincu) = \dtop(Y, \spinct) + \dtop(Z, \spincu).
\]
\end{proposition}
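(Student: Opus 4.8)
The plan is to deduce additivity from the connected sum (Künneth) formula of Ozsv\'ath and Szab\'o, carrying along the $H_1$-actions, and to halve the work using duality: once the identity for $\dbot$ is established for all closed oriented $3$-manifolds with standard $\HFi$, the identity for $\dtop$ follows by applying it to $-Y$ and $-Z$, using $-(Y\conn Z)\cong(-Y)\conn(-Z)$ together with Proposition~\ref{prop:duality}. I first observe that $\HFi(Y\conn Z)$ is automatically standard --- either because the triple cup product of $Y\conn Z$ vanishes (Theorem~\ref{thm:Lidman}) or directly from the Künneth formula below --- so that $\dbot(Y\conn Z,\spinct\conn\spincu)$ is defined; throughout one uses the canonical identification $H^1(Y\conn Z)=H^1(Y)\oplus H^1(Z)$, under which $\Lambda^*H^1(Y\conn Z)=\Lambda^*H^1(Y)\otimes\Lambda^*H^1(Z)$ and $H_1^T(Y\conn Z)=H_1^T(Y)\oplus H_1^T(Z)$.

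The key input is the chain-level connected sum formula: there are chain homotopy equivalences
\[
\CFi(Y\conn Z,\spinct\conn\spincu)\simeq\CFi(Y,\spinct)\otimes_{\Z[U,U^{-1}]}\CFi(Z,\spincu), \qquad \CF^-(Y\conn Z,\spinct\conn\spincu)\simeq\CF^-(Y,\spinct)\otimes_{\Z[U]}\CF^-(Z,\spincu),
\]
compatible with the inclusions $\CF^-\hookrightarrow\CFi$ (hence also with the quotient complexes $\CF^+=\CFi/\CF^-$ and the resulting maps $\pi$), with the absolute gradings up to an overall shift fixed by the standard normalization, and --- the point that requires the most care --- with the action of $\Lambda^*H_1^T$, where $H_1^T(Y)\oplus H_1^T(Z)$ acts on the tensor products by the Leibniz rule. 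Since $\HFi(Y,\spinct)$ and $\HFi(Z,\spincu)$ are free over $\Z[U,U^{-1}]$ by standardness, the Künneth theorem applied to the first equivalence gives an $H_1$-equivariant isomorphism $\HFi(Y\conn Z,\spinct\conn\spincu)\cong\HFi(Y,\spinct)\otimes_{\Z[U,U^{-1}]}\HFi(Z,\spincu)$. A formal computation with the Leibniz action then identifies the kernel of the $H_1$-action on the left with the $\Z[U,U^{-1}]$-tensor product of the kernels on the two factors (each isomorphic to $\Z[U,U^{-1}]$ since $\HFi$ is standard), and likewise the cokernel of the $H_1$-action with the tensor product of the cokernels.

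It remains to prove that $\dbot$ is additive, which reduces to a statement about towers. By definition $\dbot(Y,\spinct)$ is the minimal grading, on the bottom $\Z[U,U^{-1}]$-summand of $\HFi(Y,\spinct)$ (the summand annihilated by the $H_1$-action), in which $\pi\colon\HFi(Y,\spinct)\to\HFp(Y,\spinct)$ is nonzero; equivalently, $\ker\pi=\im\bigl(\HFm(Y,\spinct)\to\HFi(Y,\spinct)\bigr)$ meets this summand in a $\Z[U]$-subtower whose top grading is $\dbot(Y,\spinct)-2$. Applying the second Künneth formula on homology, the Tor correction term is $U$-torsion and so maps to zero in the $U$-torsion-free group $\HFi(Y\conn Z,\spinct\conn\spincu)$; hence $\im\bigl(\HFm(Y\conn Z)\to\HFi(Y\conn Z)\bigr)$ equals the image of $\HFm(Y,\spinct)\otimes_{\Z[U]}\HFm(Z,\spincu)$. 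By $H_1$-equivariance, its intersection with the bottom summand of $\HFi(Y\conn Z,\spinct\conn\spincu)$ is the image of the $\Z[U]$-tensor product of the $\ker(H_1)$-parts of $\HFm(Y,\spinct)$ and $\HFm(Z,\spincu)$; each of these, modulo $U$-torsion (which again maps to zero), is a single $\Z[U]$-tower mapping onto a subtower of top grading $\dbot(Y,\spinct)-2$, resp.\ $\dbot(Z,\spincu)-2$. Tensoring towers over $\Z[U]$ adds top gradings (up to the Künneth shift), producing a subtower of top grading $\dbot(Y,\spinct)+\dbot(Z,\spincu)-2$; therefore $\dbot(Y\conn Z,\spinct\conn\spincu)=\dbot(Y,\spinct)+\dbot(Z,\spincu)$.

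The main obstacle is bookkeeping rather than geometry. One must (i) confirm that the Ozsv\'ath--Szab\'o connected sum equivalence can be arranged to intertwine the full $\Lambda^*H_1^T$-module structure, not merely the $\Z[U,U^{-1}]$-module structure and the gradings; this is essentially contained in their construction, since the $H_1$-action is induced by the same kind of count of holomorphic disks that the equivalence respects (compare \cite{lidman:hfinfinity}), but it warrants an explicit statement. And (ii) one must pin down the grading shifts in the two Künneth formulas so that the conclusion is an honest equality of rational numbers rather than one valid only up to a universal constant; this is conveniently checked against Example~\ref{ex:S1xS2}, e.g.\ $\dbot(\conn^{m+n}S^1\times S^2,\spincs_0)=-(m+n)/2=\dbot(\conn^mS^1\times S^2,\spincs_0)+\dbot(\conn^nS^1\times S^2,\spincs_0)$. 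Everything else is the routine translation --- already familiar for rational homology spheres \cite{oz:boundary} --- between the $\pi$-description of the correction terms used in the text and the ``free part of $\HFm$'' description used above.
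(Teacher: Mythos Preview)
Your proposal is correct and uses the same essential ingredients as the paper --- the Ozsv\'ath--Szab\'o connected sum formula, the $\Lambda^*H_1^T$-equivariance of the K\"unneth isomorphism, the observation that the Tor correction is $U$-torsion and hence dies in $\HFi$, and Proposition~\ref{prop:duality} --- but organizes them differently.  The paper first recasts $\dbot$ and $\dtop$ as the maximal degree in which $\iota\colon\HF^{\le0}\to\HFi$ is nonzero on the kernel (respectively, cokernel) of the $H_1$-action, then uses the K\"unneth diagram to chase out only the one-sided inequalities $\dbot(Y\conn Z)\le\dbot(Y)+\dbot(Z)$ and $\dtop(Y\conn Z)\le\dtop(Y)+\dtop(Z)$; applying these same inequalities to $-Y,-Z$ and invoking duality then yields the reverse inequalities for both invariants at once.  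You instead argue directly that the intersection of $\im(\HFm(Y)\otimes\HFm(Z))$ with the bottom tower is exactly the (image of the) tensor of the $\ker(H_1)$-parts, giving the full equality for $\dbot$ in one shot, and then a single application of duality finishes $\dtop$.  Your route avoids a separate cokernel argument; the paper's route avoids having to pin down the intersection exactly, since only the upper bound on its top degree is needed.  The key algebraic step --- that image-then-intersect-with-bottom and intersect-with-bottom-then-tensor agree --- is the substance behind both the paper's ``some diagram-chasing'' and your tower computation.
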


\begin{proof}
Consider the map
\[
\iota^Y \co \HF^{\le0}(Y, \spinct) \to \HFi(Y, \spinct).
\]
It is not hard to show that $\dbot(Y, \spinct)$ is equal to the maximal degree in which the restriction of $\iota^Y$ to the kernel of the $H_1$ action is nonzero, and $\dtop(Y, \spinct)$ is the maximal degree in which the map on cokernels induced by $\iota^Y$ is nonzero. By the connected sum formula for Heegaard Floer homology, there are graded isomorphisms making the diagram
\[
\xymatrix{
H_*(\CF^{\le0}(Y, \spinct) \otimes_{\Z[U]} \CF^{\le0} (Z, \spincu)) \ar[r]^-{F^-_{Y \conn Z}}_-{\cong} \ar[d]^{(\iota^Y \otimes \iota^Z)_*}  &  \HF^{\le0}(Y \conn Z, \spinct \conn \spincu) \ar[d]^{\iota^{Y \conn Z}} \\
H_*(\CF^\infty(Y, \spinct) \otimes_{\Z[U,U^{-1}]} \CF^\infty (Z, \spincu)) \ar[r]^-{F^\infty_{Y \conn Z}}_-{\cong} &
  \HF^\infty (Y \conn Z)
}
\]
commute. Furthermore, identifying $\Lambda^* H_1^T(Y \conn Z)$ with $\Lambda^* H_1^T(Y) \otimes \Lambda^* H_1^T(Z)$, the horizontal maps respect the $H_1$ action. Combining this result with the algebraic K\"unneth theorem and the fact that $Y$, $Z$, and $Y \conn Z$ have standard $\HFi$, we have a diagram
\[
\xymatrix{
0 \ar[r] & \HF^{\le0}(Y, \spinct) \otimes_{\Z[U]} \HF^{\le0}(Z, \spincu) \ar[r]^-{F^{\le0}_{Y \conn Z}} \ar[d]^{\iota^Y \otimes \iota^Z} & \HF^{\le0}(Y \conn Z, \spinct \conn \spincu) \ar[r] \ar[d]^{\iota^{Y \conn Z}} & T \ar[r] & 0 \\
& \HF^\infty(Y, \spinct) \otimes_{\Z[U]} \HF^\infty(Z, \spincu) \ar[r]^-{F^\infty_{Y \conn Z}}_-{\cong} & \HF^\infty({Y \conn Z}, {\spinct \conn \spincu}) & & }
\]
where $T$ is a torsion $\Z[U]$--module and the top row is exact. Some diagram-chasing then shows that
\[
\dbot(Y \conn Z, \spinct \conn \spincu) \le \dbot(Y, \spinct) + \dbot(Z, \spincu)
\]
and
\[
\dtop(Y \conn Z, \spinct \conn \spincu) \le \dtop(Y, \spinct) + \dtop(Z, \spincu).
\]
Applying the same reasoning to $-(Y \conn Z)$, we see that
\[
\dbot(-(Y \conn Z), \spinct \conn \spincu) \le \dbot(-Y, \spinct) + \dbot(-Z, \spincu)
\]
and
\[
\dtop(-(Y \conn Z), \spinct \conn \spincu) \le \dtop(-Y, \spinct) + \dtop(-Z, \spincu).
\]
The desired result then follows from Proposition \ref{prop:duality}.
\end{proof}

The key property of the $\dbot$ and $\dtop$ invariants is their behavior with respect to negative semidefinite $4$-manifolds bounding a given $3$-manifold:

\begin{theorem}[cf.~{\cite[Theorem 9.15]{oz:boundary}}] \label{T:negative}
Let $Y$ be a closed, oriented $3$-manifold with standard $\HFi$, equipped with a torsion spin$^c$ structure $\spinct$. If $X$ is a negative semidefinite $4$-manifold bounded by $Y$ such that the restriction map $H^1(X;\Z) \to H^1(Y;\Z)$ is trivial, then for each spin$^c$ structure $\spincs$ on $X$ that restricts to $\spinct$, we have
\[
c_1(\spincs)^2 + b_2^-(X) + 4b_1(X) \le 4\dbot(Y,\spinct) + 2b_1(Y).
\]
Moreover, the two sides of this inequality are congruent modulo $8$.
\end{theorem}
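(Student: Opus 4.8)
The plan is to adapt the proof of \cite[Theorem 9.15]{oz:boundary}, with $\dbot$ playing the role that the $d$--invariant of a rational homology sphere plays there, while keeping careful track of the $H_1$--action throughout. First I would delete an open ball from the interior of $X$ to obtain a cobordism $W\co S^3\to Y$, so that $H_1(W)=H_1(X)$, $\sigma(W)=\sigma(X)=-b_2^-(X)$, and $\chi(W)=\chi(X)-1$. Poincar\'e--Lefschetz duality together with the hypothesis that $H^1(X;\Z)\to H^1(Y;\Z)$ is trivial shows, by a short homological computation, that $b_3(X)=b_1(X)$ and that the radical of the intersection form of $X$ has rank $b_1(Y)$, so that $\chi(X)=1-2b_1(X)+b_2^-(X)+b_1(Y)$. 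Substituting into the grading shift formula, the cobordism maps $F^-_{W,\spincs}$ and $F^\infty_{W,\spincs}$ each raise degree by
\[
\delta\;:=\;\frac{c_1(\spincs)^2-2\chi(W)-3\sigma(W)}{4}\;=\;\frac{c_1(\spincs)^2+b_2^-(X)+4b_1(X)-2b_1(Y)}{4},
\]
so the inequality to be proved is precisely $\delta\le\dbot(Y,\spinct)$, and the asserted congruence is $4\delta\equiv 4\dbot(Y,\spinct)\pmod 8$.

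Let $\theta\in\HFm(S^3)$ be the top--degree generator, of degree $-2$ (with the standard convention $d(S^3)=0$), and write $\iota\co\HFm\to\HFi$ for the natural map. Set $\xi=F^-_{W,\spincs}(\theta)\in\HFm(Y,\spinct)$, so $\iota(\xi)=F^\infty_{W,\spincs}(\iota(\theta))\in\HFi(Y,\spinct)$ has degree $-2+\delta$. The heart of the proof is the claim that $F^\infty_{W,\spincs}$ carries $\HFi(S^3)$ isomorphically onto the kernel $K$ of the $\Lambda^* H_1^T(Y)$--action on $\HFi(Y,\spinct)$. Granting this, $\iota(\xi)$ generates the (infinite cyclic) degree $-2+\delta$ part of $K$; and since $\xi\in\HFm(Y,\spinct)$, we have $\iota(\xi)\in\im\!\bigl(\iota\co\HFm(Y,\spinct)\to\HFi(Y,\spinct)\bigr)=\ker\!\bigl(\pi\co\HFi(Y,\spinct)\to\HFp(Y,\spinct)\bigr)$, so $\pi(\iota(\xi))=0$. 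As $\iota(\xi)$ generates $K$ in degree $-2+\delta$, the restriction of $\pi$ to $K$ vanishes in degree $-2+\delta$, which by the definition of $\dbot$ forces $-2+\delta\le\dbot(Y,\spinct)-2$, i.e.\ $\delta\le\dbot(Y,\spinct)$. Since $\HFi(Y)$ is standard, $K\cong\Z[U,U^{-1}]$ is concentrated in a single parity of degree, so $\delta\equiv\dbot(Y,\spinct)\pmod 2$ and hence $4\delta\equiv 4\dbot(Y,\spinct)\pmod 8$, giving the congruence.

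The claim itself splits into two parts. First, $\im F^\infty_{W,\spincs}\subseteq K$: for $\gamma\in H_1^T(Y)$, the triviality of $H^1(X)\to H^1(Y)$ says dually that the image of $\gamma$ in $H_1(W)=H_1(X)$ is torsion, and applying the naturality of the $H_1$--action to $W\co S^3\to Y$ (recall $H_1(S^3)=0$, so $\gamma$ and $0$ have the same image in $H_1(W)$) gives $\gamma\cdot F^\infty_{W,\spincs}=F^\infty_{W,\spincs}\circ(0\cdot-)=0$; thus the whole image of $F^\infty_{W,\spincs}$ lies in $K$. Second, $F^\infty_{W,\spincs}$ maps onto $K$ (equivalently, is injective with image $K$): this is the substantive input, and it is where the negative semidefiniteness of $X$ is genuinely used; it is the exact analog of the corresponding step in the proof of \cite[Theorem 9.15]{oz:boundary}. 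That argument turns the cobordism around, relating $W$ by duality to a cobordism from $-Y$ built out of the positive semidefinite manifold $-X$, and uses the structure of standard $\HFi$ to pin down the image; over $\Z$ it also requires a divisibility/saturation check, handled as in \cite{oz:boundary}.

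I expect this second part to be the main obstacle: it carries the hard Floer--theoretic (equivalently, gauge--theoretic) content, exactly as in the rational homology sphere case, and the turn--around and duality step must be set up so that the standardness hypothesis really controls the image rather than merely showing non-vanishing. Conditional on it, the remaining work is the $H_1$--equivariance bookkeeping of the first part, together with the routine homological and grading computations above, and then assembling everything as in the second paragraph.
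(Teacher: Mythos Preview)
Your proposal is correct and follows essentially the same approach as the paper: remove a ball to get a cobordism $W\co S^3\to Y$, compute the grading shift (your $\delta$ equals the paper's $D$), invoke the key claim that $F^\infty_{W,\spincs}$ maps $\HFi(S^3)$ isomorphically onto the kernel of the $H_1$--action (with the substantive part deferred to \cite[Section 9]{oz:boundary}), and read off the inequality and congruence. The only cosmetic difference is that the paper argues by \emph{pulling back}: it chooses $\xi\in K$ at grading $\dbot$ with $\pi^Y(\xi)\ne 0$, finds its preimage $\eta\in\HFi(S^3)$, and uses commutativity with $F^+$ to conclude $\pi^{S^3}(\eta)\ne 0$, whence $\dbot-D\ge 0$; you instead \emph{push forward} the top of $\HFm(S^3)$ and use $\im\iota=\ker\pi$ to see that $\pi|_K$ vanishes in degree $-2+\delta$. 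These are dual versions of the same argument.
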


\begin{proof}
Let $X' = X \minus B^4$, and view $X'$ as a cobordism from $S^3$ to $Y$. Consider the commutative diagram
\[
\xymatrix{
\HFi(S^3)  \ar[r]^{F^\infty_{X', \spincs}} \ar[d]^{\pi^{S^3}}  & \HFi(Y, \spinct) \ar[d]^{\pi^{Y}} \\
\HFp(S^3)  \ar[r]^{F^+_{X', \spincs}} & \HFp(Y, \spinct).
}
\]
Using the fact that $X'$ is negative semidefinite, the argument in \cite[Section 9]{oz:boundary} can be generalized to show that $F^\infty_{X', \spincs}$ is injective, with image equal to the kernel of the $H_1$ action on $\HFi(Y, \spinct)$. Furthermore, the horizontal maps shift the grading by
\[
D = \frac{c_1(\spincs)^2 - 2\chi(X') -3\sigma(X')}{4} = \frac{c_1(\spincs)^2 + 4b_1(X) + b_2^-(X) - 2b_1(Y)}{4},
\]
since $\chi(X') = -2b_1(X) + b_2(X) = -2b_1(X) + b_2^-(X) + b_1(Y)$ and $\sigma(X') = -b_2^-(X)$. Let $d = \dbot(Y, \spinct)$, and let $\xi \in \HFi_d(Y, \spinct)$ be an element in the kernel of the $H_1$ action such that $\pi^Y(\xi) \ne 0$. There exists some $\eta \in \HFi_{d-D}(S^3)$ such that $F^\infty_{X', \spincs}(\eta) = \xi$. Commutativity of the diagram shows that $\pi^{S^3}(\eta) \ne 0$, so $d-D \ge 0$ and $d - D \equiv 0 \pmod 2$. The result then follows.
\end{proof}

Likewise, we have:

\begin{theorem} \label{T:QHcob}
Let $Y_0$ and $Y_1$ be closed, oriented manifolds with standard $\HFi$, equipped with torsion spin$^c$ structures $\spinct_0$ and $\spinct_1$, respectively. If $W$ is a negative-semidefinite cobordism from $Y_0$ to $Y_1$ such that the restriction maps $H^1(W) \to H^1(Y_0)$ and $H^1(W) \to H^1(Y_1)$ are isomorphisms, and $\spincs$ is a spin$^c$ structure on $W$ that restricts to $\spinct_0$ on $Y_0$ and to $\spinct_1$ on $Y_1$, then
\begin{align*}
\dbot(Y_0, \spinct_0) \le \dbot(Y_1, \spinct_1) - \frac{c_1(\spincs)^2 + b_2^-(W)}{4} \\
\intertext{and}
\dtop(Y_0, \spinct_0) \le \dtop(Y_1, \spinct_1) - \frac{c_1(\spincs)^2 + b_2^-(W)}{4}.
\end{align*}
In particular, if $(W, \spincs)$ is a spin$^c$ rational homology cobordism, then
\[
\dbot(Y_0, \spinct_0) = \dbot(Y_1, \spinct_1)  \quad \text{and} \quad
\dtop(Y_0, \spinct_0) = \dtop(Y_1, \spinct_1).
\]
\end{theorem}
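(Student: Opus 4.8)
The plan is to repeat the proof of Theorem~\ref{T:negative} with $W$ itself (in place of the punctured manifold $X'$) serving as the cobordism from $Y_0$ to $Y_1$. First I would set up the commutative square relating the cobordism maps
\[
F^\infty_{W,\spincs}\co\HFi(Y_0,\spinct_0)\to\HFi(Y_1,\spinct_1)\qquad\text{and}\qquad F^+_{W,\spincs}\co\HFp(Y_0,\spinct_0)\to\HFp(Y_1,\spinct_1)
\]
to the projections $\pi^{Y_0}$ and $\pi^{Y_1}$. The essential input is that, because $W$ is negative semidefinite and the restrictions $H^1(W)\to H^1(Y_i)$ are isomorphisms, the same generalization of \cite[Section~9]{oz:boundary} already used for Theorem~\ref{T:negative} shows that $F^\infty_{W,\spincs}$ is an \emph{isomorphism}. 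Since the maps $H^1(W)\to H^1(Y_i)$ are isomorphisms, the canonical identifications $H_1^T(Y_0)\cong H_1^T(W)\cong H_1^T(Y_1)$ make $F^\infty_{W,\spincs}$ equivariant for the $\Lambda^*H_1^T$-actions; hence it restricts to an isomorphism between the kernels of the $H_1$-actions on the two sides and induces an isomorphism between the cokernels of those actions.

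Next I would compute the grading shift of $F^\infty_{W,\spincs}$, which is $D=(c_1(\spincs)^2-2\chi(W)-3\sigma(W))/4$, and check that $D=(c_1(\spincs)^2+b_2^-(W))/4$. Negative semidefiniteness gives $b_2^+(W)=0$, so $\sigma(W)=-b_2^-(W)$, and it remains to see that $\chi(W)=b_2^-(W)$. Here I would use the $H^1$-hypothesis: $W$ is connected, so $b_0(W)=1$ and $b_4(W)=0$; the hypothesis forces $b_1(W)=b_1(Y_0)=b_1(Y_1)=:b$; the long exact sequence of $(W,\partial W)$ together with Lefschetz duality gives $b_3(W)=1$; and that same sequence shows that the radical of the intersection form on $H_2(W;\Q)$ --- that is, $\im\bigl(H_2(\partial W;\Q)\to H_2(W;\Q)\bigr)$ --- has dimension $b$, so that $b_2(W)=b+b_2^-(W)$. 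Therefore $\chi(W)=1-b+b_2(W)-1=b_2^-(W)$, as wanted.

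The inequality then follows by the diagram chase of Theorem~\ref{T:negative}, run from $Y_1$ toward $Y_0$. Writing $d_1=\dbot(Y_1,\spinct_1)$, pick $\xi_1\in\HFi_{d_1}(Y_1,\spinct_1)$ in the kernel of the $H_1$-action with $\pi^{Y_1}(\xi_1)\neq0$, and set $\xi_0=(F^\infty_{W,\spincs})^{-1}(\xi_1)\in\HFi_{d_1-D}(Y_0,\spinct_0)$, which again lies in the kernel of the $H_1$-action. Commutativity gives $\pi^{Y_1}(\xi_1)=F^+_{W,\spincs}(\pi^{Y_0}(\xi_0))$, so $\pi^{Y_0}(\xi_0)\neq0$ and $\dbot(Y_0,\spinct_0)\le d_1-D$, as claimed; the $\dtop$ statement is the identical argument applied to the induced isomorphism on cokernels and the maps $\bar\pi^{Y_i}$. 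For the final assertion, a rational homology cobordism has $H_2(W;\Q)\cong H_2(Y_0;\Q)$, so $b_2^-(W)=0$ and the intersection form vanishes, whence $c_1(\spincs)^2=0$ and $D=0$; the two inequalities become $\dbot(Y_0,\spinct_0)\le\dbot(Y_1,\spinct_1)$ and $\dtop(Y_0,\spinct_0)\le\dtop(Y_1,\spinct_1)$, and applying the same reasoning to $-W$ --- still negative semidefinite since its intersection form also vanishes, and still satisfying the $H^1$-hypothesis --- yields the opposite inequalities, hence equality.

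The one step that carries real content is the first: verifying that $F^\infty_{W,\spincs}$ is an isomorphism of $\Lambda^*H_1^T$-modules under these hypotheses. This is where the negative-semidefinite machinery of \cite{oz:boundary}, in the form already needed for Theorem~\ref{T:negative}, is doing the work; the rest is bookkeeping, though the Euler-characteristic and signature computation must be carried out with some care to reach the clean expression for $D$.
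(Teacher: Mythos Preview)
Your proof is correct and follows essentially the same approach as the paper's, which consists of a single sentence invoking ``the usual argument'' together with the observation that $F^\infty_{W,\spincs}$ is an $H_1$-equivariant isomorphism. You have simply spelled out that usual argument in detail, including the Euler-characteristic computation needed to identify the grading shift, which the paper leaves entirely to the reader.
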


\begin{proof}
This follows from the usual argument and the observation (using \cite[Section 9]{oz:boundary}) that
\[
F^\infty_{W, \spincs}\co \HFi(Y_0, \spinct_0) \to \HFi(Y_1, \spinct_1)
\]
is an isomorphism that respects the $H_1$ action.
\end{proof}

\begin{corollary} \label{C:QHS1xB3}
Let $Y$ be a closed, oriented $3$-manifold with standard $\HFi$, equipped with a torsion spin$^c$ structure $\spinct$, and let $b= b_1(Y)$. If $X$ is an oriented $4$-manifold bounded by $Y$ such that $b_1(X) = b$ and $b_2(X) = b_3(X)=0$, and $\spinct$ extends over $W$, then $\dbot(Y, \spinct) = -b/2$ and $\dtop(Y, \spinct) = b/2$.
\end{corollary}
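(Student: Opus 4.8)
The plan is to view $X^\circ := X\minus\operatorname{int}B^4$ as a cobordism from $S^3$ to $Y$ and to run the arguments behind Theorems~\ref{T:negative} and~\ref{T:QHcob} in the ``intermediate'' case in which $H^1$ restricts isomorphically to the outgoing end $Y$ but (necessarily) trivially to $S^3$, and then to feed the resulting inequality through duality and Lemma~\ref{lemma:topbot}.

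First I would record the homological input. Set $b=b_1(Y)$. The long exact sequence of the pair $(X,\partial X)$, together with Lefschetz duality and the hypotheses $b_1(X)=b$ and $b_2(X)=b_3(X)=0$, shows that $H^2(X;\Q)=0$ (so $c_1(\spincs)^2=0$ for the given extension $\spincs$), that the restriction $H^1(X;\Q)\to H^1(Y;\Q)$ is an isomorphism, and that $X$ — hence $X^\circ$ — is negative semidefinite with $b_2^-(X)=0$.

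The key step, and the only place where something must be proved rather than assembled, is the following variant of \cite[Section 9]{oz:boundary}: because $X^\circ$ is negative semidefinite and $H^1(X^\circ)\to H^1(Y)$ is an isomorphism modulo torsion, the cobordism map
\[
F^\infty_{X^\circ,\spincs}\co \HFi(S^3)\to \HFi(Y,\spinct)
\]
is injective, it carries $\HFi(S^3)$ onto a free $\Z[U,U^{-1}]$--submodule that projects isomorphically onto the cokernel $\HFi(Y,\spinct)/\bigl(H_1^T(Y)\cdot\HFi(Y,\spinct)\bigr)$ of the $H_1$ action, and it shifts the grading by
\[
D=\tfrac14\bigl(c_1(\spincs)^2+4b_1(X)+b_2^-(X)-2b_1(Y)\bigr)=b/2 .
\]
This is precisely the analogue, for a filling with $H^1(X)\to H^1(Y)$ an isomorphism, of the statement used in the proof of Theorem~\ref{T:negative} that $F^\infty$ has image equal to the \emph{kernel} of the $H_1$ action when $H^1(X)\to H^1(Y)$ is trivial; I expect it to follow from the same reasoning once one keeps track of the $H_1$ action through the identifications of \cite{lidman:hfinfinity}, exactly as in Theorems~\ref{T:negative} and~\ref{T:QHcob}. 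Granting it, I would run the diagram chase from the proof of Theorem~\ref{T:negative} with cokernels in place of kernels: if $\bar\pi^Y$ is nonzero in grading $g$, lift a suitable class via $F^\infty_{X^\circ,\spincs}$ to an element $\eta\in\HFi_{g-D}(S^3)$, and use commutativity of the square relating $\pi^{S^3}$, $\pi^Y$ and the cobordism maps together with the vanishing of $\pi^{S^3}$ in negative gradings to conclude $g-D\ge 0$. Hence $\dtop(Y,\spinct)\ge b/2$.

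Finally I would apply the previous paragraph to $\overline X$, which is a negative semidefinite filling of $-Y$ satisfying the same hypotheses, to get $\dtop(-Y,\spinct)\ge b/2$; by Proposition~\ref{prop:duality} this gives $\dbot(Y,\spinct)=-\dtop(-Y,\spinct)\le -b/2$. Lemma~\ref{lemma:topbot} then closes the gap:
\[
-\tfrac b2\ \ge\ \dbot(Y,\spinct)\ \ge\ \dtop(Y,\spinct)-b\ \ge\ \tfrac b2-b\ =\ -\tfrac b2 ,
\]
so all the inequalities are equalities and $\dbot(Y,\spinct)=-b/2$, $\dtop(Y,\spinct)=b/2$. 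The hard part is the cobordism-map computation displayed above; everything after it is formal manipulation of the properties established earlier in this section.
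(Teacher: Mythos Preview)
Your approach is more laborious than the paper's and hinges on a lemma you do not prove. The paper's argument is a one-liner: delete from $X$ a regular neighborhood of a bouquet of $b$ circles representing a basis for $H_1(X;\Q)$. The result is a spin$^c$ rational homology cobordism from $(\conn^b S^1\times S^2,\spincs_0)$ to $(Y,\spinct)$, so Theorem~\ref{T:QHcob} together with Example~\ref{ex:S1xS2} immediately gives $\dbot(Y,\spinct)=-b/2$ and $\dtop(Y,\spinct)=b/2$. No new analysis of $F^\infty$ is needed; everything was already established.

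By contrast, your route requires the assertion that for $X^\circ$ a negative semidefinite cobordism from $S^3$ to $Y$ with $H^1(X^\circ)\to H^1(Y)$ an isomorphism, the image of $F^\infty_{X^\circ,\spincs}$ projects isomorphically onto the \emph{cokernel} of the $H_1$ action. This is plausible---it is in some sense dual to the kernel statement used in Theorem~\ref{T:negative}---but it is not contained in \cite[Section~9]{oz:boundary} as stated there, nor in Theorem~\ref{T:QHcob} (which assumes isomorphisms on \emph{both} ends), and you have not supplied an argument. Saying ``I expect it to follow from the same reasoning'' is precisely the gap. If you did want to pursue this line, the natural fix is in fact the paper's trick: factor $X^\circ$ through $\#^b S^1\times S^2$ by first attaching $b$ one-handles (whose $F^\infty$ behavior is known) and then a rational homology cobordism---at which point you have rediscovered the paper's proof. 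The subsequent duality/Lemma~\ref{lemma:topbot} manipulations in your proposal are correct but become unnecessary once you have the two-sided equality from Theorem~\ref{T:QHcob}.
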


\begin{proof}
Deleting a regular neighborhood of a bouquet of circles representing a basis for $H_1(X;\Q)$ yields a spin$^c$ rational homology cobordism from $(\conn^b S^1 \times S^2, \spincs_0)$ to $(Y, \spinct)$; apply Theorem \ref{T:QHcob} and Example \ref{ex:S1xS2}.
\end{proof}

\section{Non-orientable genus bounds from the correction terms}\label{S:bound}

Let $W$ be a homology cobordism between rational homology spheres $M_0$ and $M_1$, and suppose $F=F_h \subset W$ is an essential non-orientable surface of genus $h$ with normal Euler number $e$. Recall that we denote the normal disk bundle of $F$ in $W$ by $P=P_{h,e}$ and its boundary by $Q=Q_{h,e}$. We orient $Q$ as the boundary of $P$. Let $V = W \smallsetminus \operatorname{int} P$; with this convention, $\partial V = -M_0 \sqcup -Q \sqcup M_1$.

For any $\spincs \in \Spin^c(M_0)$ we denote by $\spincs$ the unique extension of $\spincs$ to $W$. Let $\tilde\spincs \in \Spin^c(V)$ be the unique spin$^c$ structure that restricts to $\spincs$ on $M_0$ and does not extend over $W$; i.e., $\tspincs=\spincs+\gamma$, where $\gamma$ is the twisting element from Proposition \ref{twist}. Let $\t_\spincs \in \Spin^c(Q)$ be the restriction of $\tilde \spincs$ to $Q$; this is one of the two torsion spin$^c$ structures on $Q$ that do not extend over the disk bundle $P$. The restriction of $\tilde\spincs$ to $M_1$ is $\spincs + \phi$, where $\phi=\PD([c])$ and $c \in H_1(F;\Z)$ is the torsion generator.

\begin{theorem}\label{T:twist-bound}
Let $W$ be a homology cobordism between rational homology spheres $M_0$ and $M_1$ and suppose that $F_h \subset W$ is essential and has normal Euler number $e$. Then for each $\s \in \Spin^c(M_0)$, we have
\begin{equation} \label{dbounds}
\dtop(Q_{h,e},\t_s) - \frac{h-1}{2} \le d(M_1,\s+\phi)) - d(M_0, \s) \le \dbot(Q_{h,e}, \t_\s) + \frac{h-1}{2}.
\end{equation}
Moreover, the three quantities in \eqref{dbounds} are congruent modulo $2$.
\end{theorem}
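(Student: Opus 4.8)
The plan is to apply Theorem~\ref{T:negative} to the surface complement $V = W \smallsetminus \operatorname{int} P$, and then to $-V$, and to split the resulting estimate into one contribution per boundary component of $\partial V = -M_0 \sqcup -Q \sqcup M_1$ using additivity and duality of the correction terms. Here $V$ carries the spin$^c$ structure $\tilde\s$ described above, which restricts to $\s$, $\t_\s$, and $\s+\phi$ on $M_0$, $Q$, and $M_1$ respectively.

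First I would record the homological properties of $V$ using the Mayer--Vietoris sequence for $W = V \cup_Q P$. Since $P$ deformation retracts to $F_h$ we have $H_2(P;\Q) = 0$; since $W$ is a homology cobordism between rational homology spheres, $H_2(W;\Q) = 0$; and the bundle projection $Q \to F_h$ induces a surjection, hence by a dimension count an isomorphism, $H_1(Q;\Q) \to H_1(F_h;\Q)$. Chasing the sequence gives $b_1(V;\Q) = 0$ and shows that the inclusion $H_2(\partial V;\Q) \cong H_2(Q;\Q) \to H_2(V;\Q)$ (the $M_i$ being rational homology spheres) is onto. Because the image of the boundary lies in the radical of the intersection form, this forces the intersection form of $V$ to vanish identically; in particular $V$ (and likewise $-V$) is negative semidefinite with $b_2^-=0$, and $c_1(\tilde\s)^2 = 0$ — the square is defined because $c_1(\tilde\s)$ restricts to a torsion class on each boundary component (torsion spin$^c$ structures all around), so in $H^2(V;\Q)$ it equals a class that lifts to $H_2(V;\Q)$, on which the form is zero. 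Finally $H_1(V;\Z)$ is torsion, being an extension of $H_1(M_0)$ by $\Z_2[f]$ as in the proof of Proposition~\ref{twist}, so $H^1(V;\Z) = 0$ and the $H^1$ hypothesis of Theorem~\ref{T:negative} holds vacuously. I would also check here that $\HFi(Q)$ is standard: rationally $H^1(Q;\Q) = \pi^* H^1(F_h;\Q)$, so every triple cup product of classes in $H^1(Q;\Q)$ factors through $H^3(F_h;\Q) = 0$; this is detected rationally, so the integral triple cup product vanishes and Theorem~\ref{thm:Lidman} applies to both $Q$ and $-Q$. Note $b_1(Q) = h-1$, which is the source of the $\frac{h-1}{2}$ in \eqref{dbounds}.

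Now apply Theorem~\ref{T:negative} to $V$. Writing $\dbot(\partial V, \tilde\s) = \dbot(M_1, \s+\phi) + \dbot(-M_0,\s) + \dbot(-Q,\t_\s)$ by Proposition~\ref{prop:additivity}, then using Proposition~\ref{prop:duality} together with $\dbot = \dtop = d$ for rational homology spheres to rewrite $\dbot(-M_0,\s) = -d(M_0,\s)$ and $\dbot(-Q,\t_\s) = -\dtop(Q,\t_\s)$, the inequality
\[
c_1(\tilde\s)^2 + b_2^-(V) + 4b_1(V) \le 4\dbot(\partial V, \tilde\s) + 2b_1(\partial V)
\]
reduces (the three left-hand terms being $0$, and $b_1(\partial V) = h-1$) to
\[
0 \le 4\bigl(d(M_1, \s+\phi) - d(M_0,\s) - \dtop(Q,\t_\s)\bigr) + 2(h-1),
\]
which rearranges to the left inequality of \eqref{dbounds}. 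Running the same argument with $-V$, whose boundary is $-M_1 \sqcup M_0 \sqcup Q$, yields the right inequality. For the congruences, the mod~$8$ clause of Theorem~\ref{T:negative} says the two sides of the displayed inequality agree mod~$8$; since their difference is $4$ times $d(M_1,\s+\phi) - d(M_0,\s) - \bigl(\dtop(Q,\t_\s) - \frac{h-1}{2}\bigr)$, this gives that the middle and left quantities of \eqref{dbounds} are congruent mod~$2$, and the $-V$ computation gives the analogous statement for the middle and right quantities (equivalently, the left and right quantities are congruent mod~$2$ by Lemma~\ref{lemma:topbot}).

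The substantive part is the homological analysis of $V$ — in particular verifying that its intersection form vanishes and that $c_1(\tilde\s)^2 = 0$ — which requires some care with the Mayer--Vietoris sequence and with the rational cohomology of the non-orientable circle bundle $Q$. Everything after that is formal manipulation of additivity and duality, together with the observation that the proof of Theorem~\ref{T:negative}, although phrased for connected boundary, is insensitive to $\partial V$ having three components precisely because the correction terms are additive under disjoint union.
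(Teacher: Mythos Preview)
Your proposal is correct and follows essentially the same approach as the paper: apply Theorem~\ref{T:negative} to $\pm V$ with the twisted spin$^c$ structure $\tilde\s$, then use additivity and duality to separate the contributions of the three boundary components. The only cosmetic difference is that the paper first removes neighborhoods of arcs joining the boundary components to obtain $V'$ with connected boundary $-M_0 \conn -Q \conn M_1$, so that Theorem~\ref{T:negative} and Proposition~\ref{prop:additivity} apply literally as stated, whereas you invoke the (equally valid) extension of Theorem~\ref{T:negative} to disconnected boundary.
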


\begin{proof}
Let $V'$ be obtained from $V$ by deleting neighborhoods of an arc connecting $M_0$ and $Q$ and an arc connecting $Q$ to $M_1$; thus, $\partial V' = -M_0 \conn -Q \conn M_1$. We denote the restriction of $\tilde\spincs$ to $V'$ by $\tilde\spincs$. Note that the intersection form on $H_2(V')$ is zero, since all of $H_2(V')$ comes from the boundary, hence $c_1(\tilde\spincs)^2=0$. Furthermore, it is easy to see (cf.~the proof of Proposition~\ref{twist}) that $b_1(V) = b_1(V')=0$ and $b_2(V) = b_2(V')=h-1$.

Applying Theorem \ref{T:negative} to $(V', \tilde\s)$, we have:
\[
\begin{aligned}
0 &\le 4 \dbot(\partial V', \tilde\spincs) + 2b_1(\partial V) \\
&= 4(d(-M_0, \spincs) + \dbot(-Q, \spinct_\spincs) + d(M_1, \spincs+\phi)) + 2(h-1),
\end{aligned}
\]
so
\[
\begin{aligned}
d(M_1, \spincs+\phi) - d(M_0, \spincs) &\ge -\dbot(-Q, \spinct_\spincs) - \frac{h-1}{2} \\
&= \dtop(Q, \spinct_\spincs) - \frac{h-1}{2}
\end{aligned}
\]
using Proposition \ref{prop:duality}. Likewise, applying Theorem \ref{T:negative} to $(-V', \tilde\s)$, we have:
\[
\begin{aligned}
0 &\le 4 \dbot(\partial (-V'), \tilde\spincs) + 2b_1(\partial V) \\
&= 4(d(M_0, \spincs) + \dbot(Q, \spinct_\spincs) + d(-M_1, \spincs+\phi)) + 2(h-1).
\end{aligned}
\]
so
\[
d(M_1, \spincs+\phi) - d(M_0, \spincs) \le \dbot(Q, \spinct_\spincs) + \frac{h-1}{2}.
\]
Moreover, in each of these inequalities, the two sides are congruent modulo $2$.
\end{proof}

\section{Circle bundles over non-orientable surfaces}\label{S:floercalc}
Let $P_{h,e}$ and $Q_{h,e}$ denote the orientable disk bundle and circle bundle over $F_h=\#^h \RP^2$ of Euler number $e$, respectively. We orient $Q_{h,e}$ as the boundary of $P_{h,e}$. The goal of this section is to determine the correction terms $\dbot$ and $\dtop$ for the torsion spin$^c$ structures on $Q_{h,e}$.

Note that the torsion spin$^c$ structures on $Q_{h,e}$ are in one-to-one correspondence with the torsion part of $H^2(Q_{h,e})$, which by Lemma \ref{L:Q} is isomorphic to $\Z_2 \oplus \Z_2$ when $e$ is even and $\Z_4$ when $e$ is odd. In either case, two of the torsion spin$^c$ structures on $Q_{h,e}$ extend over $P_{h,e}$ and two do not. Using a precise notational convention established below, we label the two extendible spin$^c$ structures by $\spincu^{h,e}_0, \spincu^{h,e}_1$ and the two non-extendible ones by $\spinct^{h,e}_0, \spinct^{h,e}_1$. (We omit $h$ and $e$ when they are clear from the context.) The rest of this section is devoted to the proof of the following theorem:

\begin{theorem} \label{T:d}
Let $h>0$ and $e\in\Z$. For $a=0,1$, the correction terms of $Q_{h,e}$ in the non-extendible spin$^c$ structures are given by
\[
\dbot(Q_{h,e},\spinct_a^{h,e})=\dtop(Q_{h,e},\spinct_a^{h,e})= \frac{e-2}{4} +a
\]
for $h$ odd, and by
\[
\dbot(Q_{h,e},\spinct_a^{h,e})=\dtop(Q_{h,e},\spinct_{1-a}^{h,e})=\frac{e-2}{4} + a
\]
for $h$ even. The correction terms in the extendible spin$^c$ structures are given by
\[
\dbot(Q_{h,e}, \spincu_a^{h,e}) = - \frac{h-1}{2} \quad \text{and} \quad \dtop(Q_{h,e}, \spincu_a^{h,e}) = \frac{h-1}{2}.
\]
\end{theorem}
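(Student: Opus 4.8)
The strategy is to realize $Q_{h,e}$ by explicit surgery diagrams and compute with standard Heegaard Floer technology, bootstrapping from a base case via surgery exact triangles and the additivity/duality results of Section~\ref{S:correction}. First I would set up the notation: the manifold $Q_{h,e}$ is the circle bundle over $F_h=\#^h\RP^2$ with $w_1=w_1(F_h)$ and Euler number $e$; since $F_h$ has a handle decomposition with one $0$-handle, $h$ $1$-handles, and one $2$-handle, the circle bundle has a surgery description on a link in $\#^h(S^1\times S^2)$ or, after a standard manipulation, on a framed link in $S^3$ consisting of $h$ chain-linked curves (encoding the non-orientable $1$-handles, each contributing a $\pm2$-type framing because of the orientation-reversing gluing) plus one curve carrying the Euler-number framing $e$. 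The key point is that $b_1(Q_{h,e})$ equals $h-1$ for $e$ even and $h-1$ for $e$ odd as well (from Lemma~\ref{L:Q}, $H_1$ has free rank $h-1$), and that the triple cup product on $H^1(Q_{h,e})$ vanishes, so by Theorem~\ref{thm:Lidman} $\HFi(Q_{h,e})$ is standard and the invariants $\dbot,\dtop$ are defined.

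Second, I would handle the \emph{extendible} spin$^c$ structures $\spincu_a^{h,e}$ directly via Corollary~\ref{C:QHS1xB3}: the disk bundle $P_{h,e}$ deformation retracts to $F_h$, so $b_1(P_{h,e})=h-1$, and one checks $b_2(P_{h,e})=b_3(P_{h,e})=0$ (the only $2$-handle is attached along a curve that, together with the Euler number, kills $H_2$ rationally — equivalently $P_{h,e}$ is rationally a $\natural^{h-1}(S^1\times B^3)$ up to the Euler class contribution, which is torsion). Since $\spincu_a^{h,e}$ by definition extends over $P_{h,e}$, Corollary~\ref{C:QHS1xB3} immediately gives $\dbot(Q_{h,e},\spincu_a^{h,e})=-(h-1)/2$ and $\dtop(Q_{h,e},\spincu_a^{h,e})=(h-1)/2$. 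This is the easy half.

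Third, for the \emph{non-extendible} spin$^c$ structures I would proceed by induction on $h$. For the base case $h=1$: $F_1=\RP^2$, and $Q_{1,e}$ is a genuine rational homology sphere — in fact a lens space (the $S^1$-bundle over $\RP^2$ with Euler number $e$ has $H_1=\Z_{2}\oplus 0$ for $e$ even, which forces a small lens space, and $\Z_4$ for $e$ odd, which is $L(4,\pm1)$), so $\dbot=\dtop=d$ and one reads off $d(Q_{1,e},\spinct_a)=\tfrac{e-2}{4}+a$ from the known $d$-invariants of lens spaces (Ozsv\'ath--Szab\'o's recursion). For the inductive step $h\to h+1$, adding a copy of $\RP^2$ to the connected sum corresponds to a controlled modification of the surgery diagram; I would use a surgery exact triangle relating $Q_{h+1,e}$ to $Q_{h,e}$ and to a $\#(S^1\times S^2)$-connected-sum piece, then track the gradings and the $H_1$-action through the triangle to extract how $\dbot$ and $\dtop$ shift. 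The appearance of the index shift ``$-1$'' in $\tfrac{e-2}{4}$ should come from the Euler-number-$e$ surgery curve (relating $Q_{h,e}$ to $Q_{h,e\pm1}$ by a triangle and matching with $h=1$), and the swap $\spinct_a\leftrightarrow\spinct_{1-a}$ between $\dbot$ and $\dtop$ in the $h$ even case should come from Proposition~\ref{prop:duality} combined with an orientation-reversal identification $-Q_{h,e}\cong Q_{h,-e+\text{const}}$ together with the conjugation action on the $\Z_4$ or $\Z_2\oplus\Z_2$ set of torsion spin$^c$ structures, which behaves differently in the two parity classes of $h$ (conjugation fixes vs.\ permutes the relevant pair).

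\textbf{Main obstacle.} The hard part is the bookkeeping in the inductive step: one must pin down \emph{which} spin$^c$ structure on the intermediate manifold each term of the surgery triangle contributes to, and control the grading shifts with the $H_1$-action present (so that one is computing $\dbot,\dtop$ and not merely the ordinary $d$ of a rational homology sphere). Concretely, one needs a clean model for $Q_{h,e}$ — probably as $\partial$ of a plumbing or as $(\#^{h-1}S^1\times S^2)$ surgered along an explicit knot — so that the maps in the triangle, the conjugation symmetry, and the orientation-reversal diffeomorphism can all be made explicit and compatible with the labeling convention $\spincu_0,\spincu_1,\spinct_0,\spinct_1$. Getting the $\pm$ signs, the $+a$ versus $1-a$, and the $\tfrac{e-2}{4}$ offset all simultaneously correct is where the real work lies; the Floer-theoretic inputs (standardness, additivity, duality, the semidefinite inequality) are all in place from Section~\ref{S:correction}.
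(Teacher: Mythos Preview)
Your treatment of the extendible spin$^c$ structures via Corollary~\ref{C:QHS1xB3} is exactly what the paper does, and that part is fine.

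The rest of your plan has a real gap. First, your base case is wrong: for $h=1$ and $e$ even, Lemma~\ref{L:Q} gives $H_1(Q_{1,e})\cong\Z_2\oplus\Z_2$, not $\Z_2$, so $Q_{1,e}$ is \emph{not} a lens space (lens spaces have cyclic $H_1$); these are prism-type Seifert manifolds, and you cannot simply read off the $d$-invariants. Second, and more seriously, your inductive step ``surgery triangle relating $Q_{h+1,e}$ to $Q_{h,e}$'' is left entirely unspecified; adding an $\RP^2$ summand does not correspond to any single $2$-handle attachment that would slot into an $(\infty,n,n{+}1)$ triangle in an obvious way, and you have not identified the third term or the grading shifts.

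The paper's argument is structurally different from induction on $h$. It proceeds in three pieces. (1)~An \emph{upper} bound $\dbot(Q_{h,e},\spinct_a)\le\frac{e-2}{4}+a$ for large negative even $e$ (Lemma~\ref{L:dbound}), obtained by building explicit $2$-handle cobordisms from $Q_{2g+1,e+2}$ and $Q_{2g+2,e}$ to $M_{g,e}\conn S^1\times S^2$, where $M_{g,e}$ is the circle bundle over an \emph{orientable} genus-$g$ surface, whose $\dbot$ is known. (2)~\emph{Linearity in $e$} (Proposition~\ref{P:linearity}): a single $-1$-framed meridian gives a negative-definite cobordism $Q_{h,e}\to Q_{h,e+1}$ sitting in a surgery triangle whose third vertex involves only non-torsion spin$^c$ structures, forcing $\dbot$ to jump by exactly $\tfrac14$. (3)~The matching \emph{lower} bound comes not from a direct computation but from feeding the classical Bredon--Wood embedding $F_k\hookrightarrow L(2k,1)$ (with $e=0$) into Theorem~\ref{T:twist-bound}: the known $d$-invariant differences of $L(2k,1)$ force $\dbot(Q_{k,0},\spinct_{\spincs_0})\ge\tfrac12$, which pins down equality and identifies which labelled structure is which. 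The $\dtop$ values then follow from the orientation-reversal $Q_{h,e}\cong -Q_{h,-e}$ together with the parity congruence of Lemma~\ref{lemma:topbot}, which is what produces the $a\leftrightarrow 1-a$ swap for $h$ even. The point you are missing is this ``reverse-engineering'' step: the application (embeddings in lens spaces) supplies the lower bound that completes the Floer computation.
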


We may obtain $Q_{h,e}$ as surgery on a knot in $\#^h S^1 \times S^2$, namely the connected sum of $h$ copies of a knot in $S^1 \times S^2$ representing twice the generator of first homology. By \cite[Proposition 9.3]{oz:boundary}, we see that $Q_{h,e}$ has standard $\HFi$; thus, the aforementioned correction terms are actually defined. Let $R_{h,e}$ denote the $2$-handle cobordism corresponding to the surgery; note that $P_{h,e} = (\natural^h S^1 \times B^3) \cup R_{h,e}$.

\begin{figure}[h]
  \begin{center}
    \labellist
    \normalsize\hair 0mm
    \pinlabel {{\footnotesize$0$}} at 19 87
    \pinlabel {{\footnotesize$0$}} at 51 105
    \pinlabel {{\footnotesize$0$}} at 97 87
    \pinlabel {{\footnotesize$0$}} at 119 105
    \pinlabel {{\footnotesize$0$}} at 232 87
    \pinlabel {{\footnotesize$0$}} at 268 105
    \pinlabel {$=\ \#_g$} at 193 32
    \endlabellist
    \includegraphics[scale=.7]{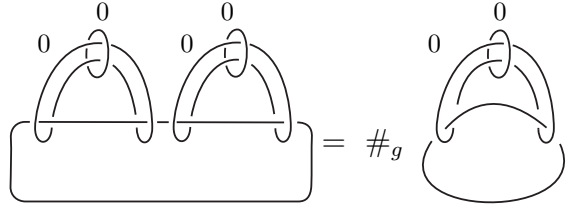}
  \end{center}
  \caption{The Borromean knot.}
  \label{F:borromean}
\end{figure}

Denote by $M_{g,e}$ the oriented circle bundle of Euler number $e$ over an orientable surface of genus $g$. (Note that $M_{0,e}$ is the lens space $L(e,1)$.) The manifold $M_{g,e}$ is obtained by surgery with coefficient $e$ on the ``Borromean'' knot $B_g \subset \#^{2g} S^1 \times S^2$, drawn on the left in Figure~\ref{F:borromean}; let $V_{g,e}$ denote the corresponding $2$-handle cobordism from $\#^{2g} S^1 \times S^2$ to $M_{g,e}$.

Let $\Sigma \subset \#^{2g} S^1 \times S^2$ be a genus-$g$ Seifert surface for $B_g$, which can be capped off to give a closed surface $\hat \Sigma_e \subset V_{g,e}$ with $[\hat \Sigma_e]^2=e$. For $i \in \Z$, let $\tilde \spincs^{g,e}_i \in \Spin^c(V_{g,e})$ denote the spin$^c$ structure that restricts to the unique torsion spin$^c$ structure on $\#^{2g} S^1 \times S^2$ and satisfies
\[
\gen{c_1(\tilde \spincs^{g,e}_i), [\hat \Sigma_e]} + e = 2i.
\]
Let $\spincs^{g,e}_i$ be the restriction of $\tilde \spincs^{g,e}_i$ to $M_{g,e}$; this depends only on the class of $i$ modulo $e$. (We may omit $g$ and $e$ when they are understood from context.)

When $h = 2g+1$, the non-orientable surface $F_h$ can be viewed as a connected sum of $g$ tori and a copy of $\RP^2$. Similarly, when $h=2g+2$, $F_h$ is a connected sum of $g$ tori and two copies of $\RP^2$. Therefore, the surgery diagram in Figure \ref{F:bothgenus} represents $Q_{2g+2,e}$, whereas the same diagram without the rightmost $0$-framed surgery curve represents $Q_{2g+1,e+2}$.

\begin{figure}[h]
  \begin{center}
    \labellist
    \normalsize\hair 0mm
    \pinlabel {{\footnotesize$0$}} at 22 70
    \pinlabel {{\footnotesize$0$}} at 54 90
    \pinlabel {{\footnotesize$0$}} at 105 70
    \pinlabel {{\footnotesize$0$}} at 135 90
    \pinlabel {{\footnotesize$0$}} at 218 90
    \pinlabel {{\footnotesize$0$}} at 285 90
    \pinlabel {{\footnotesize$e$}} at 338 28
    \endlabellist
    \includegraphics[scale=.8]{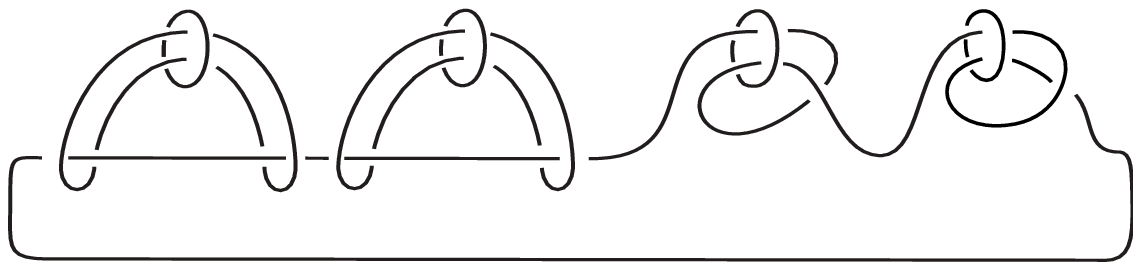}
  \end{center}
  \caption{Surgery description of $Q_{h,e}$ for $h$ even; omitting the right-most zero framed surgery curve gives $Q_{h-1,e+2}$.}
  \label{F:bothgenus}
\end{figure}

We set up some notation used in the proof of the theorem. Let $X=X_{g,e}$ be the cobordism from $\#^{2g} S^1 \times S^2$ obtained by attaching five $2$-handles $h_0,\ldots, h_4$, as shown in Figure~\ref{F:Xgn}.
\begin{figure}[h]
  \begin{center}
    \labellist
    \normalsize\hair 0mm
    \pinlabel {{\footnotesize$\langle0\rangle$}} at 52 107
    \pinlabel {{\footnotesize$\langle0\rangle$}} at 80 125
    \pinlabel {{\footnotesize$\langle0\rangle$}} at 130 107
    \pinlabel {{\footnotesize$\langle0\rangle$}} at 155 125
    \pinlabel {{\footnotesize$e$}} at 323 47
    \pinlabel {{\footnotesize$h_0$}} at 302 48
    \pinlabel {{\footnotesize$h_1$}} at 195 115
    \pinlabel {{\footnotesize$0$}} at 215 123
    \pinlabel {{\footnotesize$0$}} at 280 123
    \pinlabel {{\footnotesize$0$}} at 248 115
    \pinlabel {{\footnotesize$h_2$}} at 302 115
    \pinlabel {{\footnotesize$-1$}} at -3 49
    \pinlabel {{\footnotesize$h_4$}} at 42 48
    \pinlabel {{\footnotesize$h_3$}} at 222 102
    \endlabellist
    \includegraphics[scale=.9]{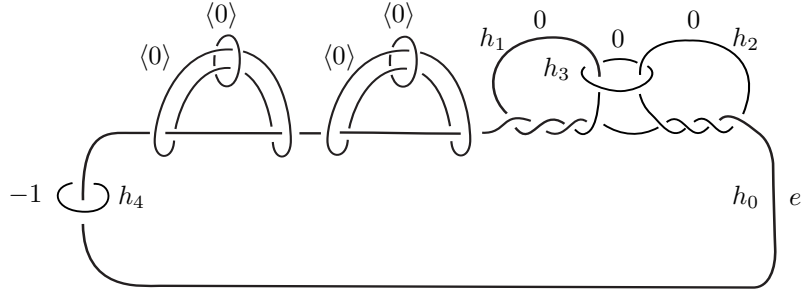}
  \end{center}
  \caption{The composite cobordism $X_{g,e}$.}
  \label{F:Xgn}
\end{figure}
We will use various subcobordisms of $X$ to relate the $d$-invariants of different 3-manifolds. The interesting part, where $h_1$ was already attached, is shown in the following diagram:
\begin{equation} \label{eq:handles}
\xymatrix @C=0.9in @R=0.5in{
\#^{2g+1} S^1 \times S^2 \ar[d]^{R_{2g+1,e+2}}_{h_0} \ar[r]^{}_{h_2} & \#^{2g+2} S^1 \times S^2 \ar[r]_{h_3}^{}  \ar[d]_{h_0}^{R_{2g+2,e}} & \#^{2g+1} S^1 \times S^2 \ar[d]_{h_0}^{V_{g,e} \bconn\, \mathbb{I}} \\
Q_{2g+1,e+2} \ar[r]^{W_2}_{h_2} \ar[d]^{W_4}_{h_4} & Q_{2g+2,e} \ar[r]^{W_3}_{h_3} \ar[d]^{W_4'}_{h_4} & M_{g,e} \conn S^1 \times S^2 \\
 Q_{2g+1,e+3} \ar[r]^{}_{h_2} & Q_{2g+2,e+1}
}
\end{equation}
Here, $\mathbb{I}$ denotes the product cobordism $(S^1 \times S^2) \times I$.

Note that all of the 2-handles in $X$ are attached to $\#^{2g} S^1 \times S^2$ along nullhomologous curves. Thus, $H_2(X) \cong H_2(\#^{2g} S^1 \times S^2) \oplus \Z^5$, where the generators $A_\ell$, $\ell=0,\ldots,4$ of $\Z^5$ represent classes given by $h_\ell$ with the exception of $A_2$, which represents the sum of $h_1$ and $h_2$ (corresponding to sliding $h_2$ across $h_1$). This generator is represented by a sphere of square zero and has vanishing intersection number with the other generators. The intersection form of $X$ relative to $A_0, A_1, A_3$ and $A_4$ is given by
\[
\begin{bmatrix}
e & -2 & 0 & 1\\ -2 & 0 & 1 & 0 \\ 0 & 1 & 0 & 0 \\ 1 & 0 & 0 & -1
\end{bmatrix}
\]

A spin$^c$ structure on $X$ is specified by its restriction to $\#^{2g} S^1 \times S^2$ and the evaluation of its first Chern class on the $A_\ell$. Specifically, for integers $i, j, k, l$, let $\spincv_{i,j,k,l} \in \Spin^c(X)$ be the spin$^c$ structure that restricts to the unique torsion spin$^c$ structure on $\#^{2g} S^1 \times S^2$ and satisfies:
\[
\begin{aligned}
\gen{c_1(\spincv_{i,j,k,l}), A_0} &+ e = 2i \\
\gen{c_1(\spincv_{i,j,k,l}), A_1} & = 2j \\
\gen{c_1(\spincv_{i,j,k,l}), A_2} & = 0 \\
\gen{c_1(\spincv_{i,j,k,l}), A_3} & = 2k \\
\gen{c_1(\spincv_{i,j,k,l}), A_4} &-1 = 2l.
\end{aligned}
\]

By considering the presentations for $H^2$ of the various $3$-manifolds given by the intersection forms of
subcobordisms of $X$, we observe the following facts:

\begin{itemize}
\item
The restriction of $\spincv_{i,j,k,l}$ to $\#^{2g+1} S^1 \times S^2$ is torsion iff $j=0$.

\item
If $e$ is even, then $\spincv_{i,j,k,l} |_{Q_{2g+1, e+2}} = \spincv_{i',j',k',l'}|_{Q_{2g+1, e+2}}$ iff $i \equiv i' \pmod 2$ and $j \equiv j' \pmod 2$. If $e$ is odd, then $\spincv_{i,j,k,l} |_{Q_{2g+1, e+2}} = \spincv_{i',j',k',l'}|_{Q_{2g+1, e+2}}$ iff $2i-j  \equiv 2i'-j' \pmod 4$. The same is true for the restrictions to $Q_{2g+2,e}$.

\item
If $e$ is even, then $\spincv_{i,j,k,l} |_{Q_{2g+1, e+3}} = \spincv_{i',j',k',l'}|_{Q_{2g+1, e+3}}$ iff $2i-j+2l \equiv 2i'-j'+2l' \pmod 4$. If $e$ is odd, then $\spincv_{i,j,k,l} |_{Q_{2g+1, e+3}} = \spincv_{i',j',k',l'}|_{Q_{2g+1, e+3}}$ iff $j \equiv j' \pmod 2$ and $i+l \equiv i'+l' \pmod 2$. The same is true for the restrictions to $Q_{2g+2, e+1}$.
\end{itemize}


Thus, for each $(h,f) \in \{(2g+1,e+2), (2g+1,e+3), (2g+2,e), (2g+2,e+1)\}$, we pin down the labeling of the four torsion spin$^c$ structures on $Q_{h,e}$ by setting
\[
\begin{aligned}
\spincu_0^{h,f} &= \spincv_{g+e,0,0,0} |_{Q_{h,f}} & \spincu_1^{h,f} &= \spincv_{g+e+1,0,0,0} |_{Q_{h,f}} \\
\spinct_0^{h,f} &= \spincv_{g+e,1,0,0} |_{Q_{h,f}} & \spinct_1^{h,f} &= \spincv_{g+e+1,1,0,0} |_{Q_{h,f}}.
\end{aligned}
\]
Note that $\spincu_0^{h,f}$ and $\spincu_1^{h,f}$ extend over the disk bundle $P_{h,f}$, while $\spinct_0^{h,f}$ and $\spinct_1^{h,f}$ do not.

\begin{remark}
We must verify that this labeling of the spin$^c$ structures on $Q_{2g+1,e+3}$ and $Q_{2g+2,e+2}$ is consistent with the labeling obtained by considering $X_{g,e+1}$ in place of $X_{g,e}$. To see this, let $Z$ be the manifold obtained by adding an additional $-1$-framed handle $h_5$ to $X$ along a meridian of $h_0$, and let $A_5 \in H_2(Z)$ be the element of square $-1$ represented by this handle. We may alternately decompose $Z$ as
\[
Z = (\mathbb{I} \conn \CPbar^2) \cup X_{g,e+1},
\]
where the first stage corresponds to adding the handle $h_4$ to $\mathbb{I} = (\#^{2g} S^1 \times S^2) \times I$, and the second stage corresponds to adding $h_0$, $h_1$, $h_2$, $h_3$, and $h_5$. Denote by $A_0', \cdots, A_4'$ the classes in this $H_2(X_{g,e+1})$ corresponding to $A_0, \dots, A_4$ in the original definition; in $H_2(Z)$, we have $A_0' = A_0+A_4$, $A_1'=A_1$, $A_2' = A_2$, $A_3'=A_3$, and $A_4'=A_5$. Observe that $Q_{2g+1,e+3}$ and $Q_{2g+2, e+1}$ sit inside both $X_{g,e}$ and $X_{g,e+1}$.

Let $\spincv_{i,j,k,l}'$ denote the spin$^c$ structure on $X_{g,e+1}$ that is defined (through its evaluations on $A_0', \dots, A_4'$) analogously to $\spincv_{i,j,k,l}$. For $a,b \in \{0,1\}$, let $\spincw_{a,b}$ be the extension of $\spincv_{g+e+a,b,0,0}$ to $Z$ satisfying
\[
\gen{c_1(\spincw_{a,b}), A_5} - 1 = 0.
\]
Then
\begin{align*}
\gen{c_1(\spincw_{a,b}|_{X_{h,e+1}}), A_0'} + e+1 &= \gen{c_1(\spincv_{g+e+a,b,0,0}), A_0 + A_4} + e+1 \\
&= (-e + 2(g+e+a)) + (1+0) + e + 1 \\
&= 2(g+e+1+a)
\intertext{and}
\gen{c_1(\spincw_{a,b}|_{X_{h,e+1}}), A_1'} &= 2b,
\end{align*}
which implies that the restriction of $\spincw_{a,b}$ to $X_{g,e+1}$ is $\spincv'_{g+e+1+a,b,0,0}$. It follows that the two naming conventions for spin$^c$ structures on $Q_{2g+1,e+3}$ and $Q_{2g+2,e+2}$ agree. \qed
\end{remark}

Additionally, we determine the restriction of $\spincv_{i,j,k,l}$ to $M_{g,e} \conn S^1 \times S^2$. Observe that the image of a generator of $H_2(V_{g,e})$ in $H_2(X)$ is equal to $A_0+2A_3$. Since
\[
\gen{c_1(\spincv_{i,j,k,l}),A_0+2A_3}+e=2(i+2k),
\]
it follows that $\spincv_{i,j,k,l}|_{M \conn S^1 \times S^2} = \hat\spincs_{i+2k}$, where $\hat\spincs_{i+2k}$ restricts to $\spincs_{i+2k}$ on $M_{g,e}$ and to the unique torsion spin$^c$ structure on $S^1 \times S^2$.

\begin{lemma}
\label{L:dbound}
For even large negative $e$, the $\dbot$ invariants of the two torsion spin$^c$ structures on $Q_{h,e}$ that do not extend over the disk bundle $P_{h,e}$ satisfy
\[
\dbot(Q_{h,e},\spinct_{a}) \le \frac{e-2}4+a, \qquad a=0,1.
\]
\end{lemma}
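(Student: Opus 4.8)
The plan is to obtain this \emph{upper} bound on $\dbot(Q_{h,e},\spinct_a)$ from Theorem~\ref{T:negative} (or equivalently Theorem~\ref{T:QHcob}) applied to a negative semidefinite $4$--manifold built out of the handle description of $Q_{h,e}$, together with the duality of Proposition~\ref{prop:duality}. Note that a negative semidefinite $4$--manifold bounding $Q_{h,e}$ itself would only yield a \emph{lower} bound on $\dbot(Q_{h,e},\spinct_a)$, and that a negative semidefinite cobordism \emph{out of} $Q_{h,e}$ cannot help either, since $c_1(\spincs)^2\le0$ in any negative semidefinite $4$--manifold. So the point must be that when $e\ll0$ the piece of the picture carrying the $e$--framed handle $h_0$ becomes negative semidefinite, and can be arranged to have $-Q_{h,e}$ on one end and, on the other, a $3$--manifold whose correction terms are already known — a connected sum of copies of $S^1\times S^2$ (Example~\ref{ex:S1xS2}), or a circle bundle $M_{g,e'}$ over an orientable surface whose negative definite disk bundle controls its correction terms via Theorem~\ref{T:negative} and Corollary~\ref{C:QHS1xB3}. (As a cross-check, one could instead run the large negative surgery / mapping--cone formula on the knot $K_h\subset\conn^h S^1\times S^2$ directly, which is exactly the regime $e\ll0$; I would use the cobordism version since it meshes with the $X_{g,e}$ machinery already set up.)

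Concretely I would proceed as follows. First, inside the relevant piece of $X_{g,e}$ — with $h=2g+1$ or $h=2g+2$ according to parity — isolate a sub--cobordism $Z$ realizing $-Q_{h,e}$ together with a manifold $Y_1$ of known correction terms as its two ends, compute $b_1(Z)$, $b_2^-(Z)$, $\chi(Z)$, $\sigma(Z)$, and check negative semidefiniteness: the handle $h_0$ contributes the block $\langle e\rangle$, negative definite for $e<0$, while the $0$--framed handles $h_1,h_2,\dots$ contribute to the radical, so for $e\ll0$ the whole form is negative semidefinite (``$e$ even'' being used so that the $\langle e\rangle$ block carries a characteristic vector of square $0$). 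Second, take the spin$^c$ structure on $Z$ to be $\spincv_{i,j,k,l}|_Z$ with $j=1$, $k=l=0$, and $i\in\{g+e,\,g+e+1\}$, so that it restricts to $\spinct_0$, resp.\ $\spinct_1$, on $Q_{h,e}$ by the labeling conventions established just before the lemma; then compute $c_1(\spincv_{i,j,k,l})^2$ on $Z$ and its restriction to $Y_1$, using the restriction facts listed in the bullet points above and the identity $\spincv_{i,j,k,l}|_{M_{g,e}\conn S^1\times S^2}=\hat{\mathfrak s}_{i+2k}$. Third, feed these into Theorem~\ref{T:negative}: the grading shift $\tfrac14\bigl(c_1(\spincs)^2+b_2^-(Z)+4b_1(Z)\bigr)-\tfrac12 b_1$ combines with the known correction term of $Y_1$ to give, after simplification, the bound $\dbot(Q_{h,e},\spinct_a)\le\tfrac{e-2}{4}+a$, with the $+a$ coming from the two admissible choices of $i$ and the $-2$ from the $\chi,\sigma$ contribution of $Z$. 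Where the argument naturally produces an upper bound on $\dtop$ rather than $\dbot$, convert using the relations between $\dbot$ and $\dtop$ (Lemma~\ref{lemma:topbot}) and Proposition~\ref{prop:duality}.

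The step I expect to be the main obstacle is the spin$^c$ bookkeeping: one must simultaneously guarantee that the extended structure on $Z$ restricts to the \emph{designated} non--extendible structure $\spinct_a$ on $Q_{h,e}$, and identify its restriction to the other boundary component as a torsion spin$^c$ structure whose correction term is already known, all while keeping the numerous small constants consistent — the $-2$ versus $+2$, the $\pm\tfrac{h-1}{2}$ terms, and the dependence of the $h_0$--framing on the parity of $h$. A secondary but genuine point is verifying that the chosen sub--cobordism of $X_{g,e}$ really is negative semidefinite for $e\ll0$ and that the hypotheses of Theorem~\ref{T:negative} (behavior of the $H^1$--restriction maps, standardness of $\HFi$) hold for it.
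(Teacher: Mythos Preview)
Your second paragraph lands close to the paper's argument, but your first paragraph contains a wrong turn that would derail you. You dismiss ``a negative semidefinite cobordism out of $Q_{h,e}$'' on the grounds that $c_1^2\le 0$. That dismissal is unwarranted: Theorem~\ref{T:QHcob} gives $\dbot(Q_{h,e})\le \dbot(Y_1)-\tfrac14(c_1^2+b_2^-)$, and this is useful precisely when $\dbot(Y_1)$ is itself very negative. The paper exploits exactly this: for $e\ll 0$ the target $Y_1=M_{g,e}\conn S^1\times S^2$ has $\dbot(M_{g,e},\spincs_{g+a})=\tfrac{e+1}{4}+a+\tfrac{(g+a)^2}{e}$ (via \cite[Lemma~9.17]{oz:boundary}), and that negativity is what drives the bound. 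So there is no need to pass to $-Q_{h,e}$, and in particular no need for the $\dtop\to\dbot$ conversion you flag at the end (which, as you may have noticed, goes the wrong way via Lemma~\ref{lemma:topbot}).

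Concretely, the paper uses the cobordisms $W_2$ (handle $h_2$) and $W_3$ (handle $h_3$), not $h_0$: the second homology of $W_3$ is generated by $G=A_0+\tfrac{e}{2}A_1+2A_3$ with $G^2=e<0$, so $W_3$ is negative definite. The spin$^c$ choice is $\spincv_{g+a,1,0,0}$ (which, since $e$ is even, restricts to $\spinct_a$ on both $Q$'s and to $\hat\spincs_{g+a}$ on $M_{g,e}\conn S^1\times S^2$). One subtlety you should anticipate: the hypotheses of Theorem~\ref{T:QHcob} (that $H^1(W)\to H^1(Y_i)$ be isomorphisms) are not obviously satisfied here, so the paper does not invoke that theorem. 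Instead it uses the finer structural facts about $F^\infty$ for single $2$-handle additions from \cite[Propositions~9.3 and~9.4]{oz:boundary}: a $0$-framed $2$-handle on a null\-homologous knot sends the bottom tower isomorphically to the bottom tower (giving $\dbot(Q_{2g+1,e+2})-\tfrac12\le\dbot(Q_{2g+2,e})$), and a negative definite $2$-handle makes $F^\infty$ an isomorphism (giving $\dbot(Q_{2g+2,e})+\delta\le\dbot(M_{g,e})-\tfrac12$). Combining these with the known $\dbot(M_{g,e},\spincs_{g+a})$ yields the stated bound for both parities of $h$.
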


\begin{proof}
To simplify the notation, we write $Q$ for $Q_{2g+1,e+2}$, $Q'$ for $Q_{2g+2,e}$, and $M$ for $M_{g,e}$. We also write $\spinct_a$ for $\spinct_a^{2g+1,e+2}$ and $\spinct'_a$ for $\spinct_a^{2g+2,e}$. For $a=0,1$, let $\spincv'_a$ and $\spincv''_a$ be the restrictions of $\spincv_{g+a,1,0,0}$ to $W_2$ and $W_3$, respectively. Since $e$ is even, note that $\spincv_{g+a,1,0,0}|_Q = \spincv_{g+e+a,1,0,0}|_Q = \spinct_a$ and $\spincv_{g+a,1,0,0}|_{Q'} = \spincv_{g+e+a,1,0,0}|_{Q'} = \spinct'_a$. Consider the induced maps
\[
\HF^\circ(Q, \spinct_a) \xrightarrow{F^\circ_{W_2, \spincv'_a}} \HF^\circ(Q', \spinct_a') \xrightarrow{F^\circ_{W_3, \spincv''_a}} \HF^\circ(M \conn S^1 \times S^2, \hat\spincs_{g+a} ).
\]

The image of a generator of $H_2(W_2)$ in $H_2(X)$ is $A_2$, which has square $0$. Since the handle was attached along a nullhomologous knot, the map $F_{W_2,\spincv'}^\infty$ is an isomorphism onto the kernel of the action by the new free generator of $H_1(Q')$ (see \cite[Proposition 9.3]{oz:boundary}). In particular, $F^\infty_{W_2, \spincv'_a}$ maps the bottom tower in $\HFi(Q, \spinct_a)$ isomorphically to the bottom tower in $\HFi(Q', \spinct'_a)$. The grading shift of this map is $-1/2$, hence
\begin{equation}\label{eq:boundQ}
\dbot(Q,\spinct_a)-\frac12 \le \dbot(Q',\spinct_a').
\end{equation}

The image $G$ of a generator of $H_2(W_3)$ in $H_2(X)$ is $G=A_0+ (e/2) A_1 +2 A_3$ and has square $G^2=e$, so this cobordism is negative definite for negative $e$. Since
\[
\gen{c_1(\spincv_{i,j,k,l}),G}=2i+e(j-1)+4k,
\]
the grading shift of $F^\circ_{W_3, \spincv''_a}$ is
\[
\delta=\frac{(g+a)^2}e+\frac14.
\]
Since $F^\infty_{W_3,\spincv''_a}$ is an isomorphism (by \cite[Proposition 9.4]{oz:boundary}), we have
\[
\dbot(Q',\spinct_{g+a}') + \delta \le \dbot(M \conn S^1 \times S_2, \hat\spincs_{g+a}) = \dbot(M,\spincs_{g+a}) - \frac12,
\]
using Proposition \ref{prop:additivity}. If $e \le -2g$, the same argument as in \cite[Lemma 9.17]{oz:boundary} shows that
\[
\dbot(M,\spincs_{g+a}) = \frac{e+1}{4} + a + \frac{(g+a)^2}{e},
\]
so
\[
\dbot(Q',\spinct_a')\le \frac{e-2}{4}+a
\]
which when combined with \eqref{eq:boundQ} gives the result for $Q$ as well.
\end{proof}

We now show that the $d$-invariants of the torsion spin$^c$ structures on $Q_{h,e}$ that do not extend over $P_{h,e}$ are linear in $e$.

\begin{proposition}
\label{P:linearity}
For any $h>0$, $e\in\Z$, and $a \in \{0,1\}$, we have
\[
\dbot(Q_{h,e+1}, \spinct_a^{h,e+1}) = \dbot(Q_{h,e}, \spinct_a^{h,e}) + \frac14.
\]
\end{proposition}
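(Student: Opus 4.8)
The plan is to realize $Q_{h,e+1}$ by a single surgery on $Q_{h,e}$ that raises the Euler number by one, and to identify the resulting cobordism map as an isomorphism of a controlled degree shift. When $h=2g+1$ I will use the cobordism $W_4\co Q_{2g+1,e+2}\to Q_{2g+1,e+3}$ of diagram~\eqref{eq:handles}, and when $h=2g+2$ the cobordism $W_4'\co Q_{2g+2,e}\to Q_{2g+2,e+1}$; since $e$ runs over all of $\Z$, these two families cover every case of the proposition. Each is obtained by attaching the single $-1$-framed $2$-handle $h_4$ along a meridian $\mu$ of the surgery curve whose framing records the Euler number, and blowing $\mu$ down identifies the outgoing end with the circle bundle whose Euler number is larger by one. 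The crucial bookkeeping point, already built into the labelling conventions above, is that a single spin$^c$ structure $\spincv:=\spincv_{g+e+a,1,0,0}$ on $X_{g,e}$ restricts to $\spinct_a$ on each of $Q_{2g+1,e+2}$, $Q_{2g+1,e+3}$, $Q_{2g+2,e}$ and $Q_{2g+2,e+1}$; in particular $F^\circ_{W_4,\spincv}$ is a map $\HF^\circ(Q_{2g+1,e+2},\spinct_a)\to\HF^\circ(Q_{2g+1,e+3},\spinct_a)$ for every flavor $\circ$.

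The heart of the argument is to show that $F^\circ_{W_4,\spincv}$ is an isomorphism for each flavor. For this I would use the surgery exact triangle attached to the three slopes on $\mu$: the $\infty$-slope gives back $Q_{2g+1,e+2}$, the slope realizing $-1$-surgery gives $Q_{2g+1,e+3}$, and the remaining ($0$-) slope, after a slam-dunk that deletes the Euler-number curve, gives $\#^{2g+1} S^1\times S^2$. The key observation is that the spin$^c$ structure induced on this third manifold is \emph{non-torsion}: this is precisely the condition that $\spinct_a$ fails to extend over the disk bundle $P_{h,e}$ (equivalently, in the notation above, that the index ``$j$'' of $\spinct_a$ is odd, so that $c_1(\spincv)$ pairs oddly with the class $A_1$). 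Hence $\HF^\circ$ of the third manifold in the induced spin$^c$ structure vanishes, the triangle degenerates, and the surviving map---a cobordism map closely related to $F^\circ_{W_4,\spincv}$---is an isomorphism, compatibly across all flavors. In particular $F^\infty_{W_4,\spincv}$ and $F^+_{W_4,\spincv}$ are isomorphisms respecting the $\Lambda^* H_1^T$-action and fit into the usual commuting square with the maps $\pi$.

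It then remains to compute the degree shift and conclude. The maps $F^\circ_{W_4,\spincv}$ shift the absolute grading by
\[
\frac{c_1(\spincv)^2-2\chi(W_4)-3\sigma(W_4)}{4}=\frac{0-2\cdot 1-3\cdot(-1)}{4}=\frac14,
\]
using $\chi(W_4)=1$ (one $2$-handle), $\sigma(W_4)=-1$, and $c_1(\spincv)^2=0$; these three numbers are read off from the description of $H_2(X_{g,e})$ and its intersection form above, the vanishing of $c_1(\spincv)^2$ again resting on the fact that $c_1(\spincv)$ pairs trivially with the relevant negative-square class of $H_2(W_4)$ (a multiple of the core of $h_4$ capped off by a surface in $Q_{2g+1,e+2}$). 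Since $F^\infty_{W_4,\spincv}$ and $F^+_{W_4,\spincv}$ are isomorphisms, they carry the bottom tower of $\HFi(Q_{2g+1,e+2},\spinct_a)$ isomorphically onto that of $\HFi(Q_{2g+1,e+3},\spinct_a)$ while commuting with $\pi$, so $\dbot(Q_{2g+1,e+3},\spinct_a)=\dbot(Q_{2g+1,e+2},\spinct_a)+\tfrac14$. The identical argument with $W_4'$ gives $\dbot(Q_{2g+2,e+1},\spinct_a)=\dbot(Q_{2g+2,e},\spinct_a)+\tfrac14$.

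The genuine obstacle will be the middle step: confirming that the exact triangle is the right one and, above all, that the induced spin$^c$ structure on $\#^{2g+1} S^1\times S^2$ is non-torsion (so that its Floer homology vanishes in the relevant grading), which amounts to matching the non-extendibility of $\spinct_a$ with the homological data of the $0$-surgery cobordism on $\mu$. Correctly identifying the integers $\chi$, $\sigma$ and $c_1(\spincv)^2$ for $W_4$---in particular that $c_1(\spincv)^2=0$ rather than some other square---is a secondary point, again resting on careful bookkeeping of the torsion in $H_1(Q_{h,e})$ and of the self-intersection data of the subcobordisms of $X_{g,e}$.
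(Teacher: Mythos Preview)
Your overall strategy---use the $-1$-framed $2$-handle cobordism $W_4$ and the surgery exact triangle with third vertex $\#^{h}S^1\times S^2$ in non-torsion spin$^c$ structures---is exactly the paper's approach. However, two of the steps do not go through as written.

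First, the computation of $c_1(\spincv|_{W_4})^2$ is wrong. The generator of $H_2(W_4)$ modulo classes from $H_2(Q)$ is not simply the core of $h_4$ capped off: since the fiber class $[\mu]$ has order~$2$ in $H_1(Q)$, one must take \emph{twice} the core and cap with a surface in $Q$, and the resulting class in $H_2(X_{g,e})$ is $G=A_1+2A_4$ with $G^2=-4$. For your choice $\spincv=\spincv_{g+e+a,1,0,0}$ one has $\langle c_1(\spincv),A_1\rangle=2$ and $\langle c_1(\spincv),A_4\rangle=1$, hence $\langle c_1(\spincv),G\rangle=4$ and $c_1(\spincv|_{W_4})^2=-4$. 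The grading shift is therefore $\tfrac{-4-2+3}{4}=-\tfrac34$, not $\tfrac14$. The spin$^c$ structure with grading shift $\tfrac14$ is the one with $\langle c_1,G\rangle=0$ (in the paper's notation, $\spincx_{a,-2}$), and it differs from your $\spincv$ by a multiple of $\PD(G)$.

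Second, the exact triangle only shows that the \emph{sum} $F^+_{W_4}=\sum_{\spincx}F^+_{W_4,\spincx}$ over all spin$^c$ structures on $W_4$ is an isomorphism on the relevant summands; infinitely many $\spincx$ restrict to the same pair $(\spinct_a,\spinct'_a)$ on the ends, so no individual $F^+_{W_4,\spincv}$ need be an isomorphism. The paper's argument is that each $F^\infty_{W_4,\spincx_{a,m}}$ is an isomorphism (negative-definite $2$-handle), so each $F^+_{W_4,\spincx_{a,m}}$ maps the bottom tower surjectively, with grading shift $\tfrac{-(m+2)^2+1}{4}$; the map with $m=-2$ has the strictly largest shift among all $m$, and injectivity of the total map then forces $F^+_{W_4,\spincx_{a,-2}}$ to be injective on the bottom tower. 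This comparison across spin$^c$ structures is the missing ingredient in your argument.
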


\begin{proof}
We begin by considering the case where $h=2g+1$. For conciseness, we write $Q = Q_{2g+1, e+2}$ and $Q' = Q_{2g+1, e+3}$, and likewise write $\spinct_a = \spinct_a^{2g+1, e+2}$ and $\spinct'_a = \spinct_a^{2g+1, e+3}$. Note that the cobordisms $W_0 = R_{2g+1,e+2}$ and $W_4$, shown in the left-hand column of \eqref{eq:handles}, give consecutive maps in the long exact sequence for $(\infty, e, e+1)$ surgery on the attaching circle for $h_0$. Let $Z$ denote the third cobordism in the sequence, gotten by attaching a $-1$-framed $2$-handle along a meridian of the attaching circle for $h_4$. Thus, the following sequence is exact:
\begin{equation} \label{eq:exactseq}
\cdots \xrightarrow{F^+_Z} \HFp(\#^{2g+1} S^1 \times S^2) \xrightarrow{F^+_{W_0}} \HFp(Q) \xrightarrow{F^+_{W_4}}  \HFp(Q') \xrightarrow{F^+_Z} \cdots.
\end{equation}

Any spin$^c$ structure on $W_0$ that restricts to either $\spinct_0$ or $\spinct_1$ must restrict to a non-torsion spin$^c$ structure on $\#^{2g+1} S^1 \times S^2$, since otherwise we could extend it over $\natural^{2g+1} S^1 \times B_3$ to get a spin$^c$ structure on $P_{2g+1,e+2}$ extending $\spinct_i$, a contradiction. The same is true for spin$^c$ structures on $Z$.

The group $H_2(W_4)$ is generated by an element $G$ whose image in $H_2(X)$ is $A_1 + 2A_4$ and whose self-intersection is $-4$. For $a \in \Z_2$ and $m \in \Z$, let $\spincx_{a,m}$ denote the spin$^c$ structure on $W_4$ that restricts to $\spinct_a$ on $Q$ and satisfies
\[
\gen{c_1(\spincx_{a,m}), G} - 4 = 2m,
\]
It is easy to verify that the restriction of $\spincx_{a,m}$ to $Q'$ is $\spinct_{a+m}'$.

Since the summand of $\HFp(\#^{2g+1} S^1 \times S^2)$ in any non-torsion spin$^c$ structure is zero, we see from \eqref{eq:exactseq} that $F^+_{W_4}$ restricts to an isomorphism
\[
\HFp(Q, \spinct_0) \oplus \HFp(Q, \spinct_1) \to \HFp(Q', \spinct'_0) \oplus \HFp(Q', \spinct'_1).
\]
Since $W_4$ is a negative-definite $2$-handle addition, each map $F^\infty_{W_4, \spincx_{a,m}}$ is an isomorphism, so $F^+_{W_4, \spincx_{a,m}}$ takes the bottom tower in $\HFp(Q, \spinct_a)$ surjectively to the bottom tower in $\HFp(Q', \spinct'_{a+m})$. The grading shift of $F^+_{W_4, \spincx_{a,m}}$ is
\[
\frac{c_1(\spincx_{a,m})^2-2\chi(W_4) - 3\sigma(W_4)}{4} = \frac{-(m+2)^2+1}{4}.
\]
Since $F^+_{W_4, \spincx_{a,-2}}$ lowers the grading the least among all the maps $F^+_{W_4, \spincx_{a,m}}$, its restriction is injective.
Thus,
\[
\dbot(Q', \spinct'_a) = \dbot(Q', \spinct_a) + \frac14
\]
as required.

The case where $h=2g+2$ proceeds in the exact same manner, making use of the cobordisms in the middle column of \eqref{eq:handles}.
\end{proof}

Combining results of Lemma \ref{L:dbound} and Proposition \ref{P:linearity} about the $d$-invariants of the torsion spin$^c$ structures on $Q_{h,e}$ with some known embeddings into lens spaces we are able to compute the $d$-invariants in these spin$^c$ structures.

\begin{proof}[Proof of Theorem \ref{T:d}]
Recall from \cite{bredon-wood:surfaces} that $M=L(2k,1)$ contains an (essential) embedding of $F_k$. This can be easily described as follows. Let $\alpha$ and $\beta$ be sides of a square describing a genus 1 Heegaard diagram for $S^3$ and let $\gamma$ be the linear slope representing the homology class $\alpha + 2k\beta$; label the intersections between $\alpha$ and $\gamma$ by $0, 1, \ldots, 2k-1$. Then $(\alpha,\gamma)$ is a Heegaard diagram for $L(2k,1)$. An embedding of $F_k$ can be constructed by starting with the core of the 2-handle, attaching to its boundary $\gamma$ $k$ non-orientable 1-handles (that lie in the Heegaard surface and connect arcs on $\gamma$ labeled $2j$ and $2j+1$ for $j=0,\ldots,k-1$) and capping the resulting boundary off with the cocore of the 1-handle. As an embedding in $M\times I$ this surface has vanishing normal Euler number, $e=0$. The case of a Klein bottle ($h=2$) embedded in $L(4,1)$ is illustrated in Figure \ref{F:L41}.

\begin{figure}[!ht]
\centering
    \labellist
    \normalsize\hair 0mm
    \pinlabel {$0$} at 0 -7
    \pinlabel {$1$} at 21 -7
    \pinlabel {$2$} at 42 -7
    \pinlabel {$3$} at 63 -7
    \pinlabel {{\color{red}\footnotesize$\alpha$}} at 15 12
    \pinlabel {\footnotesize$\beta$} at -8 45
    \pinlabel {\footnotesize$\gamma$} at 11 65
    \endlabellist
\includegraphics[scale=1.2]{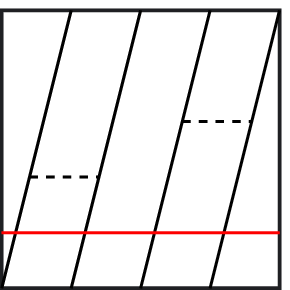}
\caption{Lens space $L(4,1)$.}
  \label{F:L41}
\end{figure}

We will apply Theorem \ref{T:twist-bound} to this embedding. Recall from \cite[Proposition 4.8]{oz:boundary} that the spin$^c$ structures on $L(2k,1)$ can be labeled $\spincs_0, \dots, \spincs_{2k-1}$ so that the $d$-invariants are
\[
d(M,\spincs_s)=\frac14-\frac{(s-k)^2}{2k},
\]
and the relevant differences of these invariants are
\[
d(M,\spincs_{s+k})-d(M,\spincs_s) = \frac{k}{2} - s, \qquad s=0,\ldots,k-1.
\]
The maximal difference is attained for the spin$^c$ structure $\spincs_0$, and the right-hand inequality in \eqref{dbounds} then gives $\dbot(Q_{k,0},\spinct_{\spincs_0}) \ge 1/2$. Combining the bound of Lemma \ref{L:dbound} with the linearity result of Proposition \ref{P:linearity} yields $\dbot(Q_{k,0},\spinct_0^{k,0}) \le -1/2$ and $\dbot(Q_{k,0},\spinct_1^{k,0}) \le 1/2$. This implies that $\spinct_{\spincs_0} = \spinct_1^{k,0}$ and
$\dbot(Q_{k,0},\spinct_1^{k,0}) = 1/2$. Another application of linearity gives $\dbot(Q_{k,e},\spinct_1^{k,e}) = (e+2)/4$.

By Proposition \ref{twist}, $\spinct_{\spincs_1} \ne \spinct_{\spincs_0}$, so $\spinct_{\spincs_1} = \spinct^{k,0}_0$.
Thus, \eqref{dbounds} gives $\dbot(Q_{k,0},\spinct_{0})\ge -1/2$, which in combination with the upper bound and linearity gives $\dbot(Q_{k,e},\allowbreak \spinct_{0}^{k,e})=(e-2)/4$.

To determine $\dtop(Q_{k,e},\spinct_a)$, first note that there is an orientation-reversing diffeomorphism between $Q_{h,e}$ and $Q_{h,-e}$. By Proposition \ref{prop:duality}, for $a = 0,1$, we either have
\begin{equation} \label{eq:a}
\dtop(Q_{h,e}, \spinct_a^{h,e}) = -\dbot(Q_{h,-e}, \spinct_a^{h,-e}) = \frac{e+2}{4} - a
\end{equation}
or
\begin{equation} \label{eq:1-a}
\dtop(Q_{h,e}, \spinct_a^{h,e}) = -\dbot(Q_{h,-e}, \spinct_{1-a}^{h,-e}) = \frac{e+2}{4} + a - 1,
\end{equation}
depending on how the diffeomorphism acts on the set of spin$^c$ structures. Also, by Lemma \ref{lemma:topbot}, we have
\[
\dtop(Q_{h,e}, \spinct_a^{h,e}) \equiv \dbot(Q_{h,e}, \spinct_a^{h,e}) - h+1 \pmod 2.
\]
Thus, \eqref{eq:a} holds when $h$ is even, and \eqref{eq:1-a} holds when $h$ is odd, as required.

The final statement in the theorem follows immediately from Corollary \ref{C:QHS1xB3}.
\end{proof}

\section{Proof of Theorem~\ref{T:Mbound}}\label{S:Hcob}
Theorem \ref{T:twist-bound}, which provides genus bounds for surfaces in a homology cobordism $(W,M_0,M_1)$ with the $M_i$ rational homology spheres, is most effective when we apply it to the spin$^c$ structure that maximizes the value of the differences of $d$-invariants appearing in the statement of the theorem. This motivates the following definition.

\begin{definition}
\label{D:delta}
Let $M$ be a rational homology sphere and $\phi\in H^2(M;\Z)$ a nontrivial class of order $2$. Define
$$\Delta=\Delta(M,\phi)=\max\{d(M,\spincs+\phi)-d(M,\spincs)\mid \spincs\in \spinc (M)\}.$$
\end{definition}
By the homology cobordism invariance of $d$-invariants, $\Delta$ is also a homology cobordism invariant.  In particular, if $W$ is a homology cobordism between $M_0$ and $M_1$, the spin$^c$ structures on the ends are canonically identified, and $\Delta(M_0,\phi) = \Delta(M_1,\phi)$.

Recall from Proposition \ref{twist} that if $F \subset W$ is an essential embedding in a homology cobordism, then a curve in $F$ representing the torsion generator $c \in H_1(F;\Z)$ represents a nontrivial class $[c] \in H_1(W;\Z)$. It is the Poincar\'e dual of this class, $\phi=\PD([c])$, that appears in the bound of Theorem \ref{T:twist-bound}.  For the reader's convenience, we restate Theorem~\ref{T:Mbound} from the introduction, with a slight refinement.
\begin{theorem}
Suppose that $(W,M_0,M_1)$ is a homology cobordism between rational homology spheres, and that $F_h\subset W$ is essential and has normal Euler number $e$. Let $\Delta=\Delta(M_0,\phi)$. Then
\[
h \ge 2\Delta;
\]
furthermore, if $h$ is even and the restriction of $\phi$ to $F_h$ is nontrivial, or if $h$ is odd and the restriction of $\phi$ to $F_h$ is trivial, then
\[
h \ge 2\Delta+1.
\]
Additionally,
\[
|e| \le 2h - 4\Delta\qquad \text{and}\qquad e \equiv 2h - 4\Delta \pmod4 .
\]
\end{theorem}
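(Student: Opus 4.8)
The plan is to feed two carefully chosen spin$^c$ structures on $M_0$ into Theorem~\ref{T:twist-bound}, substitute the correction terms of $Q_{h,e}$ computed in Theorem~\ref{T:d}, and combine the resulting inequalities with the mod $4$ congruence of Proposition~\ref{P:mod4}. Observe first that, being a homology cobordism between rational homology spheres, $W$ is spin, so $e$ is even and every quantity below lies in $\tfrac14\Z$, which keeps the arithmetic clean.

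First I would pick $\s_0\in\Spin^c(M_0)$ realizing the maximum in Definition~\ref{D:delta}, so that $d(M_0,\s_0+\phi)-d(M_0,\s_0)=\Delta$, and set $\s_1=\s_0+\phi$. Since $\phi$ has order $2$ we have $\s_1+\phi=\s_0$, so the corresponding difference for $\s_1$ is $-\Delta$; by homology cobordism invariance these differences are also computed on $M_1$. By construction (Section~\ref{S:bound}) the spin$^c$ structures $\t_{\s_0},\t_{\s_1}\in\Spin^c(Q_{h,e})$ are non-extendible torsion spin$^c$ structures, hence each is one of $\spinct_0^{h,e},\spinct_1^{h,e}$; I will not need to know which. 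From Theorem~\ref{T:d} one reads off, for any non-extendible torsion structure $\t$ on $Q_{h,e}$,
\[
\dbot(Q_{h,e},\t)\le\frac{e+2}{4},\qquad \dtop(Q_{h,e},\t)\ge\frac{e-2}{4}.
\]

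Next I would feed these into \eqref{dbounds}. The right-hand inequality at $\s_0$ gives
\[
\Delta\le \dbot(Q_{h,e},\t_{\s_0})+\frac{h-1}{2}\le\frac{e+2}{4}+\frac{h-1}{2}=\frac{e}{4}+\frac{h}{2},
\]
so $e\ge 4\Delta-2h$; the left-hand inequality at $\s_1$ gives $\dtop(Q_{h,e},\t_{\s_1})-\frac{h-1}{2}\le-\Delta$, whence
\[
\Delta\le -\dtop(Q_{h,e},\t_{\s_1})+\frac{h-1}{2}\le-\frac{e-2}{4}+\frac{h-1}{2}=-\frac{e}{4}+\frac{h}{2},
\]
so $e\le 2h-4\Delta$. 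Together these give $\abs{e}\le 2h-4\Delta$, and adding the two bounds on $\Delta$ gives $2\Delta\le h$. For the congruence I would use the mod $2$ clause of Theorem~\ref{T:twist-bound} at $\s_0$: $\Delta\equiv\dbot(Q_{h,e},\t_{\s_0})+\frac{h-1}{2}\pmod 2$. Writing $\dbot(Q_{h,e},\t_{\s_0})=\frac{e-2}{4}+p$ with $p\in\{0,1\}$ (Theorem~\ref{T:d}) and multiplying through by $4$ gives $4\Delta\equiv e+2h+4p-4\pmod 8$, hence $4\Delta\equiv e+2h\pmod 4$; combined with $e\equiv 2k_{[c]}+2h\pmod4$ (Proposition~\ref{P:mod4}) this yields $4\Delta\equiv 2k_{[c]}\pmod4$, so $e-(2h-4\Delta)\equiv 4k_{[c]}\equiv 0\pmod4$.

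Finally, for the sharpening $h\ge 2\Delta+1$: the hypotheses ``$h$ even and $\phi|_{F_h}\ne 0$'' and ``$h$ odd and $\phi|_{F_h}= 0$'' become, via Lemma~\ref{L:restrict} and the identification of the $\Z_2$-valued class $\phi|_{F_h}$ with $k_{[c]}\bmod 2$, the conditions ``$h$ even, $k_{[c]}=1$'' and ``$h$ odd, $k_{[c]}=0$'' respectively. In either case Proposition~\ref{P:mod4} forces $e\equiv 2\pmod 4$, so $\dbot(Q_{h,e},\t_{\s_0})=\frac{e-2}{4}+p\in\Z$, and the congruence $\Delta\equiv\dbot(Q_{h,e},\t_{\s_0})+\frac{h-1}{2}\pmod2$ reduces to $2\Delta\equiv h-1\pmod2$. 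Thus $2\Delta$ and $h$ have opposite parity, which together with $2\Delta\le h$ forces $h\ge 2\Delta+1$. All of the gauge-theoretic content is packaged into Theorems~\ref{T:twist-bound} and~\ref{T:d}, so the remaining work is bookkeeping — tracking which torsion spin$^c$ structure on $Q_{h,e}$ is hit and juggling the $\tfrac14\Z$-valued inequalities against the mod $2$ and mod $4$ congruences. The one genuinely delicate point, and the main obstacle, is the identification of $\phi|_{F_h}$ with $k_{[c]}\bmod 2$, which rests on Lemma~\ref{L:restrict} and the naturality argument used in the proof of Proposition~\ref{P:mod4}, and which is exactly what makes the refined inequality available precisely in the stated cases.
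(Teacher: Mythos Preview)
Your argument for the basic inequalities $h\ge 2\Delta$ and $\abs{e}\le 2h-4\Delta$, and for the congruence $e\equiv 2h-4\Delta\pmod4$, is correct and essentially matches the paper's: both choose $\s_0$ realizing $\Delta$, feed $\s_0$ and $\s_0+\phi$ into Theorem~\ref{T:twist-bound}, and combine with the bounds $\dbot\le(e+2)/4$, $\dtop\ge(e-2)/4$ from Theorem~\ref{T:d}. (Your detour through Proposition~\ref{P:mod4} for the congruence is unnecessary---the mod~$2$ clause of Theorem~\ref{T:twist-bound} alone already gives $4\Delta\equiv e+2h\pmod4$, hence the result---but it does no harm.)

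For the refined inequality $h\ge 2\Delta+1$, your route is genuinely different from the paper's. The paper observes that $\spinct_{\s_0}=\spinct_{\s_0+\phi}$ exactly when $\phi|_Q$ (equivalently $\phi|_F$) vanishes, and then reads off from the $h$-parity dichotomy in Theorem~\ref{T:d} that in the two stated cases one has $\dbot(\spinct_{\s_0})=\dtop(\spinct_{\s_0+\phi})$, whence $2\Delta\le h-1$ directly. You instead translate the hypotheses into $e\equiv 2\pmod4$ via Proposition~\ref{P:mod4} and the identification $\phi|_F\leftrightarrow k_{[c]}$, and then run a parity argument. Your approach works, but it rests on the identity $\langle\phi\bmod 2,[F]\rangle=k_{[c]}$; this is true (both sides equal $\langle\psi^3,[M_0]\rangle$ where $\psi\in H^1(M_0;\Z_2)$ satisfies $\beta\psi=\phi$, using $\PD$, the Bockstein, and $Sq^1\psi=\psi^2$), but it is not stated in the paper and needs the short cohomological computation you allude to. The paper's route avoids this extra lemma by using the finer structure of Theorem~\ref{T:d} directly, which is cleaner; your route, on the other hand, has the virtue of using only the coarse bounds on $\dbot$ and $\dtop$ together with the classical congruence of Proposition~\ref{P:mod4}.
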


\begin{proof}
Let $\spincs\in\spinc (M_0)$ be a spin$^c$ structure on $M_0$ (and hence on $W$ and $M_1$) such that $d(M_1,\spincs+\phi) - d(M_0,\spincs) = \Delta$.
Let $\spinct = \spinct_{\spincs}$ and $\spinct' = \spinct_{\spincs+ \phi}$. Note that $\spinct = \spinct'$ if and only if the restriction of $\phi$ to $Q$ is trivial. From \eqref{dbounds} we have
\begin{equation}\label{E:delta1}
\dtop(Q,\spinct)-\frac{h-1}2 \le \Delta \le \dbot(Q,\spinct)+\frac{h-1}2.
\end{equation}
Note that since $\phi$ is of order 2, replacing $\spincs$ by $\spincs + \phi$ in \eqref{dbounds} changes the sign of the middle term, thus giving
\begin{equation}\label{E:delta2}
\dtop(Q,\spinct')-\frac{h-1}2 \le -\Delta \le \dbot(Q,\spinct')+\frac{h-1}2.
\end{equation}
Subtracting the two inequalities yields
\[
2\Delta \le \dbot(\spinct)-\dtop(\spinct')+h-1.
\]
By Theorem \ref{T:d}, we have $\dbot(\spinct) - \dtop(\spinct') \le 1$, implying that $h \ge 2\Delta$. Furthermore, if $h$ is even and $\spinct\ne\spinct'$, or if $h$ is odd and $\spinct=\spinct'$, then $\dbot(\spinct) = \dtop(\spinct')$, implying that $2 \Delta \ge h+1$.

To get the bounds on the normal Euler number, use \eqref{E:delta1} combined with the facts that $\dbot(\spinct) \le (e+2)/4$ and $\dtop(\spinct) \ge (e-2)/4$. Finally, reducing the congruence condition in Theorem \ref{T:twist-bound} modulo 1 yields the congruence condition on $e$.
\end{proof}

More can be said in the case of embeddings of the projective plane in a homology cobordism $W$.

\begin{corollary}\label{C:rp2}
If $\RP^2 \subset W$ is essential, then the Euler number of the embedding must be $0$ and $\phi$ restricts nontrivially to $\RP^2$. Let $K$ be the kernel of the restriction homomorphism $H^2(W; \Z) \to H^2(\RP^2;\Z)$. Then there exists $\spincs_0 \in \Spin^c(M_0)$ such that
\[
d(M_0,\spincs+\phi)-d(M_0,\spincs)=\frac12
\]
for each $\spincs \in \spincs_0+K$. (That is, the $d$-invariants of $M_0$ are the same as those of a manifold with an $\RP^3$ summand.)
\end{corollary}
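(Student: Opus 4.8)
The plan is to run Theorem~\ref{T:twist-bound} and Theorem~\ref{T:d} with $h=1$, where the odd‑genus coincidence $\dbot=\dtop$ of Theorem~\ref{T:d} turns the two‑sided estimate~\eqref{dbounds} into an equality, and then to play this rigidity against essentiality of $\RP^2$. Write $F=\RP^2$ (so $h=1$), $Q=Q_{1,e}$, and for $\spincs\in\Spin^c(M_0)$ let $\spinct_\spincs=\spincs|_Q+\gamma|_Q\in\Spin^c(Q)$ be the non‑extendible spin$^c$ structure of Section~\ref{S:bound}; let $a_\spincs\in\{0,1\}$ be the label recording which of the two non‑extendible spin$^c$ structures on $Q$ it is, so by Theorem~\ref{T:d} we have $\dbot(Q,\spinct_\spincs)=\dtop(Q,\spinct_\spincs)=\tfrac{e-2}{4}+a_\spincs$. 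Since $h=1$ the term $\tfrac{h-1}{2}$ vanishes, so \eqref{dbounds} collapses to $d(M_1,\spincs+\phi)-d(M_0,\spincs)=\tfrac{e-2}{4}+a_\spincs$ for every $\spincs$. Using $d(M_1,\cdot)=d(M_0,\cdot)$ and substituting $\spincs\mapsto\spincs+\phi$ (which reverses the sign of the left‑hand side, $\phi$ having order two), then adding the two identities, one gets $a_\spincs+a_{\spincs+\phi}=\tfrac{2-e}{2}$; as the left‑hand side lies in $\{0,1,2\}$, this forces $e\in\{-2,0,2\}$.

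Next I would rule out $e=\pm2$. In that case $a_\spincs+a_{\spincs+\phi}$ is $0$ or $2$, so $a_\spincs$ must be independent of $\spincs$; hence $\spinct_\spincs=\spincs|_Q+\gamma|_Q$ does not depend on $\spincs$, which — $\Spin^c(M_0)$ being a torsor over $H^2(W;\Z)$ — means the restriction homomorphism $H^2(W;\Z)\to H^2(Q;\Z)$ is zero. But this restriction factors as $H^2(W;\Z)\xrightarrow{j^*}H^2(F;\Z)\xrightarrow{\pi^*}H^2(Q;\Z)$, because $Q\subset P\subset W$ and $P$ is homotopy equivalent to $F$; and for $e$ even $\pi^*$ is injective, by the Gysin sequence~\eqref{gysin} with $w_2=0$. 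So $j^*$ would be zero. On the other hand, Lemma~\ref{L:restrict} identifies $j^*\colon H^2(W;\Z)\to H^2(F;\Z)\cong\Z_2$ with reduction mod~$2$ followed by evaluation on $[F]\in H_2(W;\Z_2)$, and since $H^3(W;\Z)$ has no $2$-torsion the reduction $H^2(W;\Z)\to H^2(W;\Z_2)$ is surjective; by nondegeneracy of the $\Z_2$ Kronecker pairing, $j^*=0$ would give $[F]=0$, contradicting essentiality. Therefore $e=0$, and then $a_\spincs+a_{\spincs+\phi}=1$, so $\spinct_\spincs\neq\spinct_{\spincs+\phi}$; as $\spinct_{\spincs+\phi}-\spinct_\spincs=\phi|_Q=\pi^*(\phi|_F)$ and $\pi^*$ is injective, $\phi|_F\neq0$, i.e.\ $\phi$ restricts nontrivially to $\RP^2$.

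Finally, with $e=0$ the displayed equality becomes $d(M_0,\spincs+\phi)-d(M_0,\spincs)=a_\spincs-\tfrac12\in\{-\tfrac12,\tfrac12\}$. Since $\spinct_\spincs$ depends only on $\spincs|_Q$, and $\ker(H^2(W;\Z)\to H^2(Q;\Z))=\ker j^*=K$ (injectivity of $\pi^*$ once more), the label $a_\spincs$ is constant on each coset $\spincs_0+K$. Choose $\spincs_0\in\Spin^c(M_0)$: if $a_{\spincs_0}=1$ we are done, and if $a_{\spincs_0}=0$ then $a_{\spincs_0+\phi}=1-a_{\spincs_0}=1$, so replacing $\spincs_0$ by $\spincs_0+\phi$ yields a coset on which $d(M_0,\spincs+\phi)-d(M_0,\spincs)\equiv\tfrac12$, as required; this exhibits the $d$-invariants of $M_0$ along the coset as those of a manifold with an $\RP^3$ summand.

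The step I expect to be the crux is the exclusion of $e=\pm2$. Theorem~\ref{T:Mbound} by itself does not achieve it, because $\Delta=0$ is a priori consistent with $h=1$; one must instead extract from the $h=1$ rigidity that $\spincs\mapsto\spinct_\spincs$ cannot be constant when $[F]\neq0$, and this leans on the surjectivity of the integral reduction $H^2(W;\Z)\to H^2(W;\Z_2)$ together with perfectness of the mod‑$2$ pairing. It is also worth handling the $\Z$‑versus‑$\Z_2$ bookkeeping carefully when passing through the factorization $H^2(W)\to H^2(P)\to H^2(Q)$ and invoking injectivity of $\pi^*$.
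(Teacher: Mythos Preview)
Your argument is correct and follows the same overall route as the paper: specialize Theorem~\ref{T:twist-bound} to $h=1$, use Theorem~\ref{T:d} (odd $h$) to turn~\eqref{dbounds} into the equality $d(M_0,\spincs+\phi)-d(M_0,\spincs)=d(Q,\spinct_\spincs)$, and then play this off against the substitution $\spincs\mapsto\spincs+\phi$ to pin down $e$ and $\phi|_F$. The paper argues in the order ``$\phi|_F\ne 0$ first, then $e=0$'': it asserts that if $\phi|_F=0$ then \emph{both} $\tfrac{e-2}{4}$ and $\tfrac{e+2}{4}$ would have to vanish, which is the contradiction. You instead first isolate $e\in\{-2,0,2\}$ and then exclude $e=\pm2$ by showing that $a_\spincs$ constant would force $H^2(W;\Z)\to H^2(Q;\Z)$ (hence $j^*$) to vanish, contradicting essentiality via Lemma~\ref{L:restrict} and surjectivity of integral mod-$2$ reduction. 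Your version makes explicit the one point the paper leaves implicit---namely that essentiality guarantees both non-extendible spin$^c$ structures on $Q$ are realized as some $\spinct_\spincs$---so your ``crux'' paragraph is exactly the justification the paper's shortcut needs. The final coset argument for the existence of $\spincs_0$ is the same in both, yours just spelled out more fully.
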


\begin{proof}
In the special case where $h=1$, \eqref{dbounds} becomes
\[
\Delta(\spincs)=d(M,\spincs+\phi)-d(M,\spincs)=d(Q,\spinct_\spincs)
\]
for any $\spincs\in\Spin^c(M)$. Using $\spincs+\phi$ in place of $\spincs$ gives
\[
-\Delta(\spincs)=d(Q,\spinct_{\spincs+\phi}),
\]
and the two combine to
\[
d(Q,\spinct_{\spincs+\phi})=-d(Q,\spinct_\spincs)
\]
for all $\spincs\in\Spin^c(M)$. If the restriction of $\phi$ to the surface were trivial, this would imply
\[
\frac{e-2}4=-\frac{e-2}4\qquad\text{and}\qquad \frac{e+2}4=-\frac{e+2}4,
\]
a contradiction. Hence $\phi$ restricts nontrivially to the surface and
\[
\frac{e-2}4=-\frac{e+2}4\]
holds, implying $e=0$.

Now choosing $\spincs_0$ such that $d(Q,\spinct_{\spincs_0})=1/2$ gives the result.
\end{proof}


%
%
%
%

\section{Connections with rational genus} \label{S:rational}

For a knot $K$ representing a torsion homology class of order $p \ge 1$ in a $3$-manifold $M$, the \emph{rational genus} of $K$ is defined as
\[
g_r(K) = \min_{G}\frac{-\chi(G)}{2p},
\]
where the minimum is taken over all properly embedded, connected, oriented surfaces $G$ in $M \smallsetminus N$ such that $\partial G$ is homologous to $p$ times $K$ in the interior $N$ of a small tubular neighborhood of $K$. Such surfaces $G$ are called \emph{rational Seifert surfaces}.\footnote{Note that this definition differs slightly from the standard one given by Turaev \cite{turaev:homology} and Calegari--Gordon \cite{calegari-gordon:rational-genus} in that we do not exclude disks. Specifically, if there is a disk $G \subset M \smallsetminus N$ whose boundary winds along $K$ $p$ times, then $g_r(K) = -\frac{1}{2p}$ according to our definition, whereas ordinarily the rational genus of such a knot is defined to be zero. Of course, in this case, $K$ must be contained in an $L(p,q)$ summand of $M$.} Following Ni and Wu \cite{ni-wu:rational-genus}, for a torsion class $x \in H_1(M;\Z)$, define
\[
\Theta(x) = 2 \min_{K \subset M \mid [K] = x} g_r(K).
\]
This notion, for order-2 homology classes $x$, is closely connected with the embedding problem studied in the present paper, as we now explain.

As in the introduction, if $M$ is a rational homology sphere and $\phi \in H^2(M;\Z)$, define
\[
\Delta(M,\phi) = \max\{ d(M, \spincs + \phi) - d(M, \spincs) \mid \spincs \in \Spin^c(M) \}.
\]
Also, recall that $M$ is an \emph{L-space} if $\HFhat(M, \spincs) \cong \Z$ for each spin$^c$ structure $\spincs$ on $M$. A knot $K$ in an L-space $M$ (which need not be nulhomologous) is called \emph{Floer-simple} if $\HFKhat(M, K, \spincs) \cong \HFhat(M, \spincs) \cong \Z$ for each $\spincs$. For example, lens spaces are L-spaces, and every homology class in a lens space can be represented by a Floer-simple knot \cite{hedden:berge, rasmussen:lens}.

Ni and Wu \cite[Theorem 1.1]{ni-wu:rational-genus} proved that for any rational homology sphere $M$ and any $x \in H_1(M;\Z)$
\begin{equation} \label{E:Theta}
\Delta(M,\PD(x)) \le 1 + \Theta(x).
\end{equation}
Furthermore, if $M$ is an L-space and $K$ is a Floer-simple knot, then $K$ minimizes genus in its homology class, and the bound \eqref{E:Theta} is an equality \cite[Theorem 1.2 and Proposition 5.1]{ni-wu:rational-genus}.

For a rational homology sphere $M$, let $\beta\co H_2(M;Z_2) \to H_1(M;\Z)$ denote the connecting homomorphism in the Bockstein sequence associated to $0 \to \Z \to \Z \to \Z_2 \to 0$. Note that $\beta$ is an injection whose image consists of all $2$-torsion elements in $H_1(M)$.

\begin{lemma} \label{L:rationalgenus}
Let $M$ be a rational homology sphere, and let $a \in H_2(M;\Z_2)$ be a nonzero homology class. Then the minimum genus of any connected, non-orientable surface representing $a$ is equal to $2+2\Theta(\beta(a))$.
\end{lemma}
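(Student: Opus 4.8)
The plan is to reduce everything to a direct geometric correspondence between connected non-orientable surfaces representing $a$ and pairs consisting of a knot $K$ with $[K]=\beta(a)$ together with a rational Seifert surface for $K$; no Heegaard Floer input is needed for this lemma. First I would record that, since $a \neq 0$ and $\beta$ is injective with image the $2$-torsion subgroup of $H_1(M;\Z)$, the class $x=\beta(a)$ has order exactly $2$, so $p=2$ in the definition of rational genus. Unwinding the definitions then gives $\Theta(x)=2\min_{K,G}\frac{-\chi(G)}{4}=\frac12\min_{K,G}(-\chi(G))$, the minimum being over knots $K$ with $[K]=x$ and rational Seifert surfaces $G$ for $K$; hence $2+2\Theta(x)=\min_{K,G}(2-\chi(G))$, and it suffices to prove the two matching inequalities between $\min_F(2-\chi(F))$ (over connected non-orientable $F$ with $[F]=a$) and $\min_{K,G}(2-\chi(G))$.

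For the inequality $\min_F(2-\chi(F)) \le 2+2\Theta(x)$, I would start from a pair $(K,G)$ realizing $\min_{K,G}(-\chi(G))=2\Theta(x)$, which (after discarding boundary components inessential on $\partial N$, where $N$ is a tubular neighborhood of $K$) has $\partial G$ a collection of disjoint parallel curves on the torus $\partial N$ with total class $2[K]\in H_1(N)$. There are then only two possibilities: $\partial G$ is a single $(2,q)$-curve with $q$ odd, or $\partial G$ is a pair of parallel longitudes of $K$. In the first case $\partial G$ bounds an embedded M\"obius band $\mu \subset N$ with core isotopic to $K$, and $F:=G\cup_{\partial G}\mu$ is a closed connected non-orientable surface; in the second case the two longitudes cobound an annulus $A \subset N$, and I claim $F:=G\cup_{\partial G}A$ is again non-orientable --- because both components of $\partial G$ carry the class $+[K]$ in $H_1(N)$, whereas the two boundary circles of the oriented annulus carry $+[K]$ and $-[K]$, so the orientations of $G$ and $A$ agree rather than disagree along one of the gluing circles. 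In either case $\chi(F)=\chi(G)$, and a Mayer--Vietoris computation identifies the torsion generator of $H_1(F;\Z)$ with the core of $\mu$ (resp.\ of $A$), whose image in $H_1(M;\Z)$ is $[K]=x$; by naturality and injectivity of $\beta$ on $2$-torsion, $[F]=a$. Thus $F$ is a connected non-orientable representative of $a$ with $2-\chi(F)=2-\chi(G)=2+2\Theta(x)$, which also shows such representatives exist.

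For the reverse inequality $2+2\Theta(x)\le\min_F(2-\chi(F))$, I would run the construction backwards. Given any connected non-orientable $F$ with $[F]=a$, the torsion subgroup of $H_1(F;\Z)$ is $\Z/2$, and by naturality of the Bockstein its generator maps to $\beta(a)=x$ in $H_1(M;\Z)$. Writing $F=F_h$ as a connected sum of tori with a single $\RP^2$ (for $h$ odd) or a Klein bottle (for $h$ even), I can represent this generator by an embedded simple closed curve $c$ lying in that last summand, so that its neighborhood $\nu_F(c)$ in $F$ is a M\"obius band (for $h$ odd) or an annulus (for $h$ even). Setting $K=c$ and $G=F\setminus\operatorname{int}\nu_F(c)$, pushed off $F$ into the complement of a tubular neighborhood $N$ of $c$ in $M$, one checks that $G$ is connected (since $[c]\neq 0$ in $H_1(F;\Z_2)$), orientable (it is $\Sigma_{g,1}$ or $\Sigma_{g,2}$ according to the parity of $h$), satisfies $\chi(G)=\chi(F)$, and has $\partial G$ homologous to $2[c]=2[K]$ in $N$; hence it is a rational Seifert surface for $K$, giving $2+2\Theta(x)\le 2-\chi(G)=2-\chi(F)=h$. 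Minimizing over $F$ finishes the proof.

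The main obstacle I anticipate is the bookkeeping in the first inequality: pinning down that $\partial G$ takes one of exactly two model forms, that the odd $(2,q)$-curve bounds a M\"obius band in the solid torus with core a longitude, and --- most delicately --- that capping with an annulus in the two-component case is forced to yield a non-orientable surface. Verifying that the torsion class of $H_1$ of the capped surface is carried by the core curve (so that the surface genuinely represents $a$) is the other point demanding care, but it is a routine Mayer--Vietoris argument combined with naturality of the Bockstein.
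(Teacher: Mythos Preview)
Your proposal is correct and follows essentially the same approach as the paper: both directions are proved by the cutting/capping correspondence between non-orientable surfaces and rational Seifert surfaces for a knot representing the torsion class. The paper's argument is terser --- it simply asserts that the boundary of the minimal rational Seifert surface is a $(2,m)$ cable and fills it with an annulus or M\"obius band, without separately justifying non-orientability in the annulus case or verifying that the resulting surface represents $a$ --- whereas you supply those details explicitly. The point you flag as the main obstacle, reducing $\partial G$ to one of the two model forms, is exactly where both arguments are brief; it is handled by the standard observation that a $-\chi$--minimizing rational Seifert surface can be taken to have no inessential boundary components and no pair of oppositely oriented parallel boundary components (tubing such a pair with an annulus on $\partial N$ preserves $\chi$, connectedness, and orientability), forcing $\partial G$ to be either a single $(2,q)$ curve with $q$ odd or two coherently oriented $(1,q')$ curves.
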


\begin{proof}
Suppose that $F = F_h \subset M$ is a minimal-genus embedded surface representing $a$. As discussed in Section~\ref{S:prelim}, $F$ represents a nontrivial class in $H_2(M;\Z_2)$, and the torsion class in $H_1(F;\Z)$ maps to $\beta(a)$. Let $C \subset F$ be an embedded curve representing this homology class, which we may view as a knot in $M$, and let $N$ be a regular neighborhood of $C$. Removing the neighborhood $N$ yields a properly embedded, orientable surface $F' \subset M \smallsetminus N$ with the Euler characteristic $\chi(F') = \chi(F) = 2-h$, and hence
\[
1 + \Theta(\beta(a)) \le 1 + 2 g_r(C) = 1 + \frac{h-2}{2} = \frac{h}{2}.
\]

Conversely, suppose $K$ is a genus-minimizing knot representing the class $\beta(a)$, and $F'$ is a rational Seifert surface so that $-2\chi(F') = \Theta(\beta(a))$. The boundary of $F'$ is a $(2,m)$ cable of $K$, which we can fill in with either an annulus (if $m$ is even) or a M\"obius band (if $m$ is odd) to obtain a closed, non-orientable surface $F$ representing $a$, with genus
\[
h = 2 - \chi(F) = 2-\chi(F') = 2+2\Theta(\beta(a)). \qedhere
\]
\end{proof}

The work of Ni and Wu, combined with Lemma \ref{L:rationalgenus}, implies:

\begin{theorem} \label{T:ni-wu}
Let $M$ be a rational homology sphere, and let $a \in H_2(M;\Z_2)$. If $F_h$ embeds into $M$ representing the class $a$, then
\begin{equation} \label{E:ni-wu}
h \ge 2\Delta(M, \PD(\beta(a))).
\end{equation}
Furthermore, if $M$ is an $L$ space and $\beta(a)$ is represented by a Floer-simple knot, then there exists an embedding of $F_h$ representing $a$ with
\[
h = 2\Delta(M, \PD(\beta(a))). \qedhere
\]
\end{theorem}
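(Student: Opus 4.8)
The plan is to derive both halves of Theorem~\ref{T:ni-wu} formally from Lemma~\ref{L:rationalgenus} together with the Ni--Wu results recalled above; no new Heegaard Floer input is required, and in particular nothing from Sections~\ref{S:correction}--\ref{S:Hcob} is needed. The point is that all the genuine work has already been packaged into Lemma~\ref{L:rationalgenus} (which converts the non-orientable genus into the invariant $\Theta$) and into \cite[Theorems 1.1, 1.2, Proposition 5.1]{ni-wu:rational-genus} (which relate $\Theta$ to $\Delta$), so what remains is bookkeeping.

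First I would prove the lower bound~\eqref{E:ni-wu}. If $F_h \subset M$ is an embedding representing the class $a \in H_2(M;\Z_2)$, then $F_h$ is a connected non-orientable surface representing $a$, so Lemma~\ref{L:rationalgenus} gives $h \ge 2 + 2\Theta(\beta(a))$. Feeding in the Ni--Wu inequality~\eqref{E:Theta}, $\Delta(M,\PD(\beta(a))) \le 1 + \Theta(\beta(a))$, yields
\[
h \ge 2 + 2\Theta(\beta(a)) \ge 2\Delta(M,\PD(\beta(a))),
\]
which is exactly~\eqref{E:ni-wu}.

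Next I would prove the sharpness statement. Assume $M$ is an L-space and that $\beta(a)$ is represented by a Floer-simple knot $K$. By the equality case of Ni--Wu's work \cite[Theorem 1.2, Proposition 5.1]{ni-wu:rational-genus}, $K$ minimizes rational genus in its homology class and the inequality~\eqref{E:Theta} is an equality, so $\Delta(M,\PD(\beta(a))) = 1 + \Theta(\beta(a))$. On the other hand, the ``conversely'' construction in the proof of Lemma~\ref{L:rationalgenus}, applied to $K$, produces a connected non-orientable surface $F_h \subset M$ representing $a$ of genus exactly $h = 2 + 2\Theta(\beta(a))$ (cap off the boundary of a genus-minimizing rational Seifert surface for $K$ with an annulus or a M\"obius band, according to parity). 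Substituting $\Theta(\beta(a)) = \Delta(M,\PD(\beta(a))) - 1$ gives $h = 2\Delta(M,\PD(\beta(a)))$, which is the desired embedding.

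The only place where care is needed — and hence the closest thing to an ``obstacle'' — is the identification of homology classes: one must check that the order-$2$ class $\beta(a) \in H_1(M;\Z)$ whose Poincar\'e dual enters Definition~\ref{D:delta} is the same class carried by the one-sided curve used in Lemma~\ref{L:rationalgenus}, and that the parity conventions tying together the knot $K$, its $(2,m)$-cable bounding a rational Seifert surface, and the resulting closed surface are consistent. This is precisely the content of the relation $\beta([F]) = [c]$ from Lemma~\ref{L:restrict} and the setup preceding Theorem~\ref{T:twist-bound}, so it is already in hand; beyond that the argument is entirely formal.
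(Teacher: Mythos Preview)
Your proposal is correct and matches the paper's approach exactly: the paper does not give a separate proof of this theorem but simply states that it follows from Lemma~\ref{L:rationalgenus} together with the Ni--Wu results, and you have filled in precisely those details. The one slightly misplaced citation is the appeal to Lemma~\ref{L:restrict} for the identification $\beta([F])=[c]$, since that lemma is stated for a homology cobordism $W$ rather than a $3$-manifold $M$; but the same Bockstein comparison works verbatim for $M$ (and is in fact invoked in the proof of Lemma~\ref{L:rationalgenus} itself), so this is cosmetic rather than a gap.
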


\begin{corollary}\label{C:simple}
Let $M$ be an $L$-space, and suppose $a\in H_2(M;\Z_2)$ is a class such that $\beta(a)$ is represented by a Floer-simple knot. Let $\Delta=\Delta(M,\PD(\beta(a)))$. For any homology cobordism $W$ with one boundary $M$ (e.g. $W = M \times I$), there is an embedding of $F_h$ in $W$ with normal Euler number $e$ representing $a$ if and only if
\begin{equation} \label{E:hbounds}
h \ge 2\Delta, \quad \abs{e} \le 2h - 4\Delta \quad \text{and} \quad e \equiv 2h - 4\Delta \pmod4.
\end{equation}
\end{corollary}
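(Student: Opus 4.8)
The plan is to prove the two implications separately, using Theorem~\ref{T:Mbound} for the ``only if'' direction and Theorem~\ref{T:ni-wu} together with connect-summing with $\RP^2\subset S^4$ for the ``if'' direction. For ``only if'', suppose $F_h\subset W$ is essential with normal Euler number $e$ and $[F_h]=a$. Since $W$ is a homology cobordism one of whose boundary components is $M$, its other component is again a rational homology sphere, the inclusions induce isomorphisms on $H_*(\,\cdot\,;\Z_2)$, and $\Delta$ is the same computed at either end. The key point is to identify the class $\phi$ of Theorem~\ref{T:Mbound} with $\PD(\beta(a))$: by Lemma~\ref{L:restrict} the torsion generator $c\in H_1(F_h;\Z)$ maps to $\beta([F_h])=\beta(a)$ in $H_1(W;\Z)$, so $\phi=\PD([c])=\PD(\beta(a))$ and hence the $\Delta(M_0,\phi)$ of Theorem~\ref{T:Mbound} equals the $\Delta$ of the statement. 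Applying Theorem~\ref{T:Mbound} then gives precisely $h\ge 2\Delta$, $|e|\le 2h-4\Delta$, and $e\equiv 2h-4\Delta\pmod4$.

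For ``if'', I would first invoke Theorem~\ref{T:ni-wu}: since $M$ is an $L$-space and $\beta(a)$ is represented by a Floer-simple knot, there is an embedding of $F_{2\Delta}$ in $M$ representing $a$. Pushing this surface off the boundary component $M$ into a collar neighborhood of $M$ in $W$ yields an essential embedding of $F_{2\Delta}$ in $W$ representing $a$, and its normal Euler number is $0$ because the normal bundle of a non-orientable surface in a $3$-manifold, stabilized by the collar direction, admits a nowhere-zero section. Now, given $h\ge 2\Delta$ and $e$ with $|e|\le 2(h-2\Delta)$ and $e\equiv 2(h-2\Delta)\pmod4$, set $j=h-2\Delta$ and choose nonnegative integers $n_+,n_-$ with $n_++n_-=j$ and $2(n_+-n_-)=e$ (the constraints on $e$ are exactly what makes such a choice possible). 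Form the connected sum of the embedding above with $n_+$ copies of the standard $\RP^2\subset S^4$ of normal Euler number $+2$ and $n_-$ copies of the one of normal Euler number $-2$. Since $\#^{j}S^4\cong S^4$ and $\RP^2$ is $\Z_2$-nullhomologous in $S^4$, the result is an essential embedding of $F_h$ in $W$ representing $a$ with normal Euler number $2(n_+-n_-)=e$; as $(h,e)$ ranges over all pairs satisfying~\eqref{E:hbounds}, this construction realizes every one of them.

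There is no serious obstacle: the substantive content is already contained in Theorems~\ref{T:Mbound} and~\ref{T:ni-wu}, and the proof is an assembly. The points that genuinely require care are the identification $\phi=\PD(\beta(a))$ in the first direction (so that the two meanings of $\Delta$ coincide), and, in the second direction, the two standard but worth-stating facts that a non-orientable surface pushed from a $3$-manifold into a collar of $\partial W$ has vanishing normal Euler number, and that forming a connected sum with $\RP^2\subset S^4$ adds $1$ to the genus and $\pm2$ to the normal Euler number while leaving the $\Z_2$-homology class unchanged.
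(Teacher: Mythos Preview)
Your proof is correct and follows essentially the same approach as the paper: the ``only if'' direction by direct appeal to Theorem~\ref{T:Mbound} (with your added care in identifying $\phi=\PD(\beta(a))$ via Lemma~\ref{L:restrict}), and the ``if'' direction by realizing $F_{2\Delta}$ in $M$ via Theorem~\ref{T:ni-wu}, pushing it into a collar to get Euler number $0$, and then taking connected sums with the standard $\RP^2\subset S^4$ of Euler number $\pm 2$. Your counts $n_\pm$ agree with the paper's $(2\ell\pm e)/4$ for $\ell=h-2\Delta$.
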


\begin{proof}
The ``only if'' direction follows immediately from Theorem \ref{T:Mbound}. For the ``if'' direction, the second half of Theorem \ref{T:ni-wu} implies that there exists an embedding of $F_{2\Delta}$ in $W$ with Euler number $0$. For any $(h,e)$ satisfying \eqref{E:hbounds}, we can construct an embedding of $F_h$ with Euler number $e$ as follows. Let $\ell=h-2\Delta$. The congruence conditions for genus $2\Delta$ with Euler class $0$ and genus $h$ with Euler class $e$ imply that $e \equiv 2\ell \pmod 4$. Then an embedding of $F_h$ into $W$ can be constructed from that of $F_{2\Delta}$ by taking the pairwise connected sum with $(2\ell+e)/4$ copies of an embedding of $\RP^2$ in $S^4$ with Euler number $+2$ and $(2\ell-e)/4$ copies of an embedding of $\RP^2$ in $S^4$ with Euler number $-2$.
\end{proof}

We now consider several classes of manifolds to which the results of this section may be applied.

\subsection{Lens spaces}\label{S:lens}

For any $1 \le q \le k$ with $q$ relatively prime to $2k$, Bredon and Wood \cite{bredon-wood:surfaces} showed using elementary geometric techniques that the minimal genus of a non-orientable surface embedded in the lens space $L(2k,q)$ is equal to $N(2k,q)$, where the function $N$ is defined recursively by:
\begin{itemize}
\item $N(2,1) = 1$
\item $N(2k,q) = N(2(k-q), q') +1$, where $q' \equiv \pm q \pmod{ 2(k-q)}$ and $1\leq q' \leq k-q$.
\end{itemize}
Because lens spaces contain Floer-simple knots in each homology class, it follows from Theorem \ref{T:ni-wu} that $N(2k,q) = 2\Delta(L(2k,q),\phi)$, where
$$
\phi = k \in H^2(L(2k,q);\Z) \cong \Z_{2k}
$$
is the unique element of order $2$.

It is worth noting that the differences of $d$-invariants that go into the definition of $\Delta$ can be computed
quite explicitly for lens spaces using a formula of Lee and Lipshitz \cite{lee-lipshitz:q-gradings} for the relative grading between two generators of the Heegaard Floer complex of a Heegaard diagram. Specifically, Ozsv\'ath and Szab\'o \cite[Section 4.1]{oz:boundary} give a particular labeling of the spin$^c$ structures on $L(p,q)$ by $\spincs_0, \dots, \spincs_{p-1}$, where under a certain identification of $H_1(L(p,q);\Z)$ with $\Z_p$, we have $\s_i + j = \s_{i+j}$. The Heegaard Floer complex associated to the standard Heegaard diagram for $-L(p,q)$ has exactly $p$ generators $x_0, \dots, x_{p-1}$, where $x_i$ represents $\spincs_i$. Thus, $d(-L(p,q), \spincs_i) = \grbar(x_i)$. By~\cite[Corollary 5.2]{lee-lipshitz:q-gradings}, we have
\begin{equation}\label{E:gr-shift}
d(-L(p,q), \spincs_{i+q}) - d(-L(p,q), \spincs_i) = \grbar(x_{i+q}) - \grbar(x_i) = \frac1p(p-1-2i).
\end{equation}
For $j \in \Z$, let $[j]$ be the integer congruent to $j$ modulo $p$ satisfying $0 \leq [j] \leq p-1$. Now let $p = 2k$, and note that $kq \equiv k \pmod{2k}$.  Applying~\eqref{E:gr-shift} $k$ times gives
\begin{equation}\label{E:d-shift}
d(-L(2k,q), \spincs_{i+k}) - d(-L(2k,q), \spincs_i) = \frac{2k-1}{2} - \frac1{k}  \sum_{j=0}^{k-1} [i + qj].
\end{equation}
Denote the function on the right-hand side of \eqref{E:d-shift} by $g(2k,q,i)$, or by $g(i)$ if $k$ and $q$ are understood from context. Setting
\[
G(2k,q) = \max\{g(2k,q,i)\mid i\in \Z_{2k}\},
\]
it follows that
\[
\Delta(L(2k,q),\phi) = \Delta(-L(2k,q),\phi) = G(2k,q).
\]
In the Appendix, we present a number-theoretic proof by Ira Gessel that $2G(2k,q)$ satisfies the same recursion relation as $N(2k,q)$, and thus that the two quantities are equal for all $(k,q)$. Combined with Theorem \ref{T:Mbound}, this provides a new proof (independent of \cite{bredon-wood:surfaces} and \cite{ni-wu:rational-genus}) that $N(2k,q)$ gives a lower bound on the genera of non-orientable surfaces in $L(2k,q)$.

\subsection{Strong L-spaces}

A Heegaard diagram $\mathcal{H} = (\Sigma, \bm\alpha, \bm\beta)$ for a rational homology sphere $M$ is called \emph{strong} if the rank of the associated Heegaard Floer complex $\CFhat(\mathcal{H})$ is equal to $\abs{H_1(M;\Z)}$; we call $M$ a \emph{strong L-space} if it admits a strong Heegaard diagram \cite{levine-lewallen:strong}. Suppose that $M$ admits a strong diagram of genus $2$. Forthcoming work of Josh Greene and the first author will show that every class in $H_1(M;\Z)$ can be represented by a Floer-simple knot. (Furthermore, such $M$ must be a graph manifold with two Seifert fibered pieces, each of which fibers over a disk with two exceptional fibers.) By Corollary \ref{C:simple}, the minimal genus problem for non-orientable surfaces in any homology cobordism from $M$ to itself is the same as the minimal genus problem in $M$. Additionally, just as in the previous section, $\Delta(M,\phi)$ can be easily determined from the strong Heegaard diagram using the Lee--Lipshitz formula. The authors do not know whether these results can be extended to arbitrary strong L-spaces.

\subsection{Homology classes that do not contain Floer-simple knots}

In the opposite direction, Theorem \ref{T:ni-wu} can be used to prove that not every homology class in an L-space can be represented by a Floer-simple knot.

\begin{proposition}\label{P:non-simple}
There is a rational homology sphere $Y$ with $H_1(Y) = \Z_6$ that is an L-space, but for which there is no Floer-simple knot in the non-trivial order $2$ homology class.
\end{proposition}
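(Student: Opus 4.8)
The plan is to exhibit an explicit Seifert fibered L-space $Y$ with $H_1(Y)\cong\Z_6$, compute its $d$-invariants, and combine the computation with an elementary observation about embedded projective planes. I would take $Y=S^3_6(T_{3,4})$, the result of $+6$-surgery on the $(3,4)$-torus knot (the knot $8_{19}$). The first step is to record the basic topology of $Y$: since $Y$ is integral surgery on a knot in $S^3$ we have $H_1(Y)\cong\Z_6$; since $T_{3,4}$ is an L-space knot of Seifert genus $3$ and $6\ge 2\cdot 3-1$, the manifold $Y$ is an L-space; and since surgery on a torus knot along a slope different from $pq$ yields a Seifert fibered space over $S^2$ with three exceptional fibers (here of multiplicities $3$, $4$, $6$), the manifold $Y$ is irreducible, and it is not $\RP^3$ because $H_1(\RP^3)\cong\Z_2\ne\Z_6$.

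The second step is the $d$-invariant computation. Using the surgery formula for the correction terms of positive surgeries on an L-space knot, $d(Y,\spincs_i)=d(L(6,1),\spincs_i)-2\,V_{\min(i,\,6-i)}$ for $0\le i\le 5$, where the $V_j$ are the torsion coefficients of $\Delta_{T_{3,4}}(t)=t^3-t^2+1-t^{-2}+t^{-3}$, so that $(V_0,V_1,V_2,V_3,\dots)=(1,1,1,0,\dots)$. Together with $d(L(6,1),\spincs_\bullet)=(\tfrac54,\tfrac5{12},-\tfrac1{12},-\tfrac14,-\tfrac1{12},\tfrac5{12})$ in the labeling induced by the surgery presentation, this gives $d(Y,\spincs_\bullet)=(-\tfrac34,-\tfrac{19}{12},-\tfrac{25}{12},-\tfrac14,-\tfrac{25}{12},-\tfrac{19}{12})$. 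The unique order-two class $\phi\in H^2(Y;\Z)\cong\Z_6$ acts on spin$^c$ structures by $\spincs_i\mapsto\spincs_{i+3}$, so each difference $d(Y,\spincs_{i+3})-d(Y,\spincs_i)$ equals $\pm\tfrac12$; hence $\Delta(Y,\phi)=\tfrac12$.

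The third step assembles the contradiction. Suppose the nontrivial order-two class $\beta(a)\in H_1(Y;\Z)$ — where $a$ is the unique nonzero element of $H_2(Y;\Z_2)$ — were represented by a Floer-simple knot. By Theorem~\ref{T:ni-wu} there would then be an embedding of $F_{2\Delta(Y,\phi)}=F_1=\RP^2$ in $Y$ representing $a$. But an embedded $\RP^2$ in an orientable $3$-manifold has nonorientable, hence nontrivial, normal bundle, so a regular neighborhood is the orientable twisted $I$-bundle over $\RP^2$, i.e.\ $\RP^3$ with an open ball removed, whose boundary is a $2$-sphere; capping the complementary $2$-sphere with a ball exhibits $\RP^3$ as a connected summand of $Y$. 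Since $Y$ is irreducible and $Y\not\cong\RP^3$, this is impossible, so no Floer-simple knot represents the order-two class.

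The main obstacle is the first half of the argument rather than the last: one must locate a suitable $Y$ and carry out the $d$-invariant bookkeeping with exactly the right conventions — in particular, matching the labeling of spin$^c$ structures used in the surgery formula with the $H_1(Y)$-action so that $\phi$ genuinely acts by $\spincs_i\mapsto\spincs_{i+3}$, and confirming that the largest difference is $\tfrac12$ and not something bigger. Conceptually, what the example demonstrates is that the Ni--Wu inequality $\Delta(M,\PD(x))\le 1+\Theta(x)$ can be strict: here $\Theta$ of the order-two class is at least $0$ (no $\RP^2$ embeds in $Y$, so the minimal non-orientable genus is at least $2$), while $\Delta(Y,\phi)=\tfrac12$, and the strict inequality $\tfrac12=\Delta<1+\Theta$ is precisely the obstruction to the existence of a Floer-simple representative.
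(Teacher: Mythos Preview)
Your proof is correct and is essentially the same as the paper's: you have chosen the very same manifold (the paper presents it as the Seifert space $M(-1;(3,2),(4,1),(6,1))$ and notes elsewhere, in Example~\ref{E:rhod}, that this is $+6$-surgery on $T_{3,4}$), compute $\Delta=\tfrac12$, and derive a contradiction from the fact that an embedded $\RP^2$ would force an $\RP^3$ summand in an irreducible manifold. The only differences are cosmetic: you verify the L-space property via the large-surgery criterion for L-space knots rather than the Lisca--Stipsicz test, you compute the $d$-invariants with the integer-surgery formula rather than the plumbing algorithm, and your irreducibility argument uses that a Seifert fibration over $S^2$ with three exceptional fibers (multiplicities $3,4,6$) is irreducible, in place of the paper's appeal to the hyperbolic base orbifold.
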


\begin{proof}
Let $Y$ be the Seifert-fibered space $M(-1; (3,2),(4,1),(6,1))$ (we follow notation in \cite{orlik}), whose homology is easily computed to be $\Z_6$.   This manifold can be verified to be an $L$-space using the criterion of Lisca and Stipsicz~\cite{lisca-stipsicz:tight-III}.  (This was independently confirmed by a computer calculation of Jonathan Hanselman based on bordered Floer homology.)  Computing the $d$-invariants of $Y$ via the \oz\ algorithm~\cite{oz:plumbed} yields that $\Delta(Y) = 1/2$.  If $Y$ contained a Floer-simple knot, then Theorem \ref{T:ni-wu} would give rise to an embedded $\RP^2$ carrying the non-trivial class in $H_2(Y;\Z_2)$.  But the existence of an embedded $\RP^2$ would imply that $Y$ is a connected sum $\RP^3 \conn Y'$ where $H_1(Y') \cong \Z_3$. But $Y$, being a Seifert-fibered space whose base is hyperbolic~\cite[Section 3]{scott:geometries}, is irreducible, so this cannot happen.
\end{proof}

\subsection{Mappings versus embeddings}\label{S:maps}
One striking consequence of Gabai's work relating foliations and the Thurston norm~\cite{gabai:foliations} is that the Thurston norm of an integral homology class (roughly, the minimal genus of an embedded representative) is the same as the singular Thurston norm (the minimal genus of any surface that maps to $M$ in the given homology class)~\cite[Corollary 6.18]{gabai:foliations}. Since the projection $M \times I \to M$ induces an isomorphism on homology, this means that one cannot lower the genus by embedding in $M \times I$ instead of embedding in $M$. The results of this paper give evidence for a non-orientable analogue of Gabai's result, that the minimal genus of a non-orientable representative of a homology class in $M \times I$ is the same as the minimal genus in $M$.

However, the following proposition implies that such a result cannot be proved by looking at non-orientable surfaces mapping to $M$, as in the orientable case:
\begin{proposition}\label{mapping}
For any $k$ and $q$, there is a map from $\RP^2$ to $L(2k,q)$ inducing the non-trivial map in $\Z_2$ homology.
\end{proposition}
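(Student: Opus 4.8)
The plan is to reduce the $\Z_2$-homological condition to a statement about fundamental groups and then realize it by an elementary construction. First I would record two facts about $L(2k,q)$: that $H_2(L(2k,q);\Z_2)\cong\Z_2$, and that the Bockstein $\beta\co H_2(L(2k,q);\Z_2)\to H_1(L(2k,q);\Z)\cong\Z_{2k}$ associated to $0\to\Z\xrightarrow{2}\Z\to\Z_2\to0$ is injective (because $H_2(L(2k,q);\Z)=0$), with image the order-two subgroup $\{0,k\}$. For $\RP^2$ the analogous Bockstein $H_2(\RP^2;\Z_2)\to H_1(\RP^2;\Z)$ is an isomorphism $\Z_2\to\Z_2$. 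Hence, by naturality of $\beta$, a map $f\co\RP^2\to L(2k,q)$ is nontrivial on $H_2(\,\cdot\,;\Z_2)$ if and only if the induced map $f_*\co H_1(\RP^2;\Z)\to H_1(L(2k,q);\Z)$ --- equivalently $f_*\co\pi_1(\RP^2)\to\pi_1(L(2k,q))$ --- carries the generator of $\Z_2$ to the element $k\in\Z_{2k}$. So it suffices to produce a map $\RP^2\to L(2k,q)$ realizing the inclusion $\Z_2\hookrightarrow\Z_{2k}$ of the $2$-torsion subgroup on $\pi_1$.

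Second, I would construct such a map. One clean option is to use the $k$-fold covering $\RP^3=L(2,q)\to L(2k,q)$ (writing $L(2,q)=\RP^3$, which is legitimate since $q$ is odd) and restrict it to the standard $\RP^2\subset\RP^3$; on $\pi_1$ the covering is the subgroup inclusion $\Z_2\hookrightarrow\Z_{2k}$, and $\pi_1(\RP^2)\to\pi_1(\RP^3)$ is an isomorphism, so the composite is as required. Alternatively one can build the map cell by cell: give $\RP^2$ its standard structure $S^1\cup_2 e^2$ and $L(2k,q)$ the standard CW structure with $2$-skeleton $S^1\cup_{2k}e^2$, send the $1$-cell of $\RP^2$ to the $1$-cell of $L(2k,q)$ by the degree-$k$ map of $S^1$, and observe that the $2$-cell of $\RP^2$ then extends because the composite of its attaching map with the degree-$k$ map is $z\mapsto z^{2k}$, which represents $2k$ times a generator of $\pi_1(L(2k,q))=\Z_{2k}$ and is therefore nullhomotopic.

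There is no genuine obstacle here; the one point to keep straight is that, because $\RP^2$ is $2$-dimensional, the only obstruction to extending the chosen map on $1$-skeleta over the $2$-cell of $\RP^2$ lives in $\pi_1(L(2k,q))$, and this obstruction is exactly the class $2k\cdot[\mathrm{gen}]=0$. This vanishing --- together with the fact that $\beta$ detects the $2$-torsion in $H_1$ --- is precisely what makes the $\Z_2$-homological statement hold even though, for $k>1$, $\RP^2$ does not embed in $L(2k,q)$ (its embedded analogue requires genus $N(2k,q)>1$), which is the point of the proposition.
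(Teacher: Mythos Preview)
Your proof is correct and takes a genuinely different route from the paper's.  The paper gives an explicit geometric construction: it writes $L(2k,q)$ as a genus-one Heegaard splitting, parametrizes the $(2k,q)$ torus knot $K$ on the Heegaard torus, builds an immersed M\"obius band in one solid torus by joining pairs of points $(z,w)$ and $(z,-w)$ of $K$ by straight line segments in the disc factor, and then caps this off with the core $2$-disk of the other solid torus to obtain an immersed $\RP^2$ carrying the nontrivial $\Z_2$ class.  Your argument is instead homotopy-theoretic: you use naturality and injectivity of the Bockstein to reduce the $H_2(\,\cdot\,;\Z_2)$ condition to realizing the inclusion $\Z_2\hookrightarrow\Z_{2k}$ on $\pi_1$, and then produce such a map either by restricting the $k$-fold cover $\RP^3\to L(2k,q)$ to the standard $\RP^2\subset\RP^3$, or by a one-line cellular extension argument.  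Your approach is cleaner and more elementary as a pure existence proof; the paper's construction has the virtue of giving a concrete immersed surface, which is closer in spirit to the surrounding discussion contrasting mapped, immersed, and embedded representatives of the class.
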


\begin{proof}
View $L(2k,q)$ as usual as the union $S^1\times D^2 \cup_\phi D^2 \times S^1$ where $\phi(\partial D^2  \times p)$ is a $(2k,q)$ curve $K$ on $\partial(S^1\times D^2)$. Assume $K$ is the standard $(2k,q)$ torus knot, given by parametrization $S^1 \to S^1\times\partial D^2$, $\zeta \mapsto (\zeta^{2k},\zeta^q)$. Then an immersed M\"obius band $\beta$ with boundary $K$ in $S^1\times D^2$ is constructed by connecting points $(z,w)$ and $(z,-w)$ on $K$ by a line segment in $\{z\} \times D^2$. By adding the 2-cell of $L(2k,q)$ to $\beta$ we obtain an immersed $\RP^2$ that carries the $\Z_2$ homology of the lens space.
\end{proof}
The relation between the singular Thurston norm and the fundamental group of $M$ is also discussed in~\cite[Section 4] {calegari-gordon:rational-genus}.

\section{Genus bounds from the \texorpdfstring{$\rho$}{rho}-invariants}\label{S:sign}
The twisting of spin$^c$ structures described in Proposition~\ref{twist} gives rise to embedding obstructions stated in terms of classical Atiyah-Singer invariants arising from the G-signature theorem~\cite{atiyah-singer:III}.   The idea is similar to the classic paper of Massey~\cite{massey:whitney}; one considers a branched cover and compares the result of the G-signature theorem with a Smith-theory estimate of the equivariant signature.   Both the genus and the Euler class appear in the bounds, so Theorem~\ref{T:g-sign} below can be read as providing a restriction on the genus for fixed Euler class (or vice versa).  These obstructions differ from the bounds in Theorem~\ref{T:twist-bound} and are generally not as strong as those arising from considerations of $d$-invariants.  For instance, as we will see in Example~\ref{E:drho}, we cannot recover the results on surfaces in lens spaces via the signature obstructions.  On the other hand, we will also give an example where Theorem~\ref{T:g-sign} gives a stronger embedding restriction than the $d$-invariant bound.  Because the G-signature theorem holds in the locally-flat setting, Theorem~\ref{T:g-sign} applies to topologically locally-flat embeddings.  Hence, for this section, the homology cobordism $W$ is allowed to be merely a topological manifold, and the surface $F$ is allowed to be merely locally flat rather than requiring it to be smooth.

To construct the branched cover, we begin by reinterpreting Proposition~\ref{twist} in terms of $U(1)$ representations.
Let $\gamma \in H^2(V,M_0;\Z)$ be the class constructed in the proof of Proposition~\ref{twist}.  Since $\gamma$ is a $2$-torsion class, there is a class $\tau \in
H^1(V,M_0;\Z_2)$, easily seen to be unique, with $\beta(\tau) = \gamma$.   Then $\tau$ may be viewed as a $U(1)$ representation taking values in the $\Z_2 = \{\pm 1\}$ subgroup of $U(1)$; by naturality of the Bockstein, the restriction of $\tau$ to $M_1$ and to the homology class of the fiber in $Q$ is non-trivial.

Now for any $U(1)$ representation $\alpha\co \pi_1(M) \to U(1)$, we can obtain a new representation
\[
\altau\co \pi_1(V) \to U(1)
\]
by extending $\alpha$ to $W$, restricting to $V$, and then defining
\[
\altau(g) = \alpha(g) \cdot \tau(g).
\]
By construction, $\left.\altau_{}\right|_{M_0} =\left.\alpha_{}\right|_{M_0}$, but the restrictions to $M_1$ and $Q$ of $\altau$ are different from the restrictions of the extension of $\alpha$. For instance, if the image of $\left.\alpha_{}\right|_{M_0}$ has odd order, say $m$, then the image of $\left.\altau_{}\right|_{M_1}$ has order $2m$.
We will refer to a representation $\pi_1(Q) \to U(1)$ whose value on the homology class $f$ of the fiber of $Q$ is $-1$ as a {\em twisted} representation; by construction $\left.\altau_{}\right|_Q$ is twisted.

This observation gives rise to obstructions expressed in terms of an invariant due to Atiyah and Singer~\cite{atiyah-singer:III}. There are many notations for this invariant; we use the version in~\cite{aps:II}.
\begin{definition}\label{rho}
Let $M$ be an oriented $3$-manifold, and $\alpha\co H_1(M) \to U(1) $ be a representation with image in the cyclic group $\Z_m \subset U(1)$ generated by $\omega = \exp(2 \pi i/m)$.  For some $n \geq 1$, there is a $4$-manifold $X$ with $\partial X = n \cdot M$ and a representation $\alpha\co H_1(X) \to U(1)$ extending $\alpha$.  The representation $\alpha$ defines a local coefficient system $\C_\alpha$ on $X$, and we consider the signature $\sign_\alpha(X)$ defined by the intersection form on $H_2(X; \C_\alpha)$.  Then
\[
\rho_\alpha(M) = \frac1n\left(\sign(X) - \sign_\alpha(X)\right).
\]
\end{definition}
The Atiyah-Patodi-Singer index theorem~\cite{aps:II} or the G-signature theorem can be used to show that $\rho_\alpha(M)$ is independent of $n$ and the choice of $X$.  The representation $\alpha$ determines covering spaces $\widetilde{M} \to M$ and $\xtilde \to X$, with a choice $T$ of generator of the covering transformations.  With respect to that choice, the signature $\sign_\alpha(X)$ is the same as the signature of the intersection form on the $\omega$--eigenspace of $T_*$ acting on $H_2(\xtilde;\C)$.

Instead of extending $n \cdot \alpha$ over $X$, we can extend $\widetilde{M} \to M$ to a branched covering $\ztilde \to Z$, in which $T$ acts as a covering transformation of $\ztilde$ with fixed point set a locally flat surface $\widetilde{C}\subset \ztilde$, and use the G-signature theorem to compute $\rho_\alpha(M)$.  In the special case that $m=2$, this gives (compare~\cite{casson-gordon:stanford} for the general case)
\begin{equation}\label{cg-branch}
\rho_\alpha(M) = \sign(Z) - \sign_\alpha(Z)  - \ctilde^2 = \sign(Z) - \sign_\alpha(Z) -\frac12\,C^2 .
\end{equation}
In this special case, it is not necessary that $C$ be orientable, as long as the self-intersection is interpreted as in Section~\ref{S:prelim}.

For the proof of Theorem~\ref{T:g-sign}, we will need the $\rho$-invariant for a twisted representation $\alpha:\pi_1(\qhe) \to U(1)$ in the case that $e$ is even.
\begin{proposition}\label{rhoQ}
Let $e$ be even, and let $\alpha\co H_1(Q_{h,e}) \to U(1)$ be a twisted representation.  With $\qhe$ oriented as the boundary of the disk bundle $P_{h,e}$, we have
\begin{equation}\label{E:rhoQ}
\rho_\alpha(Q_{h,e}) =  -\frac{e}{2}.
\end{equation}
\end{proposition}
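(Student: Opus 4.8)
The plan is to compute $\rho_\alpha(Q_{h,e})$ via the branched cover formula \eqref{cg-branch}, exactly as in the Casson--Gordon / Massey style argument. Since $\alpha$ is a twisted representation (with image in $\Z_2$), the double cover $\widetilde{Q}_{h,e} \to Q_{h,e}$ extends to a double branched cover $\widetilde{Z} \to Z$ over a suitable $4$-manifold $Z$ bounded by $Q_{h,e}$ (possibly up to multiples, but $m=2$ lets us take $n=1$), with branch locus a surface $\widetilde{C} \subset \widetilde{Z}$ whose image $C \subset Z$ is a closed surface with $[C]^2 = C^2$. Then \eqref{cg-branch} reads $\rho_\alpha(Q_{h,e}) = \sign(Z) - \sign_\alpha(Z) - \tfrac12 C^2$, and the whole computation reduces to producing an explicit $(Z, C)$ and evaluating the three terms.

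First I would produce a convenient $Z$. The natural choice is to take $Z$ to be a $4$-manifold built so that $\partial Z = Q_{h,e}$ and so that $\tau$ extends; concretely, since $Q_{h,e} = \partial P_{h,e}$ and the disk bundle $P_{h,e}$ has a nice handle description, one can take $Z$ to be either $P_{h,e}$ itself with an appropriate correction, or better, the $4$-manifold obtained from the surgery picture (Figures \ref{F:borromean}--\ref{F:Xgn}): recall $P_{h,e} = (\natural^h S^1 \times B^3) \cup R_{h,e}$, and $Q_{h,e}$ arises as $e$-type surgery on the connected sum of $h$ copies of a knot in $S^1\times S^2$ representing twice a generator. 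The twisted representation does not extend over $P_{h,e}$ (that is precisely why it is ``non-extendible''), so I would instead cap off $Q_{h,e}$ with a $4$-manifold $Z$ over which $\tau$ \emph{does} extend; the branch locus $C$ then represents the Poincar\'e dual of the twisting class and has $C^2 \equiv e \pmod{\text{something}}$ — tracking this self-intersection precisely is the crux. An efficient alternative, and probably the cleanest, is to compute $\rho$ additively: the value of $\rho$ on a circle bundle over $F_h$ with a twisted representation should behave well under the connected-sum decomposition $F_h = \#^g T^2 \# (\text{one or two } \RP^2)$, reducing everything to the base cases $\rho_\alpha(Q_{1,0})$ (over $\RP^2$) and the effect of a single handle/Euler-number change, which can be read off from the cobordisms $W_i$ in \eqref{eq:handles}.

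The key steps, in order: (1) choose $Z$ with $\partial Z = Q_{h,e}$ over which the order-$2$ class $\tau$ extends, and identify the branch surface $C$ together with its self-intersection number in terms of $e$ and $h$; (2) compute $\sign(Z)$ from the intersection form (which, for the handle pictures above, is explicitly given, e.g. the $4\times 4$ matrix displayed in Section~\ref{S:floercalc}); (3) compute $\sign_\alpha(Z)$, the signature of the intersection form on $H_2(Z;\C_\alpha)$ — equivalently the signature on the $(-1)$-eigenspace of the deck transformation on $H_2(\widetilde{Z};\C)$; (4) assemble via \eqref{cg-branch} and simplify to get $-e/2$. Since the answer is independent of $h$, a good sanity check at the end is that all the $h$-dependent contributions to $\sign(Z)$, $\sign_\alpha(Z)$, and $C^2$ cancel.

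The main obstacle I expect is \textbf{step (3)}: correctly computing the twisted signature $\sign_\alpha(Z)$, i.e.\ understanding the intersection form on homology with the nontrivial $\C_\alpha$ local system. One must pin down $H_2(Z;\C_\alpha)$, which requires knowing how the deck transformation acts on $H_2$ of the double cover $\widetilde Z$; this is where the twistedness of $\alpha$ on the fiber $f$ genuinely enters and where a naive count can go wrong. A secondary subtlety is bookkeeping the factor of $\tfrac12$ in \eqref{cg-branch} and making sure the orientation of $Q_{h,e}$ as $\partial P_{h,e}$ gives the sign $-e/2$ rather than $+e/2$; for this I would double-check against a known case, e.g.\ $h=1$, $e=0$ gives $\rho_\alpha(Q_{1,0})=0$, consistent with $\RP^2 \subset \RP^3 \times \{\tfrac12\}$ having trivial self-intersection. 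Once the eigenspace decomposition in step (3) is nailed down, the rest is linear algebra on an explicit matrix.
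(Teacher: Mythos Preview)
Your proposal has a genuine gap at the very first step: you assume that a twisted representation $\alpha$ has image in $\Z_2$, but this is not what ``twisted'' means. By the definition given just before the proposition, a representation is twisted when $\alpha(f)=-1$ on the fiber class; on the remaining generators $a_i,b_i,c$ of $H_1(Q_{h,e})$ it may take \emph{arbitrary} values in $U(1)$. So for a general twisted $\alpha$ there is no order-$2$ cover to work with, and the branched-cover formula \eqref{cg-branch} for $m=2$ does not directly apply. (Your fallback suggestion of ``computing $\rho$ additively'' via the decomposition $F_h=\#^g T^2\,\#\,\RP^2$ does not help either: the circle bundle $Q_{h,e}$ is not a connected sum of circle bundles over the summands, so $\rho$ does not split that way.)

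The paper supplies exactly the missing ingredient: a spectral-flow reduction to the $\Z_2$ case. Lemma~\ref{L:coho} shows $H^1(Q_{h,e};\C_\alpha)=0$ for every twisted $\alpha$, so along any path of twisted representations the signature operator has no spectral flow and $\rho_\alpha$ is constant on each path component of the twisted representation variety $R^t$. One then checks that $R^t$ has two components, each containing a representative $\alpha_\pm$ with image in $\Z_2$. For these the associated double cover of $Q_{h,e}$ extends to a branched double cover of the disk bundle $P_{h,e}$ itself, branched along the zero section $F_h$ (with self-intersection $e$). Since $H_2(P_{h,e})=0$, both signature terms in \eqref{cg-branch} vanish and one reads off $\rho_{\alpha_\pm}=-e/2$ immediately---far simpler than building an auxiliary $Z$ from the handle pictures of Section~\ref{S:floercalc} and computing a twisted intersection form on it.
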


Our proof requires a preliminary lemma concerning the $U(1)$ representation variety of $\pi_1(\qhe)$.  Let $R^t$ denote the subset of twisted representations.
\begin{lemma}\label{L:coho}
Let $e$ be even.  For any $\alpha \in R^t(\qhe)$, the twisted cohomology group $H^1(Q_{h,e};\C_\alpha)$ vanishes.
\end{lemma}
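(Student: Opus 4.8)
The plan is to compute $H^1(Q_{h,e};\C_\alpha)$ by comparing $Q_{h,e}$ with the simpler circle bundle $Q_{h,0}$ (so that the Euler class contributes nothing mod $2$) and then reducing the computation to the base $F_h$. First I would note that because $\alpha$ takes values in $\{\pm1\}\subset U(1)$, the twisted coefficient system $\C_\alpha$ is obtained by extending scalars from the $\Z$-local system $\Z_\alpha$ associated to the composite $H_1(Q_{h,e})\to\Z_2$; since $\C$ is flat over $\Z$, it suffices to show $H^1(Q_{h,e};\Z_\alpha)\otimes\C=0$, i.e.\ that $H^1(Q_{h,e};\Z_\alpha)$ is torsion. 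Equivalently, I would work with the double cover $\tilde Q\to Q_{h,e}$ determined by $\alpha$ and show that the $(-1)$-eigenspace of the deck transformation acting on $H^1(\tilde Q;\C)$ vanishes; this is the standard reinterpretation of twisted cohomology for an order-two representation.

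The key step is the following. Since $\alpha$ is \emph{twisted}, it is nontrivial on the fiber class $[f]$, so the restriction of the double cover to each circle fiber is the connected double cover $S^1\to S^1$. Hence $\tilde Q$ is itself a circle bundle over $F_h$: it is the pullback of $Q_{h,e}\to F_h$ along the map induced on $\pi_1$ by the composite that kills the fiber, but more usefully, $\tilde Q$ is the circle bundle over $F_h$ whose fiberwise orientation double cover structure makes it the circle bundle of Euler number $e'$ with appropriate $w_1$; a short chase with the Gysin sequence (exactly as in the proof of Lemma~\ref{L:Q}, equation~\eqref{gysin}, but now over $\C$) shows that when $e$ is even the relevant multiplication-by-$w_2$ map over $\C$ is forced by rational considerations, and one gets
\[
H^1(\tilde Q;\C)\cong H^1(F_h';\C)\oplus(\text{image of }\pi^*),
\]
where $F_h'$ is either $F_h$ or its orientation double cover depending on whether $\alpha$ is also nontrivial on $H_1(F_h)$. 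In either case one then computes the deck action: on the part pulled back from the base it acts trivially when $\alpha$ is trivial on $\pi_1(F_h)$ (and then one uses that $H^1(F_h;\C)=0$ since $F_h$ is a non-orientable surface, whose rational cohomology is concentrated in degree $0$), and when $\alpha$ is nontrivial on the base the relevant summand is the $(-1)$-eigenspace of $H^1$ of the orientable double cover of $F_h$, which is a genus $(h-1)$ surface — so one must check this eigenspace also dies. The cleanest way to see the latter is to use the Euler characteristic: $\chi(\tilde Q)=2\chi(Q_{h,e})=0$ and $b_0=b_3$, $b_1=b_2$ by Poincaré duality, and then split the Betti numbers into $(\pm1)$-eigenspaces to pin down $\dim H^1(\tilde Q;\C)^{-}$. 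Combined with the fact that $H_1(Q_{h,e};\C_\alpha)$ (twisted homology) has the same dimension as $H^1(Q_{h,e};\C_\alpha)$ by the universal coefficient theorem over a field plus Poincaré duality with twisted coefficients on the closed oriented $3$-manifold $Q_{h,e}$, and that $H^0(Q_{h,e};\C_\alpha)=0$ because $\alpha$ is nontrivial, one forces $H^1=0$.

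The main obstacle I expect is the bookkeeping in the Gysin-sequence computation over $\C$ when $\alpha$ is nontrivial on the base $F_h$: the twisted coefficient system on $Q_{h,e}$ does \emph{not} in general descend to an honest local system on $F_h$ compatible with $\pi$ in the naive way (it does only up to the fiber twist), so the clean statement ``$H^1(\tilde Q;\C)$ sits in a Gysin sequence with $H^*(\tilde F;\C)$'' needs the observation that $\tilde Q\to F_h$ (forgetting the intermediate $F_h'$) is a genuine $S^1$-bundle and applying the \emph{untwisted} Gysin sequence upstairs, then taking eigenspaces. Once that is set up, the vanishing is a dimension count driven by $\chi=0$ and $\chi(\text{genus }(h-1)\text{ surface})=2-2(h-1)$, together with the parity information already extracted in Lemma~\ref{L:Q}. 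I would also double-check the one subtle point that $e$ even is genuinely used: for $e$ odd the fiber is only $4$-torsion in $H_1$, the double cover would restrict to the fiber differently, and indeed the conclusion can fail — so the hypothesis enters precisely in guaranteeing that the deck transformation restricts to the fiber as the standard free involution on $S^1$.
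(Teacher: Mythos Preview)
There is a genuine gap at the very first step. You assert that ``$\alpha$ takes values in $\{\pm 1\}\subset U(1)$'', and your entire approach (passing to a double cover and analysing eigenspaces) rests on this. But a twisted representation $\alpha\in R^t(Q_{h,e})$ is only required to send the \emph{fiber} class $f$ to $-1$; the values $\alpha(a_i),\alpha(b_i)$ on the remaining generators of $H_1(Q_{h,e})$ are arbitrary elements of $U(1)$. (Only $\alpha(c)$ is forced to be $\pm 1$, because $c$ is $2$-torsion when $e$ is even.) So for generic $\alpha$ there is no associated double cover, the local system $\C_\alpha$ is not obtained from any $\Z$-local system, and the eigenspace decomposition you invoke does not exist. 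Your argument at best treats the finitely many $\Z_2$-valued representations, not the full representation variety $R^t$.

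There is also a secondary error: you claim $H^1(F_h;\C)=0$ ``since $F_h$ is a non-orientable surface, whose rational cohomology is concentrated in degree~$0$''. This is false for $h\ge 2$: one has $b_1(F_h)=h-1$, so $H^1(F_h;\C)\cong\C^{h-1}$. And even granting a double cover, the Euler-characteristic count you sketch cannot by itself force $H^1=0$: for a closed oriented $3$-manifold $\chi=0$ is automatic and Poincar\'e duality only gives $\dim H^1=\dim H^2$, which is no constraint. The paper instead computes $H^1(Q_{h,e};\C_\alpha)$ directly from a presentation of $\pi_1(Q_{h,e})$ via Fox calculus, writing down the matrix of $\delta_2$ with the general $U(1)$ values $\alpha(a_i),\alpha(b_i),\alpha(c)$ substituted in, and checking by hand that $\ker\delta_2=\operatorname{im}\delta_1$.
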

\begin{proof}
We give the proof---a direct calculation---when $h = 2g+1$ is odd; the case when $h$ is even is only slightly different and we address it at the end. We start with a standard presentation of the fundamental group
\begin{multline*}
\pi_1(Q_{2g+1,e}) =
 \langle c,f, a_1,b_1,\ldots,a_g,b_g\ |\\
  \prod_{i=1}^g [a_i,b_i] c^2 f^e, c f c^{-1} f, [a_i,f],\  [b_i,f] \ \text{for}\ i=1,\ldots,g \rangle.
\end{multline*}
Abelianizing this presentation gives an alternate proof of Lemma~\ref{L:Q}. The coboundary operator
$$
\delta_2\co C^1(\pi_1(Q_{2g+1,e});\C_\alpha) \to C^2(\pi_1(Q_{2g+1,e});\C_\alpha)
$$
may be obtained in two steps:

\begin{enumerate}
\item Take the Fox derivatives~\cite{fox:calculus-I,brown:cohomology} of the relations with respect to the generators in the above presentation.
\item Replace each generator by its image under $\alpha$.
\end{enumerate}
The result is displayed below for $g=2$; the general case is similar. To simplify the notation, we have written $x$ for $\alpha(x)$, and substituted $-1$ for $\alpha(f)$.  The columns correspond to the generators and the rows to the relators, in the order written in the presentation.
\begin{equation*}
\delta_2 =
\begin{pmatrix}
1+c & 0 & 1-b_1 & a_1-1& 1-b_2 & a_2-1\\
2 & c-1      & 0 & 0 & 0 & 0\\
0 & a_1-1 & 2 & 0 & 0 & 0\\
0 & b_1-1 & 0 & 2 & 0 & 0\\
0 & a_2-1 & 0 & 0 & 2 &  0\\
0 & b_2-1 & 0 & 0 & 0 & 2\\
\end{pmatrix}
\end{equation*}
Since $e$ is even, the generator $c$ is of order $2$ in $H_1(\qhe)$, and hence $\alpha(c) = \pm 1$.  It is easy to see that the null space of $\delta_2$ has dimension $1$, and in fact coincides with the image of $\delta_1\co C^0 \to C^1$, which is given by the transpose of $(c-1, f-1, a_1 - 1, b_1 - 1, a_2 - 1, b_2 - 1)$. Hence the cohomology vanishes.

If $h = 2g$ is even, then we do the same calculation, based on the presentation
\begin{multline*}
\pi_1(Q_{2g,e}) =
 \langle f, a_1,b_1,\ldots,a_g,b_g\ |\\
  a_1b_1a_1^{-1}b_1\prod_{i=2}^g [a_i,b_i] ,\ a_1 f a_1^{-1} f, [b_1,f],\ [a_i,f],\  [b_i,f] \ \text{for}\ i=2,\ldots,g \rangle.
\end{multline*}
\end{proof}

\begin{proof}[Proof of Proposition~\ref{rhoQ}]
Again, we treat the case $h$ odd in detail (with notation as in Lemma~\ref{L:coho}) and add a brief comment on $h$ even at the end.
We claim first that the representation variety $R^t(Q_{h,e})$ has two path components, $R^t_\pm$ determined by the sign of $\alpha(c)$.  Certainly there is no path joining an element of $R^t_+$ to one in $R^t_{-}$ because $c$ having order $2$ implies that $\alpha(c) = \pm 1$ must be constant along any path of representations.
To see that both $R^t_\pm$ are connected, note that $U(1)$ is connected, and so  connecting $\alpha(a_i)$ and $\alpha(b_i)$ to $1 \in U(1)$ gives a path from $\alpha$ to the representation $\alpha_\pm$ defined by $\alpha_\pm(c) = \pm 1$, $\alpha_\pm(f) = -1$ and $\alpha_\pm(a_i) = \alpha_\pm(b_i) = 1$.

A path $\alpha_t$ between two representations $\alpha_0$ and $\alpha_1$ defines a family of self-adjoint operators corresponding to the signature operator, and the difference in $\rho$-invariants, $\rho_{\alpha_1}(Q) - \rho_{\alpha_0}(Q)$ is given by the spectral flow~\cite{aps:II} of this family; compare~\cite[Theorem 7.1] {kkr} for a careful discussion.  However, Lemma~\ref{L:coho} implies that $H^1(Q_{2g+1,e};\C_{\alpha_t}) = 0$ for all $t$, so there is no spectral flow for the signature operator along that path, and it suffices to calculate $\rho_{\alpha_\pm}(Q)$.

The $2$-fold covering of $Q_{2g+1,e}$ corresponding to $\alpha_{+}$ is a fiber-preserving map $Q_{2g+1,e/2} \to Q_{2g+1,e}$, which extends to a branched covering $P_{2g+1,e/2} \to P_{2g+1,e}$.  (Recall that $e$ is even, so this makes sense.)  Since $H_2(P_{2g+1,e}) = 0= H_2(P_{2g+1,e/2})$, the signature terms in Equation~\eqref{cg-branch} vanish, so $\rho_{\alpha_{+}}(Q_{2g+1,e}) = -e/2$.

The $2$-fold covering of $Q_{2g+1,e}$ corresponding to $\alpha_{-}$ also extends to a branched covering, as follows. Let $\qtilde$ be the $\Z_2 \oplus \Z_2$ covering of $Q_{2g+1,e}$ corresponding to the kernel of the surjection $\Phi= (\Phi_1,\Phi_2)\co H_1(Q_{2g+1,e}) \to \Z_2 \oplus \Z_2$ taking $c$ to $(1,0)$ and $f$ to $(0,1)$, and vanishing on the other generators in the above presentation.  Write $T_1$ and $T_2$ for the generators of the covering transformations corresponding to $c$ and $f$, respectively.  Then $\qtilde$ can be built in two steps as the boundary of a disk bundle: first take the branched cover $P_{2g+1,e/2}$ corresponding to $\Phi_2 = \alpha_{+}$.  Note that the composition $H_1(Q_{2g+1,e/2}) \to  H_1(Q_{2g+1,e}) \overset{\Phi_1}{\rightarrow} \Z_2$ extends over  $P_{2g+1,e/2}$, giving rise to an unbranched $2$-fold cover $\widetilde{P}$.  This is the disk bundle of Euler class $e$ over the orientable double cover of $F_{2g+1}$.  Now if we take $Q'= \qtilde/(T_1\circ T_2)$, the double cover $Q' \to Q_{2g+1,e}$ corresponds to $\Phi_1 \Phi_2 = \alpha_{-}$ and has covering transformation induced by $T_2$.

Now $T_1\circ T_2$ extends to a free involution on $\widetilde{P}$ with quotient the Euler class $e/2$ bundle over $F$, and $T_2$ gives an involution on this quotient with fixed point set the $0$-section.  So as above, the $2$-fold covering of $Q_{2g+1,e}$ corresponding to $\alpha_{-}$ extends to a branched covering, and  Equation~\eqref{cg-branch} implies that $\rho_{\alpha_{-}}(Q_{2g+1,e}) = -e/2$.

If $h$ is even, there are again two components $R^t_\pm$, determined by the sign of $\alpha(b_1)$.  Then one has to compute two representative $\rho$-invariants $\rho_{\alpha_\pm}$, where $\alpha_\pm(b_1) = \pm 1$, $\alpha_\pm(f) = -1$, and $\alpha_\pm(x) = 1$ for all of the other generators.  As above, each of these extends to a branched cover, and we get $\rho_{\alpha_{\pm}}(Q_{2g,e}) = -e/2$.
\end{proof}

The main result of this section, Theorem~\ref{T:g-sign}, makes use of branched coverings constructed via Proposition~\ref{twist}. Constraints on the Euler class and genus of an essential surface come from Smith-theory bounds on the homology of these branched coverings, and so we assume that the coverings have order a power of $2$.

\begin{theorem}\label{T:g-sign}
Suppose that $F_h \subset W$ is an essential embedding with normal Euler number $e$ in a homology cobordism between $M_0$ and $M_1$.  Let $\alpha\co H_1(M_0) \to U(1)$ be a representation with image $\Z_{2^k}$, and let $\altau$ be the associated twisted representation arising from Proposition~\ref{twist}.  If $k\ge 1$, then
\begin{equation}\label{E:g-sign}
-2^{k} h \leq \rho_{\altau}(M_1) - \rho_\alpha(M_0) + e/2 \leq 2^{k} h
\end{equation}
whereas if $k=0$, then
\begin{equation}\label{E:g-sign0}
-2 h \leq \rho_{\altau}(M_1) - \rho_\alpha(M_0) + e/2 \leq 2 h
\end{equation}
\end{theorem}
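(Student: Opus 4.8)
The plan is to deduce both \eqref{E:g-sign} and \eqref{E:g-sign0} from a single inequality for the signature defect of the complement $V = W\smallsetminus\operatorname{int}P_{h,e}$, and to prove that inequality by passing to the cyclic cover of $V$ determined by $\altau$ and bounding its middle Betti number by Smith theory applied to a branched covering of $W$. Set $m=2^{\max(k,1)}$; the twisted representation $\altau\co\pi_1(V)\to U(1)$ has image contained in the cyclic group $\Z_m\subset U(1)$ (when $k=0$, $\alpha$ is trivial and $\altau=\tau$ has image $\Z_2$), and replacing $\Z_m$ by the actual image only sharpens the conclusion, so we may take the associated cover $p\co\widetilde V\to V$ to be connected of degree $m$, with a chosen generator $T$ of the deck group. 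Since $W$ is spin, $e$ is even (Section~\ref{S:prelim}), so Proposition~\ref{rhoQ} applies and gives $\rho_{\altau}(Q_{h,e})=-e/2$. Now $\altau$ is defined on all of $\pi_1(V)$, and $\partial V=-M_0\sqcup -Q_{h,e}\sqcup M_1$ with $\altau|_{M_0}=\alpha$; so, taking $V$ itself as the bounding $4$-manifold in Definition~\ref{rho} and using additivity of $\rho$ over boundary components,
\[
\sign(V)-\sign_{\altau}(V)=\rho_{\altau}(M_1)-\rho_\alpha(M_0)-\rho_{\altau}(Q_{h,e})=\rho_{\altau}(M_1)-\rho_\alpha(M_0)+\tfrac{e}{2},
\]
and \eqref{E:g-sign}, \eqref{E:g-sign0} become the single bound $\lvert\sign(V)-\sign_{\altau}(V)\rvert\le mh$.

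By the description of $\sign_\alpha$ in Definition~\ref{rho}, $\sign_{\altau}(V)$ is the signature of the intersection form on the $\omega$-eigenspace $H_2(\widetilde V;\C)_\omega$ of $T_*$ ($\omega=\exp(2\pi i/m)$), while $\sign(V)$ is the signature on the $1$-eigenspace $H_2(\widetilde V;\C)_1\cong H_2(V;\C)$; as $1\ne\omega$, these are distinct direct summands of $H_2(\widetilde V;\C)$, so
\[
\lvert\sign(V)-\sign_{\altau}(V)\rvert\le\dim_{\C}H_2(\widetilde V;\C)_1+\dim_{\C}H_2(\widetilde V;\C)_\omega\le b_2(\widetilde V),
\]
and it suffices to show $b_2(\widetilde V)\le mh$. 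For this one closes up the cover: the cover of $Q_{h,e}$ cut out by $\altau|_{Q_{h,e}}$ extends over the disk bundle $P_{h,e}$ as an $m$-fold cover branched along the zero section $F_h$ (possible since the meridian of $F_h$ maps to the order-$2$ element of $\Z_m$ and $2\mid m$; cf.\ the proof of Proposition~\ref{rhoQ}), again a disk bundle over an orientable cover of $F_h$. Gluing produces a $\Z_m$-branched covering $\widetilde V\cup\widehat P\to W$, branched along $F_h$, whose homology is built from that of $W$ (a rational homology sphere, so $b_2(W)=0$) together with contributions measured by the branch surface $F_h$, whose total mod-$2$ Betti number is $h+2$. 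An iterated application of the Smith inequalities up the $2$-power tower $\Z_m\supset\Z_{m/2}\supset\cdots\supset\Z_2$, combined with Novikov additivity across the pieces, is then meant to produce precisely $b_2(\widetilde V)\le mh$.

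The hard part is this last step: arranging the Smith-theoretic bookkeeping so that the constant is exactly $mh$ and not a larger multiple of $h$ --- a crude Euler-characteristic estimate of $b_2(\widetilde V)$ overshoots, so the refined Smith inequalities are genuinely needed --- and checking that $\altau|_{Q_{h,e}}$ is exactly the boundary restriction of the specified branched cover of $P_{h,e}$, so that the two pieces assemble. The distinction between \eqref{E:g-sign} and \eqref{E:g-sign0} enters only through the value of $m$: for $k\ge1$ one has $m=2^k$, whereas for $k=0$ the representation $\alpha$ is trivial, so $\rho_\alpha(M_0)=0$ and $m=2^1=2$, giving the coefficient $2$.
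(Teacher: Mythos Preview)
Your setup is right: once you know $\rho_{\altau}(Q_{h,e})=-e/2$ and that $\altau$ extends over $V$, the statement reduces to bounding the twisted signature of $V$. But the second half of your argument has a genuine gap, and the route you propose is both harder and less robust than the paper's.

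First, the branched-cover construction is problematic for $m>2$. You want to cap off $\widetilde V$ by extending the $\Z_m$-cover of $Q$ over $P_{h,e}$ as a branched cover along $F$. But for that to be a \emph{cyclic} branched cover in the usual sense, the meridian (the circle fiber $f$) must map to a \emph{generator} of $\Z_m$; here it maps to the unique order-$2$ element. You can still fill in, but what you get over each disk fiber is $m/2$ disjoint $2$-fold branched disks, and the resulting $\widehat P$ need not even be connected or a disk bundle in the way you suggest. The ``cf.\ the proof of Proposition~\ref{rhoQ}'' reference is misleading: that proof only handles $m=2$. Second, you explicitly leave the Smith-theory step unproved (``is then meant to produce precisely $b_2(\widetilde V)\le mh$''), and you yourself note that a crude estimate overshoots. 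That is the entire content of the inequality; without it you have no proof. Third, bounding by $b_2(\widetilde V)$ is wasteful: since $\sign(V)=0$ (the intersection form on $H_2(V;\Q)$ is identically zero, as all of $H_2(V)$ comes from $\partial V$), you only need $\lvert\sign_{\altau}(V)\rvert\le\dim H_2(V;\C_{\altau})$, a single eigenspace.

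The paper avoids all of this by working with the pair $(V,M_0)$ and citing a ready-made bound. Because $\sign(V)=0$, the APS formula gives
\[
\rho_{\altau}(M_1)-\rho_\alpha(M_0)+\tfrac{e}{2}=-\sign_{\altau}(V,M_0),
\]
and $\lvert\sign_{\altau}(V,M_0)\rvert\le\rank H_2(V,M_0;\C_{\altau})$. Now one invokes \cite[Proposition~1.4]{gilmer:thesis}, which is precisely the packaged Smith-theory inequality you were reaching for: for a representation with image $\Z_{2^k}$ (or $\Z_2$ when $k=0$) one has $\rank H_2(V,M_0;\C_{\altau})\le 2^{\max(k,1)}\rank H_2(V,M_0;\Z_2)$. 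No branched cover of $W$ need be built; Gilmer's estimate works directly with the local-coefficient complex. Finally, a Mayer--Vietoris computation for $(W,M_0)=(V,M_0)\cup_Q P$ together with Lemma~\ref{L:Q} gives $\rank H_2(V,M_0;\Z_2)=h$, and the inequality follows. The moral: bound the twisted Betti number of $(V,M_0)$ directly via Gilmer's result rather than trying to close up to a branched cover of $W$.
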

\begin{proof}
By construction, $\alpha$ on $H_1(M_0)$ and $\altau$ on $H_1(M_1)$ extend to a representation on $H_1(V)$ whose restriction (still denoted $\altau$) is twisted on $\qhe$.  Note that for $k=0$ (that is, if $\alpha$ is the trivial representation) the image of $H_1(V)$ under $\altau$ is $\Z_2$, but if $k\geq 1$, then the image is $\Z_{2^k}$.  This accounts for the difference between equations~\eqref{E:g-sign} and~\eqref{E:g-sign0}.

 Taking into consideration that $\sign(V) =0$, the Atiyah-Patodi-Singer theorem says that
$$
\rho_{\altau}(M_1) - \rho_\alpha(M_0)  + \rho_\alpha(\qhe) = - \sign_{\altau}(V,M_0)
$$
where $\qhe$ is oriented as part of the boundary of $V$.   Since that orientation of $Q$  is opposite to its orientation as the boundary of $P$, Proposition~\ref{rhoQ} gives
$$
\rho_{\altau}(M_1) - \rho_\alpha(M_0)  + \frac{e}{2} = - \sign_{\altau}(V,M_0)
$$
Now $ \sign_{\altau}(V,M_0)$ is bounded by the rank of $H_2(V,M_0;\C_{\altau})$, which according to~\cite[Proposition 1.4]{gilmer:thesis} is bounded in turn by $2^{k} \rank H_2(V,M_0;\Z_2)$ for $k \geq 1$ and $2 \rank H_2(V,M_0;\Z_2)$ for $k=0$.  A straightforward calculation with the Mayer-Vietoris sequence for $(W,M_0) = (V,M_0) \cup_Q P$ and Lemma~\ref{L:Q} show that $\rank H_2(V,M_0;\Z_2) = h$, which implies the result.
\end{proof}
As in the proof of Theorem~\ref{T:Mbound}, we can get stronger results by varying $\alpha$, although only over representations with image $\Z_{2^k}$.  This includes interchanging the roles of $\alpha$ and $\altau$. For simplicity, we give only the result for $\alpha$ the trivial representation.
\begin{corollary}\label{T:rhobound}
Let $M$ be a rational homology sphere. Suppose that $W$ is a homology cobordism, and that $F_h\subset W$ is essential and has normal Euler number $e$, and twisting $\tau\in H^1(M_1;\Z_2)$.  Let $\alpha$ denote the trivial representation.  Note that $\rho_{\alpha}=0$ and that the homology cobordism invariance of $\rho$ implies that $\rho_{\altau}(M_1) = \rho_{\altau}(M_0)$, so we write $\rho_{\altau}$ for either of these.
 Then
$$h \ge\frac{\left|\rho_{\altau}\right|}{2}.$$
Moreover, $|e| \le 2\left(2h - \left|\rho_{\altau}\right|\right)$.
\end{corollary}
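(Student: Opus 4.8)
The plan is to derive both inequalities from two applications of Theorem~\ref{T:g-sign} to the embedding $F_h\subset W$, with two different choices of base representation on $M_0$. Each application produces one two-sided linear inequality relating $e$ and $\rho_{\altau}$, and the two are complementary, so that the displayed bounds drop out of a short manipulation. Throughout one uses that $e$ is even (as $W$ is spin, cf.\ Section~\ref{S:prelim}), which is what makes Proposition~\ref{rhoQ}, and hence Theorem~\ref{T:g-sign}, applicable.

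For the first application I would take $\alpha$ to be the trivial representation, so that $k=0$. Then $\rho_\alpha(M_0)=0$, and the associated twisted representation restricts to $\tau$ on $M_1$, so $\rho_{\altau}(M_1)=\rho_{\altau}$; thus \eqref{E:g-sign0} reads
$$
-2h\;\le\;\rho_{\altau}+\tfrac{e}{2}\;\le\;2h.
$$
For the second application I would take $\alpha$ to be the order-two class $\tau_0\in H^1(M_0;\Z_2)$ corresponding to the twisting class $\tau\in H^1(M_1;\Z_2)$ under the identification $H^1(M_0;\Z_2)\cong H^1(M_1;\Z_2)$ induced by $W$, so that $k=1$. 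The essential point is that the resulting twisted representation restricts to $\tau_0$ on $M_0$ and to $\tau\cdot\tau$, hence trivially, on $M_1$, while its value on the fiber of $Q_{h,e}$ is again $-1$ (the fiber being nullhomologous in $W$), so it is still twisted in the sense required by Theorem~\ref{T:g-sign}. Using $\rho_{\altau}(M_1)=0$ and, by homology cobordism invariance of $\rho$, $\rho_{\tau_0}(M_0)=\rho_{\altau}$, equation~\eqref{E:g-sign} with $k=1$ gives
$$
-2h\;\le\;\rho_{\altau}-\tfrac{e}{2}\;\le\;2h.
$$

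Combining the two displayed inequalities, the four resulting scalar inequalities yield $\tfrac{e}{2}\le 2h-\abs{\rho_{\altau}}$ and $\tfrac{e}{2}\ge-\bigl(2h-\abs{\rho_{\altau}}\bigr)$, so that $\abs{e}\le 2\bigl(2h-\abs{\rho_{\altau}}\bigr)$. Since $\abs{e}\ge 0$, this forces $2h-\abs{\rho_{\altau}}\ge0$, i.e.\ $h\ge\abs{\rho_{\altau}}/2$, completing the argument.

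The step I expect to be the main obstacle is the bookkeeping in the second application: one must verify carefully, using Proposition~\ref{twist} and the injectivity of the Bockstein on $H^1$ of a rational homology sphere, that feeding the twisting class itself into the construction preceding Theorem~\ref{T:g-sign} really does produce a representation that (i) is twisted on $Q_{h,e}$, so that Proposition~\ref{rhoQ} still applies inside the proof of that theorem, and (ii) restricts to the trivial representation on $M_1$ and has $\rho_{\tau_0}(M_0)=\rho_{\altau}$. Once this is in place, the remaining inequality manipulation is routine.
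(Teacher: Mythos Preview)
Your proposal is correct and follows essentially the same strategy as the paper: apply Theorem~\ref{T:g-sign} twice to obtain the pair of inequalities
\[
-2h \le \rho_{\altau} + \tfrac{e}{2} \le 2h \qquad\text{and}\qquad -2h \le -\rho_{\altau} + \tfrac{e}{2} \le 2h,
\]
and then combine them. The only difference is in how the second inequality is produced. The paper phrases this as ``interchanging the roles of $M_0$ and $M_1$'' (so the trivial representation sits on $M_1$ and the twisted one on $M_0$), whereas you keep $M_0$ as the base and instead feed the twisting class $\tau_0$ itself in as $\alpha$, so that $\altau|_{M_1}=\tau\cdot\tau$ is trivial. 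These are dual descriptions of the same move and yield the same inequality; your version has the virtue of making explicit the check that the resulting representation is still twisted on $Q$ (since the fiber is null-homologous in $W$), which is exactly what is needed for Proposition~\ref{rhoQ} to apply.
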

\begin{proof}
Apply Theorem~\ref{T:g-sign} twice, interchanging the roles of $M_0$ and $M_1$, to get
\begin{alignat}{2}
-2h & \leq \rho_{\altau} + \frac{e}{2}\  && \leq 2h \label{m0}\\
-2h & \leq -\rho_{\altau} + \frac{e}{2} &&  \leq 2h \label{m1}
\end{alignat}
The right side of~\eqref{m0} and the left side of~\eqref{m1} give $\rho_{\altau} \leq 2h$, and the other pair give $-\rho_{\altau} \leq 2h$.  Similarly, the right hand sides of~\eqref{m0} and~\eqref{m1} give $e/2 \leq 2h -   \left|\rho_{\altau}\right|$, and the left hand sides give the same upper bound for $-e/2$.
\end{proof}

\subsection{Sample computations}\label{S:examples}
We present a couple of examples to explain how Theorem~\ref{T:g-sign} works and also to contrast its implications with those stemming from Theorem~\ref{T:twist-bound}.
\begin{example}\label{E:drho} According to Corollary~\ref{C:simple} (as explicated in Section~\ref{S:lens}) there is no smooth essential embedding of $\RP^2$ in $L(4,1) \times I$ with any Euler class.  Let $\omega$ be a primitive fourth root of unity, and let $g \in H_1(L(4,1))$ be a generator.  We compute (using~\cite[p. 187]{casson-gordon:orsay}) that for $\alpha_1\co H_1(L(4,1)) \to U(1)$ with $\alpha_1(g) = \omega$ we have $\rho_{\alpha_1}(L(4,1))  = -1/2$. To apply Theorem~\ref{T:g-sign}, note that
$\alpha_1^\tau(g) = -\omega$, so $\rho_{\alpha_1^\tau}(L(4,1))  - \rho_{\alpha_1}(L(4,1))  = 0$.  Hence in writing the bound in the theorem we have $\epsilon = 0$ and Equation~\eqref{E:g-sign} gives
$$
-4 \leq  e/2 \leq 4.
$$
Recalling from Proposition~\ref{P:mod4} that $e \equiv 2\pmod{4}$, we see that $e$ could be $-6,\;-2,\;2$ or $6$.

We obtain a stronger result by considering the representation $\alpha_2$ with $\alpha_2(g) = \omega^2 = -1$, which has $\rho_{\alpha_2} = -1$.  Now $\alpha_2^\tau(g) =1$, so $\rho_{\alpha_2^\tau} = 0$.  Then from Equation~\eqref{E:g-sign}
$$
-2 \leq -1 + e/2 \leq 2
$$
which rules out  $e=-6$.  Similarly, choosing $\alpha$ to be the trivial representation gives $\altau = \alpha_2$, which rules out $e=6$.  The conclusion is that Theorem \ref{T:g-sign} does not obstruct the existence of a locally flat embedding of $\RP^2$ in $L(4,1) \times I$ with Euler class $\pm 2$, although there is no such smooth embedding.
\end{example}

On the other hand, sometimes the $\rho$-invariants give stronger embedding obstructions than the $d$-invariants, as the following example demonstrates.

\begin{example}\label{E:rhod}
Let $Y$ be the Seifert-fibered space $M(-1; (3,2), (4,1), (6,1))$ that also appeared in Proposition~\ref{P:non-simple}, where it was remarked that the maximum $d$-invariant difference $\Delta = 1/2$. Thus, Corollary~\ref{C:rp2} would in principle allow for an embedding of $\RP^2$ in $Y \times I$ with Euler class $0$. On the other hand, $Y$ is also the result of $+6$ surgery on the $(4,3)$ torus knot, and this description allows us to compute $\rho_\alpha(Y) $ for a $U(1)$ representation, via~\cite[Lemma 3.1]{casson-gordon:stanford}.  The formula gives for the non-trivial $\Z_2$ representation $\alpha$ that $\rho_\alpha(Y) = -2 -\sign(T_{4,3}) = 4$, since  the signature of the $(4,3)$ torus knot is $-6$. Applying Equation~\eqref{E:g-sign} with $\alpha$ equal to the trivial and non-trivial $\Z_2$ representations gives (respectively)
\begin{align*}
-4 -2h  &\leq \frac{e}2 \leq -4 + 2h\\
4 -2h  &\leq \frac{e}2 \leq 4 + 2h
\end{align*}
Combining these gives $4 -2h \leq  -4 + 2h$, so $h\geq 2$.   If the Euler class is $0$, then the mod $4$ congruence (Proposition~\ref{P:mod4}) implies that $h$ is odd, so $h \geq 3$. (In fact, the smallest genus that we can find for a surface with $e=0$ is $9$, which suggests that there is room for improvement in our methods.)
\end{example}

\section{Topological embeddings}\label{S:top}
It is well-known that embedding problems for smooth and topological (always meaning locally-flat) surfaces may be very different. In particular, there are smooth $4$-manifolds where the minimal genus of a topologically embedded surface carrying a particular integer homology class is lower than the minimal genus of a smoothly embedded surface in the same homology class.  In this section, we address the analogous question for non-orientable surfaces.  We give an example of a $3$-manifold $M$ and a $\Z_2$ homology class in $H_2(M;\Z_2)$ that is represented by a locally flat $\RP^2$ in a topological $4$-manifold with the homology of $M \times I$, but where the minimal genus for a smoothly embedded representative in a smooth $4$-manifold is $3$.

If $K$ is a knot in $S^3$, let $S^3_r(K)$ denote $r$-framed surgery on $K$.
\begin{theorem}\label{top-emb}
Suppose that the knot $K$ is smoothly (resp. topologically) slice.  Then the non-trivial homology class in $H_2(S^3_2(K);\Z_2)$ is represented by a smooth (resp. locally flat) embedded $\RP^2$ in a smooth (resp.~topological) manifold $W$ with the homology of $S^3_2(K)\times I$ and with  $\partial W = -S^3_2(K) \sqcup S^3_2(K)$.
\end{theorem}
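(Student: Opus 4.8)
The plan is to use the elementary observation that a smoothly (resp.\ topologically) slice knot $K$ is smoothly (resp.\ topologically) concordant to the unknot $U$: removing a standard ball around an interior point of a slice disk $D\subset B^4$ for $K$ identifies the complement of that ball with $(S^3\times I, A)$, where $A$ is a properly embedded, smooth (resp.\ locally flat) annulus from $K\subset S^3\times\{0\}$ to a small unknot $U\subset S^3\times\{1\}$. Since $2$-surgery on the unknot is $L(2,1)\cong\RP^3$, which contains the standard essential $\RP^2$ with normal Euler number $0$ (the $k=1$ case of the construction in the proof of Theorem~\ref{T:d}), the idea is to turn the concordance $A$ into a homology cobordism from $S^3_2(K)$ to $\RP^3$, glue on a collar $\RP^3\times I$ carrying that $\RP^2$, and cap off with the same homology cobordism run backwards to return to $S^3_2(K)$.

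First I would perform slope-$2$ surgery on $A$: delete an open tubular neighborhood $\nu(A)\cong(S^1\times I)\times D^2$ of $A$ from $S^3\times I$ and reglue it so that each meridian disk boundary maps to a curve of slope $2$ on the corresponding torus boundary component. Call the result $W_1$; by construction $\partial W_1=-S^3_2(K)\sqcup S^3_2(U)=-S^3_2(K)\sqcup\RP^3$, and in the topological case $W_1$ is a topological manifold. The concordance complement $E=(S^3\times I)\minus\nu(A)$ has the homology of $S^1$ (with $\Z$ coefficients), with both inclusions $S^3\minus\nu(K)\hookrightarrow E$ and $S^3\minus\nu(U)\hookrightarrow E$ homology equivalences; since therefore $H_*(E,S^3\minus\nu(K);\Z)=0=H_*(E,S^3\minus\nu(U);\Z)$ and the reglued tube that is attached to $E$ to form $W_1$ restricts on each end to the filling solid torus of the corresponding Dehn surgery, an excision argument gives $H_*(W_1,S^3_2(K);\Z)=0=H_*(W_1,\RP^3;\Z)$, so $W_1$ is a homology cobordism. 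Let $W_2$ be $W_1$ equipped with the opposite orientation, which by the usual convention $\partial(\text{cobordism from }A\text{ to }B)=-A\sqcup B$ we may regard as a homology cobordism from $\RP^3$ to $S^3_2(K)$.

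Now set $W=W_1\cup_{\RP^3}(\RP^3\times I)\cup_{\RP^3}W_2$. Being a composition of homology cobordisms, $W$ is a homology cobordism from $S^3_2(K)$ to itself, so it has the homology of $S^3_2(K)\times I$ and $\partial W=-S^3_2(K)\sqcup S^3_2(K)$; it is smooth, resp.\ topological, according to whether $K$ was smoothly, resp.\ topologically, slice. The standard $\RP^2\subset\RP^3\times\{\tfrac12\}$ then sits in $W$, and its normal bundle there is the twisted line bundle over $\RP^2$ plus a trivial summand, so its normal Euler number is $0$. It remains to check that this $\RP^2$ is essential. The class $[\RP^2]$ generates $H_2(\RP^3;\Z_2)\cong\Z_2$; the inclusions $\RP^3\hookrightarrow W_i$ induce isomorphisms on $H_2(-;\Z_2)$ since the $W_i$ are homology cobordisms; and a Mayer--Vietoris computation for $W\simeq W_1\cup_{\RP^3}W_2$ (the collar being immaterial) shows that the map $H_2(W_1;\Z_2)\to H_2(W;\Z_2)\cong\Z_2$ is injective. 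Hence $[\RP^2]\ne 0$ in $H_2(W;\Z_2)$, and under the identifications of the boundary components it represents the nontrivial class of $H_2(S^3_2(K);\Z_2)$.

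The only substantive step is the construction and homological analysis of $W_1$, namely that surgering a concordance produces a homology cobordism between the corresponding Dehn surgeries; everything downstream — assembling $W$, locating the $\RP^2$, computing its normal Euler number, and checking essentiality — is routine Mayer--Vietoris bookkeeping. No gauge theory enters: the slice disk is used solely to extract the concordance $A$, and the only place the smooth/topological distinction matters is the initial ``delete a standard ball from the slice disk'' step, which works equally in both categories.
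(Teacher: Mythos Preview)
Your proof is correct and follows essentially the same route as the paper: produce a concordance from $K$ to the unknot, do $+2$ surgery on it to obtain a homology cobordism $W_1$ from $S^3_2(K)$ to $L(2,1)=\RP^3$ (the paper cites Gordon~\cite{gordon:contractible} for this step, whereas you spell out the excision argument), then double along $\RP^3$ and use the standard essential $\RP^2\subset\RP^3$. Your added detail on the normal Euler number and the Mayer--Vietoris check of essentiality is fine but not needed for the statement as given.
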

\begin{proof}
Let $C$ be a concordance in $S^3\times [0,1/2]$ between $K$ and the unknot $\OO$. Then, as in~\cite{gordon:contractible}, one can do $+2$ surgery on $C$ to obtain a smooth or topological homology cobordism between $S^3_2(K)$ and $S^3_2(\OO) = L(2,1)$. Now double this homology cobordism along $L(2,1)$ to get the manifold $W$.  Since $L(2,1)$ contains an essential $\RP^2$, the result follows.
\end{proof}

\begin{example}\label{E:top-emb} Let $K$ be the positive-clasped untwisted
Whitehead double of the trefoil $T_{2,3}$, which is topologically slice~\cite{freedman-quinn}. Hence by
Theorem~\ref{top-emb} there is a homology cobordism $W$ between $S^3_2(K)$ and itself that contains a locally-flat essential $\RP^2$. On the other hand, the following remarks will show that there is no smooth essentially embedded $\RP^2$ in any such homology cobordism.  Note that since the genus of $K$ is one, there is an embedded $F_3$ in  $S^3_2(K)$ carrying the non-trivial class in mod $2$ homology.

According to~\cite[Appendix A]{hedden-kim-livingston:2-torsion} (compare~\cite{hedden:double}) the knot Floer chain complex for $K$ is filtered homotopy equivalent to that of $T_{2,3}$ plus an acyclic complex.  The integral surgery formula~\cite{oz:z-surgery} implies that the Heegaard Floer homology for $S^3_2(K)$ is ($\Q$-graded) isomorphic to that of $S^3_2(T_{2,3})$.  One can calculate the $d$-invariants of $S^3_2(T_{2,3})$ via the integer surgery formula or surgery exact sequences (note that $2$ is a `large' surgery since $2 > 2g(T_{2,3}) -1$) or by writing $S^3_2(T_{2,3})$ as a Seifert fibered space and using the algorithm of~\cite{oz:plumbed}.  Either method yields that the two $d$-invariants are $-1/4$ and $-7/4$, so by Corollary~\ref{C:rp2}, there is no smoothly embedded $\RP^2$ in any smooth homology cobordism from $S^3_2(K)$ to itself.
\end{example}

\begin{remark}
Unless $K$ is the unknot, the manifold $W$ constructed in the proof of Theorem~\ref{top-emb} will not be homeomorphic to $M \times I$. For if it were, then $W$ would retract onto $M$; the restriction of this retraction would be a degree-one map $L(2,1) \to M$.  Such a map would be a surjection on $\pi_1$ and hence $M$ would have fundamental group of order $2$.  By Perelman's solution~\cite{perelman:extinction,morgan-tian:poincare} to the Poincar\'e conjecture, $M \cong L(2,1)$, implying~\cite{kmos:lens} that $K$ is the unknot.  Finding an embedding in $M \times I$ seems to be a challenging problem.
\end{remark}

\section{Embeddings in closed manifolds}\label{S:definite}
In this section, we study restrictions on the genus and normal Euler number of a closed, non-orientable surface $F$ embedded in a closed $4$-manifold $X$ with $H_1(X;\Z)=0$. Our first result concerns embeddings in definite manifolds.

\begin{theorem}\label{T:definite}
Suppose $X$ is a closed positive definite $4$-manifold with $H_1(X;\allowbreak\Z)=0$ and $b_2(X)=b$, and $F \subset X$ is a closed non-orientable surface of genus $h$ with normal Euler number $e$. Denote by $\ell$ the minimal self-intersection of an integral lift of $[F]$.
Then
\[
e \equiv \ell - 2h \pmod 4 \quad \text{and} \quad e \ge \ell - 2h.
\]
Additionally, if $\ell = b$, then
\[
e \le 9b + 10h - 16.
\]
\end{theorem}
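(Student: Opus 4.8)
The plan is to reduce, via Donaldson's theorem, to the case that $F$ is Poincar\'e dual to $w_2(X)$, to build from $X$ and $F$ a closed \emph{spin} $4$-manifold $Z$ whose signature is proportional to $e$ up to a bounded error, and then to apply Furuta's $10/8$ theorem to $Z$. Since $X$ is smooth, closed, and positive definite, Donaldson's diagonalization theorem shows that the intersection form of $X$ is $\langle 1\rangle^b$. In that lattice every characteristic vector has square $\ge b$, with equality precisely for $(\pm1,\dots,\pm1)$; hence $\ell=b$ is equivalent to $PD_X[F]=w_2(X)$, and moreover $\sigma(X)=b$. So we may assume $F$ is dual to $w_2$; note also that the congruence already proved forces $e\equiv b\pmod 2$, so $e+b$ is even.

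To construct $Z$, let $G\subset X$ be an embedded oriented surface with $PD[G]=w_2(X)$ and $[G]^2=\ell=b$, and let $F'$ be $F$ tubed to $G$ along an arc (resolving the intersections forced when $b$ is odd). Then $[F']=0\in H_2(X;\Z_2)$ and the normal Euler number of $F'$ is $e+b$ up to a controlled correction. Since $[F']$ is mod-$2$ trivial and $H^1(X;\Z_2)=0$, the double cover of $X\smallsetminus F'$ dual to a meridian exists and is unique, and completing it along $F'$ gives a closed double branched cover $Z=\Sigma_2(X,F')$ with a natural smooth structure. The crucial claim is that $Z$ is spin: the correction of $w_2$ along the branch locus is governed by $PD[F]=w_2(X)$ and cancels $p^{*}w_2(X)$. (Alternatives: tube $F$ with a transverse push-off of itself, so $[F']=2[F]=0$ directly; or cap $X\smallsetminus\nu(F)$ — which is spin, since $w_2(X)$ restricts to $0$ there — with a spin filling of $Q_{h,e}$ realizing the fibrewise-nontrivial spin structure. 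In every case one needs a filling whose signature grows linearly in $e$.)

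Now I would compute invariants. The branched-cover, equivalently $G$-signature, formula gives $\sigma(Z)=2\sigma(X)-\tfrac12\,e(F')=\tfrac{3b-e}{2}$ up to the correction above, while $\chi(Z)=2\chi(X)-\chi(F')$; combined with a computation of $b_1(Z)$ — controlled because $\pi_1(X)=1$ and the covering is explicit from the branch data — this writes $b_2(Z)$ as an explicit affine function of $b$, $h$, and $e$. For $e$ large and positive, $Z$ is indefinite with $\sigma(Z)<0$, so Furuta's theorem gives $b_2(Z)\ge\tfrac{10}{8}\lvert\sigma(Z)\rvert+2$. Substituting the computed values and solving the resulting affine inequality for $e$ yields $e\le 9b+10h-16$; for $e$ outside this range the inequality is automatic, and the lower bound $e\ge\ell-2h$ was established in the first part.

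The main difficulty is the construction and bookkeeping for $Z$: confirming that $Z$ is genuinely spin even though its branch locus is non-orientable (the $w_2$ computation, and, when $b$ is odd, the handling of the resolved intersections), and — most delicately — determining $b_1(Z)$, hence $b_2(Z)$, along with the genus of the auxiliary surface $G$, precisely enough that the constants come out to exactly $9b+10h-16$. The coefficient $9$ of $b$ is forced by the $\tfrac{10}{8}$ of Furuta's theorem together with $\sigma(X)=b$; getting the coefficient $10$ of $h$ and the additive constant $-16$ right is where the care is required.
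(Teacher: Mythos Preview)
Your proposal addresses only the upper bound $e\le 9b+10h-16$ in the ``Additionally'' clause, taking the congruence and lower bound as already established. The paper proves that first part by a different mechanism: it applies the $d$-invariant inequality (Theorem~\ref{T:negative}) to $-V$, where $V=X\smallsetminus\operatorname{int}P_{h,e}$, using a spin$^c$ structure on $V$ that does not extend over $X$ and hence restricts on $Q_{h,e}$ to one of the non-extendible torsion spin$^c$ structures; the computed values $\dbot(Q_{h,e},\spinct_a)=\tfrac{e-2}{4}+a$ from Theorem~\ref{T:d} then give both the congruence and the lower bound. No branched cover or Furuta-type argument enters that half.

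For the upper bound, your \emph{alternative} (``cap $X\smallsetminus\nu(F)$, which is spin since $w_2$ restricts trivially there, with a spin filling of $Q_{h,e}$'') is exactly what the paper does. The paper's key ingredient, which you are missing, is an explicit construction of such fillings by Kirby calculus (Lemma~\ref{L:spin}): for each non-extendible spin structure on $Q_{h,e}$ there is an integer $k\in\{0,\dots,h\}$ and a spin $4$-manifold $Z$ with $\partial Z=Q_{h,e}$, $\sigma(Z)=-e'$, and $b_2(Z)=h-1+|e'|$ (for $e'\ne 0$), where $e'=e+2h-4k$. Gluing $X'=Z\cup_Q V$ gives a closed spin manifold with $b_1=0$, $b_2(X')=b+2(h-1)+|e'|$, and $\sigma(X')=b-e'$; Furuta's inequality $4b_2\ge 5|\sigma|+8$ then yields $e'\le 9b+8h-16$, and since $k\le h$ one gets $e\le 9b+10h-16$. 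Because $Z$ is built by hand, $b_1(X')$, $b_2(X')$, and $\sigma(X')$ are known exactly, so the constants fall out with no delicate bookkeeping.

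Your \emph{main} approach, the branched double cover over $F'=F\mathbin{\#}G$, has genuine gaps beyond the ones you flag. First, you need an embedded orientable $G$ with $PD[G]=w_2$ and controlled genus, but $X$ is only assumed to have $H_1=0$ and diagonal form---it need not be $\#^b\CP^2$---so nothing bounds $g(G)$; any uncontrolled $g(G)$ feeds into $\chi(F')$ and wrecks the constants. Second, your computation of $b_1(Z)$ invokes $\pi_1(X)=1$, which is not hypothesized. Third, even granting $g(G)=0$ and $b_1(Z)=0$, plugging $\sigma(Z)=\tfrac{3b-e}{2}$ and $b_2(Z)=2b+h$ into Furuta gives $e\le\tfrac{31b+8h-16}{5}$, not $9b+10h-16$; the coefficients do not match because the $2:1$ cover doubles the contribution of $b$ to $b_2$ while only multiplying its contribution to $\sigma$ by $\tfrac32$. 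The capping-off route avoids all of this: the filling $Z$ is independent of $X$ and has $b_1(Z)=0$ and $b_2(Z)$, $\sigma(Z)$ linear in $e'$ by construction.
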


In the case where $h=1$, the first part of the theorem is a result of Lawson \cite{lawson:rp2}, and the second part follows from a theorem of Ue \cite[Theorem 3]{ue:spherical}. Moreover, the case where $b=0$ was proven by Massey~\cite{massey:whitney}, verifying Whitney's conjecture~\cite{whitney:manifolds-lectures} on normal bundles for surfaces in $S^4$ (or more generally, any homology sphere):

\begin{corollary}\label{C:sphere}
Suppose $X$ is a homology $4$-sphere and $F \subset X$ is a closed non-orientable surface of genus $h$ with normal Euler number $e$. Then
$$e \equiv  2h \pmod 4 \quad \text{and}\quad |e| \le 2h.$$
\end{corollary}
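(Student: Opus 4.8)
The plan is to obtain Corollary~\ref{C:sphere} as the special case $b=0$ of Theorem~\ref{T:definite}, using an orientation-reversal trick to pin down both signs of $e$.

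First I would check that a homology $4$-sphere $X$ satisfies the hypotheses of Theorem~\ref{T:definite}. By definition $H_1(X;\Z)=0$ and $b_2(X)=0$; since $H_2(X;\Z)=0$ the intersection form of $X$ is (vacuously) positive definite, so we may take $b=0$. Moreover $H_2(X;\Z_2)=0$, so $[F]=0$ and its only integral lift is $0\in H_2(X;\Z)$, whence the minimal self-intersection is $\ell=0$. The first two conclusions of Theorem~\ref{T:definite} then read $e\equiv\ell-2h=-2h\pmod 4$ and $e\ge\ell-2h=-2h$. Since $-2h\equiv 2h\pmod 4$, this is exactly the congruence $e\equiv 2h\pmod 4$ together with the one-sided bound $e\ge -2h$.

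To get the reverse inequality I would run the same argument on the orientation-reversed manifold $-X$, which is again a homology $4$-sphere containing the same surface $F$. Reversing the orientation of the ambient $4$-manifold negates each local intersection number in the definition of the normal Euler number from Section~\ref{S:prelim}, so $F$ has normal Euler number $-e$ as a surface in $-X$. Applying the lower bound of Theorem~\ref{T:definite} to $(-X,F)$ gives $-e\ge -2h$, i.e.\ $e\le 2h$; combined with the previous paragraph this yields $\abs{e}\le 2h$.

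There is no real obstacle: this is a direct specialization of the already-established Theorem~\ref{T:definite}. The only points deserving a line of justification are that a homology sphere counts as positive definite (the defining inequality is vacuous when $H_2=0$) and that the normal Euler number reverses sign under a change of ambient orientation. Note that only the congruence $e\equiv\ell-2h\pmod 4$ and the inequality $e\ge\ell-2h$ of Theorem~\ref{T:definite} are invoked; the additional bound $e\le 9b+10h-16$, whose proof is tied to the essential case with $b>0$, is not used. As a consistency check, the two standard embeddings $\RP^2\subset S^4$ have $h=1$ and $e=\pm 2$, saturating both $\abs{e}\le 2h$ and $e\equiv 2h\pmod 4$.
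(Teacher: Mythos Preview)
Your proof is correct and is exactly the paper's approach: the paper's proof of Corollary~\ref{C:sphere} consists of the single line ``Apply Theorem~\ref{T:definite} to $X$ with either orientation,'' which is precisely what you do after verifying the hypotheses. Your additional remarks (that only the congruence and the lower bound $e\ge\ell-2h$ are needed, and the $\RP^2\subset S^4$ sanity check) are accurate elaborations.
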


\begin{proof}
Apply Theorem \ref{T:definite} to $X$ with either orientation.
\end{proof}

The first part of Theorem \ref{T:definite} follows from Theorem \ref{T:negative} (Ozsv\'ath and Szab\'o's inequality for correction terms) combined with our computation of the correction terms for the circle bundles $Q_{h,e}$ (Theorem \ref{T:d}). The second part is a special case of a more general theorem concerning non-orientable surfaces that are characteristic for the intersection form, which follows from Rohlin's theorem on the signature of a spin 4-manifold and Furuta's 10/8 theorem.

\begin{theorem}\label{T:spin}
Suppose $X$ is a closed, non-spin $4$-manifold with $H_1(X;\Z)=0$ and $\sigma(X)\ge 0$. Let $F \subset X$ be a closed, non-orientable surface of genus $h$ with normal Euler number $e$ that is characteristic (i.e., $[F] = \PD(w_2(X))$). Then for some $k\in\{0,1,\ldots,h\}$, if we set
\[
e' = e+ 2h - 4k,
\]
the following hold:
\begin{gather}
\label{eq:e'congruence}  e' \equiv \sigma(X) \pmod {16}, \\
\label{eq:e'upper} e'\le \sigma(X)+8(b_+(X)+h-2), \\
\label{eq:e'lower} e'\ge \sigma(X)-8(b_-(X)+h-2)\ \text{if}\ e'<0, \\
\intertext{and}
\label{eq:e'=0} 0 \ge \sigma(X)-8(b_-(X)+h-1)\ \text{if}\ e'=0.
\end{gather}
\end{theorem}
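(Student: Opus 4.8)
The plan is to reduce, as announced, to Rohlin's theorem and Furuta's $10/8$ theorem~\cite{furuta:118}, following the pattern of Massey's proof (the case $b_2(X)=0$) and of Lawson~\cite{lawson:rp2} and Ue~\cite{ue:spherical} (the case $h=1$): from $X$ and the characteristic surface $F$ we build a closed spin $4$-manifold $N$ with
\[
\sigma(N) = \sigma(X) - e', \qquad b_2^+(N) \le b_2^+(X) + h - 1, \qquad b_2^-(N) \le b_2^-(X) + h - 1,
\]
for a suitable integer $e'$ of the prescribed form $e' = e + 2h - 4k$, $k\in\{0,1,\dots,h\}$.

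The spin manifold $N$ comes from cutting $X$ along $F$. Because $[F] = \PD(w_2(X))$, every class in $H_2(V;\Z_2)$ can be represented away from $F$ and so pairs trivially with $w_2(X)$; hence $w_2(X)$ restricts to $0$ on $V = X \smallsetminus \operatorname{int} P_{h,e}$, and being orientable $V$ is spin. (Equivalently, the double cover of $X$ branched along $F$ is spin.) By Novikov additivity $\sigma(V) = \sigma(X)$, and the spin structure that $V$ induces on $Q = Q_{h,e}$ is the twisted one of Proposition~\ref{twist}, since otherwise $V$ and the disk bundle $P_{h,e}$ would reassemble into a closed spin manifold, contradicting that $X$ is non-spin. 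One then caps $V$ (or an associated double cover) off by an explicit filling of $Q_{h,e}$ built from the surgery descriptions of Section~\ref{S:floercalc}; the normal Euler number $e$ enters the signature of this filling through the computation $\rho_\tau(Q_{h,e}) = -e/2$ of Proposition~\ref{rhoQ} (via the Atiyah--Patodi--Singer formula), and $e'$ is forced into the congruence class $e + 2h \pmod 4$ by the Whitney--Mahowald--Li relation of Proposition~\ref{P:mod4}. The discrete parameter $k$ records the connected sums with the standard $\RP^2 \subset S^4$ of Euler number $\pm 2$ (inside blown-up copies of $X$) needed to make the construction go through; the Euler characteristic together with the bound on twisted homology used in Section~\ref{S:sign} then yields the stated estimates on $b_2^\pm(N)$.

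Granting the construction, the conclusions are immediate. Rohlin's theorem gives $\sigma(N)\equiv 0 \pmod {16}$, i.e.\ \eqref{eq:e'congruence}. Furuta's theorem gives $b_2(N) \ge \tfrac{10}{8}|\sigma(N)| + 2$ when $\sigma(N) \ne 0$; splitting $b_2 = b_2^+ + b_2^-$, $\sigma = b_2^+ - b_2^-$, this reads $b_2^-(N) \ge \tfrac{1}{8}\sigma(N) + 1$ if $\sigma(N) > 0$ and $b_2^+(N) \ge -\tfrac{1}{8}\sigma(N) + 1$ if $\sigma(N) < 0$. Plugging in $\sigma(N) = \sigma(X) - e'$ and the bounds on $b_2^\pm(N)$: from $\sigma(N) < 0$, i.e.\ $e' > \sigma(X) \ge 0$, one gets $e' \le \sigma(X) + 8(b_+(X) + h - 2)$, and for $e'\le\sigma(X)$ this holds trivially (under our hypotheses $b_+(X)\ge 1$, since $\sigma(X)\ge 0$ with $b_+(X)=0$ would force $b_2(X)=0$ and make $X$ a spin homology $4$-sphere), so \eqref{eq:e'upper} holds; from $\sigma(N) > 0$, which in particular covers $e' < 0$, one gets $e' \ge \sigma(X) - 8(b_-(X) + h - 2)$, which is \eqref{eq:e'lower}. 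If $e' = 0$ then $\sigma(N) = \sigma(X)$: for $\sigma(X) > 0$ the displayed inequality already gives $\sigma(X) \le 8(b_-(X)+h-2)$, and for $\sigma(X) = 0$ the assertion $0 \ge -8(b_-(X)+h-1)$ is trivial since $h\ge 1$; this is \eqref{eq:e'=0}.

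The hard part is the middle step: defining the closed spin $4$-manifold $N$ so that its signature is exactly $\sigma(X) - e'$ and its $b_2^\pm$ obey the sharp bounds above, rather than weaker ones. This means pinning down the parity obstruction to the branched cover (which is what forces the shift by $2h - 4k$ and the range of $k$), computing the precise effect of the blow-ups on $\sigma$ and on $b_2^+$, $b_2^-$ \emph{separately} (the final inequalities being asymmetric in $b_+$ and $b_-$), and giving a clean identification of the spin filling of $Q_{h,e}$ of the correct signature --- drawing on Proposition~\ref{rhoQ}, the twisted-homology estimates of Section~\ref{S:sign}, and the handlebody descriptions of Section~\ref{S:floercalc}. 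Once $N$ is constructed, Rohlin and Furuta finish the proof with only routine bookkeeping.
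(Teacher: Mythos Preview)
Your overall strategy is the paper's: cut out a neighborhood of $F$, observe that the complement $V$ is spin because $F$ is characteristic, note that the induced spin structure on $Q_{h,e}$ is non-extendible because $X$ is non-spin, cap off with a spin filling $Z$ to obtain a closed spin manifold, and feed the result into Rohlin and Furuta. The deduction of \eqref{eq:e'congruence}--\eqref{eq:e'lower} from the properties of $N$ is essentially correct.

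The genuine gap is exactly the step you flag as ``the hard part,'' and your hints for it point in the wrong direction. The paper's construction of the spin filling $Z$ (Lemma~\ref{L:spin}) is pure Kirby calculus on the surgery diagram of Figure~\ref{F:Q3e}: the $2^h$ non-extendible spin structures on $Q_{h,e}$ correspond to characteristic sublinks containing the $(e+2h)$-framed component $K$; one performs $k$ handle-slides of $K$ over the zero-framed components to reduce the characteristic sublink to a single unknotted component with framing $e' = e+2h-4k$, and then $|e'|-1$ blow-ups plus one blow-down produce a spin $Z$ with $\sigma(Z)=-e'$ and $b_2(Z)=h-1+|e'|$ (or $h+1$ if $e'=0$). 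The integer $k$ thus records which spin structure $V$ induces on $Q$, not connect sums with $\RP^2\subset S^4$; the $\rho$-invariant of Proposition~\ref{rhoQ} and the Smith-theory estimates of Section~\ref{S:sign} play no role here.

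As a consequence, the bounds you assert for $b_2^\pm(N)$ are not quite right. With the paper's filling, $b_2(X')=b_2(X)+2(h-1)+|e'|$ when $e'\ne 0$, so only one of $b_2^+(X')\le b_+(X)+h-1$ and $b_2^-(X')\le b_-(X)+h-1$ holds (depending on the sign of $e'$), and when $e'=0$ one has $b_2^\pm(X')=b_\pm(X)+h$. You happen to invoke the correct inequality in each of the cases $e'>0$ and $e'<0$, but your handling of $e'=0$ overshoots by $1$, giving $\sigma(X)\le 8(b_-(X)+h-2)$ rather than the claimed \eqref{eq:e'=0}.
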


As an immediate consequence of Theorem \ref{T:spin}, we have:

\begin{corollary}\label{C:spin}
Under the assumptions of Theorem \ref{T:spin}, we have
\begin{gather*}
e \equiv \sigma(X) + 2h \pmod 4 \\
\intertext{and}
\min\{-2h,\sigma(X)-8(b_-(X)-2)-10h\} \le  e \le \sigma(X) + 8(b_+(X)-2)+10h.
\end{gather*}
\end{corollary}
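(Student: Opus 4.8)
The plan is to obtain Corollary~\ref{C:spin} as a purely arithmetic consequence of Theorem~\ref{T:spin}, by a short case analysis on the sign of the auxiliary integer $e'$. Recall that Theorem~\ref{T:spin} supplies some $k\in\{0,1,\dots,h\}$ for which $e'=e+2h-4k$ satisfies $e'\equiv\sigma(X)\pmod{16}$ together with \eqref{eq:e'upper}, \eqref{eq:e'lower}, and \eqref{eq:e'=0}. Note that $e'$ is an integer, and since $0\le 4k\le 4h$ we have $e'-2h\le e\le e'+2h$. For the congruence assertion, I would reduce $e'\equiv\sigma(X)\pmod{16}$ modulo $4$ to get $e+2h-4k\equiv\sigma(X)\pmod 4$; since $4k\equiv 0$ and $-2h\equiv 2h\pmod 4$, this rearranges to $e\equiv\sigma(X)+2h\pmod 4$.

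For the upper bound I would combine $e\le e'+2h$ with the unconditional estimate \eqref{eq:e'upper}, giving
\[
e\le\sigma(X)+8(b_+(X)+h-2)+2h=\sigma(X)+8(b_+(X)-2)+10h.
\]
For the lower bound, split on the sign of $e'$. If $e'>0$ then $e\ge e'-2h\ge 1-2h>-2h$; if $e'=0$ then $e=4k-2h\ge-2h$; in either case $e\ge-2h$, hence $e\ge\min\{-2h,\ \sigma(X)-8(b_-(X)-2)-10h\}$. If $e'<0$, then \eqref{eq:e'lower} gives $e'\ge\sigma(X)-8(b_-(X)+h-2)$, so
\[
e\ge e'-2h\ge\sigma(X)-8(b_-(X)+h-2)-2h=\sigma(X)-8(b_-(X)-2)-10h,
\]
which again dominates the stated minimum. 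Assembling the three cases yields the displayed lower bound.

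There is no substantive obstacle here: everything follows from elementary rearrangement of the inequalities already established in Theorem~\ref{T:spin}, which is the content that requires the real work (Rohlin's theorem and Furuta's $10/8$ theorem). The only subtlety worth flagging is that the branches with $e'\ge 0$ force $e\ge-2h$ independently of $\sigma(X)$ and the Betti numbers, which is precisely why the lower bound in the corollary must be written as a minimum of two expressions rather than a single formula.
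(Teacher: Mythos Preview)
Your argument is correct and is exactly the intended derivation: the paper states Corollary~\ref{C:spin} as an immediate consequence of Theorem~\ref{T:spin} without further proof, and what you have written is precisely the elementary arithmetic unpacking of that implication. The case split on the sign of $e'$ and the observation that $e'-2h\le e\le e'+2h$ are the only ingredients needed.
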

Note that the congruence in the corollary could also be deduced from the extension by Guillou and Marin~\cite{guillou-marin:rohlin}  of Rochlin's theorem.  Indeed, the proof of~\eqref{eq:e'congruence}, from which this congruence is deduced, could be adapted to give a proof of the Guillou--Marin result, along the lines of~\cite{matsumoto:rohlin}.

For the proof of Theorem \ref{T:definite}, let $P = P_{h,e}$ be a regular neighborhood of the surface $F \subset X$, let $Q = Q_{h,e} = \partial P$, and let $V$ be the closure of the complement of $P$. As a preliminary step we need to understand how spin$^c$ structures on $V$ restrict to $Q$.

\begin{lemma}\label{L:closed}
Let $X$ be a closed 4-manifold with $H_1(X;\Z)=0$ and $F \subset X$ a closed non-orientable surface.
If $[F]$ is non-zero in $H_2(X;\Z_2)$, then $H_1(V;\Z)=0$ and the restriction homomorphism $H^2(V;\Z) \to H^2(Q;\Z)$ is surjective. If $[F]$ is trivial in $H_2(X;\Z_2)$, then $H_1(V;\Z)\cong \Z_2$ and the cokernel of the restriction homomorphism $H^2(V;\Z) \to H^2(Q;\Z)$ is isomorphic to $\Z_2$.
\end{lemma}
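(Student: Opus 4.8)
The plan is to read everything off from the Mayer--Vietoris decomposition $X = V\cup_Q P$, combined with Poincar\'e--Lefschetz duality on $V$ (throughout, $X$, hence $V$ and $Q$, is oriented, as it is everywhere in this section). Since $P$ deformation retracts onto $F=F_h$ we have $H_1(P;\Z)\cong H_1(F;\Z)\cong\Z^{h-1}\oplus\Z_2$ and $H_2(P;\Z)\cong H_2(F;\Z)=0$, and the inclusion-induced map $H_1(Q;\Z)\to H_1(P;\Z)$ has kernel the order-two subgroup $\gen{[f]}$ generated by the fibre class (this is the map appearing in Lemma~\ref{L:Q}). First I would run Mayer--Vietoris with $\Z$ coefficients: because $H_1(X;\Z)=0$, it gives that $i_{V*}\co H_1(Q;\Z)\to H_1(V;\Z)$ is surjective and that $H_1(V;\Z)\oplus H_1(P;\Z)\cong H_1(Q;\Z)/\im\partial$, where $\partial\co H_2(X;\Z)\to H_1(Q;\Z)$ is the connecting homomorphism. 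Since $\im\partial$ is the kernel of $H_1(Q;\Z)\to H_1(V;\Z)\oplus H_1(P;\Z)$, it lies in $\gen{[f]}$; hence $\im\partial$ is either $0$ or all of $\gen{[f]}$, and (as $i_{P*}[f]=0$) these two options are exactly $i_{V*}[f]\ne 0$ and $i_{V*}[f]=0$ respectively. Comparing $H_1(V;\Z)\oplus H_1(P;\Z)$ with the explicit form of $H_1(Q;\Z)$ in Lemma~\ref{L:Q}, a short computation with finitely generated abelian groups forces $H_1(V;\Z)\cong\Z_2$ when $i_{V*}[f]\ne 0$ and $H_1(V;\Z)=0$ when $i_{V*}[f]=0$; in particular, when $e$ is odd, $H_1(Q;\Z)\cong\Z_4\oplus\Z^{h-1}$ has no quotient isomorphic to $\Z_2^2$, so only the second alternative can occur in that case.

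Next I would identify $\coker\bigl(H^2(V;\Z)\to H^2(Q;\Z)\bigr)$ with $H_1(V;\Z)$. From the long exact sequence of the pair $(V,\partial V)=(V,Q)$ in cohomology, this cokernel is isomorphic to $\ker\bigl(H^3(V,Q;\Z)\to H^3(V;\Z)\bigr)$; Poincar\'e--Lefschetz duality on the compact oriented $4$-manifold $V$ turns the latter into $\ker\bigl(H_1(V;\Z)\to H_1(V,Q;\Z)\bigr)$, which by exactness of the homology sequence of $(V,Q)$ equals $\im\bigl(i_{V*}\co H_1(Q;\Z)\to H_1(V;\Z)\bigr)$, and this is all of $H_1(V;\Z)$ by the surjectivity noted above. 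So both conclusions of the lemma follow once $H_1(V;\Z)$ is known: if it is $0$ the restriction is onto, and if it is $\Z_2$ the cokernel of the restriction is $\Z_2$.

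It remains only to match the two cases for $H_1(V;\Z)$ with the two cases for $[F]$, i.e.\ to show $i_{V*}[f]=0$ in $H_1(V;\Z)$ if and only if $[F]\ne 0$ in $H_2(X;\Z_2)$. If $i_{V*}[f]=0$, choose a compact surface $\Sigma\subset V$ with $\partial\Sigma=f$ and glue it along $f$ to a normal fibre disk $D\subset P$ (also bounded by $f$); the closed surface $D\cup_f\Sigma\subset X$ meets $F$ transversally in a single point, so $[D\cup_f\Sigma]\cdot[F]=1$ in $\Z_2$ and $[F]\ne 0$. Conversely, if $i_{V*}[f]\ne 0$ then $\im\partial=0$, so $H_2(V;\Z)\to H_2(X;\Z)$ is onto (using $H_2(P)=0$); since $H_1(X;\Z)=0$, the mod-$2$ reduction $H_2(X;\Z)\to H_2(X;\Z_2)$ is onto as well, hence so is $H_2(V;\Z_2)\to H_2(X;\Z_2)$. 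Thus every mod-$2$ homology class of $X$ has a representative lying in $V$, hence disjoint from $F$, so $[F]$ pairs trivially with all of $H_2(X;\Z_2)$, and non-degeneracy of the $\Z_2$ intersection form of the closed manifold $X$ forces $[F]=0$.

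The Mayer--Vietoris and duality bookkeeping is routine, provided one keeps careful track of which groups are $\Z^{h-1}\oplus\Z_2$, $\Z^{h-1}\oplus\Z_2^2$, or $\Z^{h-1}\oplus\Z_4$. The step carrying the real content is the last equivalence: the capping construction gives one direction, and non-degeneracy of the mod-$2$ intersection form gives the other. The main point to stay alert to is the $e$-odd case, where only $H_1(V;\Z)=0$ can occur --- consistently, $[F]$ then has odd self-intersection and so is automatically nonzero in $H_2(X;\Z_2)$.
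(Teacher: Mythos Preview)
Your proof is correct, but it takes a genuinely different route from the paper's. The paper works cohomologically with the pair $(X,F)$: the hypothesis $[F]\ne 0$ in $H_2(X;\Z_2)$ is exactly the statement that the restriction $H^2(X;\Z_2)\to H^2(F;\Z_2)\cong\Z_2$ is onto, and (using $H^3(X;\Z)\cong H_1(X;\Z)=0$) this forces $H^2(X;\Z)\to H^2(F;\Z)$ onto as well. The long exact sequence of $(X,F)$ then gives $H^3(X,F;\Z)=0$, and excision plus Poincar\'e--Lefschetz duality yield $H^3(V,Q;\Z)=0$ and $H_1(V;\Z)=0$ simultaneously; the $[F]=0$ case gives $H^3(X,F;\Z)\cong\Z_2$ instead. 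So in the paper the link between $[F]$ and $H_1(V)$ is purely algebraic, coming for free from the definition of the pairing.

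Your argument instead runs Mayer--Vietoris directly on $X=V\cup_Q P$, reduces the dichotomy for $H_1(V;\Z)$ to whether the fibre class dies in $V$, and then matches this with $[F]\ne 0$ geometrically (capping a fibre to produce a dual surface in one direction, non-degeneracy of the $\Z_2$ intersection form in the other). This is longer but more concrete, and it makes the $e$-odd consistency check explicit. The paper's approach is slicker because the case split on $[F]$ feeds directly into the machinery without the separate geometric matching step; yours has the virtue of being entirely self-contained from Lemma~\ref{L:Q} and standard duality, with no need to chase the Bockstein square relating $H^2(X;\Z)$ and $H^2(F;\Z)$.
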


\proof
Assume first $0\ne [F]\in H_2(X;\Z_2)$. Consider the exact sequences for $X$ and $F$ corresponding to the coefficient sequence $0 \to \Z \to \Z \to \Z_2 \to 0$. Since $H^2(X;\Z_2) \to H^2(F;\Z_2)$ is onto, so is $H^2(X;\Z) \to H^2(F;\Z)$. From the exact sequence of the pair $(X,F)$ it now follows that $H^3(X,F;\Z)=0$. Using excision we get $H^3(V,Q;\Z)=0$ (which implies the surjectivity of the restriction homomorphism) and finally using Poincar\'e--Lefschetz duality $H_1(V;\Z)=0$.

If $0= [F]\in H_2(X;\Z_2)$ then $H^3(X,F;\Z)\cong \Z_2$ from which the result follows in this case.
\endproof

\proof[Proof of Theorem \ref{T:definite}]
Recall that by Donaldson's diagonalization theorem the intersection form of $X$ is diagonal. Denote by $x_1,\ldots,x_b$ a basis for $H_2(X;\Z)$ with $x_i \cdot x_j = \delta_{ij}$ for all $i,j$. We may relabel the generators so that the $\Z_2$ homology class of $F$ is equal to $[F]=\sum_{i=1}^\ell \bar x_i$, where $\bar{x}$ denotes the reduction of an integral homology class $x$ modulo 2. Assume first $\ell>0$. Let $\xi\in H^2(V;\Z)$ be the image of $\sum_{i=\ell+1}^b x_i^*$, where $x^*$ denotes the hom-dual of a homology class $x$. Note that $\xi$ is characteristic, but does not come from a characteristic element on $X$, hence the corresponding spin$^c$ structure $\spincs$ on $V$ does not extend over $X$. In particular the restriction $\spinct$ of $\spincs$ to $Q$ is a torsion spin$^c$ structure that does not extend over $P$. Since $-V$ is a negative definite manifold with boundary $Q$ and $H^1(V;\Z)=0$ it follows from Theorem \ref{T:negative} that
$$4\dbot(Q,\spinct)\geq \ell -2(h-1),$$
where the two sides of the inequality are congruent modulo 8. Using the values of $\dbot$ from Theorem \ref{T:d} this yields the first two conditions of the theorem. The last one follows from Corollary \ref{C:spin} after substitution $\sigma(X)=b$.

If $\ell=0$ then by Lemma \ref{L:closed} $H^2(V;\Z)$ contains a nontrivial element of order 2 by which we can twist the spin$^c$ structure determined by $\xi$ on $X$. Then the same argument as above applies.
\endproof

Recall that $Q_{h,e}$ may be obtained as a surgery on a knot in $\#^h S^1 \times S^2$, namely the connected sum of $h$ copies of a knot in $S^1 \times S^2$ representing twice a generator of the first homology, as drawn below for $h=3$.
\begin{figure}[h]
  \begin{center}
    \labellist
    \normalsize\hair 0mm
    \pinlabel {{\footnotesize$0$}} at 28 75
    \pinlabel {{\footnotesize$0$}} at 99 75
    \pinlabel {{\footnotesize$0$}} at 165 75
    \pinlabel {{\footnotesize$e+2h$}} at 213 26
    \endlabellist
    \includegraphics[scale=.85]{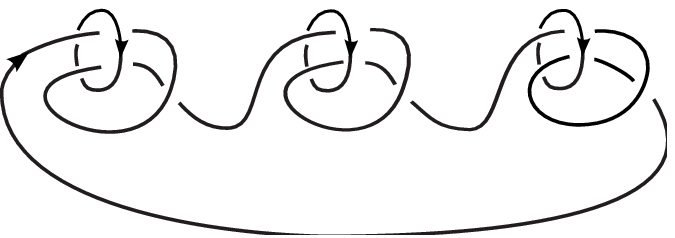}
  \end{center}
  \caption{Kirby diagram for $Q_{h,e}$, shown when $h=3$.}
  \label{F:Q3e}
\end{figure}

Starting from the surgery manifold we describe below spin 4-manifolds whose spin boundary is $Q_{h,e}$, equipped with one of the $2^h$ spin structures that do not extend over the disk bundle $P_{h,e}$. Using these in conjunction with Furuta's 10/8 Theorem~\cite{furuta:118} we get bounds of Theorem \ref{T:spin}.

\begin{lemma}\label{L:spin}
For each spin structure $\spincs$ on $Q_{h,e}$ that does not extend over the disk bundle $P_{h,e}$, there exist an integer $k \in \{0, 1, \dots, h\}$ and a spin 4-manifold $Z$ with spin boundary $(Q,\spincs)$ such that if we define $e' = e+2h-4k$, then $\sigma(Z) = -e'$ and
\[
b_2(Z) = \begin{cases}
h-1+|e'| & e' \ne 0 \\
h+1 & e'= 0.
\end{cases}
\]
\end{lemma}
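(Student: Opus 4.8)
The plan is to build $Z$ explicitly by Kirby calculus, starting from the surgery description of $Q_{h,e}$ recalled just above (Figure~\ref{F:Q3e}): $Q_{h,e}$ is obtained by $(e+2h)$-framed surgery on the knot $K=K_1\#\cdots\#K_h\subset\#^h(S^1\times S^2)$, where $K_i$ represents twice a generator of the $i$-th $S^1\times S^2$ summand. The first move is to replace the standard handlebody picture (which produces the disk bundle $P_{h,e}$, and whose spin structure, when it exists, extends over $P_{h,e}$) by a different spin filling of $\#^h(S^1\times S^2)$: namely $\natural^h(S^2\times D^2)$, identified with $P_{h,e}$ along the boundary so that each $H_1$-generator bounds a disk $\{\ast\}\times D^2$. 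Under this identification $K_i$ is twice a nullhomotopic curve, hence bounds, in the $i$-th summand, a once-punctured $\RP^2$ (a M\"obius band) $\mu_i^{+}$ or $\mu_i^{-}$; the two models correspond to the two standard $\RP^2\subset S^4$ of normal Euler number $\pm2$, and induce on $K_i$ framings differing by $4$.

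Given a spin structure $\spincs$ on $Q_{h,e}$ not extending over $P_{h,e}$, I would match it to a sign vector $\epsilon\in\{+,-\}^h$ (with $k=k(\epsilon)$ the number of $-$'s, so $e'=e+2h-4k$), form the punctured $F_h$ given by $\mu^\epsilon=\mu_1^{\epsilon_1}\natural\cdots\natural\mu_h^{\epsilon_h}$ bounding $K$ in $\natural^h(S^2\times D^2)$, and attach the surgery $2$-handle along $K$ with framing $e+2h$; since the surgered $3$-manifold depends only on $K$ and its framing, the result still has boundary $Q_{h,e}$. When $e'\neq 0$ the $2$-handle creates a closed surface of odd/indefinite self-intersection, so to keep the form even I would further attach a short chain of $2$-handles along small meridional circles of $K$, realizing an even definite plumbing block of rank $|e'|$ and signature $-e'$, chosen so that the odd/indefinite class is absorbed; when $e'=0$ no block is needed. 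A Mayer-Vietoris/handle count then yields $\partial Z=Q_{h,e}$, $Z$ spin, $\sigma(Z)=-e'$, and $b_2(Z)=h-1+|e'|$ (respectively $h+1$ when $e'=0$).

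It then remains to pin down which spin structure $Z$ induces on $Q_{h,e}$. Since $Z$ has the expected $H^1(\;\cdot\;;\Z_2)$, its spin structures restrict to an $H^1(Q_{h,e};\Z_2)$-coset of spin structures on $Q_{h,e}$, all of which I would check are non-extendible over $P_{h,e}$ — e.g. by comparison with the disk-bundle picture, or by computing the associated $\Z_2$-quadratic refinement on the $K_i$'s. Running over the $2^h$ sign vectors $\epsilon$, with $\sum_{k=0}^{h}\binom{h}{k}=2^h$, then realizes all $2^h$ non-extendible spin structures.

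The step I expect to be the main obstacle is the precise bookkeeping in the second paragraph: correctly tracking the framing of the non-orientable surface $\mu^\epsilon$ against the surgery framing $e+2h$ (the source of the $-4k$ shift), and arranging the auxiliary handles so that the intersection form is simultaneously \emph{even} and has \emph{exactly} signature $-e'$ and second Betti number $h-1+|e'|$ — uniformly in the parity of $e$, since for $e$ odd the naive rank-one $\langle e+2h\rangle$-block is not even and the definite block genuinely has to be assembled from $\langle\pm2\rangle$'s. Verifying that the construction meets every non-extendible spin structure and no extendible one is the other point requiring care.
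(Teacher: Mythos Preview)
Your starting point is exactly the paper's: filling $\#^h(S^1\times S^2)$ by $\natural^h(S^2\times D^2)$ and attaching the $(e+2h)$--framed $2$--handle along $K$ is precisely the manifold $Z_0$ described by the Kirby diagram in Figure~\ref{F:Q3e} (with every component a $2$--handle rather than a $1$--handle). From there, though, your outline has two genuine gaps.

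First, the M\"obius-band bookkeeping is not the right device, and as written it is not well-defined: a curve in $\partial(S^2\times D^2)=S^2\times S^1$ winding twice around the $S^1$ factor bounds an immersed annulus or disk in $S^2\times D^2$, not a M\"obius band, and it is unclear what your $\mu_i^\pm$ are or why they would induce distinct framings differing by $4$. The paper instead uses the standard correspondence between spin structures on the boundary of a $2$--handlebody and \emph{characteristic sublinks} of the Kirby diagram (Kaplan~\cite{kaplan:even}; see \cite[\S5.7]{gompf-stipsicz:book}). The $2^h$ non-extendible spin structures on $Q_{h,e}$ are exactly those whose characteristic sublink contains $K$; up to relabeling, such a sublink is $\{K,C_1,\dots,C_k\}$ for some $0\le k\le h$. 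Handle-sliding $K$ over each of $C_1,\dots,C_k$ leaves $Z_0$ unchanged but moves the characteristic sublink to the singleton $\{K'\}$, where $K'$ is still an unknot and now carries framing $e'=e+2h-4k$. This is where the $-4k$ shift comes from, and it is a one-line framing computation---no surfaces required.

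Second, and more seriously, your proposed ``even definite plumbing block of rank $|e'|$'' attached to $Z_0$ would give $b_2=h+1+|e'|$, which is off by $2$. What is missing is a \emph{blow-down}. The paper blows up $|e'|-1$ meridians of $K'$ with framing $-\operatorname{sign}(e')$; this brings the framing of $K'$ to $\pm1$, and since $K'$ is an unknot one then blows it down. That single blow-down simultaneously (i) drops $b_2$ by one (so $b_2=(h+1)+(|e'|-1)-1=h-1+|e'|$), (ii) contributes the final $\mp1$ to the signature (so $\sigma=-e'$), and (iii) leaves the empty sublink characteristic, i.e.\ makes $Z$ spin with the prescribed boundary spin structure. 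When $e'=0$ one first blows up a single $+1$ meridian before blowing down, which explains the exceptional value $b_2=h+1$. Without this blow-down step there is no way to reach the stated $b_2$, and your anticipated difficulty of making the form even ``by hand'' from $\langle\pm2\rangle$ blocks is a symptom of trying to avoid it.
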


\proof
Let $Z_0$ be the $4$-manifold specified by the Kirby diagram as in Figure~\ref{F:Q3e}. We label the $(e+2h)$-framed component $K$ and the $0$-framed components $C_1, \dots, C_h$. Note that every component, as drawn in $S^3$, is an unknot. We orient the link components such that $\lk(K, C_i) = 2$ for $i=1,\ldots,h$. Note that $b_2(Z_0)=h+1$ and $\sigma(Z)=0$. The boundary of $Z_0$ is $Q = Q_{h,e}$. Replacing each $C_i$ with a dotted circle (for a 1-handle addition) yields a Kirby diagram for the disk bundle $P = P_{h,e}$.

Recall that spin structures on $Q_{h,e}$ are in one-to-one correspondence with characteristic sublinks of $\{K,C_1,\ldots,C_h\}$; see \cite{kaplan:even}, \cite[Section 5.7]{gompf-stipsicz:book}; the $2^h$ spin structures on $Q$ that do not extend over $P$ correspond to the sublinks that include $K$. When $e$ is odd, these are the only spin structures; when $e$ is even, every sublink is characteristic, and the empty sublink corresponds to the restriction of the unique spin structure on $Z_0$.

Up to reindexing, we may assume that the characteristic sublink corresponding to the given spin structure $\spincs$ is $\{K, C_1, \dots, C_k\}$, where $k \in \{0, \dots, h\}$. We obtain a new Kirby diagram for $Z_0$ by handle-sliding $K$ over $C_i$ for each $i=1, \dots, k$, as shown in Figure \ref{F:slide}, to obtain a new knot $K'$ with framing $e' = e + 2h - 4k$. Note that $K'$ remains unknotted in $S^3$. By \cite[Theorem 5.7.14]{gompf-stipsicz:book}, the characteristic sublink of the new diagram corresponding to $\spincs$ is $\{K'\}$.

\begin{figure}[h]
  \begin{center}
    \labellist
    \normalsize\hair 0mm
    \pinlabel {{\footnotesize$0$}} at 57 155
    \pinlabel {{\footnotesize$0$}} at 230 155
    \pinlabel {{\footnotesize$0$}} at 373 155
    \pinlabel {{\footnotesize$e+2h-4k$}} at 475 52
    \endlabellist
    \includegraphics[scale=.45]{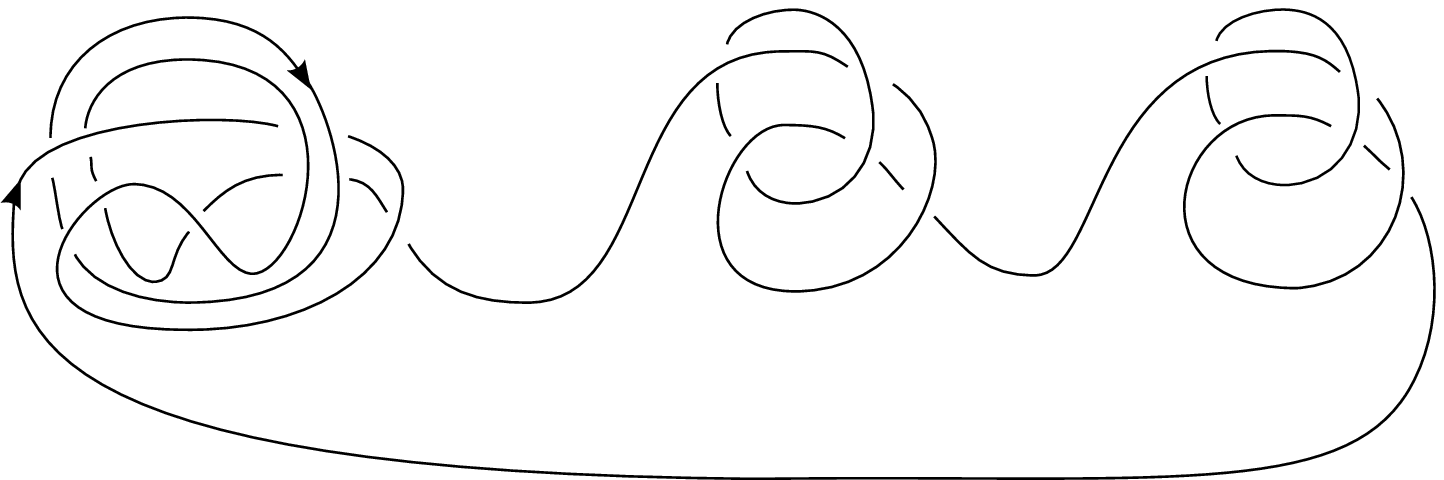}
  \end{center}
  \caption{New Kirby diagram for $Q_{h,e}$ after performing $k$ handle-slides to change the characteristic sublink, shown when $h=3$ and $k=1$.}
  \label{F:slide}
\end{figure}

We now perform a sequence of blow-ups and blow-downs to change the $4$-manifold. Specifically, if $e'\ne 0$ let $\veps=\pm 1$ be the sign of $e'$ and blow up $|e'|-1$ $(-\veps)$-framed meridians of $K'$. The resulting framing of $K'$ is $\veps$, and since $K'$ is an unknot, we may blow it down to obtain a Kirby diagram for a spin manifold $Z$ with $b_2(Z) = h-1+|e'|$ and $\sigma(Z)=-e'$. In the new diagram, the empty sublink corresponds to $\spincs$, meaning that $Z$ has spin boundary $(Q, \spincs)$, as required. Likewise, if $e'=0$, we blow up a $(+1)$-framed meridian of $K'$ to change the framing on $K'$ to $+1$ and then blow down $K'$ as before to obtain $Z$ with $b_2(Z)=h+1$ and $\sigma(Z)=0$.
\endproof

\begin{proof}[Proof of Theorem \ref{T:spin}]
Let $V$ be the closure of the complement of the disk bundle $P$ over $F$ in $X$. It follows from the Mayer-Vietoris sequence for $X=P \cup_Q V$ that $b_1(V)=0$, $b_2(V)=b_2(X)+h-1$, and that $H_2(Q;\Z)$ injects into $H_2(V;\Z)$. Since $F$ is characteristic, $V$ is spin and it induces a spin structure $\spincs$ on its boundary $Q$. By assumption $X$ is not spin, so $\spincs$ does not extend over $P$. Let $k$, $Z$, and $e'$ be as in Lemma \ref{L:spin}.
Define $X'=Z \cup_Q V$, which is a closed spin 4-manifold with $b_1(X')=0$ and $\sigma(X') = \sigma(X) - e'$. Rohlin's theorem implies that
\[
\sigma(X) \equiv e' \pmod {16}.
\]
Also, Furuta's 10/8 theorem states that
\[
4b_2(X')\ge 5|\sigma(X')|+8,
\]

Suppose first that $e' \ne 0$. Then
\[
b_2(X')=b_2(X)+2(h-1)+|e'|.
\]
so
\begin{equation} \label{eq:furuta,e'ne0}
4 b_2(X) + 8(h-1) + 4|e'| \ge 5|\sigma(X) - e'| + 8.
\end{equation}
Recall that $\sigma(X) \ge 0$ by assumption. If $e' \ge \sigma(X)$, then \eqref{eq:e'upper} follows from \eqref{eq:furuta,e'ne0}; if $e' < \sigma(X)$, then \eqref{eq:e'upper} is automatic since $b_+(X) \ge 1$ and $h \ge 1$.
Moreover, if $e' < 0$, then \eqref{eq:e'lower} also follows from \eqref{eq:furuta,e'ne0}.

Similarly, if $e' = 0$, then
\[
b_2(X')=b_2(X)+2h \quad \text{and} \quad \sigma(X')= \sigma(X),
\]
which yields \eqref{eq:e'=0}, as required.
\end{proof}

\def\cprime{$'$}
\providecommand{\bysame}{\leavevmode\hbox to3em{\hrulefill}\thinspace}


\begin{thebibliography}{10}

\bibitem{aps:II}
M.F. Atiyah, V.K. Patodi, and I.M. Singer, \emph{Spectral asymmetry and
  {Riemannian} geometry: {II}}, Math.\ Proc.\ Camb.\ Phil.\ Soc. \textbf{78}
  (1975), 405--432.

\bibitem{atiyah-singer:III}
M.F. Atiyah and I.M. Singer, \emph{The index of elliptic operators: {III}},
  Annals of Math. \textbf{87} (1968), 546--604.

\bibitem{batson:non-orientable}
J.~Batson, \emph{Nonorientable four-ball genus can be arbitrarily large},
  \arxiv{1204.1985}, 2012.

\bibitem{bredon-wood:surfaces}
Glen~E. Bredon and John~W. Wood, \emph{Non-orientable surfaces in orientable
  {$3$}-manifolds}, Invent. Math. \textbf{7} (1969), 83--110.

\bibitem{brown:cohomology}
Kenneth~S. Brown, \emph{Cohomology of groups}, Graduate Texts in Mathematics,
  vol.~87, Springer-Verlag, New York, 1994.

\bibitem{calegari-gordon:rational-genus}
Danny Calegari and Cameron Gordon, \emph{Knots with small rational genus},
  Comment. Math. Helv. \textbf{88} (2013), no.~1, 85--130.

\bibitem{casson-gordon:stanford}
A.~Casson and C.~Gordon, \emph{On slice knots in dimension three}, Proc.\
  Symp.\ Pure Math. \textbf{32} (1978), 39--53.

\bibitem{casson-gordon:orsay}
\bysame, \emph{Cobordism of classical knots}, {\`{A}} la Recherche de la
  Topologie Perdue (A.~Marin and L.~Guillou, eds.), Progress in Mathematics,
  Birkhauser, Boston, 1986.

\bibitem{fox:calculus-I}
Ralph~H. Fox, \emph{Free differential calculus. {I}. {D}erivation in the free
  group ring}, Ann. of Math. (2) \textbf{57} (1953), 547--560.

\bibitem{freedman-quinn}
M.~Freedman and F.~Quinn, \emph{Topology of $4$-manifolds}, Princeton
  University Press, Princeton, N.J., 1990.

\bibitem{furuta:118}
M.~Furuta, \emph{Monopole equation and the {$\frac{11}8$}-conjecture}, Math.
  Res. Lett. \textbf{8} (2001), no.~3, 279--291.

\bibitem{gabai:foliations}
D.~Gabai, \emph{Foliations and the topology of 3-manifolds}, J. Diff.\ Geo.
  \textbf{18} (1983), 445--503.

\bibitem{gilmer:thesis}
P.~Gilmer, \emph{Configurations of surfaces in 4-manifolds}, Trans.\ A.M.S.
  \textbf{264} (1981), 353--380.

\bibitem{gompf-stipsicz:book}
Robert~E. Gompf and Andr{\'a}s~I. Stipsicz, \emph{$4$-manifolds and {K}irby
  calculus}, American Mathematical Society, Providence, RI, 1999.

\bibitem{gordon:contractible}
C.~McA. Gordon, \emph{Knots, homology spheres, and contractible
  {$4$}-manifolds}, Topology \textbf{14} (1975), 151--172.

\bibitem{guillou-marin:rohlin}
Lucien Guillou and Alexis Marin, \emph{Une extension d'un th\'eor\`eme de
  {R}ohlin sur la signature}, \`A la Recherche de la topologie perdue, Progr.
  Math., vol.~62, Birkh\"auser Boston, Boston, MA, 1986, pp.~97--118.

\bibitem{hedden:berge}
Matthew Hedden, \emph{On {F}loer homology and the {B}erge conjecture on knots
  admitting lens space surgeries}.

\bibitem{hedden:double}
\bysame, \emph{Knot {F}loer homology of {W}hitehead doubles}, Geom. Topol.
  \textbf{11} (2007), 2277--2338.

\bibitem{hedden-kim-livingston:2-torsion}
Matthew Hedden, Se-Goo Kim, and Charles Livingston, \emph{Topologically slice
  knots of smooth concordance order two}, \arxiv{1212.6628}, 2012.

\bibitem{kaplan:even}
Steve~J. Kaplan, \emph{Constructing framed $4$-manifolds with given almost
  framed boundaries}, Trans. Amer. Math. Soc. \textbf{254} (1979), 237--263.

\bibitem{kkr}
P.~Kirk, E.~Klassen, and D.~Ruberman, \emph{Splitting the spectral flow and the
  {Alexander} matrix}, Comm.\ Math.\ Helv. \textbf{69} (1994), 375--416.

\bibitem{kmos:lens}
P.~Kronheimer, T.~Mrowka, P.~Ozsv{\'a}th, and Z.~Szab{\'o}, \emph{Monopoles and
  lens space surgeries}, Ann. of Math. (2) \textbf{165} (2007), no.~2,
  457--546.

\bibitem{lawson:rp2}
T.~Lawson, \emph{Normal bundles for an embedded {$RP^2$}}, J. Diff.\ Geo.
  \textbf{22} (1985), 215--231.

\bibitem{lee-lipshitz:q-gradings}
Dan~A. Lee and Robert Lipshitz, \emph{Covering spaces and {$\Bbb Q$}-gradings
  on {H}eegaard {F}loer homology}, J. Symplectic Geom. \textbf{6} (2008),
  no.~1, 33--59.

\bibitem{levine-lewallen:strong}
Adam~S. Levine and Sam Lewallen, \emph{Strong {L}-spaces and
  left-orderability}, Math. Res. Lett. \textbf{19} (2012), no.~6, 1237--1244.

\bibitem{li:whitney}
Bang~He Li, \emph{Generalization of the {W}hitney-{M}ahowald theorem}, Trans.
  Amer. Math. Soc. \textbf{346} (1994), no.~2, 511--521.

\bibitem{lidman:hfinfinity}
Tye Lidman, \emph{{On the infinity flavor of {Heegaard Floer} homology and the
  integral cohomology ring}}, \arxiv{1002.4389}, 2010.

\bibitem{lisca-stipsicz:tight-III}
Paolo Lisca and Andr{\'a}s~I. Stipsicz, \emph{Ozsv\'ath-{S}zab\'o invariants
  and tight contact 3-manifolds. {III}}, J. Symplectic Geom. \textbf{5} (2007),
  no.~4, 357--384.

\bibitem{mahowald:normal}
Mark Mahowald, \emph{On the normal bundle of a manifold}, Pacific J. Math.
  \textbf{14} (1964), 1335--1341.

\bibitem{massey:pontrjagin}
W.~S. Massey, \emph{Pontryagin squares in the {T}hom space of a bundle},
  Pacific J. Math. \textbf{31} (1969), 133--142.

\bibitem{massey:whitney}
\bysame, \emph{Proof of a conjecture of {W}hitney}, Pacific J. Math.
  \textbf{31} (1969), 143--156.

\bibitem{matsumoto:rohlin}
Yukio Matsumoto, \emph{An elementary proof of {R}ochlin's signature theorem and
  its extension by {G}uillou and {M}arin}, {\`A la Recherche de la Topologie
  Perdue}, Progr. Math., vol.~62, Birkh\"auser Boston, Boston, MA, 1986,
  pp.~119--139.

\bibitem{morgan-tian:poincare}
John Morgan and Gang Tian, \emph{{Ricci} flow and the {P}oincar\'e conjecture},
  Clay Mathematics Monographs, vol.~3, American Mathematical Society,
  Providence, RI, 2007.

\bibitem{mosher-tangora}
Robert~E. Mosher and Martin~C. Tangora, \emph{{Cohomology Operations and
  Applications in Homotopy Theory}}, Harper \& Row Publishers, New York, 1968.

\bibitem{mosher-tangora:russian}
\bysame, \emph{Kogomologicheskie operatsii i ikh prilozheniya v teorii
  gomotopii}, Izdat. ``Mir'', Moscow, 1970, Translated from the English by M.
  M. Postnikov.

\bibitem{ni-wu:rational-genus}
Yi~Ni and Zhongtao Wu, \emph{{Heegaard Floer correction terms and rational
  genus bounds}}, \arxiv{1205.7053}, 2012.

\bibitem{orlik}
P.~Orlik, \emph{Seifert manifolds}, Lecture Notes in Mathematics {\bf 291},
  Springer-Verlag, Berlin, 1972.

\bibitem{oz:boundary}
Peter Ozsv{\'a}th and Zolt{\'a}n Szab{\'o}, \emph{Absolutely graded {F}loer
  homologies and intersection forms for four-manifolds with boundary}, Adv.
  Math. \textbf{173} (2003), no.~2, 179--261.

\bibitem{oz:plumbed}
\bysame, \emph{On the {F}loer homology of plumbed three-manifolds}, Geom.
  Topol. \textbf{7} (2003), 185--224 (electronic).

\bibitem{oz:z-surgery}
\bysame, \emph{Knot {F}loer homology and
  integer surgeries}, Algebr. Geom. Topol. \textbf{8} (2008), no.~1, 101--153.

\bibitem{perelman:extinction}
Grisha Perelman, \emph{Finite extinction time for the solutions to the {Ricci}
  flow on certain three-manifolds}, \arxiv{math/0307245}, 2003.

\bibitem{pontrjagin:pi3}
L.~Pontrjagin, \emph{Mappings of the three-dimensional sphere into an
  {$n$}-dimensional complex}, C. R. (Doklady) Acad. Sci. URSS (N. S.)
  \textbf{34} (1942), 35--37.

\bibitem{postnikov:square}
M.~M. Postnikov, \emph{The classification of continuous mappings of a
  three-dimensional polyhedron into a simply connected polyhedron of arbitrary
  dimension}, Doklady Akad. Nauk SSSR (N.S.) \textbf{64} (1949), 461--462.

\bibitem{rasmussen:lens}
Jacob Rasmussen, \emph{Lens space surgeries and {L}-space homology spheres},
  preprint (2007), \arxiv{0710.2531}.

\bibitem{scott:geometries}
Peter Scott, \emph{The geometries of {$3$}-manifolds}, Bull. London Math. Soc.
  \textbf{15} (1983), no.~5, 401--487.

\bibitem{turaev:homology}
Vladimir Turaev, \emph{A function on the homology of 3-manifolds}, Algebr.
  Geom. Topol. \textbf{7} (2007), 135--156.

\bibitem{ue:spherical}
Masaaki Ue, \emph{On the intersection forms of {S}pin 4-manifolds bounded by
  spherical 3-manifolds}, Algebr. Geom. Topol. \textbf{1} (2001), 549--578
  (electronic).

\bibitem{whitehead:obstructions}
J.~H.~C. Whitehead, \emph{On the theory of obstructions}, Annals of Math.
  \textbf{54} (1951), 68--84.

\bibitem{whitney:manifolds-lectures}
Hassler Whitney, \emph{On the topology of differentiable manifolds}, Lectures
  in {T}opology, University of Michigan Press, Ann Arbor, Mich., 1941,
  pp.~101--141.

\end{thebibliography}

\begin{thebibliography}{9}

\bibitem{b-d}
B. C. Berndt and U. Dieter,
\emph{Sums involving the greatest integer funciton and Riemann-Stieltjes integration},
J. Reine Angew. Math. 337 (1982), 208--220.

\bibitem{g-carlitz}
L. Carlitz,
\emph{Some theorems on generalized Dedekind-Rademacher sums},
Pacific J. Math.  \textbf{75}  (1978), 347--358.

\bibitem{g-beck1}
M. Beck,
\emph{Geometric proofs of polynomial reciprocity laws of Carlitz, Berndt, and \hbox{Dieter}}, in Diophantine analysis and related fields 2006, Sem. Math. Sci. \textbf{35}, Keio Univ., Yokohama, {2006}, pp. 11--18.

\bibitem{g-beck2}
M. Beck, C. Haase, and A. R. Matthews, \emph{Dedekind-Carlitz polynomials as lattice-point enumerators in rational polyhedra}, Math. Ann. \textbf{341} (2008), 945--961.

\bibitem{g-beck3}
M. Beck and Florian Kohl, \emph{Rademacher-Carlitz polynomials}, arXiv:1310.0380 [math.NT], 2013.

\bibitem{g-gended}
I. M. Gessel,
   \emph{Generating functions and generalized Dedekind sums}, Electronic J. Combinatorics \textbf{4}, no. 2, Wilf Festschrift (1997), R11 (17 pages).

\bibitem{jrw}
S. Jabuka, S. Robins, and X. Wang,
\emph{Heegaard {F}loer correction terms and {D}edekind-{R}ademacher sums},
Int. Math. Res. Not. IMRN, no. 1 (2013), 170--183.


\bibitem{g-p-s}
M. R. Pettet and R. Sitaramachandrarao, \emph{Three term relations for Hardy sums},
J. Number Theory \textbf{25} (1987), 328--339.

\bibitem{tange}
M. Tange,
\emph{Ozsv\'ath {S}zab\'o's correction term and lens surgery},
Math. Proc. Cambridge Philos. Soc.
\textbf{146},
(2009)
119--134.

\end{thebibliography}
\vfil\eject
\appendix
\section{Differences of correction terms for lens spaces \\ By Ira M. Gessel}


\newcommand{\sumx}[2]{\sum_{\substack{#1^{#2}=1\\ #1\ne 1}}}
\newcommand{\sumy}[2]{\sum_{#1^{#2}=-1}}
\newcommand{\z}{\zeta}
\newcommand{\floor}[1]{\left\lfloor#1\right\rfloor}
\newcommand{\ceil}[1]{\left\lceil#1\right\rceil}

\def\noqed{\renewcommand{\qedsymbol}{}}

\theoremstyle{remark}
\newtheorem*{note}{Note}

\makeatletter{\renewcommand*{\@makefnmark}{}
\footnotetext{This work was partially supported by a grant from the Simons Foundation (\#229238 to Ira Gessel).}\makeatother}

\subsection{Definitions}
\label{s-1}

Let $k$ be a positive integer, let $i$ be an arbitrary integer, and  let $q$ be an integer relatively prime to $2k$. Then we
define $g(2k,q,i)$ by
\begin{equation}
\label{e-gk}
g(2k, q,i) = \frac12 (2k-1) -\frac 1k \sum_{j=0}^{k-1}[i+qj],
\end{equation}
where $[m]$ is the least nonnegative residue of $m$ modulo $2k$. Equation \eqref{E:d-shift} expresses $g(2k,q,i)$ as a difference of $d$-invariants of lens spaces.
Let $G(2k,q)=\max_i g(2k,q,i)$.

We note that $G(2,1)=1/2$.
We will show that  for $q>0$, $G(2k,q)$ satisfies the same recurrence as that given for $N(2k,q)$ in section
\ref{S:lens}:
\begin{equation}
\label{e-req1}
G(2k,q)=G(2(k-q),q') + \tfrac12,
\end{equation}
where $q'\equiv\pm q\pmod{2(k-q)}$ and $1\le q'\le k-q$.

We will prove a slightly more general recurrence from which \eqref{e-req1} follows easily when combined with the observation that $G(2k, q)$ depends only on the residue of $q$ modulo $2k$:
\begin{proposition}
\label{p-1}
Let $k$ and $q$ be   positive integers with $q$ relatively prime to $2k$.  Then
\begin{equation}
\label{e-G}
G(2k+2q,q) = G(2k, q)+\tfrac12=G(2k,-q)+\tfrac12.
\end{equation}
\end{proposition}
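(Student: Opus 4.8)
The plan is to separate the asserted chain into two independent claims: the \emph{symmetry} $G(2k,q)=G(2k,-q)$, and the \emph{recurrence} $G(2k+2q,q)=G(2k,q)+\tfrac12$. The second equality in Proposition~\ref{p-1} is then just the symmetry, and \eqref{e-req1} will follow by applying the recurrence with $k$ replaced by $k-q$ and using that $G(2k,\cdot)$ depends only on the residue class modulo $2k$. The key preliminary step is to rewrite $g(2k,q,i)$ as a lattice-point count.

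Since $q$ is prime to $2k$ it is odd, so $qk\equiv k\pmod{2k}$; hence for every $i$ the residues $(i+qj)\bmod 2k$, $0\le j<2k$, run once through $\Z/2k$, and the ``half-orbit'' $A_i=\{(i+qj)\bmod 2k : 0\le j<k\}$ satisfies $A_i\sqcup(A_i+k)=\Z/2k$, i.e.\ $A_i$ contains exactly one of $r,r+k$ for each $0\le r<k$. Writing $B_i=\#\{0\le j<k : (i+qj)\bmod 2k\ge k\}$ for the number of $r$ with $r+k\in A_i$, one gets $\sum_{j=0}^{k-1}[i+qj]=\tfrac12 k(k-1)+kB_i$, and therefore
\[
g(2k,q,i)=\frac k2-B_i,\qquad G(2k,q)=\frac k2-\min_i B_i .
\]
The symmetry is then immediate: in the analogous count $B_i^{-}=\#\{0\le j<k : (i-qj)\bmod 2k\ge k\}$ attached to $-q$, the reindexing $j\mapsto k-1-j$ gives $B_i^{-}=B_{\,i-q(k-1)}$, and since translation permutes $\Z/2k$ we conclude $\min_i B_i^{-}=\min_i B_i$, hence $G(2k,-q)=G(2k,q)$.

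For the recurrence set $N=2(k+q)$; note $\gcd(q,N)=\gcd(q,2k)=1$, so the same reformulation applies with $m=k+q$. Because $q$ is odd, $q(k+q)\equiv k+q\pmod N$, and reindexing the tail $j=k,\dots,k+q-1$ by $j=k+s\mapsto k+(q-1-s)$ yields the folding identity
\[
\sum_{j=0}^{k+q-1}[i+qj]_N=\sum_{j=0}^{k-1}[i+qj]_N+\sum_{s=0}^{q-1}\bigl[(i+k)-qs\bigr]_N .
\]
Via the reformulation, the recurrence $G(2k+2q,q)=G(2k,q)+\tfrac12$ is equivalent to the purely combinatorial equality $\min_i B_i^{(N)}=\tfrac{q-1}{2}+\min_i B_i^{(2k)}$, where $B_i^{(2m)}=\#\{0\le j<m : (i+qj)\bmod 2m\ge m\}$ and $\tfrac{q-1}2\in\Z_{\ge0}$ since $q$ is odd. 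I would prove this by two inequalities. For ``$\le$'': pick $i_0$ realizing $\min_i B_i^{(2k)}$ and construct an explicit $i$ whose length-$(k+q)$ orbit modulo $N$ splits, through the folding identity, into a ``head'' mimicking the optimal modulus-$2k$ half-orbit (contributing $B_{i_0}^{(2k)}$ points in the upper half $[N/2,N)$) and a monotone backward ``tail'' of step $-q$ crossing the upper half exactly $\tfrac{q-1}2$ times. For ``$\ge$'': argue directly from the folding identity together with the partition $A_i^{(N)}\sqcup(A_i^{(N)}+N/2)=\Z/N$ that, for every $i$, the head already spends at least $\min_i B_i^{(2k)}$ hits and the tail at least $\tfrac{q-1}2$ unavoidable ones.

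The main obstacle is exactly this last bookkeeping: residues modulo $N=2(k+q)$ lying in $[2k,N)$ fold back onto $[0,2q)$, so the correspondence between upper-half hits of the head under modulus $N$ and upper-half hits of a genuine half-orbit under modulus $2k$ is not literal, and showing that the tail contributes precisely $\tfrac{q-1}2$ regardless of where it starts is similarly delicate. A clean way to organize the argument is an induction on $k+q$ — equivalently on the length of the continued-fraction expansion of $q/k$, with base case $G(2,1)=\tfrac12$ — using the folding identity to pass from $(k+q,q)$ to $(k,q)$ directly. Once the combinatorial equality is in hand, Proposition~\ref{p-1} follows, and \eqref{e-req1} is deduced as indicated above.
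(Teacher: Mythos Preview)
Your reformulation $g(2k,q,i)=k/2-B_i$ is correct and is essentially the paper's formula \eqref{e-gx}, since $(-1)^{\lfloor(i+qj)/k\rfloor}$ equals $+1$ or $-1$ according as $(i+qj)\bmod 2k$ lies in $[0,k)$ or $[k,2k)$. Your symmetry argument is also fine and amounts to the paper's \eqref{e-1.1}.

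The gap is in the recurrence. You correctly reduce $G(2k+2q,q)=G(2k,q)+\tfrac12$ to the combinatorial equality $\min_i B_i^{(N)}=(q-1)/2+\min_i B_i^{(2k)}$, but you do not prove it. The head/tail decomposition via your folding identity does not control the head: the head counts upper-half hits \emph{modulo $N=2(k+q)$}, not modulo $2k$, so there is no direct comparison with any $B_{i'}^{(2k)}$, as you yourself flag (``the main obstacle is exactly this last bookkeeping''). The tail claim is also left open (``similarly delicate''), and the induction on $k+q$ you suggest is precisely the step that would require the bookkeeping you have not done. As written, the proposal is a plan, not a proof.

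The paper sidesteps the difficulty by proving something stronger and more tractable: a \emph{pointwise} identity $g(2k+2q,q,i)=g(2k,q,i)+\tfrac12$, valid for every $i$ with $-k\le i<k+q$ (Theorem~\ref{t-rec}). This is established from a roots-of-unity formula for $g$ via a partial-fraction expansion; an alternative argument in section~\ref{s-another} proves a generating-function identity for the polynomials $P(2k,q,i)=\sum_j u^{\lfloor(i+qj)/k\rfloor}$ which specializes to the same pointwise recurrence at $u=-1$. Independently, Lemma~\ref{l-max} (a short consequence of \eqref{e-1.2} and \eqref{e-a62}) shows that for $q>0$ the maximum of $g(2k,q,\cdot)$ is attained at some $i$ with $0\le i\le(q-1)/2$, which lies inside the range of validity of the pointwise recurrence. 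The identity for $G$ then follows immediately. The key idea you are missing is this decoupling: locate the maximizing $i$ in a fixed window first, and prove the recurrence pointwise on that window, rather than trying to compare the two minima directly across different moduli.
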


We will derive Proposition \ref{p-1} from similar properties of  $g(2k,q,i)$. From now on we always assume that $k>0$ and that $2k$ and $q$ are relatively prime (and thus $q$ must be odd).

The (easy) proof of the second equality in \eqref{e-G} will be given after Lemma \ref{a2}. The key to proving the first equality is the formula
\begin{equation}
\label{e-g2}
g(2k+2q, q,i) = g(2k, q, i)+\tfrac12,
\end{equation}
where $k+q>0$ and $-k\le i<k+q$.

We will see that if $q>0$, then the range of validity of \eqref{e-g2} includes a maximizing value of $i$ for each side, and thus  $G(2k+2q,q) = G(2k,q)+\tfrac12$ follows. Our proof uses a formula (equation \eqref{e-w1}) for $g(2k,q,i)$ as a sum involving roots of unity, similar to a Dedekind sum. In section \ref{s-another} we describe another proof of the the recurrence \eqref{e-g2} based on
another formula for $g(2k,q,i)$ (equation \eqref{e-gx}) as a sum of powers of $-1$.

Expressions for the $d$-invariants of lens spaces in terms of generalized Dedekind sums have also been given by Jabuka, Robins, and Wang \cite{jrw} and by Tange \cite{tange}.

\subsection{Basic properties}

It is clear that $g(2k,q,i+2k)=g(2k,q+2k,i)=g(2k, q, i)$ for all $i$ and $q$, so for fixed $k$, $g(2k, q, i)$ is a periodic function of both $q$ and $i$ of period $2k$.
We can easily obtain two formulas for $g(2k, q, i+q)$ directly from the definition.

\begin{lemma}\label{a2}
For all \textup(allowable\textup) $k$, $q$, and $i$ we have
\label{l-1x}
\begin{gather}
\label{e-1.1}
g(2k, -q, i) = g(2k,q, i+q+k)\\
\label{e-1.2}
g(2k,q, i) - g(2k,q,i+q) =
\left.\begin{cases}
   \phantom{-}1, &\text{if $0\le [i] < k$}\\
   -1, &\text{if $k\le [i] <2k$}
\end{cases}\right\}
  =(-1)^{\floor{i/k}}.
\end{gather}
\end{lemma}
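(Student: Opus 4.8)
The plan is to derive both identities directly from the defining formula \eqref{e-gk}, using nothing beyond the elementary behavior of the least-nonnegative-residue function $[\,\cdot\,]$ modulo $2k$ together with the single observation that $qk \equiv k \pmod{2k}$ (true because $q$ is odd, so $qk - k = (q-1)k$ with $q-1$ even). No auxiliary machinery (roots of unity, Dedekind sums) is needed for this particular lemma.

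\textbf{Proof of \eqref{e-1.2}.} I would write out $g(2k,q,i)$ and $g(2k,q,i+q)$ from \eqref{e-gk} and reindex the sum in the second term by $j \mapsto j-1$. The two sums then telescope, leaving only the endpoint contributions, so that
\[
g(2k,q,i) - g(2k,q,i+q) = -\frac1k\bigl([i] - [\,i+qk\,]\bigr) = -\frac1k\bigl([i] - [\,i+k\,]\bigr),
\]
where the second equality uses $qk \equiv k \pmod{2k}$. Splitting into the two cases $0 \le [i] < k$ (so $[\,i+k\,] = [i] + k$) and $k \le [i] < 2k$ (so $[\,i+k\,] = [i] - k$) yields the value $+1$ in the first case and $-1$ in the second. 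Finally, to identify this with $(-1)^{\floor{i/k}}$, write $i = 2kt + [i]$ with $0 \le [i] < 2k$; then $\floor{i/k} = 2t + \floor{[i]/k}$, whose parity is even exactly when $0 \le [i] < k$, matching the case split.

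\textbf{Proof of \eqref{e-1.1}.} Starting from $g(2k,-q,i) = \tfrac12(2k-1) - \tfrac1k\sum_{j=0}^{k-1}[\,i - qj\,]$, I would reverse the order of summation via $j \mapsto k-1-j$, which turns $\sum_{j=0}^{k-1}[\,i-qj\,]$ into $\sum_{j=0}^{k-1}[\,i - qk + q + qj\,]$. Using $qk \equiv k \pmod{2k}$ and then $-k \equiv k \pmod{2k}$, the residue of each summand equals $[\,i + q + k + qj\,]$, so the whole expression is exactly $g(2k,q,i+q+k)$ as read off from \eqref{e-gk}.

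Neither computation presents a genuine obstacle; this is a routine bookkeeping lemma. The only point to watch is keeping straight which reductions are taken modulo $2k$ (those occurring inside $[\,\cdot\,]$, which control the reindexing steps) versus the harmless overall normalization by $\tfrac1k$, and confirming that the periodicity $g(2k,q,i) = g(2k,q,i+2k)$ noted before the lemma makes the various shifts of $i$ well defined.
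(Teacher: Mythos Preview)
Your proof is correct and follows essentially the same route as the paper: for \eqref{e-1.1} you reverse the order of summation $j \mapsto k-1-j$ and then use $qk \equiv k \equiv -k \pmod{2k}$, and for \eqref{e-1.2} you telescope the two sums to the single term $\frac1k\bigl([i+k]-[i]\bigr)$ and split on whether $[i]<k$. The only difference is cosmetic---you spell out the verification that the case split matches $(-1)^{\lfloor i/k\rfloor}$, which the paper leaves implicit.
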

\begin{proof}
For \eqref{e-1.1}, we reverse the order of the sum \eqref{e-gk} defining $g(2k, -q,i)$, so
\begin{align*}
g(2k, -q,i) &= \frac12(2k-1) -\frac1k \sum_{j=0}^{k-1} [i-q(k-1-j)]\\
  &=g(2k, q, i+q-qk) = g(2k, q, i+q+k),
\end{align*}
since $i+q-qk\equiv i+q+k \pmod{2k}$.

Equation \eqref{e-gk} gives
\(
g(2k,q,i) - g(2k,q,i+1) = \frac1k\bigl( [i+kq] - [i]\bigr)
\)
from which \eqref{e-1.2} follows immediately.
\end{proof}

From \eqref{e-1.1} we get $G(2k,-q)=G(2k,q)$, the second equality of \eqref{e-G}.

By \eqref{e-1.2},   the maximum value $\max_i g(2k,q,i)$ cannot occur at $i=q, q+1,\dots,k+q-1$.
We will see in the next section that if $q>0$ then the maximum of $g(2k,q,i)$ for $0\le i< 2k$ can only occur for $0\le i <q$, and must occur for some $i$ with $0\le i\le (q-1)/2$.

\subsection{Sums over roots of unity}

In this section we show that $g(2k,q,i)$ can be expressed as a sum involving roots of unity, similar to a Dedekind sum.

The following two formulas are straightforward partial fraction expansions for rational functions of $x$.
\begin{lemma}
\begin{align}
\label{e-pf0}
\sum_{\zeta^k=-1}\frac{\zeta^{i+1}}{\zeta-x} &= \frac{kx^{i}}{1+x^k},
  \text{ for $0\le i < k$}\\
\label{e-pf1}
\sum_{\zeta^{2k}=1} \frac{\zeta^{i+1}}{\zeta-x}&=\frac{2kx^{i}}{1 -x^{2k}},
  \text{ for $0\le i<2k$}.
\end{align}
\end{lemma}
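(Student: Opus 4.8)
The plan is to treat both sides of \eqref{e-pf0} and of \eqref{e-pf1} as rational functions of $x$ and to compare their pole data, invoking the standard fact that a rational function regular at every point of $\C\cup\{\infty\}$ must be identically zero. So for each identity I would check three things about each side: that its only poles are simple poles at the relevant roots of unity, that it is regular at $x=0$, that it vanishes as $x\to\infty$, and finally that the residues at the roots of unity agree.

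For \eqref{e-pf1}: the right-hand side $\frac{2kx^i}{1-x^{2k}}$ is regular away from the $2k$-th roots of unity, regular at $x=0$ since $i\ge 0$, and vanishes at $\infty$ precisely because $i<2k$; at a $2k$-th root of unity $\zeta$ it has a simple pole, and since $\frac{d}{dx}(1-x^{2k})=-2kx^{2k-1}$ and $\zeta^{2k-1}=\zeta^{-1}$, its residue there is $\frac{2k\zeta^{i}}{-2k\zeta^{-1}}=-\zeta^{i+1}$. The left-hand side $\sum_{\zeta^{2k}=1}\frac{\zeta^{i+1}}{\zeta-x}$ is visibly regular off the $2k$-th roots of unity, regular at $0$, and vanishes at $\infty$, and at $x=\zeta$ only the $\zeta$-summand contributes a pole, with residue $-\zeta^{i+1}$. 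Hence the difference of the two sides is a rational function with no poles anywhere, so it is $0$. The argument for \eqref{e-pf0} is identical in shape: $\frac{kx^{i}}{1+x^k}$ has simple poles exactly at the roots of $x^k=-1$, is regular at $0$, and vanishes at $\infty$ because $i<k$; at such a $\zeta$ (so $\zeta^k=-1$, hence $\zeta^{-k}=-1$) its residue is $\frac{k\zeta^i}{k\zeta^{k-1}}=\zeta^{i-k+1}=-\zeta^{i+1}$, which matches the residue of the left-hand side, so the two sides again agree.

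An alternative I would keep in reserve is to expand about $x=0$: for $|x|<1$ the right-hand side of \eqref{e-pf1} is $2k\sum_{m\ge0}x^{i+2km}$, while the left-hand side is $\sum_{n\ge0}x^{n}\sum_{\zeta^{2k}=1}\zeta^{i-n}$, and the inner character sum equals $2k$ or $0$ according as $2k\mid i-n$ or not; the hypothesis $0\le i<2k$ forces these two power series to coincide, and equality of the rational functions follows by analytic continuation, with the analogous computation handling \eqref{e-pf0}. There is no genuine obstacle here; the only thing to watch in either approach is that the degree hypotheses $i<k$ and $i<2k$ are exactly what is needed to kill the contribution at infinity (equivalently, to line up the geometric-series expansions), which is why they appear in the statement.
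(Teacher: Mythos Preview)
Your proof is correct and is precisely the standard verification of a partial fraction expansion; the paper itself does not give a proof, stating only that the formulas ``are straightforward partial fraction expansions for rational functions of $x$.'' Your residue computation and the observation that the degree hypotheses $0\le i<k$ and $0\le i<2k$ are exactly what force vanishing at infinity are the natural way to fill in this assertion.
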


Our next formula is well-known and can be proved in many ways; the proof we give here is essentially that of \cite[Corollary 3.2]{g-gended}

\begin{lemma}
Let $m$ be a positive integer. Then for any integer $i$,
\label{l-eis2}
\begin{equation*}
[i]= \frac12 (2k-1) - \sum_{\substack{\zeta^{2k}=1\\ \zeta\ne 1}}
\frac{\zeta^{i+1}}{\zeta-1}.
\end{equation*}
\end{lemma}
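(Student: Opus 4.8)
The plan is to read the identity off from the partial fraction expansion \eqref{e-pf1} by isolating the term $\zeta=1$ and then specializing to $x=1$; this is essentially the argument of \cite[Corollary 3.2]{g-gended}. Fix an integer $i$ with $0\le i<2k$, so that $[i]=i$. Splitting the $\zeta=1$ summand out of \eqref{e-pf1} gives, as an identity of rational functions of $x$,
\[
\frac{1}{1-x}+\sum_{\substack{\zeta^{2k}=1\\ \zeta\ne 1}}\frac{\zeta^{i+1}}{\zeta-x}=\frac{2kx^i}{1-x^{2k}},
\]
and hence
\[
\sum_{\substack{\zeta^{2k}=1\\ \zeta\ne 1}}\frac{\zeta^{i+1}}{\zeta-x}=\frac{2kx^i}{1-x^{2k}}-\frac{1}{1-x}.
\]
The left-hand side has no pole at $x=1$, so the two simple poles on the right must cancel; both sides may therefore be evaluated at $x=1$, where the left-hand side becomes $\sum_{\zeta^{2k}=1,\ \zeta\ne 1}\zeta^{i+1}/(\zeta-1)$.

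To evaluate the right-hand side at $x=1$, factor $1-x^{2k}=(1-x)(1+x+\cdots+x^{2k-1})$ and rewrite it as
\[
\frac{2kx^i-(1+x+\cdots+x^{2k-1})}{1-x^{2k}}.
\]
Numerator and denominator both vanish at $x=1$; differentiating each once and evaluating at $x=1$ (l'H\^opital's rule) gives the value
\[
\frac{2ki-(1+2+\cdots+(2k-1))}{-2k}=\frac{2ki-k(2k-1)}{-2k}=\frac{2k-1}{2}-i.
\]
Combining the two computations yields $\sum_{\zeta^{2k}=1,\ \zeta\ne 1}\zeta^{i+1}/(\zeta-1)=\tfrac12(2k-1)-i=\tfrac12(2k-1)-[i]$ for $0\le i<2k$, which is exactly the asserted formula in that range.

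It remains to pass from $0\le i<2k$ to an arbitrary integer $i$. Both sides of the claimed identity are invariant under $i\mapsto i+2k$: the left-hand side because $[i]$ depends only on the residue of $i$ modulo $2k$, and the right-hand side because $\zeta^{i+1}$ depends only on that residue whenever $\zeta^{2k}=1$. Since the identity holds on a full set of residues, it holds for every integer $i$. The only step requiring any care is the limit computation on the right-hand side, which is an elementary l'H\^opital (or Taylor-expansion) exercise; I foresee no genuine difficulty.
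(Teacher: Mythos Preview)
Your proof is correct and follows essentially the same route as the paper: both isolate the $\zeta=1$ term in the partial fraction identity \eqref{e-pf1}, pass to the limit $x\to 1$, and compute that limit via l'H\^opital after combining over the common denominator $1-x^{2k}$. Your explicit periodicity argument for arbitrary $i$ is just an unpacking of the paper's ``without loss of generality'' reduction to $0\le i<2k$.
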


\begin{proof}
Without loss of generality, we may assume that $0\le i < 2k$. Then
 we have
\begin{align*}
\sum_{\substack{\zeta^{2k}=1\\ \zeta\ne 1}} \frac{\zeta^{i+1}}{\zeta-1}
   &= \lim_{x\to1}\biggl(\sum_{\zeta^{2k}=1} \frac{\zeta^{i+1}}{\zeta-x}-
   \frac{1}{1-x}\biggr)\\
   &=\lim_{x\to1}\left(\frac{2k x^i}{1-x^{2k}} -\frac{1}{1-x}\right),
      \text{ by  \eqref{e-pf1},}\\
   &=\lim_{x\to 1}\left(\frac{{2k}x^i -(1+x+\cdots + x^{{2k}-1})}{1-x^{2k}}\right)\\
   &=\frac{{2k}i -(1+2+\cdots +{2k}-1)}{-{2k}}, \text{ by l'H\^opital's rule,}\\
   &=\frac{1}{2}({2k}-1) -i.\qedhere
   \end{align*}
\end{proof}

Next, we prove a useful formula for $g(2k,q,i)$.

\begin{theorem}
\label{t-1}
\begin{equation}
\label{e-w1}
g(2k,q,i) =-\frac 2k \sum_{\zeta^k = -1}
   \frac{\zeta^{i+1}}{(\zeta-1)(\zeta^q-1)}.\\
\end{equation}
\end{theorem}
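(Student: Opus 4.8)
The plan is to substitute the root-of-unity expression for $[m]$ from Lemma~\ref{l-eis2} into the defining sum~\eqref{e-gk} for $g(2k,q,i)$, interchange the two summations, and evaluate the resulting geometric sum in $j$. Concretely, applying Lemma~\ref{l-eis2} to each term $[i+qj]$ and summing over $j$ gives
\[
\sum_{j=0}^{k-1}[i+qj] = \frac{k}{2}(2k-1) - \sum_{\substack{\zeta^{2k}=1\\ \zeta \ne 1}} \frac{\zeta^{i+1}}{\zeta-1}\sum_{j=0}^{k-1}\zeta^{qj}.
\]

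The key step is to evaluate $\sum_{j=0}^{k-1}\zeta^{qj}$ for each $2k$-th root of unity $\zeta\ne 1$. Since $\zeta^{2k}=1$ we have $(\zeta^k)^2=1$, so $\zeta^k=\pm1$, and I would split into two cases. If $\zeta^k=1$ (with $\zeta\ne1$), then $\gcd(q,2k)=1$ forces $\zeta^q\ne1$ (else $\zeta^{\gcd(q,k)}=\zeta=1$), and $\zeta^{qk}=(\zeta^k)^q=1$, so $\sum_{j=0}^{k-1}\zeta^{qj}=(\zeta^{qk}-1)/(\zeta^q-1)=0$. If $\zeta^k=-1$, then again $\zeta^q\ne1$, and because $q$ is odd, $\zeta^{qk}=(-1)^q=-1$, so $\sum_{j=0}^{k-1}\zeta^{qj}=(-1-1)/(\zeta^q-1)=-2/(\zeta^q-1)$. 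Hence only the terms with $\zeta^k=-1$ contribute (and these automatically have $\zeta\ne1$ and $\zeta^q\ne1$, so the summand is well defined), yielding
\[
\sum_{j=0}^{k-1}[i+qj] = \frac{k}{2}(2k-1) + 2\sum_{\zeta^k=-1}\frac{\zeta^{i+1}}{(\zeta-1)(\zeta^q-1)}.
\]

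Finally I would substitute this back into~\eqref{e-gk}: the constant contributions $\tfrac12(2k-1)$ and $-\tfrac1k\cdot\tfrac{k}{2}(2k-1)$ cancel, leaving exactly $g(2k,q,i)=-\tfrac{2}{k}\sum_{\zeta^k=-1}\zeta^{i+1}/\big((\zeta-1)(\zeta^q-1)\big)$, which is~\eqref{e-w1}. The only genuinely delicate point is the case analysis on $\zeta^k$, where the hypotheses that $q$ is odd and $\gcd(q,2k)=1$ are both essential to control $\zeta^q$ and $\zeta^{qk}$; the rest is bookkeeping. I note that the partial-fraction identities~\eqref{e-pf0} and~\eqref{e-pf1} are not needed for this derivation, although they would give an alternative route via the formula in~\eqref{e-pf0} for $x^i$ when $0\le i<k$.
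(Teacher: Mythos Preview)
Your proof is correct and follows essentially the same route as the paper: apply Lemma~\ref{l-eis2} termwise, sum the resulting geometric series in $j$, and split according to whether $\zeta^k=1$ or $\zeta^k=-1$. If anything, you are slightly more careful than the paper in verifying that $\zeta^q\ne1$ in both cases before dividing by $\zeta^q-1$.
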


\begin{proof}
Applying Lemma \ref{l-eis2} to \eqref{e-gk}
gives
\begin{align}
g(2k,q,i) &=\frac12(2k-1) -\frac1k\sum_{j=0}^{k-1}\biggl( \frac12(2k-1)
 - \sumx{\z}{2k} \frac{\z^{i+qj+1}}{\z-1}\biggr)\notag\\
&=-\frac{1}{k} \sum_{\substack{\zeta^{2k}=1\\ \zeta\ne 1}}\frac{\zeta^{i+1}}{1-\zeta}\cdot\frac{1-\zeta^{qk}}{1-\zeta^{q}}.\label{e-gsum}
\end{align}
Now if $\zeta^{2k}=1$ then $\zeta^k$ is either 1 or $-1$. If $\zeta^k=1$ then  $1-\z^{qk}=0$ and if
$\z^k=-1$ then (since $q$ is odd) $1-\z^{qk}=2$. Thus \eqref{e-w1} follows.
\end{proof}

We can also use this computation to find a simpler formula for $g(2k,q,i)$ that we will use in section \ref{s-another} (though we will give an independent proof of this formula there).
 First we note that setting $x=1$ in \eqref{e-pf0} gives
\begin{equation*}
\sum_{\zeta^k=-1}\frac{\zeta^{i+1}}{\zeta-1} = \frac{k}{2}
\end{equation*}
for $0\le i<k$ and it follows that, for any $i$,
\begin{equation}
\label{e-z1}
\sum_{\zeta^k=-1}\frac{\zeta^{i+1}}{\zeta-1} = (-1)^{\floor {i/k}}\frac{k}{2}.
\end{equation}

Then since the only nonzero terms in \eqref{e-gsum} are those with $\zeta^k=-1$, we have
\begin{equation}
\label{e-gx}
\begin{aligned}
g(2k,q,i) &= -\frac{1}{k} \sumy \z k\frac{\zeta^{i+1}}{1-\zeta}\cdot\frac{1-\zeta^{qk}}{1-\zeta^{q}}\\
    &= -\frac{1}{k} \sumy \z k\sum_{j=0}^{k-1}\frac{\zeta^{i+qj+1}}{1-\zeta}\\
    &=  \frac{1}{2}\sum_{j=0}^{k-1} (-1)^{\floor{(i+qj)/k}}, \text{ by \eqref{e-z1}.}
\end{aligned}
\end{equation}

We have a few easy consequences of Theorem \ref{t-1}.

\begin{corollary}
\label{c-1}
We have
\begin{gather}
\label{e-a61}
g(2k,q,i+k)  = -g(2k,q,i)\\
\label{e-a62}
g(2k,q,i) = g(2k,q,q-1-i)
\end{gather}
\end{corollary}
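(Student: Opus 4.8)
The plan is to read off both identities directly from the root-of-unity expression \eqref{e-w1} of Theorem~\ref{t-1}, using only two elementary symmetries of the summation set $\{\zeta : \zeta^k=-1\}$: the defining relation $\zeta^k=-1$ itself, and the fact that $\zeta\mapsto\zeta^{-1}$ permutes this set (since $(\zeta^{-1})^k=(\zeta^k)^{-1}=-1$). Note also that every such $\zeta$ satisfies $\zeta\neq 1$, and since $\gcd(q,2k)=1$ forces the order of $\zeta$ to be coprime to $q$, one has $\zeta^q\neq 1$ as well; hence all denominators appearing below are nonzero and the (finite) sums make sense.

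For \eqref{e-a61} I would substitute $i+k$ for $i$ in \eqref{e-w1}. Each numerator becomes $\zeta^{i+k+1}=\zeta^k\,\zeta^{i+1}=-\zeta^{i+1}$ because $\zeta^k=-1$, so the overall sign flips and $g(2k,q,i+k)=-g(2k,q,i)$ follows immediately.

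For \eqref{e-a62} I would apply the change of variable $\zeta\mapsto\zeta^{-1}$ in the formula for $g(2k,q,q-1-i)$. The key auxiliary computations are $\zeta^{-1}-1=-\zeta^{-1}(\zeta-1)$ and $\zeta^{-q}-1=-\zeta^{-q}(\zeta^q-1)$, so that the transformed denominator $(\zeta^{-1}-1)(\zeta^{-q}-1)$ equals $\zeta^{-1-q}(\zeta-1)(\zeta^q-1)$. After the substitution the typical summand of $g(2k,q,q-1-i)$ becomes $\zeta^{-(q-i)}\big/\big(\zeta^{-1-q}(\zeta-1)(\zeta^q-1)\big)$; tallying the power of $\zeta$, namely $-(q-i)-(-1-q)=i+1$, shows this is exactly the summand of $g(2k,q,i)$, and summing over $\zeta$ gives \eqref{e-a62}.

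The only place requiring any care — and thus the main (quite mild) obstacle — is the exponent bookkeeping in the second identity: one must correctly combine the contribution $\zeta^{(q-1-i)+1}$ from the numerator, the inversion $\zeta\mapsto\zeta^{-1}$, and the factor $\zeta^{-1-q}$ pulled out of the denominator, and verify that these collapse precisely to $\zeta^{i+1}$. Everything else is formal manipulation of a finite sum, with no analytic subtleties.
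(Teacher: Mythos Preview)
Your proof is correct and follows essentially the same route as the paper: both identities are read off directly from the root-of-unity formula \eqref{e-w1}, using $\zeta^k=-1$ for \eqref{e-a61} and the substitution $\zeta\mapsto\zeta^{-1}$ for \eqref{e-a62}. The only cosmetic difference is direction: the paper starts from $g(2k,q,i)$ and transforms it into $g(2k,q,q-1-i)$, whereas you start from $g(2k,q,q-1-i)$ and arrive at $g(2k,q,i)$, but the algebra is identical.
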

\begin{proof}
Formula \eqref{e-a61} is immediate from Theorem \ref{t-1}. For \eqref{e-a62}, we replace $\z$ with $\z^{-1}$ in \eqref{e-w1}, getting
\begin{align*}
g(2k,q,i) &=-\frac 2k \sum_{\zeta^k = -1}
   \frac{\zeta^{-i-1}}{(\zeta^{-1}-1)(\zeta^{-q}-1)}\\
   &=-\frac2k \sum_{\zeta^k = -1}
      \frac{\zeta^{q-i}}{(1-\z)(1-\z^q)}=g(2k,q,q-1-i).\qedhere
\end{align*}
\end{proof}

We note that \eqref{e-a62} is equivalent to the statement that for fixed  $k$ and $q$, $g(2k,q, i+(q-1)/2)$ is an even function of $i$.

Next we find an interval containing the value of $i$ that maximizes $g(2k, q,i)$.

\begin{lemma}
\label{l-max}
For $q>0$,
there is some $i$ with $0\le i \le (q-1)/2$ satisfying $g(2k,q,i)=G(2k,q)$.
\end{lemma}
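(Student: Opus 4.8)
The plan is to combine the observation recorded just after \eqref{e-1.2}, that the maximum $G=G(2k,q)$ is never attained at $i\equiv q,q+1,\dots,q+k-1\pmod{2k}$, with the reflection symmetry \eqref{e-a62} from Corollary~\ref{c-1}, namely $g(2k,q,i)=g(2k,q,q-1-i)$. Since $g(2k,q,i)$ is periodic in $q$ of period $2k$ and $G(2k,q)$ depends only on $q\bmod 2k$, I would first replace $q$ by its least positive residue; this only shrinks the interval $[0,(q-1)/2]$, so it is harmless, and I may therefore assume $0<q<2k$.

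First I would establish that $g(2k,q,i)<G$ for every $i$ in the block of residues $\{q,q+1,\dots,2k-1\}\pmod{2k}$. The cited observation gives this for $i\equiv q,\dots,q+k-1$. Applying \eqref{e-a62} to those values of $i$ shows it also holds for $i\equiv q-1-q,\dots,q-1-(q+k-1)\equiv -1,-2,\dots,-k\pmod{2k}$, i.e. for $i\in\{k,k+1,\dots,2k-1\}$. A short residue computation then finishes this step, splitting into $q\le k$, where the two blocks $\{q,\dots,q+k-1\}$ and $\{k,\dots,2k-1\}$ abut and together fill $\{q,\dots,2k-1\}$, and $q>k$, where $\{q,\dots,q+k-1\}$ already wraps around to contain $\{q,\dots,2k-1\}$ by itself. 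In either case $g(2k,q,\cdot)<G$ on all of $\{q,\dots,2k-1\}$, so since $g(2k,q,\cdot)$ attains its maximum somewhere in $\mathbb{Z}/2k$, it must do so at some $i_0\in\{0,1,\dots,q-1\}$.

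Finally, because $q$ is odd, the map $i\mapsto q-1-i$ is an involution of $\{0,1,\dots,q-1\}$ preserving $g(2k,q,\cdot)$ by \eqref{e-a62}; hence $q-1-i_0$ is also a maximizer, and whichever of $i_0$ and $q-1-i_0$ is smaller lies in $[0,(q-1)/2]$, giving the desired $i$. I expect no real difficulty here: the only thing needing care is the bookkeeping modulo $2k$ in the covering step of the second paragraph, together with the bland check that the reduction to $0<q<2k$ does not weaken the stated bound on $i$ (it does not, since the residue of $q$ is at most $q$).
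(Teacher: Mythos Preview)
Your argument is correct and follows essentially the same route as the paper: exclude the block $\{q,\dots,q+k-1\}$ via \eqref{e-1.2}, reflect by \eqref{e-a62} to exclude $\{k,\dots,2k-1\}$, conclude the maximum lies in $\{0,\dots,q-1\}$, and then fold that interval by the same symmetry. The only difference is cosmetic: you spell out the reduction to $0<q<2k$ and the case split $q\le k$ versus $q>k$, which the paper leaves implicit.
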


\begin{proof}

As we noted earlier, by  \eqref{e-1.2} the maximum value $\max_i g(2k,q,i)$ cannot occur at $i=q, q+1,\dots,k+q-1$.
But by \eqref{e-a62},  $g(2k,q, i) = g(2k,q, q-1-i) = g(2k, q, q-1-i+2k)$ so the maximum cannot occur for $q\le q-1-i+2k\le k+q-1$, which is equivalent to $k\le i\le 2k-1$.  Thus if $0<q<2k$ (wich we may assume without loss of generality) then any  $i$ in $\{0,\dots 2k-1\}$ for which $g(2k,q,i)$  attains its maximum must have $0\le i <q$. Moreover, by \eqref{e-a62} again, since $g(2k,q,i) = g(2k,q, q-1-i)$, if $0\le i < q$ then there is at least one $i$ for which $g(2k,q,i)$ attains its maximum satisfying $0\le i\le (q-1)/2$.
\end{proof}

We now prove a fundamental recurrence for $g(2k,q,i)$.
\begin{theorem}
\label{t-rec}
Suppose that $k+q>0$ and that $-k\le i < k+q$. Then
\begin{equation}
\label{e-rec}
g(2k+2q, q,i) =g(2k, q, i)+\tfrac12.
\end{equation}
\end{theorem}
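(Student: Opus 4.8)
The plan is to derive \eqref{e-rec} from the roots-of-unity formula \eqref{e-w1} by a residue computation. Write $K=k+q$ throughout, so that $2K=2k+2q$, $K>0$, and $\gcd(2K,q)=\gcd(2k,q)=1$. First I would reinterpret both sides of \eqref{e-rec} as sums of residues. Introduce the rational functions
\[
F(x)=\frac{x^{i}}{(x-1)(x^{q}-1)(x^{K}+1)},\qquad \tilde F(x)=\frac{x^{i}}{(x-1)(x^{q}-1)(x^{k}+1)}.
\]
If $\omega^{K}=-1$, then $\omega$ is a simple pole of $F$, and since $(x^{K}+1)'|_{x=\omega}=K\omega^{K-1}=-K/\omega$ we get $\operatorname{Res}_{x=\omega}F=-\tfrac1K\,\omega^{i+1}/\big((\omega-1)(\omega^{q}-1)\big)$; summing over the roots $\omega$ of $x^K+1$ and comparing with \eqref{e-w1} gives $\sum_{\omega^{K}=-1}\operatorname{Res}_{x=\omega}F=\tfrac12 g(2K,q,i)$, and likewise $\sum_{\zeta^{k}=-1}\operatorname{Res}_{x=\zeta}\tilde F=\tfrac12 g(2k,q,i)$.

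Next I would invoke the residue theorem. Besides the roots of $x^{K}+1$, the only poles of $F$ are: $x=1$, which is a \emph{double} pole because $x^{q}-1$ has a simple zero there; the $q$-th roots of unity $\xi\neq1$, which are simple poles (note $\xi^{K}+1=\xi^{q}\xi^{k}+1=\xi^{k}+1\neq0$, since $\xi^{k}=-1$ together with $\xi^{q}=1$ and $\gcd(2k,q)=1$ would force $\xi=1$); and $x=0$ when $i<0$. The analogous statement holds for $\tilde F$. This is exactly where the hypotheses enter: the denominator of $\tilde F$ has degree $1+q+k$, so $i<k+q$ makes $\tilde F=O(x^{-2})$ at $\infty$, and $i<k+q\le k+2q$ does the same for $F$; hence the total residue of each of $F$, $\tilde F$ vanishes, yielding
\[
\tfrac12 g(2K,q,i)=-\operatorname{Res}_{1}F-\sum_{\xi^{q}=1,\ \xi\neq1}\operatorname{Res}_{\xi}F-\operatorname{Res}_{0}F
\]
together with the identical identity for $\tilde F$.

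Subtracting these two identities, I would check that all contributions cancel except at $x=1$. For $\xi\neq1$ with $\xi^{q}=1$, one computes $\operatorname{Res}_{\xi}F=\xi^{i+1}/\big(q(\xi-1)(\xi^{k}+1)\big)$ using $\xi^{K}+1=\xi^{k}+1$ and $q\xi^{q-1}=q/\xi$, and this is exactly $\operatorname{Res}_{\xi}\tilde F$, so these terms cancel. For the residue at $0$: when $i<0$, $\operatorname{Res}_{0}F$ is the coefficient of $x^{-i-1}$ in the Taylor expansion of $1/\big((x-1)(x^{q}-1)(x^{K}+1)\big)$ at $0$, and since $-i-1\le k-1<k\le K$ (this is where $i\ge-k$ is needed), the factors $(x^{K}+1)^{-1}$ and $(x^{k}+1)^{-1}$ both contribute only their constant term $1$ up to this degree, so $\operatorname{Res}_{0}F=\operatorname{Res}_{0}\tilde F$ (and both vanish when $i\ge0$). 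Finally, writing $x^{q}-1=(x-1)u(x)$ with $u(x)=1+x+\cdots+x^{q-1}$, the double pole at $x=1$ gives $\operatorname{Res}_{1}F=\big(x^{i}/(u(x)(x^{K}+1))\big)'\big|_{x=1}$, and a logarithmic-derivative computation (using $u(1)=q$ and $u'(1)/u(1)=(q-1)/2$) yields $\operatorname{Res}_{1}F=\frac1{2q}\big(i-\tfrac{q-1}{2}-\tfrac{K}{2}\big)$; the same with $K$ replaced by $k$ gives $\operatorname{Res}_{1}F-\operatorname{Res}_{1}\tilde F=\frac{k-K}{4q}=-\tfrac14$. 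Combining, $\tfrac12\big(g(2K,q,i)-g(2k,q,i)\big)=-\big(\operatorname{Res}_{1}F-\operatorname{Res}_{1}\tilde F\big)=\tfrac14$, which is \eqref{e-rec}.

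The main obstacle is not any single calculation — the double-pole residue at $x=1$ is the only genuine computation and it is short — but rather the careful accounting of all the poles and the verification that the two hypotheses $-k\le i<k+q$ are precisely what is needed: the upper bound so that $F$ and $\tilde F$ decay fast enough for the residue theorem to apply, and the lower bound so that the residues at $0$ agree. Once that bookkeeping is arranged, the identity falls out and Proposition~\ref{p-1} follows as indicated in the text, since \eqref{e-g2} then holds on a range of $i$ containing a maximizer of each side when $q>0$ (Lemma~\ref{l-max}).
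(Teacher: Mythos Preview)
Your argument is correct and complete for $q>0$, which is all that the application to Proposition~\ref{p-1} requires. As written, however, it tacitly assumes $q>0$: the claim that the denominator of $\tilde F$ has degree $1+q+k$, and the chain of inequalities $-i-1\le k-1<k\le K$ used to match the residues at $0$, both need $q>0$ (when $q<0$ one has $K<k$). The fix is straightforward---rewrite $x^{q}-1=-x^{q}(x^{-q}-1)$ and redo the pole and degree bookkeeping with $|q|$ in place of $q$; one then finds that the decay-at-infinity condition becomes $i<K$ (on $F$) and the residue-at-$0$ matching again reduces to $i\ge -k$. So this is a minor gap, not a substantive one.

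Your route is genuinely different from the paper's. The paper rewrites \eqref{e-w1} as \eqref{e-w2}, does a single partial-fraction expansion of $\zeta^{i+1}/\bigl((\zeta-1)(\zeta^{k+q}+1)\bigr)$ in the variable $\zeta$, and then sums over $\zeta^{k}=-1$; this directly produces the identity $g(2k,q,i)=-\tfrac12+g(2k+2q,-q,i+k)$, which is then massaged into \eqref{e-rec} via the symmetries \eqref{e-1.1} and \eqref{e-a61}. You instead introduce the auxiliary rational functions $F$ and $\tilde F$, apply the residue theorem on $\mathbb{P}^{1}$ to each separately, and subtract. Your organization is more symmetric and makes the role of the two hypotheses on $i$ completely transparent (upper bound for decay at infinity, lower bound for agreement at $0$), at the cost of one extra residue computation. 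The paper's version is a touch shorter and works uniformly for $q$ of either sign without adjustment, but requires invoking the two symmetry identities at the end. Both are variations on the same residue/partial-fraction theme and lead to the reciprocity formulation in Theorem~\ref{t-recip1}.
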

\begin{proof}
Since $\z^k=-1$, we can write \eqref{e-w1} as
\begin{equation}
\label{e-w2}
g(2k,q,i) =\frac 2k \sum_{\zeta^k = -1}
   \frac{\zeta^{i+1}}{(\zeta-1)(\zeta^{k+q}+1)}.
\end{equation}

We first consider  the case in which  $-1\le i < k+q$, so the summand in \eqref{e-w2}
is a proper rational function of $\zeta$, and therefore has the partial fraction expansion
\begin{equation}
\label{e-pf3}
\frac{\zeta^{i+1}}{(\zeta-1)(\zeta^{k+q}+1)}
=\frac{1/2}{\zeta-1} -\frac 1{k+q} \sum_{\eta^{k+q}=-1}\frac{\eta^{i+2}}{(\eta-1)(\zeta-\eta)}.
\end{equation}
By  \eqref{e-pf0} with $i=k-1$ we have
\begin{equation}
\label{e-pf2}
\sum_{\zeta^k=-1}\frac{1}{\zeta-\eta} = -k\frac{\eta^{k-1}}{\eta^k+1}.
\end{equation}

Applying \eqref{e-pf3} and \eqref{e-pf2} to \eqref{e-w2} gives
\begin{equation*}
 g(2k,q,i) = -\frac12 +\frac 2{k+q} \sum_{\eta^{k+q}=-1}\frac{\eta^{i+k+1}}{(\eta-1)(\eta^k+1)}
\end{equation*}
and by \eqref{e-w2}, this is equal to
 $ -1/2+g(2k+2q, -q, i+k)$.

Now we have
\begin{align*}
g(2k+2q, -q,i+k) &= -g(2k+2q, -q, i-q), \text{ by \eqref{e-a61}},\\
    &=-g(2k+2q, q,i+q+k), \text{ by \eqref{e-1.1}},\\
          &= g(2k+2q, q,i), \text{ by \eqref{e-a61}},
\end{align*}
and the conclusion follows for $-1\le i < k+q$.

 If $-k\le i <-1$ then the partial fraction expansion for the left side of \eqref{e-pf3} will have additional terms of the form
$c_l\zeta^{-l}$, where $0<l<k$, but for these values of $l$,
$\sum_{\z^k=-1} \z^{-l} = 0$, so the  formula still holds.
\end{proof}


Since the two sums  in the proof of Theorem \ref{t-rec} switch $k$ with $k+q$, we get an equivalent version of the recurrence in the form of a traditional reciprocity theorem by taking  our parameters to be $k$ and $j=k+q$.

\begin{theorem}
\label{t-recip1}
Let $h(k,j,i)= g(2k, j-k,i)$, where $k$ is positive, and  $j$ and $k$ are relatively prime and of opposite parity. Then for $0\le i < j+k$ we have
\begin{equation}
\label{e-recip}
h(j,k,i) + h(k,j,i) = \frac12.
\end{equation}
\end{theorem}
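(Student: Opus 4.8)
The plan is to recognize \eqref{e-recip} as a Dedekind-style reciprocity law and prove it by summing the residues of one auxiliary rational function; this repackages the partial-fraction computation already used in the proof of Theorem~\ref{t-rec}, with $k$ and $j=k+q$ now playing symmetric roles. Write $q=j-k$, so that $h(k,j,i)=g(2k,j-k,i)$ and $h(j,k,i)=g(2j,k-j,i)$. Applying formula \eqref{e-w2} (with $(m,q)=(k,j-k)$ and $(m,q)=(j,k-j)$ respectively) gives
\[
h(k,j,i)=\frac2k\sum_{\zeta^k=-1}\frac{\zeta^{i+1}}{(\zeta-1)(\zeta^j+1)},\qquad
h(j,k,i)=\frac2j\sum_{\eta^j=-1}\frac{\eta^{i+1}}{(\eta-1)(\eta^k+1)}.
\]

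Next I would introduce $R(x)=x^{i}\big/\big((x-1)(x^k+1)(x^j+1)\big)$ and analyze its poles. Since $\gcd(j,k)=1$ and $j,k$ have opposite parity, no root of $x^k+1$ is a root of $x^j+1$ (a common root $\zeta$ would have order dividing $\gcd(2k,2j)=2$, hence be $-1$, forcing $k$ and $j$ both odd), and $x=1$ is a root of neither factor $x^k+1$, $x^j+1$; thus $R$ has exactly $1+k+j$ simple poles: $x=1$, the $k$-th roots of $-1$, and the $j$-th roots of $-1$. Because $0\le i<j+k$, the denominator of $R$ exceeds the numerator in degree by at least $2$, so $R(x)=O(x^{-2})$ as $x\to\infty$ and the sum of all residues of $R$ vanishes. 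A direct computation of residues shows: $\operatorname{Res}_{x=1}R=\tfrac14$; at a root $\zeta$ of $x^k+1$ one gets $\operatorname{Res}_\zeta R=-\zeta^{i+1}\big/\big(k(\zeta-1)(\zeta^j+1)\big)$, so these $k$ residues sum to $-\tfrac12\,h(k,j,i)$; symmetrically the residues at the $j$-th roots of $-1$ sum to $-\tfrac12\,h(j,k,i)$. Setting the total to zero yields $h(k,j,i)+h(j,k,i)=\tfrac12$.

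The only steps that need care are checking that the three groups of poles are disjoint and all simple — this is exactly where $\gcd(j,k)=1$ and the opposite parity of $j,k$ are used — and the degree bookkeeping at infinity, which has to be sharp enough to see that $0\le i<j+k$ is precisely the range in which the residue sum vanishes (for $i=j+k$ there would be a nonzero residue at $\infty$). An alternative is to start from the intermediate identity $g(2k,q,i)=-\tfrac12+g\big(2(k+q),-q,i+k\big)$ buried in the proof of Theorem~\ref{t-rec} and re-express both sides via $h$, using \eqref{e-a61} and \eqref{e-1.1}; but then the shift $i\mapsto i+k$ clutters the range of validity, so I would present the residue argument.
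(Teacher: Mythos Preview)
Your residue argument is correct and is genuinely different from the paper's proof. The paper takes exactly the route you mention and set aside at the end: it substitutes $q=j-k$ into the intermediate identity
\[
g(2k,q,i)=-\tfrac12+g(2k+2q,-q,i+k)
\]
obtained inside the proof of Theorem~\ref{t-rec}, rewrites both sides as $h(j,k,i+k)-h(k,j,i)=\tfrac12$ on the range $-k\le i<j$, applies the antisymmetry \eqref{e-a61} to flip the sign of the second term, and then shifts $i\mapsto i-k$ to land on $0\le i<j+k$. So the paper's proof is a two-line corollary of work already done, whereas yours is a self-contained residue computation that treats $k$ and $j$ symmetrically from the outset and makes the reciprocity structure transparent. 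Your version has the advantage of not depending on Theorem~\ref{t-rec} at all (indeed, one could reverse the logical order and deduce Theorem~\ref{t-rec} from your argument), and it exposes exactly where the hypotheses $\gcd(j,k)=1$, opposite parity, and $0\le i<j+k$ enter: disjointness of poles and vanishing of the residue at infinity. The paper's approach is shorter in context but hides these dependencies inside the earlier partial-fraction manipulation.
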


\begin{proof}
Setting $q=j-k$ in
the identity $g(2k, q, i) = -1/2 + g(2k+2q, -q, i+k)$ given in
the proof of Theorem \ref{t-rec} yields
$h(j,k,i+k) - h(k,j,i) = 1/2$ for $-k\le i < j$. By \eqref{e-a61}, $h(k,j,i) = -h(k,j,i+k)$, so we have
\begin{equation*}
h(j,k,i+k) + h(k,j,i+k) = \frac12
\end{equation*}
for $-k\le i < j$. Replacing $i$ with $i-k$ gives \eqref{e-recip}.
\end{proof}

%

We can now finish the proof of Proposition \ref{p-1}. All we need to prove is that for $q>0$,
$G(2k+2q,q) =G(2k,q)+1/2$. This follow Theorem \ref{t-rec} and Lemma \ref{l-max}, since  \eqref{e-rec} is valid for $0\le i \le (q-1)/2$.

There is also a reciprocity form of the recurrence for $G$.  Let us define $H(k,j) = G(2k, k-j)$. Then we have
\begin{equation*}
H(k,j) - H(j,k) = \frac12, \text{ for $k>j>0$}.
\end{equation*}

\begin{note}
Although \eqref{e-rec} holds for $q$ negative as long as $q > -k$, it is generally not true that $G(2k+2q,q)= G(2k,q)+1/2$ for $q<0$. For example, if $k=3$ and $q=-1$ then we have
\[g(4,-1,i) = g(6,-1,i)+\tfrac12, \text{ for $-3\le i<2$,}\]
by Theorem \ref{t-rec}, but the maximum value of $g(6,-1,i)$ occurs for $i=2$, and also for $i=-4$. (Lemma \ref{l-max} guarantees us that the maximum value of $g(6,-1,i)=g(6,5,i)$ occurs for some $i$ with $0\le i\le (5-1)/2=2$.)
\end{note}

It is not hard to compute an $i$ for which $G(2k,q) = g(2k,q,i)$.
\begin{proposition}
Let us define $I(2k,q)$ for $q>0$ by
\begin{equation*}
I(2k,q) = \begin{cases}
      0, &\text{ if $k=q=1$,}\\
      I(2k, [q]), &\text{ if $q>2k$,}\\
      I(2k-2q, q), &\text{ if $q<k$,}\\
      I(2k,2k-q) +q- k, &\text{ if $k<q<2k$.}
\end{cases}
\end{equation*}
Then $G(2k,q) = g(2k, q, I(2k,q))$.
\end{proposition}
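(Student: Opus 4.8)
The plan is to prove, by strong induction on $k$, the slightly stronger statement: for every $q>0$ with $\gcd(q,2k)=1$ one has $0\le I(2k,q)\le (q-1)/2$ and $g(2k,q,I(2k,q))=G(2k,q)$. Throughout I would use the periodicity $g(2k,q+2k,i)=g(2k,q,i)$ (so that $G(2k,q)$ and the recursion for $I$ depend only on $[q]$), the symmetry \eqref{e-1.1}, the recurrence of Theorem~\ref{t-rec}, and Lemma~\ref{l-max}, which provides a maximizer of $g(2k,q,\cdot)$ lying in $[0,(q-1)/2]$. The base case $k=1$ is immediate: $q$ is odd, $[q]=1$, and $g(2,1,0)=\tfrac12(2-1)-[0]=\tfrac12=G(2,1)$, matching $I(2,1)=0$.

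For the inductive step I would fix $k>1$, assume the statement for all smaller values of $k$, and treat the admissible $q$ in three groups in the order $q<k$, then $k<q<2k$, then $q>2k$, so that the later groups may invoke the earlier ones at the same level $k$ (note $q\ne k$ by coprimality). If $q<k$, then applying Theorem~\ref{t-rec} with $k$ replaced by $k-q$ gives
\[
g(2k,q,i)=g(2(k-q),q,i)+\tfrac12\qquad\text{for }-(k-q)\le i<k .
\]
By the inductive hypothesis at level $k-q<k$, the index $i_1:=I(2(k-q),q)$ satisfies $0\le i_1\le (q-1)/2<q<k$ and $g(2(k-q),q,i_1)=G(2(k-q),q)$; by Lemma~\ref{l-max} there is a maximizer $i_0$ of $g(2k,q,\cdot)$ with $0\le i_0\le (q-1)/2$. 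Both $i_0$ and $i_1$ lie in the window $-(k-q)\le i<k$, so
\[
G(2k,q)=g(2k,q,i_0)=g(2(k-q),q,i_0)+\tfrac12\le G(2(k-q),q)+\tfrac12=g(2k,q,i_1)\le G(2k,q),
\]
forcing equality throughout; hence $I(2k,q)=i_1$ is a maximizer of $g(2k,q,\cdot)$ lying in $[0,(q-1)/2]$, which is precisely the third clause of the recursion.

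If $k<q<2k$, set $q'=2k-q\in(0,k)$, which is admissible; by \eqref{e-1.1} and periodicity of $g$ in its second and third variables,
\[
g(2k,q',i)=g(2k,-q,i)=g(2k,q,i+q+k)=g(2k,q,i+q-k),
\]
so $j\mapsto j+q-k$ is a bijection of $\Z/2k\Z$ carrying the function $g(2k,q',\cdot)$ onto $g(2k,q,\cdot)$; in particular $G(2k,q')=G(2k,q)$ and maximizers correspond. By the case already settled, $i_1':=I(2k,q')$ maximizes $g(2k,q',\cdot)$ and lies in $[0,(q'-1)/2]$, so $I(2k,q):=i_1'+q-k$ maximizes $g(2k,q,\cdot)$, and $q-k\le I(2k,q)\le (q'-1)/2+q-k=(q-1)/2$, which (since $q-k\ge 1$ and $q<2k$) is contained in $[0,(q-1)/2]$; this is the fourth clause. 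Finally, if $q>2k$ then $I(2k,q)=I(2k,[q])$ with $[q]\in(0,2k)$ and $[q]\ne k$, so $[q]$ falls into one of the two cases just handled; periodicity then gives $g(2k,q,I(2k,q))=g(2k,[q],I(2k,[q]))=G(2k,[q])=G(2k,q)$ and $0\le I(2k,q)\le([q]-1)/2\le(q-1)/2$.

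The only real work is this bookkeeping: checking at each reduction that the distinguished index stays inside $[0,(q-1)/2]$, which is simultaneously the interval where Lemma~\ref{l-max} locates a maximizer and (being contained in $[-(k-q),k)$) the range where Theorem~\ref{t-rec} is valid, and checking that the recursion for $I$ is well-founded---the clause $q>2k$ fires at most once, the clause $q<k$ strictly decreases $k$, and the clause $k<q<2k$ passes to $q'<k$ at the same $k$ and then decreases $k$---so that the descent terminates at $k=q=1$. No conceptual input beyond the four cited results is needed.
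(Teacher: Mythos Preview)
Your proof is correct and follows essentially the same approach as the paper's: both use the recurrence of Theorem~\ref{t-rec} together with Lemma~\ref{l-max} for the case $q<k$, and the symmetry \eqref{e-1.1} to reduce $k<q<2k$ to the previous case. The only organizational differences are that the paper inducts on $k+q$ and establishes the bound $0\le I(2k,q)\le (q-1)/2$ separately beforehand (by ``an easy induction on $q$''), whereas you induct on $k$, order the cases within each level of $k$, and carry the bound along as part of the inductive hypothesis; your version is somewhat more explicit about checking that the relevant indices lie in the window where Theorem~\ref{t-rec} applies.
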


\begin{proof}
An easy induction on $q$ shows that that $0\le I(2k,q) \le (q-1)/2$. We now prove the result by induction on $k+q$. The base case, $k=q=1$, is clear, so suppose that $k+q>2$ and that the result holds for $I(2k', q')$ with $k'+q'<k+q$.  Without loss of generality, we may assume that $1<q<2k$.

First suppose that $q<k$. Then by Theorem \ref{t-rec},   for $i<k$ we have
\(
g(2k,q,i) = g(2k-2q,q, i) +1/2
\).
So by Lemma \ref{l-max},
\begin{align*}
G(2k,q) &=\max_{0\le i\le (q-1)/2} g(2k,q,i) \\
  &= \max_{0\le i\le (q-1)/2} g(2k-2q,q,i) +\tfrac12\\
  &=g(2k-2q,q, I(2k-2q,q)) +\tfrac12  \\
  &=g(2k, q, I(2k-2q,q)).
\end{align*}

Next, suppose that $k<q<2q$. Then
\(
g(2k, q, i+q-k) = g(2k, 2k-q, i)
\) by \eqref{e-1.1},
so
\begin{align*}
G(2k, q) &= \max\nolimits_i g(2k, q, i+q-k)\\
  &=\max\nolimits_i g(2k, 2k-q, i)\\
  &=g(2k, 2k-q, I(2k, 2k-q))\\
  &=g(2k, q, I(2k, 2k-q)+q-k).\qedhere
\end{align*}
\end{proof}

\subsection{Another approach}
\label{s-another}

We now describe another approach to the fundamental recurrence \eqref{e-rec} that avoids the use of roots of unity, and gives a more general result. We start by giving a direct proof of equation \eqref{e-gx},
$g(2k,q,i) =  \frac{1}{2}\sum_{j=0}^{k-1} (-1)^{\floor{(i+qj)/k}}.$
By \eqref{e-1.2}, we have
\begin{align}
\sum_{j=0}^{k-1} (-1)^{\floor{(i+qj)/k}}
  &= \sum_{j=0}^{k-1} \bigl(g(2k,q,i+jq) - g(2k,q, i+(j+1)q)\bigr)\notag\\
  &=g(2k, q,i) - g(2k,q, i+kq)\notag\\
  &=g(2k,q,i) - g(2k,q, i+k) \notag
\end{align}
since $g(2k, q, i+2k)=g(2k,q,i)$  and $q$ is odd, so to prove \eqref{e-gx}, it suffices to show that $g(2k,q,i+k) = -g(2k, q, i)$ (which we proved in Corollary \ref{c-1} using roots of unity).
By \eqref{e-gk} we have
\begin{equation}
\label{e-f3}
g(2k, q,i) + g(2k,q, i+k) = 2k-1 -\frac 1k  \sum_{j=0}^{k-1}[i+qj]
 -\frac 1k \sum_{j=0}^{k-1}[i+k+qj].
\end{equation}
The numbers $i+qj$, as $j$ runs from 0 to $2k-1$, run through a complete residue system modulo $2k$, so
\begin{equation}
\label{e-f4}
\sum_{j=0}^{2k-1}[i+qj]=2k(2k-1)/2=k(2k-1).
\end{equation}
But
\begin{align}
\sum_{j=0}^{2k-1}[i+qj]&=\sum_{j=0}^{k-1}[i+qj]
+\sum_{j=0}^{k-1}[i+q(j+k)]\notag\\
&=\sum_{j=0}^{k-1}[i+qj]+\sum_{j=0}^{k-1}[i+qj+k],
\label{e-f5}
\end{align}
since $q$ is odd and thus $qk\equiv k \pmod{2k}$. Then from  \eqref{e-f3}, \eqref{e-f4}, and \eqref{e-f5}, it follows that $g(2k,q,i)+g(2k,q,i+k)=0$, and this completes the proof of \eqref{e-gx}.

Now let us define Laurent polynomials $P(2k,q,i)$  in $u$ by
\begin{equation}
\label{e-P}
P(2k,q,i)  = \sum_{j=0}^{k-1}u^{\floor{(i+qj)/k}}.
\end{equation}

Then we have the following generalization of Theorem \ref{t-rec}, to which it reduces for $u=-1$. We assume that $k>0$ but $q$ need not be relatively prime to $2k$.
\begin{theorem}
\label{t-Prec}
Suppose that $k+q > 0$ and that $-k\le i < k+q$. Then
\begin{equation}
\label{e-rec2}
P(2k+2q,q,i) - P(2k,q,i)=\frac{1-u^q}{1-u}.
\end{equation}
\end{theorem}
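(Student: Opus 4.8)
The plan is to recast both sides in a telescoping form and then carry out the bookkeeping that the hypotheses $k>0$, $k+q>0$, $-k\le i<k+q$ are tailored for. Write $m=k$ and $m'=k+q$, so that $P(2k,q,i)=\sum_{j=0}^{m-1}u^{\lfloor(i+qj)/m\rfloor}$, $P(2k+2q,q,i)=\sum_{j=0}^{m'-1}u^{\lfloor(i+qj)/m'\rfloor}$, with $m,m'\ge1$. For a positive integer $n$ and $t\in\Z$ put $\nu_n(t)=\#\{\,j:0\le j\le n-1,\ \lfloor(i+qj)/n\rfloor=t\,\}$, so $P(2n,q,i)=\sum_t\nu_n(t)u^t$, and $D_n(t)=\#\{\,j:0\le j\le n-1,\ i+qj\ge(t+1)n\,\}$, so that $\nu_n(t)=D_n(t-1)-D_n(t)$. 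Consequently $P(2k+2q,q,i)-P(2k,q,i)=\sum_t\bigl(E(t-1)-E(t)\bigr)u^t$, where $E(t):=D_{m+q}(t)-D_m(t)$.

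The key step is to pin down $E(t)$. In the sum defining $D_{m+q}(t)$, substitute $j=l+(t+1)$: because $m'=m+q$, the term $q(t+1)$ produced by the shift exactly cancels the $(t+1)q$ inside $(t+1)m'=(t+1)m+(t+1)q$, leaving
\[
D_{m+q}(t)=\#\{\,l:\ -(t+1)\le l\le m+q-t-2,\ i+ql\ge(t+1)m\,\}.
\]
Hence $E(t)$ is the signed number of elements of the ``tail set'' $\{\,l:i+ql\ge(t+1)m\,\}$ lying in the symmetric difference of the intervals $[-(t+1),\,m+q-t-2]$ and $[0,\,m-1]$. For $q>0$ the tail set is $\{\,l\ge L_t\,\}$ with $L_t=\lceil((t+1)m-i)/q\rceil$, and the inequalities $-m\le i<m+q$ are precisely what force $L_t\le0$ for $t\le-2$, $L_t\le m$ for $t=-1$, $0\le L_t\le m$ for $0\le t\le q-1$, and $L_t\ge m$ for $t\ge q$. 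Inspecting the interval geometry in each of these ranges yields $E(t)=q$ for $t\le-1$, $E(t)=q-t-1$ for $0\le t\le q-1$, and $E(t)=0$ for $t\ge q$.

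It then follows that $E(t-1)-E(t)$ equals $1$ for $0\le t\le q-1$ and $0$ for all other $t$; therefore $P(2k+2q,q,i)-P(2k,q,i)=\sum_{t=0}^{q-1}u^t=\frac{1-u^q}{1-u}$, which is \eqref{e-rec2}. The case $q<0$ is the mirror image: now the tail set is a left tail $\{\,l\le\lfloor((t+1)m-i)/q\rfloor\,\}$, the cancellation of $q(t+1)$ under $j=l+(t+1)$ is identical, and the same analysis (with $\lfloor\cdot\rfloor$ in place of $\lceil\cdot\rceil$, and $-m\le i<m+q$ again locating the cutoff) gives $E(t-1)-E(t)=-1$ for $q\le t\le-1$ and $0$ otherwise, so that $P(2k+2q,q,i)-P(2k,q,i)=-\sum_{t=q}^{-1}u^t=\frac{1-u^q}{1-u}$ once more. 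Specializing $u=-1$ recovers Theorem~\ref{t-rec}.

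The step I expect to be the real work is the middle paragraph: establishing the three-part formula for $E(t)$ (and its mirror for $q<0$). This is the only point where the inequalities $-m\le i<m+q$ genuinely enter, and it requires some care about which endpoint of which interval is ``active'' and about where $L_t$ sits relative to $0$ and $m$; everything else is formal manipulation of finite sums.
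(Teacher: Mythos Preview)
Your approach is correct and shares its central idea with the paper's proof: both rely on the substitution $j\mapsto j-(t+1)$ (equivalently $j'\mapsto j'-(n+1)$ in the paper's notation), which turns the inequality $i+qj\ge(t+1)(k+q)$ into $i+ql\ge(t+1)k$ and thereby aligns the two counts over shifted index intervals. So the combinatorial heart is the same.

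Where you differ is in the packaging. The paper first passes to the formal power series $R(2k,q,i)=P(2k,q,i)/(1-u^q)$ and then divides once more by $1-u$; the coefficient of $u^n$ in $R/(1-u)$ is simply the number of $j\ge0$ with $\lfloor(i+qj)/k\rfloor\le n$, so after the same shift the comparison reduces to showing that one half-line contains $n+1$ more lattice points than another for $n\ge0$ and the same number for $n<0$. This generating-function framing collapses your case analysis on $E(t)$ into a single pair of inequalities. For $q<0$ the paper does not mirror the argument but instead invokes the closed-form symmetry $P(2k,q,i)=u^{q}P(2k,-q,i-q)$ (obtained by reversing the sum) to reduce directly to the positive case, which is cleaner than rerunning the tail-set analysis with left tails. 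Your route is more elementary and self-contained, at the cost of the explicit casework on where $L_t$ sits relative to $0$ and $m$; one small inaccuracy worth fixing is that $L_t\le m$ can fail at $t=q-1$, but since the interval $[m,m+q-t-2]$ is then empty this does not affect the conclusion $E(q-1)=0$.
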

\begin{proof}
Let us first take $q$ to be positive.
We define the formal power series $R(2k, q,i)$ by
\begin{equation*}
R(2k, q,i) = \sum_{j=0}^{\infty}u^{\floor{(i+qj)/k}}.
\end{equation*}
 It is easy to see that $R(2k;q,i) = P(2k, q,i)/(1-u^q)$, so \eqref{e-rec2} is equivalent to
\begin{equation*}
R(2k+2q,q,i) - R(2k,q,i)=\frac{1}{1-u}.
\end{equation*}

We shall prove the equivalent formula
\begin{equation}
\label{e-rec3}
\frac{R(2k+2q,q,i)}{1-u} - \frac{R(2k,q,i)}{1-u}=\frac{1}{(1-u)^2}.
\end{equation}
The coefficient of $u^n$ in $R(2k,q,i)/(1-u)$ is the number of nonnegative integers $j$ such that $\floor{(i+qj)/k}\le n$, i.e.,
$(i+qj)/k<n+1$,
which is equivalent to
\begin{equation}
\label{e-y1}
 0\le j< \frac{k(n+1)-i}{q} .
\end{equation}
Similarly,  the coefficient of $u^n$ in $R(2k+2q,q,i)/(1-u)$ is the number of  integers $j'$  such that
\begin{equation*}
-n-1\le j'-n-1< \frac{k(n+1)-i}{q} ,
\end{equation*}
or equivalently, the number of integers $j$ such that
\begin{equation}
\label{e-y2}
-n-1\le j< \frac{k(n+1)-i}{q}.\end{equation}

To prove \eqref{e-rec3} we must show  that  if $n<0$ then \eqref{e-y1} and \eqref{e-y2} have the same number of  solutions, but if $n\ge0$ then \eqref{e-y2} has $n+1$ more solutions than \eqref{e-y1}.

We first consider the case $n<0$. If $n\le -2$ then the first inequality in \eqref{e-y1}, together with the condition $i\ge -k$, gives $j<(-k-i)/q\le0$ so there are no solutions of \eqref{e-y1} and similarly there are no solutions of \eqref{e-y2}. If $n=-1$ then \eqref{e-y1} and \eqref{e-y2} are the same.

We may now assume that $n\ge 0$. We will show that the solutions of \eqref{e-y2} are those of \eqref{e-y1} together with $-1,-2,\dots, -n$. It is sufficient to show that $-1, -2,\dots, -n $ are solutions of \eqref{e-y2}, i.e., that $\bigl(k(n+1) -i\bigr)/q>-1$.
But since $i<k+q$, we have
\begin{equation*}
\frac{k(n+1)-i}{q} \ge \frac{k-i}q >\frac{-q}q = -1.
\end{equation*}

We can reduce the case $q<0$ to the case $q>0$.
Reversing the order of summation in \eqref{e-P} gives
\[P(2k, q, i) = \sum_{j=0}^{k-1} u^{\floor {(i + q(k-1-j))/k}}.\]
Since
\begin{equation*}
\floor {\frac{(i + q(k-1-j))}{k}} = q +\floor{\frac{i-q -qj}{k}},
\end{equation*}
we have
\begin{equation}
\label{e-q5}
P(2k,q,i) = u^q P(2k, -q, i-q).
\end{equation}
Now suppose that $q<0$ and let $k'=k+q,$ $q'=-q$, and $i'=i-q$. Then the inequalities $k+q>0$ and $-k\le i < k+q$ give $k'>0$ and $-k'\le i' < k' + q'$, so by what we have already proved,
\begin{equation}
\label{e-q6}
P(2k'+2q',q',i') - P(2k',q',i')=\frac{1-u^{q'}}{1-u}.
\end{equation}
Then
\begin{align*}
P(2k+2q,q&,i) - P(2k,q,i)\\
&=u^q\bigl( P(2k+2q, -q, i-q) -P(2k, -q, i-1)\bigr),\text{ by \eqref{e-q5}}\\
  &=u^q\bigl( P(2k', q', i') -P(2k'+2q', q', i')\bigr)\\
  &= -u^q\frac{1-u^{q'}}{1-u}, \text{ by \eqref{e-q6}}\\
  &=\frac{1-u^q}{1-u}.\qedhere
\end{align*}
\end{proof}

With some additional work, which we omit here, we can show that Theorem \ref{t-Prec} is equivalent to  the following symmetric reciprocity formula generalizing Theorem  \ref{t-recip1}:
For positive integers $j$ and $k$ define
\[Q(k,j,i) = P(2k,j-k,i) = \sum_{l=0}^{k-1} u^{\floor{(i+jl)/k}-l}.\]
Then  for $0\le i< j+k$ we have
\begin{equation}
\label{e-car}
u^{j-1} Q(j,k,i) - u^{k-1} Q(k,j,i) = \frac{u^k - u^j}{1-u}.
\end{equation}
Formula \eqref{e-car} is a specialization of a result of Carlitz  \cite[equation (1.16)]{g-carlitz}. A simpler derivation of Carlitz's formula was given by Berndt and Dieter \cite[Corollary 5.8]{b-d}. Reciprocity theorems for related polynomials have been studied by Pettet and Sitaramachandrarao \cite{g-p-s}, Beck \cite{g-beck1},   Beck, Haase and Matthews \cite{g-beck2}, and Beck \cite{g-beck3}.

\end{document}